\newtheorem{lem}{Lemma}[section]
\newtheorem{rem}[lem]{Remark}
\newtheorem{eg}[lem]{Example}
\newtheorem{thm}[lem]{Theorem}
\newtheorem{defn}[lem]{Definition}
\newtheorem{prop}[lem]{Proposition}
\newtheorem{deflem}[lem]{Definition--Lemma}
\def\G{\mathbb{G}}
\def\cG{\mathcal{G}}
\def\cL{\mathcal{L}}
\def\hookar{\ar@{^{(}->}}
\def\fuk{\EuF}
\def\bc{\mathsf{bc}}
\def\Nef{\mathsf{N}}
\def\cY{\mathcal{Y}}
\def\cP{\mathcal{P}}
\def\mir0{F_0}
\newcommand{\fm}{\mathfrak{m}}
\def\cM{\mathcal{M}}
\def\G{\mathbb{G}}
\def\and{\, \& \,}
\def\cM{\mathcal{M}}
\def\cH{\mathcal{H}}
\def\nov{r}
\def\novb{r}
\def\cO{\mathcal{O}}
\def\cL{\mathcal{L}}
\def\cF{\mathcal{F}}
\def\cB{\mathcal{B}}
\def\cC{\mathcal{C}}
\def\cD{\mathcal{D}}
\def\cR{\mathcal{R}}
\def\NE{\mathrm{NE}}
\def\ob{\mathrm{Ob}\,}
\def\Gr{\mathrm{Gr}}
\def\NE{\mathrm{NE}}
\def\R{\mathbb{R}}
\def\Z{\mathbb{Z}}
\def\fuk{\mathcal{F}}
\def\bk{\mathbf{k}}
\def\id{\mathrm{id}}
\def\nufun{nu\text{-}fun}
\def\prefun{pre\text{-}fun}
\def\fprefun{\prefun}
\def\fnufun{nu\text{-}fun}
\def\bc{{\operatorname{bc}}}
\def\ffun{fun}
\def\prebc{{pre\text{-}bc}}
\newcommand{\bimod}[2]{{{#1}\text{-}mod\text{-}{#2}}}
\newcommand{\fbimod}[2]{{{{#1}}\text{-}mod\text{-}{{#2}}}}
\def\cN{\mathcal{N}}
\def\Rbar{\overline{\mathcal{R}}}
\def\Sbar{\overline{\mathcal{S}}}
\def\Cbar{\overline{\mathcal{C}}}
\def\Ubar{\overline{\mathcal{U}}}
\def\Mbar{\overline{\mathcal{M}}}
\def\C{\mathbb{C}}
\def\Bbar{\overline{B}}
\def\Cbar{\overline{\cC}}
\def\stab{\mathrm{stab}}
\def\EuS{S}
\def\Dsp{Dsp}
\def\cV{\mathcal{V}}
\def\EuY{\mathcal{Y}}
\def\EuH{\mathcal{H}}
\def\End{\mathrm{End}}
\def\Q{\mathbb{Q}}
\def\big{{\operatorname{big}}}
\def\sm{{\operatorname{small}}}
\def\top{{\mathsf{top}}}
\def\by{\mathbf{y}}
\def\num{\mathrm{num}}
\def\tang{\mathsf{tan}}
\def\im{\mathrm{im}}
\def\bY{\mathbf{Y}}
\def\bL{\mathbf{L}}
\def\rays{\mathsf{Rays}}
\def\ints{\mathsf{Ints}}
\def\bp{\mathbf{p}}
\def\ev{\mathrm{ev}}
\def\ex{\mathrm{ex}}
\newcommand{\filt}[1]{{\mathcal{F}_{\ge #1} }}
\def\signn{{n(n+1)/2}}
\def\bulk{\mathrm{bulk}}
\title{Constructing the big relative Fukaya category and its open--closed maps}
\author{Nick Sheridan}
\begin{document}

\maketitle

\begin{abstract}
    This paper continues previous work of the author with Perutz, in which the `small' version of Seidel's relative Fukaya category of a smooth complex projective variety relative to a normal crossings divisor was constructed, under a semipositivity assumption. 
    In the present work, we generalize this to construct the `big' relative Fukaya category in the same setting, as well as its closed--open and open--closed maps, and prove Abouzaid's split-generation criterion in this context. 
    We also establish a general framework for constructing chain-level Floer-theoretic operations in this context, and dealing efficiently with signs and bounding cochains, which will be used in follow-up work with Ganatra to construct the cyclic open--closed map and establish its properties. 
\end{abstract}

\section{Introduction}

\subsection{Overview of constructions and relation to previous work}

In joint work with Perutz \cite{perutz2022constructing}, we constructed Seidel's relative Fukaya category \cite{Seidel2002} under a semipositivity assumption; in this paper we will refer to this construction as the `small' relative Fukaya category, because we generalize it to construct the `big' relative Fukaya category in the same geometric setting, by incorporating bulk deformations. 
One notable feature of the construction in \cite{perutz2022constructing} was that, whereas we were able to construct the small relative Fukaya category over a Novikov ring over $\Z$ (allowing the statement and proof of homological mirror symmetry over fields of finite characteristic in \cite{Ganatra2023integrality,GHHPS_Bat}), the big relative Fukaya category is defined over the ring of \emph{divided} power series in the bulk variables (see Definition \ref{defn:Rbig}); this ring includes some denominators, but still retains some arithmetic character, and we are curious to know if this feature can be exploited in any significant way.

\begin{rem}
Note that the big Fukaya category of a general symplectic manifold with a single Lagrangian has been defined over a Novikov field containing $\Q$ in \cite{fooo}, and for a general symplectic manifold with a finite collection of Lagrangians the small Fukaya category has been defined over a field containing $\R$ in \cite{fukaya2017unobstructed}. 
For a spherically positive symplectic manifold with a single Lagrangian, the small Fukaya category has been defined over a Novikov field containing the integers has been defined in \cite{fooo:Z}, but this does not include e.g. the case of Calabi--Yau manifolds which are encompassed by \cite{perutz2022constructing}. 
\end{rem}

We also construct the closed--open and open--closed maps, prove that they are algebra and module homomorphisms respectively, and prove Abouzaid's split-generation criterion in this setting. 

\begin{rem}
    Note that the closed--open and open--closed maps were defined in \cite{fooo}, under the names $\mathfrak{q}$ and $\mathfrak{p}$. 
    A version of the result that the closed--open map is an algebra homomorphism was proved in \cite{FOOO:toric}, see also analogous results for the wrapped Fukaya category of an exact symplectic manifold in \cite{Ganatrathesis}.
\end{rem}

The results of this paper go some way towards verifying the assumptions concerning the relative Fukaya category on which various results in the literature are contingent: for example, structural results asserting `automatic split-generation' of the Fukaya category, such as \cite[Theorem A]{Perutz2015} or \cite[Theorem 1]{Ganatra2016}; results asserting that the closed--open and open--closed maps are isomorphisms under certain criteria, such as \cite[Theorem A]{Ganatra2015} or \cite[Theorem 9]{Ganatra2016}; and proofs of homological mirror symmetry which make use of such structural results, such as \cite[Theorem 1.8]{sheridan2015homological}, \cite[Theorem D]{sheridan2021homological}, the arithmetic refinement \cite[Theorem C]{Ganatra2023integrality}, and \cite[Theorem B(2)]{GHHPS_Bat}. 
However, this paper does not complete the job of verifying all these assumptions, and hence removing contingencies from all these results; this will be done in \cite{CyclicOC}.

\subsection{Geometric data}\label{sec:geom_data}

We review the geometric data on which the small relative Fukaya category from \cite{perutz2022constructing} depends, and on which the constructions in this paper will depend.  
They depend on data $(W \subset X,\omega, \theta, V,J_0)$ where:
\begin{itemize}
    \item $(X,\omega)$ is a compact $2n$-dimensional symplectic manifold;
    \item $(W,\theta) \subset X$ is a Liouville subdomain (so $d\theta = \omega|_W$);
    \item $V = \cup_{q \in Q} V_q \subset X \setminus W$ is a `system of divisors';
    \item $J_0$ is an $\omega$-compatible almost complex structure on $X$ such that each component $V_q$ of $V$ is a $J_0$-holomorphic submanifold, and there exists a convex collar for $W$.
\end{itemize}
We define $\kappa \in H^2(X,W;\R)$ to be the relative cohomology class determined by $\omega$ and $\theta$, and we define $Nef \subset H^2(X,W;\R)$ to be the convex cone spanned by the classes $PD([V_q])$. 
The data are required to satisfy the following conditions:
\begin{itemize}
    \item \textbf{(Ample)} $\kappa$ lies in the interior of $Nef$.
    \item \textbf{(Semipositive)} There exists a class $\tilde{c}_1 \in Nef$ which is a lift of $c_1(TX) \in H^2(X)$ along $H^2(X,W) \to H^2(X)$. 
\end{itemize}

\begin{rem}
    Following \cite[Section 1.2]{perutz2022constructing}, we may construct symplectic data $(W \subset X,\omega, \theta,V,J_0)$ from algebro-geometric data as follows. Let $X$ be a smooth complex projective variety, $D \subset X$ a simple normal crossings divisor with components indexed by $P$, and $Nef \subset \mathrm{Div}(X,D)_\R \cong \R^P$ a rational polyhedral cone in the space of divisors supported on $D$ such that:
    \begin{itemize}
        \item $Nef$ contains an ample class in its interior;
        \item $Nef$ is contained in the cone of effective semiample divisors supported on $D$;
        \item $Nef$ contains a divisor homologous to the anticanonical divisor modulo torsion.
    \end{itemize}
    Then we can construct a non-empty path-connected set of data $(W \subset X,\omega,\theta,J_0)$ as above, where $W \subset X$ is a deformation retract of $X \setminus D$, $\omega$ is a K\"ahler form on $X$, $\kappa = [\omega;\theta] \in H^2(X,W;\R) \cong \mathrm{Div}(X,D)_\R$ is an ample class in the interior of $\Nef$, and $J_0$ is the integrable complex structure (see \cite[Section 9.1]{perutz2022constructing}).  
    We may also construct a system of divisors $V$ such that the classes $PD([V_q])$ span $Nef$, see \cite[Section 9.2]{perutz2022constructing}.  
\end{rem}

We will also spell out grading conventions (which were omitted in \cite{perutz2022constructing} for expository reasons, but see \cite{Sheridan2017} for details): let $\cG(W)$ denote the Grassmannian of Lagrangian subspaces of $TW$, $\cG^{or}(W)$ its double cover of oriented Lagrangian subspaces, and $\tilde \cG \to \cG(W)$ some abelian cover equipped with a factorization through $\cG^{or}(W)$. 
Define $\G = H_1(\cG(W))/H_1(\tilde\cG(W))$ to be the covering group.   
It comes equipped with maps $\Z \to \G$, induced by the inclusion of a fibre $\cG(T_wW) \hookrightarrow \cG(W)$; and $\G \to \Z/2$ induced by $w_1$ of the tautological bundle. 
Together these define a grading datum $\{\Z \to \G \to \Z/2\}$ in the sense of \cite[Section 3.1]{sheridan2015homological}, which we will abbreviate by $\G$.

\begin{defn}\label{defn:brane}
    A \emph{Lagrangian brane} is a compact oriented Lagrangian submanifold $L \subset W$ which comes equipped with:
    \begin{itemize}
        \item a \emph{grading}, which is a lift of the canonical map $L \to \cG^{or}(W)$ to $\tilde \cG$; 
        \item a spin structure.
    \end{itemize} 
\end{defn}

\subsection{Small and big relative Fukaya categories}

In order to accurately define the big relative Fukaya category, we need to expand the definition of an $A_\infty$ category in several directions. 
First, we need to allow the category to have curvature, which means we need the coefficient ring to be filtered, in order later to be able to get rid of the curvature by passing to bounding cochains (cf. \cite{fooo,perutz2022constructing}).
Second, we need to allow it to have odd-degree elements, to accommodate bulk deformations (cf. \cite[Definition 1.1]{Sol_Tuk_I}). 
Third, we need to allow the coefficient ring to be a differential graded ring, to accommodate the differential on the bulk terms. 

We define the notion of $A_\infty$ category in such a context in Section \ref{sec:alg_prel}. 
We briefly preview it here, to orient the reader for the purposes of the introduction.

\begin{defn}
    A \emph{coefficient ring} will be a four-tuple $(\G_R,R,\filt{\bullet}, d_R)$ where 
    \begin{itemize}
        \item $\G_R$ is a grading datum;
        \item $R$ is a $\G_R$-graded supercommutative ring;
        \item $\filt{\bullet}$ is a complete filtration on $R$ with $\filt{0} R = R$; 
        \item $d_R: \sigma(d)R \to R$ is a degree-$1$ graded filtered derivation satisfying $d_R^2 = 0$;
\end{itemize}
If the filtration is trivial, i.e. $\filt{1} = 0$, we will write `$0$' instead of $\filt{\bullet}$. 
We denote $\bk_R = R/\filt{1} R$.
\end{defn}  

A $(\G_R,R,\filt{\bullet},d_R)$-linear $A_\infty$ category $\cC$ will be one whose morphism spaces are differential $\G_R$-graded $R$-modules, on which the induced filtration is complete, and such that the curvature lies in $\filt{1}$; see Definition \ref{def:Ainf_cat} for the precise definition. 
A key construction is that of taking the cohomology category, which can only be done if $\filt{1} = 0$ so that the curvature vanishes. It produces a $H(R,d_R)$-linear category $H(\cC)$.

We define $\NE \subset H_2(X,W)$ to be the monoid of integral classes in the dual cone to $Nef$.\footnote{We recall the convention that $H_*(X) := H_*(X;\Z)/torsion$, and similarly for $H_*(X,W)$.} 
There is a natural homomorphism $H_2(X,W) \to \G$ (cf. \cite[Section 3.3]{Sheridan2017}), which endows the semigroup ring 
$$\Z[\NE] := \left\{ \sum_{u \in \NE} c_u \cdot \nov^u: \text{ $c_u \in \Z$, only finitely many $c_u$ are non-zero}\right\}$$ 
with a $\G$-grading. 

\begin{defn}
The coefficient ring of the small relative Fukaya category is $(\G,R^{\sm},\filt{\bullet},0)$, where $R^\sm$ is the $\G$-graded completion of the semigroup ring $\Z[\NE]$ for the adic filtration $\filt{\bullet}$ for the ideal $\fm^\sm$ generated by the classes $\nov^u$ for $u \neq 0$.
\end{defn}

The \emph{small relative Fukaya category} is a $(\G,R^\sm,\filt{\bullet},0)$-linear $A_\infty$ category, whose morphism spaces are furthermore free of finite rank. 
Its objects are Lagrangian branes, and it coincides with the compact exact Fukaya category $\fuk(W)$ modulo $\fm^\sm$. 
Structure maps count holomorphic discs $u$, weighted by $\nov^{[u]} \in R^{\sm}$, where $[u] \in H_2(X,W)$. 
For brevity's sake, and for consistency with \cite{perutz2022constructing} where it was constructed, we will denote it by $\fuk^\sm(X,D)$, where $D:=X \setminus W$, although $D$ need not deformation retract onto a divisor. 
We will similarly denote $\fuk(X \setminus D) := \fuk(W)$.

The coefficient ring of the big relative Fukaya category is, of course, bigger. 
Let $f:X \to \R$ be a Morse function, $g$ a Riemannian metric such that $(f,g)$ is a Morse--Smale pair, and let $(CM^{*}(f,g),d_{f,g})$ be the corresponding Morse cochain complex.

The coefficient ring of the big relative Fukaya category, $R^{\big}$, includes extra variables coming from $CM^{*-2}(f,g)$, which are called `bulk variables'. 
We break down its definition into steps. 

First, we first form the (super-)symmetric (differential graded) $R^\sm$-algebra $Sym(R^\sm \otimes CM^{*-2}(f,g)^\vee)$. 
Note that a choice of basis $\alpha^i$ for $CM^{*}(f,g)$ canonically determines an isomorphism
$$Sym(R^\sm \otimes CM^{*-2}(f,g)^\vee) \cong R^\sm[\novb_i]_i,$$
where the generators $\novb_i$ (corresponding to the dual basis) are `bulk variables'. 
We have $|\novb_i| = 2 - |\alpha_i|$. 
The Morse differential on $CM^{*}(f,g)$ induces a differential on the symmetric algebra, by the Leibniz rule, making this into a differential graded algebra.

Next, we enlarge the polynomial algebra $R^\sm[\novb_i]$, to the $R^\sm$-subalgebra of $\Q \otimes R^\sm[\novb_i]$ generated by the family of elements $\novb_i^k/k!$, over all $i$ and $k$; this is known as a divided power algebra (it does not depend on the choice of basis).
We consider the ideal of $\Q \otimes R^\sm[\novb_i]$ generated by $\fm^\sm$ together with the classes $\novb_i$; and consider its adic filtration on this algebra. 
By intersecting with the divided power algebra, we obtain a filtration on the latter (note however that this is not the adic filtration for any ideal). 

\begin{defn}\label{defn:Rbig}
    The coefficient ring of the big relative Fukaya category is $(\G,R^\big,\filt{\bullet},d_R)$, where $R^\big$ is the $\G$-graded completion of the divided power algebra, with respect to the filtration defined above; $\filt{\bullet}$ is the induced filtration on the completion. 
    The differential $d_R$ is the one induced by the Morse differential $d_{f,g}$ in accordance with the Leibniz rule. 
\end{defn}

\begin{rem}
    The reader only interested in characteristic zero may of course tensor with $\Q$ and forget all about the divided powers. 
    However, it is intriguing that divided powers are the most natural coefficient ring, given their role in crystalline cohomology. 
\end{rem}

In this paper we extend the construction of \cite{perutz2022constructing} to construct the \emph{big relative Fukaya category}, $\fuk^{\big}(X,D)$, which is a $(\G,R^{\big},\filt{\bullet},d_R)$-linear $A_\infty$ category. 
Its objects are the same as those of the small relative Fukaya category, namely, Lagrangian branes. 
Its structure maps count holomorphic discs $u$ with points constrained to lie on Morse flowlines, corresponding to the bulk variables. 
It is equal to the small relative Fukaya category modulo the bulk variables $\novb_i$.
It has further useful properties: it is unital in an appropriate sense, and its morphism spaces are free of finite rank.

\begin{rem}
    The reader may wonder: given that we may completely avoid denominators in the definition of the small relative Fukaya category, why may we not avoid them in the definition of the big relative Fukaya category? The answer is that, in order to avoid denominators, we must choose perturbation data equivariantly, and this is not possible for the moduli spaces involved in the definition of the big relative Fukaya category. Specifically, the assertion that the evaluation map \eqref{eq:flowline_transv} is a submersion will not be true at fixed points of the group action, if we use equivariant perturbation data.
\end{rem}

\subsection{Bounding cochains}

For any filtered $A_\infty$ category, there are two operations we may apply to it. 
First, we may change coefficients. 
Let $(\G_R,R,\filt{\bullet},d) \to (\G_S,S,\mathcal{G}_{\ge \bullet},e)$ be a morphism of coefficient rings. 
This means it is a filtered homomorphism of differential graded rings (see Definition \ref{def:mor_coeff} for details). 
Then for any $(\G_R,R,\filt{\bullet},d)$-linear $A_\infty$ category $\cC$, we can form $\cC \hat \otimes_R S$, which is a $(\G_S,S,\mathcal{G}_{\ge \bullet},e)$-linear $A_\infty$ category. 

\begin{eg}\label{eg:nov}
    If $\bk$ is a field, $K \subset \R$ a subgroup containing the image of $\kappa: H_2(X,W) \to \R$, and $R^{\sm}$ is concentrated in degree $0 \in \G$, we may consider the Novikov ring $\Lambda_{\bk,K_{\ge 0}} := \bk[[T^{K_{\ge 0}}]]$, where $K_{\ge 0} = K \cap \R_{\ge 0}$, with the filtration $\filt{i} \Lambda_{\bk,K_{\ge 0}} = (T^{iA})$ for some $A$ smaller than the minimum non-zero value of $\kappa$ on $\NE$. 
    Then there is a morphism of coefficient rings $(\G,R^{\sm},\filt{\bullet},0) \to (\G,\Lambda_{\bk,K_{\ge 0}},\filt{\bullet},0)$ which sends $\nov^u \mapsto T^{\kappa(u)}$. 
    The morphism is well-defined (i.e., converges) by our assumption that $\kappa$ lies in the interior of $Nef$. 
\end{eg} 

The second operation on a $(\G,R,\filt{\bullet},d)$-linear $A_\infty$ category $\cC$, is to form the category of bounding cochains $\cC^\bc$, as in \cite[Section 2]{perutz2022constructing}. 
This gives a $(\G,R,0,d)$-linear (in particular, as the filtration is trivial, uncurved and unfiltered) $A_\infty$ category. 

\begin{eg}
    The $A_\infty$ category which was constructed and denoted by $\fuk(X,D,\kappa;\Lambda_{\bk,K})^\bc$ in \cite[Section 1.4]{perutz2022constructing}, and which appears in the homological mirror symmetry statement of \cite[Theorem B]{GHHPS_Bat}, is constructed by first changing coefficients of the small relative Fukaya category $\fuk^{\sm}(X,D)$ to the Novikov ring $\Lambda_{\bk,K_{\ge 0}}$ of Example \ref{eg:nov}, then taking bounding cochains to get rid of curvature, then changing coefficients to $\Lambda_{\bk,K} = \bk((T^K))$. 
\end{eg}

\subsection{Closed--open maps}

We now state our main results. 
Let $R^\big \to S$ be a morphism of coefficient rings, and $\fuk^\big(X,D;S)^\bc$ the $(\G_S,S,0,d_S)$-linear $A_\infty$ category obtained by first changing coefficients of $\fuk^\big(X,D)$ from $R^\big$ to $S$, then passing to the category of bounding cochains. 
We state our results for the ordinary Hochschild invariants of $\fuk^\big(X,D;S)^\bc$, but they are corollaries of analogous results for the filtered Hochschild invariants of $\fuk^\big(X,D)$, which are stated in Section \ref{sec:main}. 

We define the big quantum cohomology $QH^*(X;R^\big):= H^*(X) \otimes R^\big$ to be the $R^\big$-linear unital Frobenius algebra with pairing
$$\langle a, b \rangle := \int a \cup b,$$
and product $\star$ determined by
$$\langle a \star b, c \rangle = \sum_{\beta \in H_2(X;\Z),n} \frac{GW_{\beta,3+n}^X(a,b,c,\alpha,\ldots,\alpha)}{n!} \cdot \nov^{\beta},$$
where $\alpha = \sum_i \novb_i \alpha^i$, where $\alpha^i$ is the basis for $CM^{*}(f,g)$ which determines the variables $\novb_i$ in $R^\big$.
Note that our \textbf{(Semipositive)} assumption implies that $X$ is semipositive (see \cite[Corollary 3.5]{perutz2022constructing}), so the construction of Gromov--Witten invariants with stable domains may be carried out over $\Z$ as in \cite{RuanTian}, see also \cite[Section 7]{mcduffsalamon}; this implies that the formula above does indeed define a class in $H^*(X;\Z) \otimes R^\big$, where we recall that $R^\big$ is the ring of divided power series in the variables $\nov_i$.

We define $QH^*(X;S) := QH^*(X;R^\big) \otimes_R S$, which is a $S$-linear unital Frobenius algebra.  

\begin{thm}\label{thm:co_bc}
    There is a unital graded $S$-algebra homomorphism called the \emph{closed--open map}
    $$\cC\cO: QH^*(X;S) \to HH^*(\fuk^\big(X,D;S)^{bc}),$$
    where $HH^*$ denotes Hochschild cohomology. 
    In the case $S = R^\big$, it coincides with the first-order deformation class of the big relative Fukaya category, in the direction of the bulk variables. 
\end{thm}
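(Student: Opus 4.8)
## Proof proposal

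\textbf{Overall strategy.} The plan is to construct $\cC\cO$ by the standard recipe of counting holomorphic discs with one interior marked point constrained to a cycle in $X$ (together with auxiliary interior points on Morse flowlines, encoding the bulk variables), mapping to boundary marked points on the Lagrangians, and then verifying the algebraic properties by examining the boundary strata of the relevant one-dimensional moduli spaces. Since the excerpt's Section \ref{sec:main} promises a filtered version over $R^\big$ from which Theorem \ref{thm:co_bc} follows by applying $\hat\otimes_{R^\big} S$ and then passing to bounding cochains, I would first do everything over $R^\big$ and only at the end invoke functoriality of Hochschild cohomology under change of coefficients (which commutes with the $\bc$ construction, by the formal properties set up in Section \ref{sec:alg_prel}) to deduce the statement for general $S$. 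Concretely: (i) define the moduli spaces $\cM(L_0,\dots,L_d;\beta;\mathbf{x})$ of discs with boundary punctures labelled by chains of Lagrangians, one interior puncture mapping to a chosen pseudocycle representative of a class in $H^*(X)$, and $n$ further interior points constrained to lie on prescribed Morse flowlines of $(f,g)$ (the same moduli spaces already used to define $\fuk^\big$, just with one extra "input" interior point carrying the cohomology label); (ii) choose consistent perturbation data extending those already fixed for $\fuk^\big(X,D)$, using the framework of Section \ref{sec:alg_prel} to handle signs and coherence; (iii) package the signed weighted counts, with weights $\nov^{[u]}\cdot \prod \nov_{i}^{k}/k!$ dictated by the disc class and the flowline incidences, into a length-$d$ Hochschild cochain-valued map out of $QH^*(X;R^\big)$, then pass to bounding cochains.

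\textbf{Key steps in order.}
\begin{enumerate}
\item \emph{Transversality and compactness.} Establish that for generic perturbation data the moduli spaces are manifolds of the expected dimension with the expected (Gromov-type) compactifications, using the \textbf{(Semipositive)} hypothesis to rule out sphere bubbling in codimension $\le 1$ exactly as in \cite{perutz2022constructing}, and using the transversality of the flowline evaluation map \eqref{eq:flowline_transv} (this is precisely where equivariance must be abandoned, hence the divided powers). The only new feature compared with the structure maps of $\fuk^\big$ is the extra interior input point; its incidence with a pseudocycle of the right codimension is handled as in \cite{perutz2022constructing}'s treatment of the open--closed map, or as in \cite[Section 7]{mcduffsalamon}.
\item \emph{Definition of the map and of the weights.} For $a\in H^*(X)$ homogeneous, let $\cC\cO^0(a)$ have length-$d$ component given by the signed count of rigid discs with one interior point on a pseudocycle Poincaré-dual to $a$ (times the appropriate Morse-flowline factors), weighted by $\nov^{[u]}$ and the divided-power bulk monomial read off from the flowline incidences. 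Verify this lies in $CC^*(\fuk^\big(X,D))\hat\otimes_{R^\sm}R^\big$ and is compatible with the complete filtration. Then $\cC\cO$ on $\fuk^\big(X,D;S)^\bc$ is obtained by twisting by the bounding cochains (a purely algebraic operation, functorial in $S$).
\item \emph{$\cC\cO$ is a chain map.} Analyze the codimension-one boundary of the one-dimensional moduli spaces: contributions are (a) disc breaking along a boundary puncture, giving the $\mu^*_{\fuk}$-part of the Hochschild differential applied to $\cC\cO^0(a)$; (b) the interior input point colliding with the boundary, or a sphere bubble carrying the constraint that reassembles into a Gromov--Witten term, giving the $\star$-structure on $QH^*$ via the quantum product's definition; (c) flowline breaking, absorbed into $d_R$. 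Matching signs via the sign framework of Section \ref{sec:alg_prel} yields $\mathfrak{m}^1_{CC}\circ \cC\cO = \cC\cO\circ (\text{differential on }QH^*)$, where on the $QH^*$ side the differential is $0$ on cohomology after the $S$-specialization, so on cohomology $\cC\cO$ is well-defined.
\item \emph{Algebra homomorphism.} Introduce the auxiliary moduli space of discs with \emph{two} interior input points (each on a pseudocycle), interpolated by a domain-stabilizing parameter (e.g. the cross-ratio / an angle), and read off the boundary of its one-dimensional components: one end gives $\cC\cO(a\star b)$ (the two points collide and a sphere bubbles, producing the quantum product), the other ends give the Yoneda/cup product $\cC\cO(a)\cup\cC\cO(b)$ in $HH^*$ together with terms absorbed by the Hochschild differential. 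This is the classical argument of \cite{FOOO:toric}; the new content is only bookkeeping of bulk weights and filtration.
\item \emph{Unitality.} Show $\cC\cO(1_X)$ equals the image of the unit of $\fuk^\big$ in $HH^0$ (up to a coboundary): with the fundamental-class constraint the interior input point is unconstrained, and the count reduces to a count of discs with no constraint, which by the unitality of $\fuk^\big$ (asserted in the excerpt) represents the categorical unit.
\item \emph{Identification of the first-order term when $S=R^\big$.} Unwinding definitions: the first-order deformation class of $\fuk^\big$ in the $\nov_i$-direction is by construction the Hochschild cochain recording the $\nov_i$-linear part of $\mu^*$, i.e. discs with exactly one interior point on the Morse cycle $\alpha^i$; this is exactly $\cC\cO^0(\mathrm{PD}[\alpha^i])$ modulo higher-order terms. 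So $\cC\cO$ restricted to $H^*(X)\cdot\nov_i$ is, to first order, the deformation class. A short computation with the filtration finishes this.
\end{enumerate}

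\textbf{Main obstacle.} The genuinely delicate step is (2)--(3): arranging the perturbation data and the sign conventions so that the divided-power structure of $R^\big$ is respected and all the moduli-space boundary contributions assemble with the correct signs into the Hochschild differential and the quantum product. In particular one must check that the counts — which a priori live in $\Q\otimes R^\sm[\nov_i]$ — actually land in the divided-power subalgebra $R^\big$: this requires that a configuration with $k$ interior points mapping to the \emph{same} Morse cycle $\alpha^i$ is counted with the automorphism factor $1/k!$, which in turn forces the non-equivariant choice of perturbation data flagged in the Remark after Definition \ref{defn:Rbig}. Getting this integrality/divided-power bookkeeping right, uniformly across all the moduli spaces in steps (3)--(5), and compatibly with the bounding-cochain formalism, is where essentially all the work lies; the topological input (semipositivity controlling bubbling) and the algebraic skeleton of the argument are standard.
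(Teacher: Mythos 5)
Your overall architecture matches the paper's: construct a filtered chain-level $\cC\cO: QC^*(X;R^\big) \to CC^*(\fuk^\big(X,D))$ (the paper's Theorem \ref{thm:co}) by designating one bulk input of the $\mu$-moduli spaces as the $QC^*$-input, verify its algebraic properties via one-dimensional moduli space boundaries, then tensor with $S$, complete, and compose with the algebra homomorphism $HH^*(\cC) \to HH^*(\cC^\bc)$ from Lemma \ref{lem:Hcoh_C_Cbc}. However, your unitality argument (step 5) is circular. You invoke ``the unitality of $\fuk^\big$ (asserted in the excerpt)'' to conclude that the unconstrained disc count represents the categorical unit; but the unitality of $\fuk^\big$ is itself established precisely by showing $\cC\cO(e)$ is an HH-unit (Theorem \ref{thm:HH-unit}) and then applying Lemma \ref{lem:CC_c_unit}, so your argument presupposes its own conclusion. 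The actual mechanism is specific and nontrivial: the paper introduces the bimodule operation $_2\cC\cO(e)$ defined from discs with the interior point constrained at the index-$0$ critical point, chosen with perturbation data \emph{independent of the interior point's position}; this yields an $\R$-action that kills all contributions except zero-energy strips, giving $_2\cC\cO(e) = \id$, after which a further homotopy $H_{_2\cC\cO}$ compares $_2\cC\cO(e)$ with $L^1_{\fuk^\big_\Delta}(\cC\cO(e))$. Your sketch misses both the $\R$-action and the detour through the bimodule endomorphism.

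Two further inaccuracies. In step 3 you assert that a sphere bubble ``reassembles into a Gromov--Witten term, giving the $\star$-structure on $QH^*$'' in the chain-map identity; but $\cC\cO$ has a single $QH^*$-input, so no quantum-product term can appear in $\partial(\cC\cO) = 0$ --- only the Morse differential $\partial_{QC}$ (from flowline breaking) and the Hochschild differential (from disc and Floer breaking) contribute. In step 4 you claim that the end of your one-parameter family where the two interior constraint points collide yields $\cC\cO(a\star b)$ directly. It does not: the perturbation data at a length-$0$ attachment are \emph{deliberately not} required to be consistent with the product of data on the separate pieces (see the remark following Definition \ref{def:univ_p_d}, where this is excluded to avoid self-transversality issues), so the sphere bubble formed at the collision does not carry the data that define $\star_\big$. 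The paper names the operation at this end $H^{12}_{\cC\cO}$ and introduces a \emph{second}, independent homotopy $H^1_{\cC\cO}$ --- stretching the flowline between the sphere and the disc from length $0$ to $\infty$ --- to relate $H^{12}_{\cC\cO}$ to $\cC\cO(a\star_\big b)$. Merging the two homotopies into one family, as you do, is where the argument would actually fail.
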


\begin{thm}\label{thm:oc_bc}
    There is a homomorphism of $QH^*(X;S)$-modules called the \emph{open--closed map}
    $$\cO\cC:HH_*(\fuk^\big(X,D;S)^{bc})[-n] \to QH^{*}(X;S),$$
    where $HH_*$ denotes Hochschild homology.
\end{thm}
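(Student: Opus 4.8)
The plan is to construct $\cO\cC$ at the chain level, as a count of holomorphic discs with one interior marked point (constrained to lie on Morse flowlines coming from the bulk data, exactly as in the structure maps of $\fuk^\big(X,D)$) and a cyclic sequence of boundary inputs, mapping to a cochain on $X$ via the interior evaluation map. Concretely, I would first build the \emph{filtered} open--closed map for $\fuk^\big(X,D)$ itself: a degree-$n$ map $CC_*(\fuk^\big(X,D)) \to CM^*(X) \ctens R^\big$ (or its Poincar\'e dual $CM_*$), defined on a cyclic bar complex element $x_0 \otimes \cdots \otimes x_k$ by summing over moduli spaces $\cM$ of discs with boundary marked points carrying $x_0,\dots,x_k$ in cyclic order, one interior marked point, bulk insertions along flowlines, and perturbation data chosen (as in \cite{perutz2022constructing} and Section \ref{sec:main}) consistently with the gluing structure. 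Each rigid disc contributes $\nov^{[u]}$ times the appropriate Morse-theoretic weight; the codimension-one boundary of the $1$-dimensional moduli spaces splits into (i) bubbling of a boundary disc — producing the $A_\infty$ structure maps acting on the inputs, which assembles into the Hochschild differential $b$ — and (ii) breaking of the output Morse flowline, producing $d_{CM}$ composed with $\cO\cC$, plus (iii) contributions of the differential $d_R$ on the bulk variables. This $\cO\cC \circ (b + \text{bulk corrections}) = (d_{CM} + d_R) \circ \cO\cC$ identity is exactly the statement that $\cO\cC$ is a chain map of filtered complexes.

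Next I would pass to bounding cochains: given a morphism $R^\big \to S$ and the category $\fuk^\big(X,D;S)^\bc$, the objects carry bounding cochains $b_L$, and one deforms the disc counts by inserting arbitrarily many copies of the relevant $b_L$ along each boundary arc (convergence guaranteed by $b_L \in \filt{1}$ and completeness of the induced filtration). The same boundary-degeneration analysis now yields a genuine (uncurved) chain map $CC_*(\fuk^\big(X,D;S)^\bc)[-n] \to QH^*(X;S)$ — here I use that $QH^*(X;S)$ as defined in the excerpt is the cohomology of $(CM^*(f',g') \ctens S$-valued quantum cochains$)$ with the bulk-deformed Gromov--Witten differential, so the target of the chain-level map computes $QH^*(X;S)$. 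Taking cohomology gives the map $\cO\cC$ on $HH_*$.

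Finally, the module structure: I need $\cO\cC(a \star c)$ (for $a \in QH^*(X;S)$, $c \in HH_*$) to agree with the module action of $a$ on $\cO\cC(c)$ via $\cC\cO$ from Theorem \ref{thm:co_bc}. The standard argument is to introduce a moduli space of discs with \emph{two} interior marked points — one an input (evaluating against $a$, via a flowline from a cycle representing $a$) and one an output — together with a sequence of boundary inputs. The two interior points can collide (giving the quantum product $a \star -$ applied after $\cO\cC$, by the gluing formula for Gromov--Witten invariants with the bulk-deformed moduli) or the input point can approach the boundary (giving $\cC\cO(a)$ inserted among the Hochschild inputs, i.e. the cap-product module action); the remaining boundary strata give the $b$- and $d$-terms, so the resulting homotopy shows $\cO\cC$ respects the actions on homology.

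The main obstacle will be step one: setting up the perturbation data for the moduli spaces of discs with an interior output point and Morse-flowline constraints so that (a) all relevant evaluation maps (in particular the interior evaluation to $X$ and the flowline-matching maps, cf. the submersivity of \eqref{eq:flowline_transv}) are transverse, (b) the choices are compatible with \emph{every} boundary degeneration simultaneously — including the interaction between disc-bubbling at the boundary and flowline-breaking at the output — and (c) everything is carried out over $\Z$ (using the \textbf{(Semipositive)} hypothesis, so that bubbling of sphere components of negative Chern number is excluded and virtual techniques are unnecessary, exactly as in \cite{perutz2022constructing}). This is where the bulk of the work lies; by contrast, the algebraic bookkeeping (signs, filtered convergence, the bounding-cochain deformation, and the module homotopy) is parallel to the small case treated in \cite{perutz2022constructing} and to the framework developed in Section \ref{sec:alg_prel} and Section \ref{sec:main}.
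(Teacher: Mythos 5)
Your proposal is correct and follows essentially the same route as the paper: define the filtered open--closed map on $\fuk^\big(X,D)$ by counting discs with an outgoing interior Morse ray and bulk insertions; establish the module identity $\cO\cC(\cC\cO(p)\cap\alpha)=p\star_\big\cO\cC(\alpha)$ up to a chain homotopy coming from a one-parameter moduli space with two interior marked points (this is Theorem \ref{thm:oc}, proved by mirroring Theorem \ref{thm:co}); then extend scalars to $S$ and pass to bounding cochains. The one genuine difference is that you propose to re-run the geometry directly with $b_L$-insertions along the boundary arcs, whereas the paper factors this step entirely algebraically through Lemma \ref{lem:Hhom_C_Cbc}, which identifies the relevant chain map as $F_*$ for the $A_\infty$ functor $F:\fuk^\big(X,D;S)^\bc\to\fuk^\big(X,D;S)$ of Lemma \ref{lem:Cbc_fun} and establishes once and for all that it is an $HH^*$-module homomorphism; the net effect is the same, but the paper's route avoids having to re-verify the Maurer--Cartan cancellations geometrically. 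Two small slips worth noting: the differential on $QC^*(X;S)$ is just the Morse differential (on $CM^*$ and on $S$ via $d_S$) --- the quantum/GW data live only in the product $\star_\big$, not in the differential --- and the chain-level map should of course land in $QC^*(X;S)$, not its cohomology, before taking $H^*$.
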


\begin{thm}\label{thm:cardy_bc}
    There is a morphism of $\fuk(X,D;S)^{bc}$-bimodules:
    $$\cC\cY: \fuk^\big(X,D;S)^{bc}_{\Delta}[-n] \to \fuk^\big(X,D;S)^{\bc,!},$$
    such that the following diagram commutes:
$$    \begin{tikzcd}
        HH_*(\fuk^\big(X,D;S)^{bc})[-n] \ar[r,"\cO\cC"] \ar[d,"\cC\cY_*"] & QH^*(X;S) \ar[d,"\cC\cO"] \\
        HH_*(\fuk^\big(X,D;S)^\bc,\fuk^\big(X,D;S)^{\bc,!}) \ar[r,"\bar\mu"] & HH^*(\fuk^\big(X,D;S)^{bc}).
    \end{tikzcd}
 $$
 \end{thm}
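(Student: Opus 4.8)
The plan is to construct the Cardy morphism $\cCY$ at the chain level by counting the same kind of genus-zero Floer-theoretic configurations that govern $\cOC$ and $\cCO$, but arranged in the geometry of an annulus (or equivalently a disc with two boundary inputs/outputs grouped into an ``incoming'' and ``outgoing'' arc, with marked points on each boundary component and interior marked points constrained to Morse flowlines for the bulk variables). More precisely, I would define $\cCY$ as a map of $\fuk^\big(X,D;S)^\bc$-bimodules from the diagonal bimodule (shifted by $[-n]$) to the linear-dual (``shriek'') bimodule $\fuk^\big(X,D;S)^{\bc,!}$, whose component with a given string of boundary inputs on each side is obtained by counting rigid annuli with those Lagrangian-labelled boundary marked points, weighted by $\nov^{[u]}$ and by the bulk insertions $\alpha = \sum_i \nov_i \alpha^i$ at interior points constrained to flowlines. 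The general framework of Section~\ref{sec:alg_prel} for chain-level operations — handling signs, bounding cochains, and the divided-power/filtered coefficient ring $(\G,R^\big,\filt{\bullet},d_R)$ — should be invoked throughout so that the $A_\infty$-bimodule relations for $\cCY$ follow from the boundary strata of the one-dimensional moduli spaces, exactly as the module-homomorphism property of $\cOC$ (Theorem~\ref{thm:oc_bc}) and the algebra-homomorphism property of $\cCO$ (Theorem~\ref{thm:co_bc}) do.

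The commuting square is then proved by a degeneration argument on a family of annuli: one interpolates between the ``thin'' annulus, where the two boundary circles are pinched together into a configuration that (after neck-stretching or modulus $\to 0$) factors through the closed string — i.e. through $QH^*(X;S)$, giving the composite $\cCO \circ \cOC$ — and the ``thick''/broken annulus, where the modulus $\to \infty$ and the annulus breaks along a circle into two discs, one carrying the Hochschild-chain input and the other the Hochschild-cochain output, realizing the composite $\bar\mu \circ \cCY_*$. Concretely I would introduce a real parameter (the conformal modulus of the annulus, or the length of an inserted neck) and study the compactified one-dimensional moduli space of such annuli with fixed Lagrangian labels and bulk insertions; its boundary consists of the two ``ends'' just described together with the usual codimension-one disc bubbling that is absorbed into the $A_\infty$-bimodule and module structures. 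Counting this boundary with signs, and using that $\cCO$, $\cOC$, $\cCY$ are the chain-level maps defined by the corresponding ends, yields a chain homotopy exhibiting commutativity of the square on (filtered) Hochschild invariants; the ordinary-coefficient statement is then the specialization under $R^\big \to S$ and $\filt{1} \to 0$, exactly as in the setup preceding Theorem~\ref{thm:co_bc}.

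The two main obstacles I anticipate are: (1) \emph{Transversality/regularity for the annulus moduli spaces}, compatibly across the degeneration parameter and compatibly with the perturbation data already chosen for discs (so that the broken-annulus end genuinely recovers the composition of previously-defined operations); as the Remark after Definition~\ref{defn:Rbig} warns, one cannot in general choose perturbation data equivariantly, so the flowline evaluation maps of the form \eqref{eq:flowline_transv} must be arranged to be submersions also in the presence of the extra annular automorphism/reflection symmetry — this is where the bulk-variable denominators (the divided-power structure of $R^\big$) will again be forced, and it must be checked that the resulting counts still land in $R^\big$ rather than needing a larger ring. (2) \emph{Bookkeeping of signs and the shift $[-n]$, and the interaction with bounding cochains}, i.e. verifying that the chain-level $\cCY$ descends correctly to the bimodule map after deforming by bounding cochains and that the homotopy is $R^\big$-linear and compatible with $d_R$; I would handle this uniformly using the sign/bounding-cochain formalism of Section~\ref{sec:alg_prel} rather than ad hoc, so that it reduces to the same orientation computations used for $\cOC$ and $\cCO$. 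Establishing (1) on the nose — a single coherent system of perturbation data for discs, annuli, and the degenerating family, with the requisite submersivity — is the real heart of the argument; once it is in place, the commuting square is a formal consequence of the structure of the codimension-one boundary.
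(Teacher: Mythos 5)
Your high-level outline — descend from the filtered $R^\big$-version to $S$ and bounding cochains, prove commutativity by a degeneration of annuli interpolating between the closed-string and open-string factorizations, and push transversality and signs through the general framework — matches the shape of the paper's argument, and your attention to the divided-power structure of $R^\big$ and to where transversality for flowline evaluation maps is the hard part is well placed. But two parts of the proposal would not work as written.

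First, $\cC\cY$ is not defined by counting annuli. In the paper it is defined via the family $\Rbar(\cC\cY,s,\ell)$ of \emph{discs} with three distinguished boundary marked points $p_1^\partial,p_2^\partial,p_3^\partial$, of which $p_1^\partial$ is incoming and $p_2^\partial,p_3^\partial$ are outgoing, together with three strings of boundary marked points between them. The two outgoing boundary points are precisely what produce the tensor-factor output $\cC(D_0,C_1)\otimes\cC(C_0,D_s)$ in the definition of the inverse dualizing bimodule $\cC^!$; an annulus, with one boundary circle of inputs and one boundary circle with a single output, naturally computes instead a map $fCC_*(\cC)\to CC^*(\cC)$, i.e. the composite $\bar\mu\circ\cC\cY_*$, not the bimodule morphism $\cC\cY$ itself. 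If you define $\cC\cY$ via annuli you have nothing to break the annulus \emph{into} at the broken end, and the claimed bimodule-morphism relation would not follow from the boundary strata (which for an annulus family would include the modulus degenerations, not just disc bubbling). The paper gets the bimodule-morphism identity for $\cC\cY$ from Lemma \ref{lem:1param} applied to the disc family.

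Second, the commutativity needs \emph{two} separate homotopies. The annulus family $\Rbar(H^2_{\cC\cY})$ (modulus $R\in[0,1]$) interpolates between the broken annulus at $R\to1$ — two discs joined at two boundary nodes, giving $\bar\mu\circ\cC\cY_*$ up to $(-1)^{\signn}$ — and the pinched annulus at $R\to 0$ — two discs joined at a single interior node with zero-length neck, which is $H^{12}_{\cC\cY}$. That zero-neck configuration is \emph{not} the chain-map composite $\cC\cO\circ\cO\cC$; a second homotopy $H^1_{\cC\cY}$, given by letting the neck length vary over $[0,\infty]$ so that the $\infty$-end breaks along a Morse flowline, is needed to connect $H^{12}_{\cC\cY}$ to $\cC\cO\circ\cO\cC$. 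Your phrase ``after neck-stretching or modulus $\to 0$'' gestures at this but elides a necessary intermediate step and would not give a single coherent one-parameter family. Relatedly, your picture of the $\bar\mu\circ\cC\cY_*$ end as ``breaking along a circle'' is the long-cylinder (interior-node) picture, which belongs to the \emph{other} end $R\to 0$; the $R\to 1$ end is a boundary-node degeneration. Once $\cC\cY$ is defined via discs and $H^1_{\cC\cY}$, $H^2_{\cC\cY}$ are separated, the final descent to $\fuk^\big(X,D;S)^\bc$ goes through as you say: tensor with $S$, complete, and compose with the formal bounding-cochain square of Lemma \ref{lem:cy_bc}.
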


These are the key ingredients to prove Abouzaid's generation criterion in this context:

\begin{thm}
    If $\cC \subset \fuk^\big(X,D;S)^\bc$ is a subcategory such that the unit $e \in QH^0(X;S)$ lies in the image of $\cO\cC|_{HH_*(\cC)}$, then $\cC$ split-generates $\fuk^\big(X,D;S)^\bc$.
\end{thm}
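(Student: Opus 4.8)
The plan is to run Abouzaid's classical argument \cite{Abouzaid2010} in the present algebraic setting, using the Cardy relation of Theorem~\ref{thm:cardy_bc} as the engine. Recall the general principle: for an object $L$ of a (homologically unital) $A_\infty$ category $\cA$, one has the ``open Cardy'' commutative diagram relating the composition $\cC\cO(\cO\cC(-)) $ applied to classes supported on $\cC$ with a map built purely from the bimodule structure of $\cC$ acting on $L$. Concretely, for any object $L$ of $\fuk^\big(X,D;S)^\bc$ one forms the two-sided bar resolution and obtains, from Theorem~\ref{thm:cardy_bc} specialized along the diagonal-to-$L$ evaluation, a commutative square whose top-right composite is $\cC\cO \circ \cO\cC|_{HH_*(\cC)}$ evaluated against $L$, and whose bottom composite factors through $\mathrm{Hom}_{D(\cC\text{-}\mathrm{mod}\text{-}\cC)}\big(\cC_\Delta, \cC^!\big)$ applied to $L$, i.e. through the map $\bar\mu$. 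The key formal consequence is: if $e \in QH^0(X;S)$ lies in the image of $\cO\cC|_{HH_*(\cC)}$, say $e = \cO\cC(\xi)$ with $\xi \in HH_*(\cC)$, then applying $\cC\cO$ gives $\cC\cO(e) = \mathrm{id}_{\mathrm{Hochschild}} $ (since $\cC\cO$ is a unital algebra map by Theorem~\ref{thm:co_bc}), and the commuting square forces this identity to factor through the image of $\cC\cY_*$, i.e. through the bimodule map $\cC_\Delta[-n] \to \cC^!$.

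The second step is to interpret this factorization object-wise. Evaluating the bimodule morphism on a pair $(L, K)$ of objects gives a chain map realizing, on cohomology, a map $\mathrm{Hom}_{H\fuk}(K,L) \to \mathrm{Hom}_{H\fuk}(L,K)^\vee[-n]$ (Poincaré-type duality), which up to the identification furnished by $\cC^!$ is the ``Serre pairing'' coming from $\cC$. Having $\mathrm{id}$ factor through $\cC\cY_*$ means precisely that the identity functor of $\fuk^\big(X,D;S)^\bc$ — or rather the identity endomorphism of each object $L$ in the derived category — factors, up to the bimodule $\cC_\Delta$, through an object of the triangulated envelope of $\cC$. More carefully: one shows that $L$ is a retract, in the derived category $D(\fuk^\big(X,D;S)^\bc)$, of an object built from $\cC$ by the standard totalization of the bar complex $\cC_\Delta \otimes_{\cC} {}_L\cC$, because the would-be splitting idempotent $\mathrm{id}_L$ lifts along the map induced by $\cC\cY$. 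Since the paper works with categories that are (after passing to bounding cochains) genuinely unital, and since morphism spaces are free of finite rank over $S$, idempotents split after passing to the idempotent completion, so this retract statement gives exactly that $L$ lies in the thick subcategory generated by $\cC$. As $L$ was arbitrary, $\cC$ split-generates.

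A few technical points need care. First, one must check that the Cardy diagram of Theorem~\ref{thm:cardy_bc} is compatible with restriction to the subcategory $\cC$ — i.e. that $\cO\cC|_{HH_*(\cC)}$ and the corresponding $\cC\cY$ for $\cC$ fit into the same square; this is functoriality of all three maps ($\cC\cO$, $\cO\cC$, $\cC\cY$) under inclusion of full subcategories, which should be immediate from their geometric definitions (restricting the set of Lagrangian labels). Second, one needs the output $\cC^!$ to be computed correctly: the relevant statement is that $\mathrm{Hom}_{\cC\text{-}\mathrm{mod}\text{-}\cC}(\cC_\Delta, \cC^!)$ is the ``Hochschild homology of $\cC$ with coefficients in $\cC^!$'', which is where the $\bar\mu$ map in the theorem's diagram lands, and that the class $\cC\cY_*(\xi)$ maps under $\bar\mu$ to $\cC\cO(e) = \mathrm{id}$. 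Third — and this is the step I expect to be the main obstacle — one must make the passage ``$\mathrm{id}$ factors through $\cC_\Delta$-acting-via-bar-complex $\Rightarrow$ $L$ is a summand of something in $\langle \cC \rangle$'' precise at the chain level, over the differential graded coefficient ring $(\G_S, S, 0, d_S)$ with its grading datum, keeping track of signs and of the odd/DG structure coming from the bulk variables. In the exact, characteristic-zero setting this is Abouzaid's Lemma on the ``resolution of the diagonal'', but here one should instead quote or re-run the corresponding argument in the algebraic framework developed in this paper (Section~\ref{sec:alg_prel}), which is why the theorem is placed after the machinery is in place; the geometric input is entirely contained in Theorems~\ref{thm:co_bc}--\ref{thm:cardy_bc}.
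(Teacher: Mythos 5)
Your proposal is correct and takes essentially the same approach as the paper: it combines the Cardy relation (Theorem~\ref{thm:cardy_bc}) with the unitality of $\cC\cO$ (Theorems~\ref{thm:co_bc} and~\ref{thm:HH-unit}) to place the HH-unit in the image of $\bar\mu$ restricted to classes coming from $\cC$, and then invokes Abouzaid's split-generation criterion, which the paper packages as the algebraic lemma in Section~\ref{sec:Cshriek} (citing \cite[Lemma~1.4]{Abouzaid2010a}) rather than re-running the bar-resolution argument you outline. Your third technical concern is correctly placed: that lemma is stated over a coefficient ring which is a field with trivial differential, so the theorem implicitly carries that hypothesis on $S$.
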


\paragraph{Acknowledgments:} I am grateful to Sheel Ganatra and Tim Perutz, my collaborators on the project of which this paper forms a part. I am also grateful to Mohammed Abouzaid, Amanda Hirschi, and Kai Hugtenburg for helpful conversations; and to Mark Goresky for the crucial suggestion to work with semianalytic chains. This work was supported by an ERC Starting Grant (award number 850713 – HMS), a Royal Society University Research Fellowship, the Leverhulme Trust through the Leverhulme Prize, and a
Simons Investigator award (award number 929034).

\section{Algebraic preliminaries}\label{sec:alg_prel}

We recall the definitions of Hochschild invariants of filtered $A_\infty$ categories, and give formulae for the algebraic structures which are the main focus of this paper. 

\subsection{Gradings}

We recall that a grading datum consists of an abelian group $\G$ together with group homomorphisms $\Z \xrightarrow{i} \G \xrightarrow{\sigma} \Z/2$ whose composition is non-trivial. 
We will abbreviate this data by $\G$. 
A morphism $\mathbb{G} \to \mathbb{H}$ of grading data is a homomorphism respecting the maps $i$ and $\sigma$. 

Given a $\G$-graded object $M$, and an object $m \in M$, we write $|m| = g \in \G$ if $m \in M_g$. 
We say $|m| = k \in \Z$ if $|m| = i(k)$.  
For $g \in \G$, we write $(-1)^g$ for $(-1)^{\sigma(g)}$. 

\subsection{Koszul sign rule}

We will adopt conventions which remove the need to write explicit Koszul-type signs in the definitions of our algebraic structures, by hiding them in the Koszul sign rule; more precisely, we make systematic use of the `coherence theorem' \cite[Theorem XI.1.1]{Maclane} for the symmetric monoidal structure defined by the Koszul sign rule. 
This has two advantages: first, it removes the need for explicit and complicated sign computations to verify algebraic identities; second, it will later allow for the efficient verification of signs in Floer-theoretic operations. 
The disadvantage is that if one uses completely precise and unambiguous notation, it becomes unwieldy; instead we use a lighter notation which requires the reader to correctly interpret it from the context.  
We also explain how our conventions are equivalent to the usual ones with explicit signs. 
The reader who prefers explicit signs can find them, for example, in \cite{Seidel_nat_transf,Sheridan_formulae}, whose conventions agree with ours up to the above-mentioned translation. 

We consider the symmetric monoidal category of $\G$-graded $\Z$-modules, where the monoidal structure is given by tensor product and the braiding by the Koszul sign rule. 
Recall that morphisms in this category consist of grading-preserving homomorphisms.

We identify the subcategory of $\G$-graded free $\Z$-modules of rank one with the symmetric monoidal category of $\G$-graded $\Z_2$-torsors. 
We define the torsors $\sigma(g) := \Z_2[-g]$, which are trivial and concentrated in degree $g \in \G$.  
A \emph{trivialization} of a $\G$-graded $\Z_2$-torsor is an isomorphism with some $\sigma(g)$. 
Given an isomorphism between two $\G$-graded $\Z_2$-torsors equipped with trivializations, we say that the \emph{sign} of the isomorphism is the element of $\Z_2$, multiplication by which defines the induced isomorphism $\sigma(g) \cong \sigma(g)$. 
For example, the sign of the braiding isomorphism $\sigma(g)\sigma(h) \cong \sigma(h)\sigma(g)$ is $(-1)^{gh}$. 
If the sign of an isomorphism is $+1$, we also say that the isomorphism \emph{respects trivializations}. 

We will abbreviate $\sigma(g)M := \sigma(g) \otimes M$. 
One can define the shift of a graded module, by $M[g]_h = M_{g+h}$; there is a natural isomorphism
\begin{align}\label{eq:usual_shift}
    \sigma(g)M &\xrightarrow{\sim} M[-g],\qquad \text{given by}\\
\nonumber    1|m & \mapsto m.
\end{align}

Given a real vector space $V$, we define $\sigma(V)$ to be the $\Z$-graded $\Z_2$-torsor of orientations of $V$, placed in degree $\dim(V)$. 

Now recall that the category of graded $\Z$-modules is not merely symmetric monoidal, but \emph{closed} symmetric monoidal: there exists an internal hom-functor $Hom^*(M,N)$, whose degree-$g$ part is
\begin{align*}
    Hom^g(M,N) &= Hom(M[-g],N).
\end{align*}
We define an isomorphism
\begin{align*}
    Hom^{|\alpha|}(M,N) & \to Hom(\sigma(\alpha)M,N)\\
    \alpha & \mapsto \tilde \alpha
\end{align*}
by the identification \eqref{eq:usual_shift}, where we abbreviate $\sigma(\alpha) = \sigma(|\alpha|)$. 
Note that we may apply the coherence theorem to $\tilde\alpha$, as it belongs to a Hom-space (rather than an internal Hom-space, as $\tilde\alpha$ does; while there is also a coherence theorem for closed symmetric monoidal categories, see \cite[Theorem 2.4]{maclanekelly}, it is more elaborate than the version for symmetric monoidal categories which we will use).  

We define various multilinear operations by composing others, using the braiding, and using the `natural trivializations' 
\begin{equation}\label{eq:triv}
\sigma(g_1) \otimes \ldots \otimes \sigma(g_k) \cong \sigma(g_1 + \ldots +g_k)
\end{equation}
which send $1 \otimes \ldots \otimes 1 \mapsto 1$. 
There will typically only be one sensible way to use the braiding, but we must specify which of the isomorphisms \eqref{eq:triv} we use. 
We illustrate our convention by an example. 
Suppose we are given $\alpha \in Hom^*(B \otimes A, M)$ and $\beta \in Hom^*(C \otimes M,N)$. 
We say ``let $\gamma \in Hom^*(A \otimes B \otimes C,N)$ be defined by
$$\gamma(a,b,c) = \beta(c,\alpha(b,a))$$
where
$$\sigma(\gamma) = \sigma(\alpha) \sigma(\beta)\text{''}$$
to mean that $\tilde\gamma$ is defined to be the composition of maps
\begin{align*}
    \sigma(\gamma) \otimes A \otimes B \otimes C &\xrightarrow{i \otimes \id} \sigma(\alpha) \otimes \sigma(\beta) \otimes A \otimes B \otimes C \\
    &\xrightarrow{K} \sigma(\beta) \otimes C \otimes \sigma(\alpha) \otimes B \otimes A  \\
    &\xrightarrow{\id \otimes \tilde\alpha} \sigma(\beta) \otimes C \otimes M \\
    &\xrightarrow{\tilde\beta} N,
\end{align*}
where on the first line, $i:\sigma(\gamma) \to \sigma(\alpha) \otimes \sigma(\beta)$ is the natural trivialization \eqref{eq:triv}, and on the second line, $K$ is given by the braiding, i.e., the Koszul isomorphism. 
This is equivalent to defining
$$\gamma(a,b,c) = (-1)^{|\alpha||\beta| + |\alpha||c| +|a||b| + |a||c| + |b||c|} \beta(c,\alpha(b,a)).$$
Note that the sign is precisely the Koszul sign associated to taking the symbols $\gamma, a,b,c$ on the left, replacing $\gamma$ by $\alpha, \beta$, then rearranging to get the order in which these symbols appear on the right, namely $ \beta, c, \alpha,b,a $. 
Note that if we had instead written ``\ldots where $\sigma(\gamma) = \sigma(\beta) \sigma(\alpha)$'', the resulting map $\gamma$ would differ by the Koszul sign $(-1)^{|\alpha|\cdot|\beta|}$. 

Composition of internal homs yields another example: for $\alpha \in Hom^*(A,B)$ and $\beta \in Hom^*(B,C)$, we have 
$$\widetilde{\beta \circ \alpha} (a) = \tilde\beta(\tilde\alpha(a))$$
where $\sigma(\beta \circ \alpha) = \sigma(\beta)\sigma(\alpha)$. 

For another example, suppose we are given chain complexes $(M,d_M)$ and $(N,d_N)$. 
Then the standard differential $\partial$ on the endomorphisms $Hom^*(M,N)$ is given by
\begin{equation}\label{eq:del_chain_cpx}
\partial(f)(m) = d_N(f(m)) + f(d_M(m))
\end{equation}
where $\sigma(\partial(f)) = \sigma(d) \sigma(f)$. 
Here the term $\sigma(d)$ is identified with $\sigma(d_N)$ for the first term, and $\sigma(d_M)$ for the second term. 

For any $\G$-graded $\Z_2$-torsor $\sigma$, we also define its dual, $\sigma^\vee$. 
Note that this is defined in terms of the internal hom, not in terms of the symmetric monoidal structure; and in fact, the coherence theorem for closed symmetric monoidal categories \cite[Theorem 2.4]{maclanekelly} does not apply to such objects. 
By convention, $\sigma^\vee$ will only ever `interact' via the natural evaluation isomorphism, $\sigma^\vee \sigma = \sigma(0)$ (precisely, whenever $\sigma^\vee$ appears in a commutative diagram to which we apply the coherence theorem, it is treated as a formal symbol whose only property is that it comes equipped with an isomorphism $\sigma^\vee \sigma = \sigma(0)$, but bears no other relation to $\sigma$); in particular, we never use the natural isomorphisms $\sigma \sigma^\vee = \sigma(0)$ or $\sigma^{\vee\vee} = \sigma$. 
With this convention, a trivialization of $\sigma$ unambiguously induces one of $\sigma^\vee$, and an isomorphism $\sigma \otimes M \cong N$ unambiguously determines an isomorphism $M \cong \sigma^\vee \otimes N$; and our applications of the coherence theorem for symmetric monoidal categories are valid.

For a final example, suppose that $(M,d_M)$ and $(N,d_N)$ are chain complexes, and $f,g \in Hom(M,N)$ are both chain maps: $\partial(f) = \partial(g) = 0$. 
A homotopy between $f$ and $g$ is an element $h \in Hom(\sigma(\partial)^\vee M,N)$ satisfying
$$f-g = \partial(h).$$
Note that we implicitly use the identification $\sigma(\partial)^\vee\sigma(\partial) = \sigma(0)$ to identify $\partial(h)$ with an element of $Hom(M,N)$.

\subsection{Coefficient rings and modules}\label{subsec:coeff_ring}

\begin{defn}
A \emph{coefficient ring} will be a four-tuple $(\G_R,R,\filt{\bullet},d_R)$ where 
    \begin{itemize}
        \item $\G_R$ is a grading datum;
        \item $R$ is a $\G_R$-graded (super)commutative ring;
        \item $\filt{\bullet}$ is a complete filtration of $R$, with $\filt{0}R = R$; 
        \item $d_R: \sigma(d)R \to R$ is a degree-$1$ graded derivation which respects the filtration, and satisfies $d_R^2 = 0$.
	\end{itemize}
    If the filtration is trivial, i.e. $\filt{1} = 0$, we will write `$0$' instead of $\filt{\bullet}$.
\end{defn}  

We recall that a graded derivation satisfies the graded Leibniz rule:
$$d_R(r \cdot s) = d_Rr \cdot s + r \cdot d_Rs$$
as maps $\sigma(d) A \otimes A \to A$ (which introduces a sign, in the usual convention).  
We denote $\bk_R := R/\filt{1} R = \Gr_0 R$. 
We will abbreviate such a tuple by $R$, and omit subscripts on $\G$, $d$, and $\bk$, where no confusion can result. 
We denote the cohomology of $(R,d)$ by $H(R)$. 

\begin{eg}\label{eg:R_powers}
    If $\bk$ is a commutative ring, and $(C,d)$ is a $\G$-graded chain complex of free, finite-rank $\bk$-modules, then we define a coefficient ring $(\G,\bk[[C]],\filt{\bullet},d)$ to be the graded completion of the graded symmetric algebra on $C$, with the adic filtration, and the induced differential. 
\end{eg}

\begin{eg}\label{eg:gr_r}
    For any coefficient ring $(\G,R,\filt{\bullet},d)$, we may form another, $(\G \oplus \Z,\mathrm{Gr}_*R,0,d)$, by taking the associated graded ring with the trivial filtration.
\end{eg}

\begin{defn}\label{def:mor_coeff}
A \emph{morphism of coefficient rings}, $\varphi:(\G_R,R,\filt{\bullet},d_R) \to (\G_S,S,\mathcal{G}_{\ge \bullet},d_S)$, will consist of a morphism of grading data $\varphi_\G:\G_R \to \G_S$, and a filtered algebra homomorphism $\varphi:R \to S$, such that 
\begin{itemize}
    \item $|\varphi(r)| = \varphi_\G(|r|)$;
    \item $\varphi \circ d_R = d_S \circ \varphi$, as maps $\sigma(d) R \to S$.
\end{itemize}
\end{defn}

\begin{eg}
For any coefficient ring $R$, there exists a coefficient ring $(\G,\bk,0,d)$, and morphisms $R \to \bk \to \Gr_* R$.    
\end{eg}

\begin{defn}
    If $R$ is a coefficient ring, an $R$-module is a $\G$-graded $\Z$-module $M$ equipped with the module action
\begin{align*}
    R \otimes M &\to M \\
    r \otimes m &\mapsto r \cdot m
\end{align*}
respecting the $\G$-grading, and such that $r \cdot (s \cdot m) = (r \cdot s) \cdot m$; and a degree-$1$ map 
$$d_M:\sigma(d) M \to M$$
satisfying $d_M^2 = 0$ and the analogue of the graded Leibniz rule. 
\end{defn}

Any $R$-module comes equipped with the filtration $\filt{\bullet} M := (\filt{\bullet} R) \cdot M$, whose completion we denote by 
$$\overline{M} := \varprojlim_j M/\filt{j} M,$$
where the limit is taken in the category of $\G$-graded $\Z$-modules. 
The module action and differential $d_M$ extend to the completion, as they respect the filtration. 
We say $M$ is complete if $M = \overline{M}$. 

We denote the cohomology of $(M,d_M)$ by $H(M)$, which is a graded $H(R)$-module. 
A chain map of $R$-modules is a morphism $f:M \to N$ satisfying $f(r \cdot m) = r \cdot f(m)$ and $d_N \circ f = f \circ d_M$, as maps $\sigma(d)M \to N$. 
A chain map induces a map of $H(R)$-modules, $H(f):H(M) \to H(N)$. 
Note that any map of modules is automatically filtered. 

We define the tensor product of $R$-modules to be the usual tensor product, equipped with the differential defined by the graded product rule. 
As a special case, for $g \in \G$, we define the $g$-fold shift of an $R$-module to be the tensor product $\sigma(g)M$. 
We define the completed tensor product, denoted by $\hat\otimes$, to be the tensor product followed by completion. 

If $M$ and $N$ are $R$-modules, then $Hom^*_R(M,N)$ consists of maps $f \in Hom^*(M, N)$ such that
$$f(r \cdot m) = r \cdot f(m)$$
as maps $\sigma(f) R \otimes M \to N$ (note that if we translate into standard notation there is a sign). 
It is equipped with the structure of an $R$-module in the standard way (in particular, the differential is given by \eqref{eq:del_chain_cpx}). 

\subsection{(Filtered) $A_\infty$ categories}

We generalize some of the definitions from \cite[Appendix A]{Ganatra2023integrality}. 
Let $R=(\G,R,\filt{\bullet},d)$ be a coefficient ring in the sense of Section \ref{subsec:coeff_ring}. 
We will define a notion of $A_\infty$ category over such a coefficient ring. 
The usual notion of $A_\infty$ category over a ring corresponds to the case $d=0$ (no differential on the coefficient ring) and $\filt{1} = 0$ (the filtration is trivial). It is also more common for the coefficient ring $R$ to be assumed to be concentrated in degree $0$. 
What we call an $R$-linear $A_\infty$ category might more commonly be called a `filtered curved $A_\infty$ category over a differential graded ring', but we will omit these words systematically. 

\begin{defn}
    An \emph{($R$-linear) pre-category} $\cC$ consists of a set of objects, together with morphism spaces $hom_\cC(C_0,C_1)$ which are complete $R$-modules. 
\end{defn}

\begin{defn}
    Given pre-categories $\cC_0$ and $\cC_1$, we define a \emph{pre-$(\cC_0,\cC_1)$-bimodule} $\cM$ to consist of a complete $R$-module $\cM(C_0,C_1)$ for each pair of objects $(C_0,C_1) \in \ob(\cC_0) \times \ob \cC_1$. Furthermore:
    \begin{itemize}
        \item If $\cC_0 = \cC_1 = \cC$, we refer to a $(\cC,\cC)$-bimodule as a $\cC$-bimodule. 
        \item The \emph{diagonal} $\cC$-bimodule $\cC_\Delta$ has $\cC_\Delta(C_0,C_1) := hom_\cC(C_0,C_1)$. 
        \item Given maps $F_i: Ob(\cC_i) \to Ob(\cD_i)$ for $i=0,1$, and a pre-$(\cD_0,\cD_1)$-bimodule $\cM$, we can form the \emph{pullback} pre-$(\cC_0,\cC_1)$-bimodule by setting $$(F_0 \otimes F_1)^*\cM(C_0,C_1) := \cM(F_0C_0,F_1C_1).$$ 
When $F_0=F_1=F$, we abbreviate $(F \otimes F)^*$ by $F^*$. 
    \end{itemize}
\end{defn}

Given a pre-category $\cC$, we define $\cC(C_0,C_1) := \sigma (\cC(C_0,C_1))^\vee hom_\cC(C_0,C_1)$, where $\sigma(\cC(C_0,C_1)) = \sigma(1)$. Sometimes we will write $\sigma(\cC)$ instead of $\sigma(\cC(C_0,C_1))$. 
Given objects $C_0,\ldots,C_s$ of $\cC$, we define
$$\cC(C_0,\ldots,C_s) := \cC(C_0,C_1) \otimes \ldots \otimes \cC(C_{s-1},C_s).$$

\begin{defn}
    We define the Hochschild cochains with coefficients in $\cM$,
$$CC^*(\cC,\cM):= \prod_{C_0,\ldots,C_s} Hom^*_R(\cC(C_0,\ldots,C_s),\cM(C_0,C_s)).$$
We define
$$CC^*(\cC) := \sigma(CC^*) CC^*(\cC,\sigma(\cC)^\vee \cC_\Delta),$$
where $\sigma(CC^*) = \sigma(1)$.
\end{defn}

The \emph{length} of a Hochschild cochain is the number of inputs $s$. 
For an element $\varphi$ of $CC^*(\cC,\cM)$, we denote the component of length $s$ by $\varphi^s$. 

If $\cC$ and $\cD$ are pre-categories, we define a \emph{pre-functor} from $\cC$ to $\cD$ to be a map on objects $F: \ob \cC \to \ob \cD$, together with an element $F \in CC^0(\cC,F^* \sigma(\cD)^\vee \cD_\Delta)$ such that $F^0_C  \in \filt{1} \cD(FC,FC)$ for all $C \in Ob(\cC)$. 

Given pre-functors $F_0,F_1: \cC \to \cD$, we define the pre-category $\fprefun(\cC,\cD)$ to have objects the pre-functors, and morphism spaces
$$hom_{\fprefun(\cC,\cD)}(F_0,F_1) := CC^*(\cC,(F_0 \otimes F_1)^*\cD_\Delta).$$ 
We identify 
$$\fprefun(\cC,\cD)(F_0,F_1) = CC^*(\cC,(F_0 \otimes F_1)^* \sigma(\cD)^\vee \cD_\Delta),$$
by identifying $\sigma(\cD) = \sigma(\fprefun(\cC,\cD))$.
Note that a pre-functor $F$ can also be considered as an element of $hom_{\fprefun(\mathcal C, \mathcal D)}(F,F)$; we will sometimes do so implicitly, trusting that it will be clear from the context when we are doing so.

Given pre-categories $\cC$ and $\cD$, a pre-$\cD$-bimodule $\cM$, pre-functors $F_0,F_1,\ldots,F_k \in \fprefun(\cC,\cD)$, 
we define a degree-zero operation
\begin{align*}
    CC^*(\cD,\cM) \otimes \fprefun(\cC,\cD)(F_0,\ldots,F_k) &\to CC^*\left(\cC,(F_0 \otimes F_k)^*\cM\right),\\
    \psi \otimes \varphi_1 \otimes \ldots \otimes \varphi_k & \mapsto \psi\{\varphi_1,\ldots,\varphi_k\},
\end{align*}
where
\begin{multline}\label{eq:brace}\psi\{\varphi_1,\ldots,\varphi_k\}(c_1,\ldots,c_s) := 
\sum  \psi\left(F_0^*(c_1,\ldots,c_{s_0^1}),\ldots,F_0^*(\ldots,c_{s_0^{j_0}}),\varphi_1^*(\ldots,c_{t_1}),\right.\\
\left.F_1^*(\ldots),\ldots \ldots,F_{k-1}^*(\ldots,c_{s_{k-1}^{j_{k-1}}}),\varphi_k^*(\ldots,c_{t_k}),F_k^*(\ldots),\ldots,F_k^*(\ldots,c_{s_k^{j_k}})\right)
\end{multline}
where the sum is over all $j_0,\ldots,j_k \ge 0$ and all 
$$ s_0^1 \le s_0^2 \le \ldots \le s_0^{j_0} \le t_1 \le s_1^1 \le \ldots \le s_1^{j_1} \le t_2 \le \ldots \le t_k \le s_k^1 \le \ldots \le s_k^{j_k} = s;$$
and
$$\sigma(\psi\{\varphi_1,\ldots,\varphi_k\}) = \sigma(\psi) \sigma(\varphi_1) \ldots \sigma(\varphi_k).$$
Note that, because $F^0_i$ may be non-zero, the sum \eqref{eq:brace} is potentially infinite; but our assumptions that the length-zero components $F_i^0$ lie in $\filt{1}$, and that the filtration on the target is complete, ensure that it converges. 

We omit the functors $F_i$ from the notation $\psi\{\varphi_1,\ldots,\varphi_k\}$ to avoid clutter, as it will be clear from the context what they are; however, we make an exception in the case $k=0$ when there is only one pre-functor $F$: in this case we will write $\psi\{\}_{F}$. 

\begin{defn}\label{def:Ainf_cat}
    An $R$-linear $A_\infty$ category is an $R$-linear pre-category $\cC$ together with an element $\mu \in CC^2(\cC)$, satisfying $\mu^0_C \in \filt{1} \cC(C,C)$ for all $C$, and $d_{CC^*} (\mu) + \mu\{\mu\} = 0$. 
\end{defn}

We may consider it as a series of maps
$$\mu_\cC^s \in Hom^1_R(\cC(C_0,\ldots,C_s), \cC(C_0,C_s)).$$
The equations they satisfy are equivalent to the usual $A_\infty$ relations (although usually the differential $d_{CC^*}$ is $0$, because usually the coefficient ring is an ordinary ring rather than a differential ring). 

\begin{rem}It may be helpful, although not strictly correct, to think of the $A_\infty$ relations equivalently as $(d_{\cC} + \mu)\{d_\cC + \mu\} = 0$, where $d_{\cC}$ is `the length-$1$ Hochschild cochain given by the differential $d_{\cC(C_0,C_1)}$ on each morphism space $\cC(C_0,C_1)$'; the reason this is not strictly correct as written is that Hochschild cochains are by definition $R$-linear, but the differential instead satisfies the Leibniz rule.
\end{rem}

The usual notion of a $\Z$-graded $R$-linear $A_\infty$ category, as defined for example in \cite[Section 1a]{seidel2008fukaya}, is equivalent to a $(\Z,R,0,0)$-linear $A_\infty$ category (where $R$ is concentrated in degree $0$). 
Note that the curvature $\mu^0$ necessarily vanishes in this case. 

More generally, if the coefficient ring $(\G,R,\filt{\bullet},d)$ satisfies $\filt{1}=0$, then the curvature $\mu^0$ necessarily vanishes, and the first non-trivial $A_\infty$ relation says that $d_{\cC(C_0,C_1)} + \mu^1$ defines a differential on each morphism space $hom_\cC(C_0,C_1)$. 
In this case we define the \emph{cohomology category} $H(\cC)$ to be the (in general non-unital) category with the same set of objects, morphisms the graded $H(R)$-modules $H(hom_\cC(C_0,C_1),d_{\cC(C_0,C_1)} + \mu^1)$, and composition induced by the map 
$$\circ :hom_\cC(C_0,C_1) \otimes hom_\cC(C_1,C_2) \to hom_\cC(C_0,C_2)$$
induced by 
$$\mu: \sigma(\mu) \otimes \sigma_1^\vee hom_\cC(C_0,C_1) \otimes \sigma_2^\vee hom_\cC(C_1,C_2) \to \sigma_0^\vee hom_\cC(C_0,C_2)$$ 
together with the identification
\begin{equation}\label{eq:assoc_id}
    \sigma(\circ) = \sigma(\mu) \sigma_0 \sigma_1^\vee \sigma_2^\vee,
\end{equation} 
From the second non-trivial $A_\infty$ relation, one easily checks that composition descends to cohomology, and it is straightforward to check it is $H(R)$-linear; the fact it is associative follows from the third non-trivial $A_\infty$ equation. 
In the standard case, and translating to the standard sign conventions, this definition agrees with the convention for the cohomology category given in \cite[Section 3.2]{Sheridan_formulae}.

\begin{defn}
    If the filtration on the coefficient ring of $\cC$ is trivial, then we say that $\cC$ is \emph{c-unital} if the cohomology category $H(\cC)$ is unital. 
\end{defn}

If $\cC$ is an $R$-linear $A_\infty$ category and $R \to S$ is a morphism of coefficient rings, we may form the $S$-linear $A_\infty$ category $\cC \hat\otimes_R S$ by taking the completed tensor product of each morphism space with $S$, and equipping it with the induced $A_\infty$ structure. 

\begin{defn}
A non-unital $A_\infty$ functor between $A_\infty$ categories $\cC$ and $\cD$ is a pre-functor $F:\cC \to \cD$ satisfying $d_{CC^*}(F) + \mu_{\cD} \{\}_F = F\{\mu_\cC\}$ (or equivalently, $(d_\cD+\mu_\cD)\{\}_F = F\{d_\cC+\mu_\cC\}$). 
\end{defn}

The non-unital $A_\infty$ functors from $\cC$ to $\cD$ are the objects of a filtered $A_\infty$ category $\fnufun(\cC,\cD)$, with $A_\infty$ structure 
$$\mu^s_{\nufun(\cC,\cD)}(\alpha_1,\ldots,\alpha_s) := \left\{ \begin{array}{ll}
    0 & s=0,\\
    \mu_\cD\{\alpha_1\} + \alpha_1 \{\mu_\cC\} & s=1,\\
    \mu_\cD\{\alpha_1,\ldots,\alpha_s\} & s>1,
    \end{array}\right.$$
    where $\sigma(\mu^s(\alpha_1,\ldots,\alpha_s)) = \sigma(\mu) \sigma(\alpha_1)\ldots \sigma(\alpha_s)$. 
Note that the curvature of this filtered $A_\infty$ category is $0$.

\subsection{Bounding cochains}

Let $R$ denote the $R$-linear $A_\infty$ category with one object $*$ with endomorphism algebra $R$. 

\begin{defn}\label{def:prebc}
Given an $R$-linear $A_\infty$ category $\cC$, we define a larger $R$-linear $A_\infty$ category $\cC^{\prebc}$. 
The objects are `pre-bounding cochains', which are pre-functors $b:R \to \cC$ of length zero. 
Morphisms are morphisms in $\prefun(R,\cC)$ of length zero. 
The $A_\infty$ structure is defined by
$$\mu^s_{\cC^{\prebc}}(\varphi_1,\ldots,\varphi_s) := \begin{cases}
    \mu_\cC\{\varphi_1,\ldots,\varphi_s\} & \text{if $s > 0$}\\
     d_\cC(b) + \mu_\cC\{\}_b & \text{if $s=0$.}
\end{cases}$$
\end{defn}

Equivalently, the objects of $\cC^{\prebc}$ are pairs $(C,b)$ where $C$ is an object of $\cC$ and $b \in \filt{1} \cC(C,C)$. 
This is equivalent to a pre-functor $F:R \to \cC$ with $F(*) = C$, $F^0=b$, and $F^{\ge 1}=0$. 
The morphisms of $\cC^{\prebc}$ are
$$hom_{\cC^{\prebc}}((C_0,b_0),(C_1,b_1)) = hom_\cC(C_0,C_1).$$
Note that an element $\alpha \in hom_\cC(C_0,C_1)$ is equivalent to an element 
$$\varphi \in hom_{\prefun(R,\cC)}(F_0,F_1) = CC^*(R,(F_0 \otimes F_1)^*\cC)$$
with $\varphi^0 = \alpha$, $\varphi^{\ge 1} = 0$. 

\begin{lem}\label{lem:Cbc_fun}
    For any $A_\infty$ category $\cC$, there exists an $A_\infty$ functor $F: \cC^{\prebc} \to \cC$ defined as follows:
    \begin{align*}
        F(C,b) &= C \\
        F^0_{(C,b)} &=  b \\
        F^1 &= \id \\
        F^s &= 0 \quad \text{for $s>1$.}
        \end{align*}
\end{lem}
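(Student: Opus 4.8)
The plan is to verify directly that the data $F$ described in the statement defines a non-unital $A_\infty$ functor $\cC^{\prebc} \to \cC$, i.e.\ that it satisfies the equation $(d_\cD + \mu_\cD)\{\}_F = F\{d_\cC + \mu_\cC\}$ (with $\cD = \cC$), where the Hochschild cochain $F$ is the one with $F(C,b)=C$, $F^0_{(C,b)}=b$, $F^1 = \id$, and $F^{\ge 2}=0$. First I would unwind both sides of this equation as a collection of identities, one for each length $s \ge 0$, using the explicit formula \eqref{eq:brace} for the brace operation and the description of $\mu_{\cC^{\prebc}}$ from Definition \ref{def:prebc}. The key point is that since $F^{\ge 2}=0$ and $F^1 = \id$, the insertions of $F$ into the brace expressions collapse enormously: any factor $F^*$ of length $\ge 2$ vanishes, a factor $F^1$ is just the identity, and a factor $F^0$ contributes the bounding cochain $b$. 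One also needs to track that $\sigma(F) = \sigma(\id)$ is the trivial torsor, so no signs are introduced by the trivializations associated to $F$ itself.

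The main computation is then the following. On the right-hand side, $F\{d_\cC + \mu_\cC\}$ applied to $\varphi_1 \otimes \cdots \otimes \varphi_s$ (with $\varphi_i \in hom_{\cC^{\prebc}}((C_{i-1},b_{i-1}),(C_i,b_i)) = hom_\cC(C_{i-1},C_i)$) expands, using $F^1=\id$, as the single term $(d_\cC + \mu_\cC)$ applied to a string in which the inputs $\varphi_i$ are interleaved with arbitrarily many copies of $F^0 = b_i$ inserted at the appropriate objects — exactly the string one gets by conjugating $\mu_\cC$ by the bounding cochains. On the left-hand side, $(d_\cD+\mu_\cD)\{\}_F$ likewise inserts copies of $b_i$ on either side of the single $\mu_\cC$ (or $d_\cC$) factor. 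Comparing, both sides are literally the same sum over insertions of $b$'s into a single application of $d_\cC + \mu_\cC$; the length-$s$ part of the functor equation is thus tautologically the statement that $\mu^s_{\cC^{\prebc}}$ was \emph{defined} to be this conjugated operation. In particular the $s=0$ case recovers exactly $\mu^0_{\cC^{\prebc}} = d_\cC(b) + \mu_\cC\{\}_b$, and for $s \ge 1$ it recovers $\mu^s_{\cC^{\prebc}} = \mu_\cC\{\varphi_1,\ldots,\varphi_s\}$ (where here $\mu_\cC\{-\}$ already includes the conjugation by the $b_i$, since $F^0 = b$), matching Definition \ref{def:prebc}.

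The one genuine thing to check — and the step I expect to be the main obstacle — is the bookkeeping of \emph{signs} under the Koszul conventions of Section \ref{sec:alg_prel}: one must confirm that the sign prescription $\sigma(F\{\mu_\cC\}) = \sigma(F)\sigma(\mu_\cC) = \sigma(\mu_\cC)$ in the brace formula, together with the prescription $\sigma((d_\cD+\mu_\cD)\{\}_F) = \sigma(\mu_\cD)$, produce exactly the signs built into the definition of $\mu_{\cC^{\prebc}}$, with no residual discrepancy. Because $\sigma(F)$ is the trivial torsor $\sigma(0)$ this is essentially automatic — inserting $b$'s of degree in $\filt{1}$ does not change degrees mod $2$ in a way that affects the torsor identifications beyond what is already accounted for in $\mu_{\cC^{\prebc}}$ — but it is the only place the argument is not purely formal. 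Finally, I would remark that convergence of the (a priori infinite) sums over insertions of $b_i$ is guaranteed by $b_i \in \filt{1}\cC(C_i,C_i)$ and completeness of the morphism spaces, exactly as in the convergence discussion following \eqref{eq:brace}.
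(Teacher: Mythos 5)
Your proposal is correct and follows the same route as the paper's proof, which is a direct verification of the functor equation $d_{CC^*}(F)+\mu_\cC\{\}_F=F\{\mu_{\cC^{\prebc}}\}$ together with the observation that $F^0=b\in\filt{1}$; the paper simply asserts that both checks "follow immediately from the definitions," while you have usefully spelled out the unwinding via \eqref{eq:brace} and the collapse of the brace expressions coming from $F^{\ge 2}=0$, $F^1=\id$, and $F^0=b$. (One small notational slip: in $F\{d_{\cC^{\prebc}}+\mu_{\cC^{\prebc}}\}$ the source structure is that of $\cC^{\prebc}$, not $\cC$, though it is clear you intended this.)
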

\begin{proof}
    We first need to check that $F^0 \in \filt{1} \cC$, and $d_{CC^*}(F) + \mu_\cC\{\}_F = F\{\mu_{\cC^{\prebc}}\}$, both of which follow immediately from the definitions. 
\end{proof}

\begin{defn}
    A \emph{bounding cochain} is a pre-bounding cochain $(C,b)$ such that $\mu^0_{(C,b)} = d_\cC(b) + \mu_\cC\{\}_b$ vanishes. 
    We define $\cC^{\bc}$ to be the full subcategory of $\cC^{\prebc}$ consisting of the bounding cochains. As the curvature vanishes, we may (and do) regard it as a $(\G,R,0,d)$-linear $A_\infty$ category.
\end{defn}

\begin{rem}
    There is a strict functor $\cC^\bc \hookrightarrow \nufun(R,\cC)$, but note that it does not extend to $\cC^\prebc$: in the first place, objects of $\cC^\prebc$ need not satisfy the $A_\infty$ functor equation; in the second, $\cC^\prebc$ is curved, whereas $\nufun(R,\cC)$ by definition has curvature $0$.
\end{rem}

\subsection{Module categories}

Given pre-categories $\cC_0$ and $\cC_1$, and pre-$(\cC_0,\cC_1)$-bimodules $\cM$ and $\cN$, we define
\begin{multline}
hom_{\bimod{\cC_0}{\cC_1}}(\cM, \cN) :=\\ \prod_{\substack{C_0,\ldots,C_s \\ D_0,\ldots,D_t}} Hom^*_R(\cC_0(C_0,\ldots,C_s) \otimes \cM(C_s,D_t) \otimes \cC_1(D_t,\ldots,D_0),\cN(C_0,D_0)).
\end{multline}
Analogously to before, we refer to $s$ as the \emph{left length}, $t$ as the \emph{right length}, and given $\varphi \in hom_{\fbimod{\cC_0}{\cC_1}}(\cM,\cN)$, we denote the component of left length $s$ and right length $t$ by $\varphi^{s|1|t}$. 

Now suppose we are given 
\begin{itemize}
    \item pre-categories $\cC_0,\cC_1,\cD_0,\cD_1$;
    \item A pre-$(\cC_0,\cC_1)$-bimodule $\cM$, and pre-$(\cD_0,\cD_1)$-bimodules $\cN$ and $\cP$;
    \item pre-functors $F_i \in \fprefun(\cC_0,\cD_0)$ for $0 \le i \le j-1$, and $F_i \in \fprefun(\cC_1,\cD_1)$ for $ j \le i \le k$;
    \item $\rho \in hom_{\fbimod{\cC_0}{\cC_1}}(\cM,(F_{j-1} \otimes F_j)^*\cN)$ and $\psi \in hom_{\fbimod{\cD_0}{\cD_1}}(\cN,\cP)$;
    \item $\varphi_i \in \fprefun(\cC,\cD)(F_{i-1},F_i) = CC^*(\cC,(F_{i-1} \otimes F_i)^* \sigma(\cD)^\vee \cD_\Delta)$ for $0 \le i \le k$ except $i=j$.
\end{itemize}
We define
$$\psi\{\varphi_1,\ldots,\varphi_{j-1};\rho;\varphi_{j+1},\ldots,\varphi_k\} \in hom_{\fbimod{\cC_0}{\cC_1}}(\cM,(F_0 \otimes F_k)^*\cP)$$
by the same formula as \eqref{eq:brace}, with the difference that $\varphi_j$ is replaced by $\rho$. 
As before, we omit the $F_i$ and $G_i$ from the notation, with an exception when $k=1$, in which case we write $\psi_{F_0}\{;\rho;\}_{F_1}$, or we omit the subscripts if $F_0 = F_1 = \id$.

If $\cC_0$ and $\cC_1$ are $A_\infty$ categories, then a \emph{$(\cC_0,\cC_1)$-bimodule} is a pre-bimodule $\cM$ equipped with $\mu_\cM \in hom^1_{\fbimod{\cC_0}{\cC_1}}(\cM,\cM)$ satisfying 
$$d(\mu_\cM) + \mu_\cM\{\mu_{\cC_0};\id;\} + \mu_\cM\{;\mu_\cM;\} + \mu_\cM\{;\id;\mu_{\cC_1}\} = 0,$$
or equivalently, 
$$(d_\cM + \mu_\cM)\{d_{\cC_0} + \mu_{\cC_0};\id;\} + (d_\cM + \mu_\cM)\{;d_\cM + \mu_\cM;\} + (d_\cM + \mu_\cM)\{;\id;d_{\cC_1} + \mu_{\cC_1}\} = 0.$$
Here we have denoted by $\id \in hom^0_{\fbimod{\cC_0}{\cC_1}}(\cM,\cM)$ the element which has $\id^{s|1|t} = 0$ whenever $s$ or $t$ is non-zero, and $\id^{0|1|0}$ is the identity map.

There is a differential graded category $\fbimod{\cC_0}{\cC_1}$ whose objects are filtered $(\cC_0,\cC_1)$ bimodules, morphisms from $\cM$ to $\cN$ are given by $hom_{\fbimod{\cC_0}{\cC_1}}(\cM,\cN)$, differential is given by
$$\partial \rho = d(\rho) + \mu_\cN\{;\rho;\} - \rho\{\mu_{\cC_0};\id;\} - \rho\{;\mu_\cM;\} - \rho\{;\id;\mu_{\cC_1}\}$$
where $\sigma(\partial \rho) = \sigma(\mu) \sigma(\rho) = \sigma(d)\sigma(\rho)$, composition is given by
$$\psi \circ \rho = \psi\{;\rho;\}$$
where $\sigma(\psi \circ \rho) = \sigma(\psi)\sigma(\rho)$. 
In the case $R=(\Z,R,0,0)$, this coincides with the definition in \cite[Section 2]{Seidel_nat_transf}.  

Given a filtered bimodule $\cM$, we define its shift $\sigma(g) \cM$ by setting
$$(\sigma(g)\cM)(C_0,C_1) := \sigma(g)(\cM(C_0,C_1)),$$
and its structure maps $\mu_{\sigma(g) \cM}$ are defined in the natural way from those of $\mu_\cM$ (when passing to explicit signs as in \cite{Seidel_nat_transf}, this induces a sign). 
For a filtered $A_\infty$ category $\cC$, the shifted diagonal bimodule $\sigma(\cC)^\vee \cC_\Delta$ is the object of $\fbimod{\cC}{\cC}$ with
\begin{align*}
    \sigma(\cC)^\vee \cC_\Delta(C_0,C_1) & = \cC(C_0,C_1),\\
    \mu_{\sigma(\cC)^\vee \cC_\Delta }^{s|1|t} &= \mu_\cC^{s+1+t}.
\end{align*}
This allows us to define the unshifted diagonal bimodule $\cC_\Delta$. 

Given filtered $A_\infty$ functors $F_0:\cC_0 \to \cD_0$ and $F_1:\cC_1 \to \cD_1$, and a filtered $(\cD_0,\cD_1)$ bimodule $\cM$, we define a filtered differential graded functor 
$$(F_0 \otimes F_1)^* : \fbimod{\cD_0}{\cD_1} \to \fbimod{\cC_0}{\cC_1}$$
as follows: on the level of objects, it sends $\cM$ to $(F_0 \otimes F_1)^* \cM$, where
\begin{align*}
    (F_0 \otimes F_1)^*\cM(C_0,C_1) &= \cM(F_0C_0,F_1C_1)\\
    \mu_{(F_0 \otimes F_1)^*\cM} &= (\mu_\cM)_{F_0}\{;\id;\}_{F_1}.
\end{align*}
On the level of morphisms, the functor sends
$$\rho \mapsto \rho_{F_0}\{;\id;\}_{F_1}.$$

\subsection{Hochschild cohomology and units}

Given an $A_\infty$ category $\cC$ and $\cC$-bimodule $\cM$, we define the Hochschild differential on $CC^*(\cC,\cM)$ by
$$\partial \alpha = d(\alpha) + \mu_\cM\{\alpha\}- \alpha\{\mu_\cC\} $$
where $\sigma(\partial \alpha) = \sigma(\mu) \sigma(\alpha) = \sigma(d)\sigma(\alpha)$. 
We denote its cohomology by $HH^*(\cC,\cM)$. 
In the case $\cM = \cC_\Delta$, we may identify $CC^*(\cC)$ with $hom_{\fnufun(\cC,\cC)}(\id,\id)$, and it thereby acquires a structure of uncurved $A_\infty$ algebra. 
In particular, the Hochschild cohomology $HH^*(\cC)$ acquires a structure of associative $H(R)$-algebra. 
In fact it is graded commutative. 
We denote the associative product by $\cup$; explicitly, it is induced by the map
\begin{align*}
    \sigma(\mu) \otimes \sigma_1^\vee CC^*(\cC,\sigma(\cC)^\vee \cC_\Delta) \otimes \sigma_2^\vee CC^*(\cC,\sigma(\cC)^\vee \cC_\Delta) & \to \sigma_0^\vee CC^*(\cC,\sigma(\cC)^\vee \cC_\Delta) \\
    \psi \otimes \varphi &\mapsto \mu\{\psi,\varphi\}
\end{align*}
together with the isomorphism from \eqref{eq:assoc_id}:
$$\sigma(\cup) = \sigma(\mu) \sigma_0 \sigma_1^\vee \sigma_2^\vee.$$

There are uncurved filtered $A_\infty$ homomorphisms
$$CC^*(\cC) \xrightarrow{L_{\cC_\Delta}} hom_{\bimod{\cC}{\cC}}(\cC_\Delta,\cC_\Delta) \xleftarrow{R_{\cC_\Delta}} CC^*(\cC)^{op}$$
defined in \cite[Lemma 4.4]{Sheridan_formulae}; and they have the property that the maps $L^1_{\cC_\Delta}$ and $-R^1_{\cC_\Delta}$ are chain homotopic, by the proof of [\emph{op. cit.}, Lemma 4.6].
Note that the target is a unital differential graded algebra (with unit given by the identity endomorphism), so its cohomology is a unital algebra.

\begin{defn}
    An \emph{HH-unit} for $\cC$ is an element $e_\cC \in HH^0(\cC)$, which maps to the unit in $H^0(hom_{\bimod{\cC}{\cC}}(\cC_\Delta,\cC_\Delta))$ under $H(L^1_{\cC_\Delta})$. 
\end{defn}

\begin{lem}\label{lem:CC_c_unit}
    If $\cC$ admits an HH-unit, and the filtration on the coefficient ring is trivial, then $\cC$ is c-unital.
\end{lem}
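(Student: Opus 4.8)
\textbf{Proof plan for Lemma \ref{lem:CC_c_unit}.}

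The plan is to unwind the definitions and show that the image of the HH-unit $e_\cC \in HH^0(\cC)$ under the map $HH^0(\cC) \to H^0(hom_{\bimod{\cC}{\cC}}(\cC_\Delta,\cC_\Delta))$ controls whether composition in the cohomology category $H(\cC)$ has a two-sided identity. Concretely, choose a cocycle representative $e \in CC^0(\cC)$ for $e_\cC$; its length-zero component $e^0 \in \cC(C,C)$ is a degree-$0$ (hence, after the shift, degree-$1$) morphism for each object $C$, and the Hochschild cocycle condition $\partial e = 0$ together with the definition of the $\cup$-product and of $\mu$ in terms of $e^0$ will say that $e^0$ is a $(d_{\cC} + \mu^1)$-cocycle and that left and right $\mu^2$-multiplication by $[e^0]$ on $H^*(hom_\cC(C_0,C_1))$ agree with the identity up to the usual $A_\infty$ homotopies. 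This is exactly the statement that $[e^0]$ is a cohomological unit, i.e. that $H(\cC)$ is unital.

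First I would recall the two $A_\infty$ homomorphisms $CC^*(\cC) \xrightarrow{L_{\cC_\Delta}} hom_{\bimod{\cC}{\cC}}(\cC_\Delta,\cC_\Delta) \xleftarrow{R_{\cC_\Delta}} CC^*(\cC)^{op}$ from the paragraph preceding the lemma, and use the stated fact that $L^1_{\cC_\Delta}$ and $-R^1_{\cC_\Delta}$ are chain homotopic. Applying $H(L^1_{\cC_\Delta})$ to $e_\cC$ gives, by hypothesis, the unit of the unital algebra $H^0(hom_{\bimod{\cC}{\cC}}(\cC_\Delta,\cC_\Delta))$; applying $H(R^1_{\cC_\Delta})$ gives minus the same class, by the homotopy. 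Second, I would identify the length-$(0|1|0)$ component of the bimodule morphism $L^1_{\cC_\Delta}(e)$ (respectively $R^1_{\cC_\Delta}(e)$) with the map $hom_\cC(C_0,C_1) \to hom_\cC(C_0,C_1)$ given, up to sign and $A_\infty$-homotopy terms involving higher $e^{\ge 1}$, by $\alpha \mapsto \pm\mu^2(e^0,\alpha)$ (respectively $\alpha \mapsto \pm \mu^2(\alpha,e^0)$), referring to the explicit formulas in \cite[Lemma 4.4]{Sheridan_formulae}. Since the identity endomorphism of $\cC_\Delta$ induces the identity map on each morphism space, the fact that $H(L^1_{\cC_\Delta})(e_\cC)$ and $-H(R^1_{\cC_\Delta})(e_\cC)$ both equal the unit class translates into: left multiplication by $[e^0]$ and right multiplication by $[e^0]$ each act as the identity on $H^*(hom_\cC(C_0,C_1),d_\cC+\mu^1)$. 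That is precisely the definition of $[e^0]$ being a two-sided cohomological unit for the object, so $H(\cC)$ is unital, as desired.

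The main obstacle I anticipate is the bookkeeping in the second step: matching the $(0|1|0)$-component of $L^1_{\cC_\Delta}(e)$ (and of $R^1_{\cC_\Delta}(e)$) with $\mu^2$-multiplication by $e^0$, keeping track of the shift conventions $\sigma(\cC)^\vee \cC_\Delta$ and the Koszul signs built into \eqref{eq:assoc_id}, and verifying that the passage to cohomology kills the correction terms coming from $e^{\ge 1}$ and from the bimodule differential. Once the correspondence ``$H(L^1_{\cC_\Delta})(e_\cC) = $ unit $\iff$ $[e^0]$ is a left unit in $H(\cC)$'' is pinned down (and symmetrically for $R$), the lemma follows immediately; none of this requires new ideas beyond carefully invoking \cite[Lemmas 4.4, 4.6]{Sheridan_formulae} and the definition of the cohomology category given after Definition \ref{def:Ainf_cat}.
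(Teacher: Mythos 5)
Your plan matches the paper's own proof almost step for step: define $e_C$ as the restriction of $e_\cC$ to $H^0(hom_\cC(C,C))$, identify $\mu^2(e_C,-)$ (resp. $\mu^2(-,e_D)$) with the map induced on cohomology by $L^1_{\cC_\Delta}(e_\cC)$ (resp. $R^1_{\cC_\Delta}(e_\cC)$), and invoke the hypothesis together with the chain homotopy between $L^1_{\cC_\Delta}$ and $-R^1_{\cC_\Delta}$. The one point worth tightening is the phrase ``up to $A_\infty$-homotopy terms involving higher $e^{\ge 1}$'' applied to the $(0|1|0)$-component: that component of $L^1_{\cC_\Delta}(e)$ is \emph{exactly} $\mu^2(e^0,-)$ (no inputs from $\cC$ are available to feed into $e^{\ge 1}$); what you actually need is that the $(0|1|0)$-component of a bimodule coboundary $\partial h$ is the ordinary $\operatorname{Hom}$-complex coboundary of $h^{0|1|0}$, so that ``$L^1_{\cC_\Delta}(e)$ cohomologous to $\operatorname{id}$'' descends to ``$\mu^2(e_C,-)=\operatorname{id}$ on $H^*(hom_\cC(C,D))$''. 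You do flag this check in your final paragraph, so the argument is complete.
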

\begin{proof}
    For each object $C$ of $\cC$, we define the c-unit $e_C$ to be the image of $e_\cC$ under the restriction map
    $$HH^0(\cC) \to H^0(hom_\cC(C,C)).$$ 
    To show that this is indeed a c-unit, we observe that for any other object $D$ of $\cC$, the endomorphism of $H(hom_\cC(C,D))$ given by $\mu^2(e_C,-)$ (respectively, $\mu^2(-,e_D)$) coincides with the endomorphisms induced on the level of cohomology by $L^1_{\cC_\Delta}(e_\cC)$ (respectively, $R^1_{\cC_{\Delta}}(e_\cC)$). 
    This coincides with the identity, by the hypothesis that $L^1_{\cC_\Delta}(e_\cC) = \id$ on the level of cohomology (respectively, by the same hypothesis together with the fact that $L^1_{\cC_\Delta}$ and $-R^1_{\cC_\Delta}$ are chain homotopic).   
\end{proof}

\begin{rem}
    Note that unlike the notion of c-unitality, the notion of an HH-unit makes sense even for a curved $A_\infty$ category.
\end{rem}

\begin{lem}\label{lem:Hcoh_C_Cbc}
    For any $A_\infty$ category $\cC$, there exists an algebra homomorphism
    $$F^*:HH^*(\cC) \to HH^*(\cC^{bc}).$$
    It is given on the chain level by the formula
    \begin{align*}
        \alpha &\mapsto \alpha^{bc}\\
        \alpha^{bc}(c_1,\ldots,c_s) &= \sum \alpha(b_0,\ldots,b_0,c_1,b_1,\ldots,b_{s-1},c_s,b_s,\ldots,b_s)
    \end{align*}
    where $c_i \in hom_{\cC^{bc}}((C_{i-1},b_{i-1}),(C_i,b_i))$. 
\end{lem}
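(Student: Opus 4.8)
The plan is to build the map $F^*$ as the pullback along the $A_\infty$ functor $F:\cC^{bc} \to \cC$ constructed in Lemma \ref{lem:Cbc_fun}, and then verify that this particular functor — having $F^0 = b$, $F^1 = \id$, and $F^{\ge 2} = 0$ — produces precisely the stated closed formula. First I would recall the general construction: any (non-unital) $A_\infty$ functor $F:\cC^{bc} \to \cC$ induces a pullback on Hochschild cochains. Concretely, given $\alpha \in CC^*(\cC)$, one sets
$$\alpha^{bc} := \alpha\{F,\ldots,F\}$$
(the brace operation \eqref{eq:brace} applied with all the $\varphi_i$ equal to the pre-functor $F$, regarded as an element of $hom_{\fprefun(\cC^{bc},\cC)}(F,F)$), with the sign convention $\sigma(\alpha^{bc}) = \sigma(\alpha)$ since $\sigma(F) = \sigma(1)$ contributes trivially (here one uses that $F$ is a degree-zero element of $CC^0$). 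The fact that this is a chain map — i.e., intertwines the Hochschild differentials $\partial$ on $CC^*(\cC)$ and on $CC^*(\cC^{bc})$ — is a formal consequence of the $A_\infty$ functor equation $d_{CC^*}(F) + \mu_\cC\{\}_F = F\{\mu_{\cC^{bc}}\}$ together with the coderivation/brace identities; likewise the multiplicativity $(\alpha \cup \beta)^{bc} = \alpha^{bc} \cup \beta^{bc}$ follows from compatibility of the brace operations with the cup product $\mu\{\psi,\varphi\}$ and the functor equation. I would either cite this from \cite{Seidel_nat_transf} or \cite{Sheridan_formulae}, or sketch the one-line coherence-theorem argument.

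Next I would carry out the unwinding that turns the abstract formula $\alpha\{F,\ldots,F\}$ into the explicit sum
$$\alpha^{bc}(c_1,\ldots,c_s) = \sum \alpha(b_0,\ldots,b_0,c_1,b_1,\ldots,b_{s-1},c_s,b_s,\ldots,b_s).$$
The point is that, for this specific $F$, the components $F^*$ appearing in \eqref{eq:brace} are extremely simple: $F^0_{(C,b)} = b$, $F^1 = \id$, and all higher $F^s$ vanish, so each "$F^*(\ldots)$" slot in the brace formula is either an inserted copy of a bounding cochain $b_i$ (from $F^0$, which can be inserted any number of times consecutively) or a single morphism $c_i$ passed through by $F^1 = \id$. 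The sum over the $j_i$ and the filtration points $s_i^j, t_i$ in \eqref{eq:brace} then collapses exactly to: between consecutive genuine inputs $c_i$ and $c_{i+1}$ one inserts some number $\ge 0$ of copies of $b_i$, before $c_1$ some number $\ge 0$ of copies of $b_0$, and after $c_s$ some number $\ge 0$ of copies of $b_s$. That is precisely the displayed formula (with the "$\ldots$" denoting a possibly-empty, possibly-infinite string of $b_i$'s). Convergence of the (a priori infinite) sum is guaranteed exactly as in the discussion following \eqref{eq:brace}: each $b_i \in \filt{1}\cC$ and the filtration on the target is complete. I would also note that the sign works out to $+1$ because all the inserted $F^0 = b$ and $F^1 = \id$ carry degree that matches up trivially under the sign convention $\sigma(\alpha^{bc}) = \sigma(\alpha)$, and the Koszul bookkeeping is handled automatically by the coherence theorem as set up in the preliminaries.

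The main obstacle I anticipate is purely bookkeeping rather than conceptual: making the identification between the index set of \eqref{eq:brace} and the "insert strings of $b_i$" description fully precise, and checking that no sign is hidden in the identification $\varphi_i = F$ for all $i$ — in particular that treating the single pre-functor $F$ simultaneously as the "base" functors $F_0 = \cdots = F_k = F$ and as the "perturbing" Hochschild cochains $\varphi_i = F$ is consistent with the conventions of Section \ref{sec:alg_prel}. Since $F$ is concentrated in non-negative length with $F^0 \in \filt{1}$ and $\sigma(F) = \sigma(1)$, these subtleties all resolve trivially, but it is worth one careful sentence. A secondary (minor) point is to confirm that $\alpha^{bc}$ as defined is genuinely $R$-linear and lands in $CC^*(\cC^{bc})$, which is immediate from $R$-linearity of $\alpha$ and of each $F^s$, and that the morphism spaces of $\cC^{bc}$ are those of $\cC$ so the codomain makes sense. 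The proof will therefore be short: invoke Lemma \ref{lem:Cbc_fun} for the functor, invoke the general pullback-on-$HH^*$ construction for the map and its algebra-homomorphism property, and devote the bulk of the write-up to the explicit unwinding of the brace formula for this $F$.
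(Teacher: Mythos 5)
Your approach matches the paper's almost exactly: both define the map by pulling Hochschild cochains back along the functor $F:\cC^{bc}\to\cC$ from Lemma \ref{lem:Cbc_fun}, unwind the explicit formula by observing that $F^0=b$, $F^1=\id$, $F^{\ge 2}=0$ forces each slot to contribute either a single $b_i$ or a single $c_i$, and cite completeness for convergence of the resulting infinite sum. The paper phrases the algebra-homomorphism property by routing through the left and right composition functors $\cL_F$ and $\cR_F$ of \cite{Ganatra2023integrality}: $\cL_F:CC^*(\cC^{bc})\to hom_{\fun(\cC^{bc},\cC)}(F,F)$ is an $A_\infty$ algebra map which, for this particular $F$, has $\cL^1_F=\id$ and $\cL^{i\ne 1}_F=0$ and so is an identification, and then $\cR_F$ provides the map with $\cR^1_F(\alpha)=\alpha^{bc}$. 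This is the packaged version of exactly what you are proposing to check by hand (chain-map-ness and compatibility with $\cup$ from the $A_\infty$ functor equation and the brace identities), and it is also what silently resolves the "secondary point" you flagged at the end — why $\alpha\{\}_F$ lands in $CC^*(\cC^{bc})$ with its own $A_\infty$ algebra structure rather than merely in $CC^*(\cC^{bc},F^*\sigma(\cC)^\vee\cC_\Delta)$. One small notational caution: what you write as $\alpha\{F,\ldots,F\}$ should be the length-zero brace $\alpha\{\}_F$ of \eqref{eq:brace} with $k=0$ (no $\varphi_i$'s, all interleaving slots occupied by $F$); inserting $F$ in the $\varphi_i$ positions as well, with $k\ge 1$, would double-count and give a different operation. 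Your subsequent unwinding is however correct for $\alpha\{\}_F$, so this is a slip in notation, not in substance.
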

\begin{proof}
    Let $F:\cC^{bc} \to \cC$ be the filtered $A_\infty$ functor of Lemma \ref{lem:Cbc_fun}. 
    We have $A_\infty$ algebra homomorphisms
    $$CC^*(\cC^{bc}) = hom_{\ffun(\cC^{bc},\cC^{bc})}(\id,\id) \xrightarrow{\cL_F} hom_{\ffun(\cC^{bc},\cC)}(F,F) \xleftarrow{\cR_F} hom_{\ffun(\cC,\cC)}(\id,\id) = CC^*(\cC),$$
    defined using the left and right $A_\infty$ composition functors, see \cite[Section A.3]{Ganatra2023integrality}. 
    We then observe that $\cL^1_F = \id$ and $\cL^i_F = 0$ for $i \neq 0$ by inspection; and $\cR^1_F(\alpha) = \alpha^{bc}$ while $\cR^i_F = 0$ by definition. 
    The result follows. 
    (Note that the infinite sum defining $\cR^1_F$ converges using the completeness of morphism spaces.)
\end{proof}

\begin{lem}
    If $e_\cC$ is an HH-unit for $\cC$, then its image under the map $HH^0(\cC) \to HH^0(\cC^\bc)$ is an HH-unit for $\cC^\bc$. 
    In particular, $\cC^\bc$ is c-unital.
\end{lem}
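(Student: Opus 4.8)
The plan is to take $e_{\cC^\bc} := F^*(e_\cC)$, the image of $e_\cC$ under the algebra homomorphism $F^*\colon HH^*(\cC)\to HH^*(\cC^\bc)$ of Lemma~\ref{lem:Hcoh_C_Cbc}, as the candidate HH-unit; the final sentence, c-unitality of $\cC^\bc$, will then be immediate from Lemma~\ref{lem:CC_c_unit}, since $\cC^\bc$ carries the trivial filtration by construction. By definition of an HH-unit, what must be shown is that $H(L^1_{(\cC^\bc)_\Delta})(e_{\cC^\bc})$ equals the class of $\id_{(\cC^\bc)_\Delta}$ in $H^0\big(hom_{\bimod{\cC^\bc}{\cC^\bc}}((\cC^\bc)_\Delta,(\cC^\bc)_\Delta)\big)$. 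Recall from Lemma~\ref{lem:Hcoh_C_Cbc} that a chain-level representative of $e_{\cC^\bc}$ is $\alpha^\bc$, for any Hochschild cocycle $\alpha$ representing $e_\cC$.

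First I would establish the identification of bimodules $(F\otimes F)^*\cC_\Delta = (\cC^\bc)_\Delta$ over $\cC^\bc$, where $F\colon\cC^\bc\to\cC$ is the $A_\infty$ functor of Lemma~\ref{lem:Cbc_fun}. The underlying modules agree, since $(F\otimes F)^*\cC_\Delta((C_0,b_0),(C_1,b_1)) = hom_\cC(C_0,C_1) = hom_{\cC^\bc}((C_0,b_0),(C_1,b_1))$; and because $F^1 = \id$, $F^{\ge 2} = 0$ and $F^0_{(C,b)} = b$, the $F$-twisted structure maps $(\mu_{\cC_\Delta})_F\{;\id;\}_F$ are exactly the operations obtained from $\mu_\cC$ by inserting the bounding cochains $b_i$ among the inputs, which by Definition~\ref{def:prebc} are the structure maps of $(\cC^\bc)_\Delta$ (the Koszul signs matching automatically via the coherence theorem). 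Since the pullback $(F\otimes F)^*\colon\bimod{\cC}{\cC}\to\bimod{\cC^\bc}{\cC^\bc}$ is a differential graded functor, it therefore sends $\id_{\cC_\Delta}$ to $\id_{(\cC^\bc)_\Delta}$.

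The heart of the argument is the naturality square
$$\begin{tikzcd}
CC^*(\cC) \ar[r,"L^1_{\cC_\Delta}"] \ar[d,"\alpha\mapsto\alpha^\bc"'] & hom_{\bimod{\cC}{\cC}}(\cC_\Delta,\cC_\Delta) \ar[d,"(F\otimes F)^*"'] \\
CC^*(\cC^\bc) \ar[r,"L^1_{(\cC^\bc)_\Delta}"] & hom_{\bimod{\cC^\bc}{\cC^\bc}}((\cC^\bc)_\Delta,(\cC^\bc)_\Delta),
\end{tikzcd}$$
which I expect to commute on the nose. Unwinding the definition of $L_{\cC_\Delta}$ from \cite[Lemma~4.4]{Sheridan_formulae}, the bimodule endomorphism $L^1_{\cC_\Delta}(\alpha)$ is obtained by inserting $\alpha$ among the left-module inputs of the structure maps of $\cC_\Delta$; pulling this back along $F$ inserts the bounding cochains $b_i$ throughout, and this is precisely $L^1_{(\cC^\bc)_\Delta}(\alpha^\bc)$, whose ambient structure maps are themselves the $b_i$-twisted operations. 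Conceptually this is the compatibility of the left and right $A_\infty$-composition functors $\cL_F,\cR_F$ of \cite[Section~A.3]{Ganatra2023integrality}---which were used to build $F^*$ in the proof of Lemma~\ref{lem:Hcoh_C_Cbc}---with bimodule pullback along $F$; convergence of the possibly infinite $b_i$-insertion sums is guaranteed by completeness of the morphism spaces. Granting the square, and using that $[L^1_{\cC_\Delta}(\alpha)] = [\id_{\cC_\Delta}]$ because $\alpha$ represents an HH-unit, one computes
$$H(L^1_{(\cC^\bc)_\Delta})(e_{\cC^\bc}) = \big[(F\otimes F)^*\big(L^1_{\cC_\Delta}(\alpha)\big)\big] = H\big((F\otimes F)^*\big)\big([\id_{\cC_\Delta}]\big) = [\id_{(\cC^\bc)_\Delta}],$$
the last equality by the previous paragraph. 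Hence $e_{\cC^\bc}$ is an HH-unit for $\cC^\bc$, and $\cC^\bc$ is c-unital by Lemma~\ref{lem:CC_c_unit}.

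The main obstacle is the commutativity of the naturality square: one has to pin down, from the definitions in \cite{Sheridan_formulae,Ganatra2023integrality}, the exact forms of $L_{\cC_\Delta}$ and of the bimodule pullback functor $(F\otimes F)^*$, compare them term by term, and check both the signs (via the coherence theorem) and the convergence of the $b_i$-insertion sums. The identification of diagonal bimodules and the two displayed computations are then formal consequences.
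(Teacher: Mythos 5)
Your proposal is correct and follows essentially the same approach as the paper: it rests on the same commutative square relating $F^*$ (equivalently $\alpha\mapsto\alpha^\bc$), $L^1_{\cC_\Delta}$, $L^1_{\cC^\bc_\Delta}$, and $(F\otimes F)^*$, together with the observation that $(F\otimes F)^*\id_{\cC_\Delta}=\id_{\cC^\bc_\Delta}$. You spell out a few steps the paper leaves implicit (the identification $(F\otimes F)^*\cC_\Delta=\cC^\bc_\Delta$, and why the square commutes), but the argument is the same.
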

\begin{proof}
    Follows from the fact that the diagram
    $$\begin{tikzcd}
        CC^*(\cC) \ar[r,"F^*"] \ar[d,"L^1_{\cC_\Delta}"] & CC^*(\cC^\bc) \ar[d,"L^1_{\cC^\bc_\Delta}"] \\
        hom_{\bimod{\cC}{\cC}}(\cC_\Delta,\cC_\Delta) \ar[r,"(F \otimes F)^*"]  & hom_{\bimod{\cC^\bc}{\cC^\bc}}(\cC^\bc_\Delta,\cC^\bc_\Delta) 
    \end{tikzcd}$$
    commutes on the chain level, and $(F \otimes F)^* \id = \id$. 
\end{proof}

\subsection{Hochschild homology}

Let $\cC$ be an $A_\infty$ category, and $\cM$ a $\cC$-bimodule. 
We define the unfiltered Hochschild chains with coefficients in $\cM$,
$$CC_*(\cC,\cM) := \bigoplus_{C_0,\ldots,C_s} \cM(C_s,C_0) \otimes \cC(C_0,\ldots,C_s) $$
and the filtered Hochschild chains 
$$fCC_*(\cC,\cM) := \overline{CC_*(\cC,\cM)}.$$
We define $fCC_*(\cC) := \sigma(CC_*)fCC_*(\cC,\sigma(\cC)^\vee\cC_\Delta)$, where $\sigma(CC_*) = \sigma(1)$. 
We denote a generator of $CC_*$ by $m[c_1|\ldots|c_s] := m \otimes c_1 \otimes \ldots \otimes c_s$. 

We define a map $b: \sigma(b) CC_*(\cC,\cM) \to  CC_*(\cC,\cM)$ of degree $1$, by
\begin{multline}
\label{eqn:ccdiffb}
b(m[c_1|\ldots|c_s]) :=  d_{CC_*}(m[c_1|\ldots|c_s]) + \sum_{j,k} \mu_\cM^*(c_{k+1},\ldots,c_s,m,c_1,\ldots,c_j)[c_{j+1}|\ldots|c_k] \\
+ \sum_{j,k} m[c_1|\ldots|\mu^*_\cC(c_{j+1},\ldots,c_k)|\ldots|c_s],
\end{multline}
where $\sigma(b) = \sigma(\mu)=\sigma(d)$. 
As $b$ respects the filtration, it descends to a map on $fCC_*(\cC,\cM)$. 
It is a differential, and its cohomology is called the filtered Hochschild homology, $fHH_*(\cC,\cM)$. 
If our coefficient ring $(\G,R,\filt{\bullet},d)$ has $\filt{1} =0$, then $CC_* = fCC_*$, and we write $HH_*$ instead of $fHH_*$. 

Filtered Hochschild homology is functorial in the bimodule $\cM$, in the sense that a closed morphism of bimodules $\rho: \cM \to \cN$ induces a chain map $fHH_*(\cC,\cM) \to fHH_*(\cC,\cN)$, defined by 
$$\rho_*(m[c_1|\ldots|c_s]) = \sum \rho^{s-k|1|j}(c_{k+1},\ldots,c_s,m,c_1,\ldots,c_j)[c_{j+1}|\ldots|c_k].$$
It is also functorial in $\cC$, in the sense that a filtered $A_\infty$ functor $F:\cC \to \cD$ induces a natural chain map
\begin{align}
\label{eq:F*}
F_*: fCC_*(\cC,F^*\cM) & \to fCC_*(\cD,\cM)\\
\nonumber F_*(m[c_1|\ldots|c_s]) &= \sum_{k,j_1,\ldots,j_k} m[F^*(c_1,\ldots,c_{j_1})|\ldots|F^*(c_{j_k+1},\ldots,c_s)].
\end{align}
In the case $\cM = \sigma(\cD)^\vee\cD_\Delta$, there exists a morphism of $\cC$-bimodules, $F_*:\sigma(\cC)^\vee \cC_\Delta \to F^*\sigma(\cD)^\vee \cD_\Delta$, with structure maps given by $(F_*)^{s|1|t} = F^{s+1+t}$, where $\sigma(\cC) = \sigma(\cD)$. 
This induces a morphism $F_*:fHH_*(\cC) \to fHH_*(\cD)$ (see e.g. \cite[Lemma 3.15]{Sheridan_formulae} in the unfiltered case). 

We now define a map
\[ b^{1|1}: CC^*(\cC,\sigma(\cC)^\vee \cC_\Delta) \otimes fCC_*(\cC) \to fCC_*(\cC)\]
\begin{multline}
\label{eqn:bp1}
 b^{1|1}(\varphi,c_0[c_1,\ldots,c_s]) := \\
 \sum_{j,k,\ell,m} \mu^*(c_{k+1},\ldots,c_\ell,\varphi^*(\ldots,c_m),\ldots,c_s,c_0,c_1,\ldots,c_j)[\ldots|c_k],\\
\end{multline}
where $\sigma(b^{1|1}) = \sigma(\mu)$. 
Together with $b$, this forms part of a left $A_\infty$-module structure of $fCC_*(\cC)$ over $CC^*(\cC)$, by \cite[Theorem 1.9]{Getzler_GM}.  
This descends to an $HH^*(\cC)$-module structure on $fHH_*(\cC)$: we denote the module action by
\begin{align*}
HH^*(\cC) \otimes fHH_*(\cC) & \to fHH_*(\cC) \\
\alpha \otimes a & \mapsto \alpha \cap a.
\end{align*}
Explicitly, $\cap$ is induced by the map
$$b^{1|1}: \sigma(\mu) \otimes \sigma_1^\vee CC^*(\cC) \otimes fCC_*(\cC)  \to fCC_*(\cC)$$
together with $\sigma(\mu) = \sigma_1$.

\begin{lem}\label{lem:Hhom_C_Cbc}
   For any $A_\infty$ category $\cC$, there exists a homomorphism of $HH^*(\cC)$-modules
   $$F_*:HH_*(\cC^{bc}) \to fHH_*(\cC),$$
   induced by the chain-level maps
   \begin{align*}
       \cC^{bc}((C_0,b_0),\ldots,(C_s,b_s),(C_0,b_0)) & \to \cC(C_0,\ldots,C_s,C_0)\\
       c_0[c_1|\ldots|c_s]) & \mapsto \sum c_0[b_1|\ldots|b_1|c_1|b_2|\ldots|b_{s-1}|c_s|b_0|\ldots|b_0].
   \end{align*}
\end{lem}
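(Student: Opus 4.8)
The strategy parallels the proof of Lemma~\ref{lem:Hcoh_C_Cbc}, replacing the functoriality of Hochschild \emph{cohomology} with that of Hochschild \emph{homology}. Let $F:\cC^{bc}\to\cC$ be the $A_\infty$ functor of Lemma~\ref{lem:Cbc_fun}, with $F^0_{(C,b)}=b$, $F^1=\id$ and $F^{\ge 2}=0$. As recalled around \eqref{eq:F*}, such a functor induces a map on Hochschild chains: the morphism of $\cC^{bc}$-bimodules $\sigma(\cC^{bc})^\vee\cC^{bc}_\Delta\to F^*\sigma(\cC)^\vee\cC_\Delta$ with structure maps $(F_*)^{s|1|t}=F^{s+1+t}$ — which, since $F^{\ge 2}=0$, reduces to the identity in its sole nonzero component $(F_*)^{0|1|0}=F^1$ — induces the identity on $CC_*$; composing with the chain map \eqref{eq:F*} attached to $F$ then gives $F_*:CC_*(\cC^{bc})=fCC_*(\cC^{bc})\to fCC_*(\cC)$, hence $F_*:HH_*(\cC^{bc})\to fHH_*(\cC)$. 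Here we use that $\cC^{bc}$ carries the trivial filtration, so no completion is needed on the source, while the completion on the target absorbs the infinite sums produced by $F^0$.

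Unwinding \eqref{eq:F*} for this particular $F$ yields the displayed chain-level formula: the formula partitions $c_1,\ldots,c_s$ into consecutive blocks and applies $F^*$ to each, and the only nonzero contributions come from empty blocks (contributing $F^0=b$) and singleton blocks (contributing $F^1=\id$); keeping track of the object over which each gap lies, the empty blocks insert precisely the bounding cochains $b_i$ in the positions indicated. Convergence of the resulting series in $fCC_*(\cC)=\overline{CC_*(\cC)}$ is then clear: each inserted $b_i$ lies in $\filt{1}\cC$, so a summand containing $p$ bounding cochains lies in $\filt{p}CC_*(\cC)$, and such summands tend to $0$.

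It remains to verify that $F_*$ is a map of $HH^*(\cC)$-modules, where $HH_*(\cC^{bc})$ carries the $HH^*(\cC)$-module structure obtained from the algebra map $F^*:HH^*(\cC)\to HH^*(\cC^{bc})$ of Lemma~\ref{lem:Hcoh_C_Cbc}; that is, $F_*(F^*(\alpha)\cap a)=\alpha\cap F_*(a)$ in $fHH_*(\cC)$. This is the expected naturality of the cap product (the module operation $b^{1|1}$ of \eqref{eqn:bp1}) with respect to $A_\infty$ functors. I would deduce it by observing, as in the proof of Lemma~\ref{lem:Hcoh_C_Cbc}, that the maps $\cL_F,\cR_F$ on the cochain side and the chain map \eqref{eq:F*} on the homology side are all instances of a single functoriality package compatible with $b^{1|1}$; the identity then holds after passing to (co)homology. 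Since $F^{\ge 2}=0$ and $\cL_F$ is concentrated in length $1$ with $\cL^1_F=\id$, this comparison is nearly strict, and one can alternatively supply a direct chain-level nullhomotopy.

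I expect this last point to be the main obstacle: the module-homomorphism property is the only part with genuine content, requiring the naturality of the closed--open/cap-product structure under $A_\infty$ functors in the present filtered, $\G$-graded, differential-coefficient-ring setting. The construction of $F_*$ and the identification of its explicit formula, including convergence, are otherwise a routine application of the functoriality of Hochschild homology together with an unwinding of definitions.
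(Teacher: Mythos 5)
Your construction of $F_*$ follows the paper exactly: use the curved functor $F$ of Lemma~\ref{lem:Cbc_fun}, observe $(F_*)^{s|1|t}=F^{s+1+t}$ reduces to the identity because $F^{\ge 2}=0$, unwind \eqref{eq:F*} to see that empty blocks insert $b_i$'s and singletons pass through, and note that the inserted $b_i \in \filt{1}$ force convergence in $fCC_*(\cC)$. This part is right, including the completion bookkeeping (the paper phrases it as precomposing with $CC_*(\cC^{\bc}) \to fCC_*(\cC^{\bc})$, but the content is the same).

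Where you stop short is the module-homomorphism property. You identify the correct statement, $F_*(F^*(\alpha)\cap a)=\alpha\cap F_*(a)$, but then defer to an unspecified ``functoriality package'' and suggest it may require ``a direct chain-level nullhomotopy,'' flagging this as ``the main obstacle.'' In fact no homotopy is needed: because of the very special form of $F$ (with $F^0 = b$, $F^1 = \id$, $F^{\ge 2} = 0$), the identity
$$F_*\bigl(b^{1|1}(\varphi^{\bc},\gamma)\bigr) = b^{1|1}(\varphi,F_*\gamma)$$
holds \emph{strictly} on the chain level, directly by comparing the two sides after unwinding \eqref{eqn:bp1}: inserting $b$'s and then applying $\varphi^{\bc}$ (which itself inserts $b$'s around its own inputs) produces the same sum as applying $\varphi$ after $F_*$ has inserted $b$'s everywhere. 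This is the paper's one-line closing observation, and it is what makes the lemma essentially formal rather than a genuine naturality statement requiring a homotopy. Your proposal correctly diagnoses that $F^{\ge 2}=0$ should make the comparison ``nearly strict,'' but does not notice that it is \emph{exactly} strict, and so leaves the central step unresolved.
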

\begin{proof}
    Let $F:\cC^{bc} \to \cC$ be the curved $A_\infty$ functor of Lemma \ref{lem:Cbc_fun}. 
    Then the map $F_*:fCC_*(\cC^\bc) \to fCC_*(\cC)$ coincides with the formula given, by inspection. 
    Pre-composing with the natural map $CC_*(\cC^\bc) \to fCC_*(\cC^\bc)$ gives the desired chain map.

    The fact that this map respects module structures follows from the observation that
    $$F_*(b^{1|1}(\varphi^{bc},\gamma)) = b^{1|1}(\varphi,F_*\gamma)$$
    on the chain level, as follows by inspection of the formulae.
\end{proof}

\subsection{Inverse dualizing bimodule and generation}\label{sec:Cshriek}

We now define the \emph{one-pointed inverse dualizing bimodule} $\cC^!$ associated to a filtered $A_\infty$ category $\cC$ (as opposed to the two-pointed version defined in \cite{Ganatrathesis}, where it was denoted by $\cC^!_\Delta$). 

\begin{defn}
We define 
\begin{align*}
    \cC^!(C_0,C_1) &= \prod_{D_0,\ldots,D_s}\sigma(\cC^!) Hom^*(\cC(D_0,\ldots,D_s),\cC(D_0,C_1) \otimes \cC(C_0,D_s))
\end{align*}
where $\sigma(\cC^!) = \sigma(2)$. 
We define the structure maps $\mu_{\cC^!} = l\mu_{\cC^!} + c\mu_{\cC^!} + r\mu_{\cC^!}$ (`$l$' for `left', `$c$' for `centre', `$r$' for `right'), where
\begin{multline}
    l\mu_{\cC^!}^{s|1|0}(c_1,\ldots,c_s,\varphi)(d_1,\ldots,d_t) = \\ \sum_j (\mu^*_\cC(c_1,\ldots,c_s,-,d_{j+1},\ldots,d_t) \otimes \id) \circ \varphi^*(d_1,\ldots,d_j),
    \end{multline}
\begin{multline}
    c\mu_{\cC^!}^{0|1|0}(\varphi)(d_1,\ldots,d_t) = \sum_{j,k}  \varphi^*(d_1,\ldots,d_j,\mu_\cC^*(\ldots,d_k),\ldots,d_t),
    \end{multline}
\begin{multline}
    r\mu_{\cC^!}^{0|1|s}(c_s,\ldots,c_1,\varphi)(d_1,\ldots,d_t) =\\ (\id \otimes \mu^*_\cC(d_1,\ldots,d_{i-1},-,c_s,\ldots,c_1)) \circ \varphi^*(d_i,\ldots,d_t),
\end{multline}
where $\sigma(\mu_{\cC^!}) = \sigma(\mu)\sigma(\varphi)$.  
\end{defn}

\begin{lem}
The map
$$\bar\mu: fCC_*(\cC,\cC^!) \to CC^*(\cC)$$
defined by
\begin{multline}
    \bar\mu(\varphi[c_1|\ldots|c_s])(d_1,\ldots,d_t) = \\
    \sum \mu^*(d_{k+1},\ldots,d_t,-,c_1,\ldots,c_s,-,d_1,\ldots,d_j) \circ \varphi(d_{j+1},\ldots,d_k)
\end{multline}
where $\sigma(CC^*)\sigma(\mu) = \sigma(\cC^!)$, is a chain map. 
In particular, it induces a map
$$\bar \mu: fHH_*(\cC,\cC^!) \to HH^*(\cC).$$
\end{lem}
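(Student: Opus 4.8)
The plan is to verify directly that $\bar\mu$ is a chain map, i.e. that
$$\bar\mu \circ b - \partial_{CC^*} \circ \bar\mu = 0,$$
where $b$ is the Hochschild differential \eqref{eqn:ccdiffb} on $fCC_*(\cC,\cC^!)$ (which involves $d$, $\mu_{\cC^!}$, and the internal $\mu_\cC$ insertions into the $[c_1|\ldots|c_s]$-slots), and $\partial_{CC^*}$ is the Hochschild differential $\partial\alpha = d(\alpha) + \mu\{\alpha\} - \alpha\{\mu_\cC\}$ on $CC^*(\cC)$. The strategy is the standard one: expand both composites as sums over the combinatorial configurations of $\mu_\cC$-insertions acting on the inputs $d_1,\ldots,d_t$, the bimodule inputs $c_1,\ldots,c_s$, the two ``$-$'' slots, and the internal structure of $\varphi \in \cC^!$, and then show that the terms cancel in pairs via the defining $A_\infty$ relation $d_{CC^*}(\mu_\cC) + \mu_\cC\{\mu_\cC\} = 0$ together with the bimodule relation for $\mu_{\cC^!}$.

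Concretely, I would first unwind $\bar\mu(b(\varphi[c_1|\ldots|c_s]))$. The term $b$ has three groups of summands: (i) the coefficient differential $d_{CC_*}$ applied to $\varphi$ and the $c_i$; (ii) the terms $\mu_{\cC^!}^*(c_{k+1},\ldots,m,c_1,\ldots,c_j)[\ldots]$ where $m=\varphi$ plays the role of the bimodule element — these split into the $l\mu_{\cC^!}$, $c\mu_{\cC^!}$, $r\mu_{\cC^!}$ pieces; (iii) internal $\mu_\cC$-insertions $c_i \mapsto \mu_\cC^*(\ldots)$. Applying $\bar\mu$ to each and comparing with the three terms of $\partial_{CC^*}\bar\mu$, one sees: the $c\mu_{\cC^!}$-contribution of (ii) produces exactly the configurations where one $\mu_\cC$ inside $\varphi$ is adjacent to another $\mu_\cC$; the $l\mu_{\cC^!}$ and $r\mu_{\cC^!}$ contributions of (ii), together with the $\mu\{\bar\mu(\cdots)\}$ term, account for configurations where the ``outer'' $\mu_\cC$ (the one with the two $-$ slots) either absorbs, or is absorbed into, another $\mu_\cC$; the (iii)-terms match $-\bar\mu(\cdots)\{\mu_\cC\}$; and the $d_{CC_*}$-terms (i) match $d(\bar\mu(\cdots))$ via the Leibniz rule for $d$. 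Every remaining term is one where two $\mu_\cC$'s are composed, and these cancel in pairs by the $A_\infty$ relation, exactly as in the verification that $CC^*(\cC)$ is a complex.

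The main bookkeeping obstacle is the signs: one must check that the sign conventions bundled into $\sigma(\mu_{\cC^!}) = \sigma(\mu)\sigma(\varphi)$, $\sigma(b) = \sigma(\mu) = \sigma(d)$, and $\sigma(CC^*)\sigma(\mu) = \sigma(\cC^!) = \sigma(2)$ conspire so that the pairwise cancellations genuinely occur. Here the Koszul-sign-rule framework of Section~2.2 does most of the work: each term on both sides is the image, under the coherence theorem, of a composite of $\mu_\cC$'s, braidings, and the natural trivializations, and two terms that are built from the same underlying composite of operations (differing only in the order in which a pair of adjacent $\mu_\cC$'s is composed versus which one is ``outer'') automatically carry opposite signs by the $A_\infty$ relation. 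So rather than compute signs explicitly, I would phrase the proof as: the equation $\bar\mu \circ b = \partial_{CC^*}\circ\bar\mu$ is the image under the coherence theorem of the identity obtained by substituting $d_{CC^*}(\mu_\cC) + \mu_\cC\{\mu_\cC\} = 0$ and the $\cC^!$-bimodule equation into the expanded expressions, and invoke \cite[Theorem XI.1.1]{Maclane} to conclude. The last sentence of the lemma — that $\bar\mu$ descends to $fHH_*(\cC,\cC^!) \to HH^*(\cC)$ — is then immediate, since a chain map on complexes induces a map on cohomology; one only notes that $\bar\mu$ respects filtrations (automatic, as all module maps are filtered) so that it is well-defined on the completed Hochschild chains $fCC_*$.
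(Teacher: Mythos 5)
The paper states this lemma without proof, so there is no argument in the text to compare yours against; I'll assess your proposal on its own merits.

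Your overall strategy is the correct one: expand $\bar\mu\circ b$ and $\partial_{CC^*}\circ\bar\mu$, identify each summand as a composite of $\mu_\cC$'s, $\varphi$, $d$, braidings and trivializations, and then let the $A_\infty$ relation $d_{CC^*}(\mu_\cC)+\mu_\cC\{\mu_\cC\}=0$ and the $\cC^!$-bimodule relation do the cancellation, with the Koszul/coherence framework of Section~2.2 supplying the signs for free. That last part is genuinely what the sign conventions ($\sigma(b)=\sigma(\mu)=\sigma(d)$, $\sigma(\mu_{\cC^!})=\sigma(\mu)\sigma(\varphi)$, $\sigma(CC^*)\sigma(\mu)=\sigma(\cC^!)$) are designed to achieve, and your appeal to \cite[Theorem~XI.1.1]{Maclane} is the intended way to dispose of them. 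The final sentence, that a filtered chain map on $fCC_*$ induces a map on $fHH_*$, is indeed immediate.

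However, your description of \emph{which} terms match which is not quite right, and the imprecision is worth flagging because it is the entire non-sign content of the proof. You assert that the internal $\mu_\cC$-insertions into $[c_1|\ldots|c_s]$ (your group (iii)) ``match $-\bar\mu(\cdots)\{\mu_\cC\}$''. They do not: after applying $\bar\mu$, the (iii)-terms place a $\mu_\cC$ among the $c_i$-arguments of the outer $\mu^*$, whereas $-\bar\mu(\cdots)\{\mu_\cC\}$ places a $\mu_\cC$ among the $d_i$'s. Both are $\mu_\cC\{\mu_\cC\}$-type configurations, but with the inner $\mu_\cC$ occupying disjoint ranges of slots; neither cancels the other pairwise. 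Similarly, the $c\mu_{\cC^!}$-piece of the bimodule action does not produce ``one $\mu_\cC$ adjacent to another''; after $\bar\mu$ it produces a $\mu_\cC$ feeding into an input slot of $\varphi$, which is precisely the portion of $-\bar\mu(\cdots)\{\mu_\cC\}$ in which the inserted $\mu_\cC$ is routed by $\bar\mu$ into $\varphi$'s arguments. The correct picture is a single three-way cancellation: the Leibniz expansion of $d(\bar\mu(\cdots))$ produces a $d(\mu^*)$ term which, by the $A_\infty$ relation, becomes $-\mu^*\{\mu^*\}$ for the outer $\mu^*$; this, together with $\mu\{\bar\mu(\cdots)\}$, the $l\mu_{\cC^!}$- and $r\mu_{\cC^!}$-contributions, the (iii)-terms, and the remaining portion of $-\bar\mu(\cdots)\{\mu_\cC\}$, exhausts all possible positions for the inner $\mu_\cC$ relative to the outer $\mu^*$ and to $\varphi$, and the signed sum of that full exhaustive list is zero. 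The coherence theorem does not perform this enumeration for you — it only guarantees that once two terms are identified as the same composite of basic morphisms, their signs agree. You should either carry out the enumeration explicitly, or at least state the bijection between terms carefully; as written the matching does not close up.
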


\begin{lem}[Lemma 1.4 of \cite{Abouzaid2010a}]
If $\cC$ is an $A_\infty$ category over a coefficient ring $(\G,R,0,0)$ where $R$ is a field, and $\cB \subset \cC$ a full subcategory, such that an HH-unit lies in the image of the map
$$HH_0(\cB,\cC^!) \xrightarrow{\iota} HH_0(\cC,\cC^!) \xrightarrow{\bar \mu} HH^0(\cC), $$
where $\iota$ is the map induced by the inclusion of $\cB$ into $\cC$, then $\cB$ split-generates $\cC$.
\end{lem}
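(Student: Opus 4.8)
The plan is to adapt Abouzaid's original argument (\cite{Abouzaid2010a}) to the present framework. Since $\cC$ admits an HH-unit and the filtration on $R$ is trivial, Lemma~\ref{lem:CC_c_unit} shows $\cC$ is c-unital; I would therefore work in the homotopy category $\mathcal{D}$ of (right) $A_\infty$ $\cC$-modules, with its Yoneda embedding $X \mapsto \cY_X$, and reduce the statement to the following: for every object $X$ of $\cC$, the module $\cY_X$ is a homotopy retract, in $H^0(\mathcal{D})$, of an object of the pre-triangulated envelope $\mathrm{Tw}(\cB) \subset \mathcal{D}$ generated by the Yoneda images of the objects of $\cB$. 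By the Yoneda lemma this is exactly $X \in \Perf(\cB)$, i.e.\ that $\cB$ split-generates $\cC$; and since $R$ is a field, a homotopy idempotent on $\cY_X$ in $\mathcal{D}$ splits automatically, so it suffices to exhibit, for each $X$, a homotopy idempotent on $\cY_X$ that is homotopic to $\mathrm{id}_{\cY_X}$ and factors through $\mathrm{Tw}(\cB)$.

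Next I would unwind the hypothesis into such a factorisation. The role of the inverse dualizing bimodule $\cC^!$ is that it is the module-theoretic dual of the diagonal: a cycle $\gamma \in HH_0(\cB,\cC^!)$, after unpacking the definitions of $CC_*$ and $\cC^!$, is a finite sum $\sum \varphi[c_1|\cdots|c_s]$ with the $c_i$ morphisms among objects of $\cB$ and $\varphi \in \cC^!(B_s,B_0)$, and such a $\gamma$ packages into a chain-level morphism of $\cC$-bimodules, a ``coevaluation'' $\mathrm{coev}_\gamma : \cC_\Delta \to \mathcal{K}_\gamma$, where $\mathcal{K}_\gamma$ is the bimodule assembled from the Yoneda modules of the objects $B_i$ and the $\cC^!$-datum $\varphi$ (so that, as a module in either variable, $\mathcal{K}_\gamma$ is built from Yoneda images of objects of $\cB$). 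The previous lemma identifies $\bar\mu(\iota\gamma) \in HH^0(\cC)$ with the composite of $\mathrm{coev}_\gamma$ with an ``evaluation'' $\mathrm{ev} : \mathcal{K}_\gamma \to \cC_\Delta$ built from the $\mu_\cC$-operations and the $c_i$; and the characterisation of the HH-unit through $L^1_{\cC_\Delta}$ turns the hypothesis $\bar\mu(\iota\gamma) = e_\cC$ into the statement that $\mathrm{ev}\circ\mathrm{coev}_\gamma$ is chain-homotopic to the identity endomorphism of the diagonal bimodule $\cC_\Delta$.

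Finally I would fix $X$ and apply the tensoring functor $(-) \otimes_\cC \cY_X$ (equivalently, restrict the bimodule-level homotopy to a single pair of objects): this presents $\mathrm{id}_{\cY_X}$ as homotopic to a composite $\cY_X \xrightarrow{\mathrm{coev}} \mathcal{K}_{\gamma,X} \xrightarrow{\mathrm{ev}} \cY_X$ in $\mathcal{D}$, so that $\cY_X$ is a retract of $\mathcal{K}_{\gamma,X}$ in $H^0(\mathcal{D})$; since $\mathcal{K}_{\gamma,X} \in \mathrm{Tw}(\cB)$ and $R$ is a field, this retract exists honestly, giving $\cY_X \in \Perf(\cB)$, and as $X$ is arbitrary $\cB$ split-generates $\cC$. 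The main obstacle is the middle step, and within it specifically the verification that $\mathcal{K}_{\gamma,X}$ genuinely lies in the pre-triangulated envelope of $\cB$ — i.e.\ is (homotopy equivalent to) a finite twisted complex over shifts of objects of $\cB$, not merely an infinite complex built from them — which is where finiteness of $\gamma$ over the field $R$, the perfectness properties encoded by $\cC^!$, and the fact that complexes with finite total cohomology over a field are perfect all enter; the bar-complex bookkeeping then has to be carried out while keeping track of the shift $\sigma(\cC^!) = \sigma(2)$ and the Koszul signs hidden in the conventions of Section~\ref{sec:alg_prel}, and checking convergence of the sums using completeness of morphism spaces.
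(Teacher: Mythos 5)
Your argument is correct in outline, but it takes a genuinely different --- and considerably longer --- route than the paper. The paper treats Abouzaid's result as a black box: it invokes \cite[Lemma 1.4]{Abouzaid2010a}, which reduces split-generation of an object $C$ by $\cB$ to showing that a c-unit lies in the image of the map $H(\mu): H^0(\cY^\ell_C \otimes_\cC \cY^r_C) \to H^0(hom_\cC(C,C))$, and then merely establishes a commutative square identifying $\bar\mu\circ\iota$ with Abouzaid's $H(\mu)$ (both horizontal comparison arrows being ``forget all components of $\varphi$ except the length-zero one at $C$''), together with the fact from Lemma \ref{lem:CC_c_unit} that the bottom arrow sends the HH-unit to a c-unit. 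You instead re-derive Abouzaid's argument from scratch: build the coevaluation/evaluation pair from the Hochschild cycle, use the $L^1_{\cC_\Delta}$ characterisation of the HH-unit to show the composite is chain-homotopic to the identity of $\cC_\Delta$, tensor against a Yoneda module, and split the resulting homotopy idempotent. Both routes work, and yours is more self-contained, but the paper's is the leaner choice --- it does only the translation work and defers the substance to the cited lemma, which is the expected practice when the cited statement already exists in the right generality.

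One inaccuracy in your final paragraph: the finiteness that makes the resulting module a genuine object of $\mathrm{Tw}(\cB)$ is chain-level finiteness of the Hochschild $0$-cycle (a finite sum of terms, each referencing finitely many objects of $\cB$), which is part of the definition of $CC_*$ as a direct sum rather than a product. Appealing to ``perfectness properties encoded by $\cC^!$'' and to ``complexes with finite total cohomology over a field are perfect'' is not needed and is not how Abouzaid's argument closes; those are not the mechanism here, and invoking them misidentifies where the finite-twisted-complex structure comes from.
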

\begin{proof}
    It suffices to show that any object $C$ of $\cC$ is split-generated by $\cB$. 
    By \cite[Lemma 1.4]{Abouzaid2010a}, this follows if the c-unit lies in the image of a certain map
    $$H(\mu): H^0(\cY^\ell_C \otimes_\cC \cY^r_C) \to H^0(hom_\cC(C,C)).$$ 
    The result now follows from the existence of a commutative diagram
    $$\begin{tikzcd}
        HH_0(\cB,\cC^!) \ar[r] \ar[d,"\bar\mu \circ \iota"] & H^0(\cY^\ell_C \otimes_\cC \cY^r_C) \ar[d,"H(\mu)"] \\
        HH^0(\cC) \ar[r] & H^0(hom_\cC(C,C))
    \end{tikzcd}$$
    where the bottom horizontal arrow sends the HH-unit to a c-unit by Lemma \ref{lem:CC_c_unit}. 
    Briefly, both horizontal arrows are defined by taking in a collection of maps $$\varphi^s:\cC(C_0,\ldots,C_s) \to \cM(C_0,C_s)$$ and forgetting all of them except $\varphi^0_C \in \cM(C,C)$; we leave the details to the reader.
    \end{proof}

Now let $F:\cC^\prebc \to \cC$ be the functor from Lemma \ref{lem:Cbc_fun}. 
We start by observing that there is a natural morphism of bimodules $F_*:F^*\cC^! \to \cC^{\prebc,!}$, which is defined by $F_*^{0|1|0}(\varphi) = \varphi^\bc$ analogously to Lemma \ref{lem:Hcoh_C_Cbc}, and with all other $F_*^{s|1|t} = 0$. 

Now let us suppose that we have a bimodule morphism $\rho:\sigma(\rho) \cC_\Delta \to \cC^!$. 
We define a bimodule morphism $\rho^\prebc: \sigma(\rho) \cC^\prebc_\Delta \to \cC^{\prebc,!}$ as the composition
$$\sigma(\rho)\cC^\prebc_\Delta = F^* (\sigma(\rho) \cC_\Delta) \xrightarrow{F^*\rho} F^*\cC^! \xrightarrow{\varphi \mapsto \varphi^\bc} \cC^{\prebc,!}.$$ 
If $i$ denotes the inclusion of $\cC^\bc$ into $\cC^\prebc$, then we define a bimodule morphism $\rho^\bc:\sigma(\rho)\cC^\bc_\Delta \to \cC^{\bc,!}$ as the composition
$$\sigma(\rho)\cC^\bc_\Delta = i^*(\sigma(\rho) \cC^\prebc_\Delta) \xrightarrow{i^*\rho^\prebc} i^*\cC^{\prebc,!} \xrightarrow{i^*} \cC^{\bc,!}.$$

\begin{lem}\label{lem:cy_bc}
    The following diagram commutes:
    $$
    \begin{tikzcd}
        \sigma(\rho)HH_*(\cC^\bc) \ar[r,"\rho^\bc_*"] \ar[d,"i_*"] & HH_*(\cC^\bc,\cC^{\bc,!}) \ar[r,"\bar\mu_\bc"] & HH^*(\cC^\bc) \\
        \sigma(\rho)fHH_*(\cC^\prebc) \ar[r,"\rho^\prebc_*"] \ar[d,"F_*"] & fHH_*(\cC^\prebc,\cC^{\prebc,!}) \ar[r,"\bar\mu_\prebc"] & HH^*(\cC^\prebc) \ar[u,"i^*"] \\
        \sigma(\rho)fHH_*(\cC) \ar[r,"\rho_*"] & fHH_*(\cC,\cC^!) \ar[r,"\bar\mu"] & fHH^*(\cC) \ar[u,"F^*"].
    \end{tikzcd}$$
\end{lem}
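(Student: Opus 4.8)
The plan is to prove the commutativity of the diagram square by square, reducing everything to chain-level identities and then invoking naturality. The diagram is a $3 \times 3$ array: two columns of horizontal composites $\bar\mu \circ \rho_*$ (at the levels of $\cC^\bc$, $\cC^\prebc$, and $\cC$) linked by vertical maps $i_*, F_*$ (on Hochschild homology, going down the left) and $i^*, F^*$ (on Hochschild cohomology, going up the right). I would break it into the two squares on the left (involving the $\rho_*$ maps and the homology verticals) and the two squares on the right (involving the $\bar\mu$ maps and the cohomology verticals); each of these splits further into an upper and a lower square, so there are really four little squares to check. Since all the maps in sight are induced by chain-level maps which have already been written down explicitly earlier in the excerpt — $i_*$ and $i^*$ come from the strict inclusion $i:\cC^\bc \hookrightarrow \cC^\prebc$, $F_*$ and $F^*$ come from the functor $F:\cC^\prebc \to \cC$ of Lemma~\ref{lem:Cbc_fun} via the formulae in Lemmas~\ref{lem:Hcoh_C_Cbc} and~\ref{lem:Hhom_C_Cbc}, $\rho_*$ is post-composition with the bimodule morphism $\rho$ followed by $(-)^\bc$, and $\bar\mu$ is the explicit formula from the preceding lemma — it suffices to check that each little square commutes already on the chain level, and then pass to cohomology.

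The key observation that makes the left-hand squares work is that $\rho^\prebc$ and $\rho^\bc$ were \emph{defined} as the composites $F^*(\sigma(\rho)\cC_\Delta) \xrightarrow{F^*\rho} F^*\cC^! \xrightarrow{\varphi\mapsto\varphi^\bc} \cC^{\prebc,!}$ and $i^*(\sigma(\rho)\cC^\prebc_\Delta) \xrightarrow{i^*\rho^\prebc} i^*\cC^{\prebc,!}\xrightarrow{i^*} \cC^{\bc,!}$. So the lower-left square (levels $\cC^\prebc$ and $\cC$) commutes because $\rho^\prebc_* \circ i_*^{\text{h}}$—here using the bimodule morphism $F_*: F^*\cC^! \to \cC^{\prebc,!}$ and the induced map on filtered Hochschild homology of Section~\ref{sec:Cshriek} together with the functoriality map $F_*: fCC_*(\cC^\prebc, F^*\cC^!) \to fCC_*(\cC,\cC^!)$—is literally the functoriality of $fHH_*(-, -)$ applied to the pair $(F, \rho)$: functoriality in the bimodule (closed morphisms of bimodules induce chain maps) and in the category ($A_\infty$ functors induce chain maps $F_*$) are compatible, which is the content of the naturality statement already recorded around equation~\eqref{eq:F*}. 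The upper-left square is the same statement with $i$ in place of $F$, and here $i$ is a strict inclusion so the maps are even simpler; again it is just the definition of $\rho^\bc$ unwound. The only care needed is tracking the shift torsor $\sigma(\rho)$ consistently through all the maps, which the Koszul-sign conventions of Section~2.2 are designed to handle automatically.

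For the right-hand squares, I would check that $\bar\mu$ is natural with respect to both $i$ and $F$: that is, the square relating $\bar\mu_\prebc: fHH_*(\cC^\prebc,\cC^{\prebc,!}) \to HH^*(\cC^\prebc)$ to $\bar\mu: fHH_*(\cC,\cC^!) \to fHH^*(\cC)$ via $F_*$ on the source and $F^*$ on the target commutes on the chain level. Unwinding the explicit formula for $\bar\mu$ in the preceding lemma, $\bar\mu(\varphi[c_1|\cdots|c_s])(d_1,\ldots,d_t)$ is an alternating sum of expressions $\mu^*(d_{k+1},\ldots,-,c_1,\ldots,c_s,-,d_1,\ldots) \circ \varphi(\ldots)$; applying $F^*$ inserts strings $F^*(\cdots)$ in all the input slots, and $F$ being an $A_\infty$ functor lets one reorganize these insertions to match exactly what $\bar\mu_\prebc$ produces after $F_*$ has inserted the bounding cochains $b_i$ into the Hochschild chain. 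This is precisely the kind of bookkeeping that the brace formalism of \eqref{eq:brace} is built for, and the identity is a formal consequence of the $A_\infty$ functor equation for $F$ together with the definition of $F_*: F^*\cC^! \to \cC^{\prebc,!}$ as $\varphi \mapsto \varphi^\bc$. The upper-right square is the corresponding statement for $i$, which is nearly tautological since $i$ is strict. I expect the main obstacle to be purely notational: getting the signs (torsors) to line up in the $\bar\mu$-naturality square, because $\bar\mu$ mixes Hochschild \emph{co}chains and \emph{homology} and involves the torsor identification $\sigma(CC^*)\sigma(\mu) = \sigma(\cC^!)$, so one must carefully verify that $F^*$ (which carries $\sigma(CC^*)$) and the bimodule-morphism $F_*$ (which carries $\sigma(\cC^!)$) interact correctly — but this is exactly the sort of verification the coherence theorem renders routine once the diagram of torsors is drawn, so I would state it as such and leave the diagram-chase to the reader.
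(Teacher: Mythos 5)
Your proposal is essentially correct and follows the same strategy as the paper: introduce the intermediate group $fHH_*(\cC^\prebc, F^*\cC^!)$ (forced by the fact that $\rho^\prebc$ is defined as a composite through $F^*\cC^!$), and split the bottom rectangle into a chain-level naturality statement for $\bar\mu$ under the $A_\infty$ functor $F$ and a chain-level compatibility of $\rho_*$ with functoriality in the category; the top rectangle is handled identically with $i$ in place of $F$. One small caveat: the framing as "four little squares in a $3\times 3$ array" is a bit misleading, since the middle column of the original diagram has no vertical maps — there is no natural map $fHH_*(\cC^\prebc,\cC^{\prebc,!}) \to fHH_*(\cC,\cC^!)$ — and both of your "left" and "right" sub-diagrams for a given rectangle in fact share the single intermediate object $fHH_*(\cC^\prebc, F^*\cC^!)$, which you do mention in the body of the argument. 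The paper presents this cleanly as a composition of two commutative diagrams rather than four squares, which avoids the false impression that there is a commuting middle vertical.
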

\begin{proof}
The proofs of the commutativity of the top and bottom squares are analogous, so we only do the bottom square. 
It is obtained by composing the commutative diagram
    $$\begin{tikzcd}
        fHH_*(\cC^\prebc,F^*\cC^!) \ar[r,"F_*"] \ar[d,"F_*"] & fHH_*(\cC^\prebc,\cC^{\prebc,!}) \ar[r,"\bar\mu_\bc"] & fHH^*(\cC^\prebc)  \\
        fHH_*(\cC,\cC^!) \ar[rr,"\bar\mu"] & & HH^*(\cC) \ar[u,"F^*"].
    \end{tikzcd}$$
with
$$\begin{tikzcd}
        fHH_*(\cC^\prebc) \ar[r,"(F^*\rho)_*"] \ar[d,"F_*"] & fHH_*(\cC^\prebc,F^*\cC^!) \ar[d,"F_*"]\\
        fHH_*(\cC) \ar[r,"\rho_*"] & fHH_*(\cC,\cC^!).
    \end{tikzcd}$$
Both of these commute on the chain level.
\end{proof}

\section{Domain moduli spaces}\label{sec:dom_ms}

\subsection{Generalities}
\label{subsec:gen_domains}

\begin{defn}
    A \emph{mixed curve} $C$ consists of the following data:
    \begin{itemize}
        \item finite sets $P^{bulk}$, $P^{stab}$, and $P^\partial$; we write $P^{int} := P^{bulk} \sqcup P^{stab}$;
        \item a marked Riemann surface with boundary $C_2$, with interior marked points labelled by $P^{int}$ and boundary marked points labelled by $P^\partial$; 
        \item a disjoint union of zero- and one-dimensional Riemannian manifolds with boundary $C_1$, consisting of a finite disjoint union of intervals $\coprod_{i \in \ints} [0,\ell_i]$ of lengths $\ell_i \in [0,\infty]$; together with a finite disjoint union of rays $\coprod_{r \in \rays} [0,\infty)$;
        \item a bijection between $P^{bulk}$ and the set of boundary points of $C_1$ (an interval of any length, including $0$ and $\infty$, is defined to have two boundary points, and a ray has one). 
    \end{itemize}
    Points labelled by $P^{bulk}$ are called `bulk points'; by $P^{stab}$ are called `stabilizing points'; by $P^{int}$ are called `interior marked points'; by $P^\partial$ are called `boundary marked points'.  
    We denote by $N^{int}$ (respectively $N^\partial$) the set of points on the normalization of $C$ which project to interior (respectively boundary) nodes; when we wish to refer to the set of nodes, we write $N^{int/\partial}/\sim$, where $\sim$ is the equivalence relation which identifies the two preimages of each node. 
    We say that a mixed curve is \emph{stable} if $C_2$ is stable.

    We require that each irreducible component of $C$ is a disc, sphere, or annulus; and that if we `smooth' all nodes, the resulting curve is a disjoint union of discs, spheres, and annuli.
\end{defn}

We will only consider mixed curves in which each irreducible component of $C$ is a disc, sphere, or annulus. 

The moduli space $\cC(\top)$ of stable mixed curves of a fixed topological type $\top$ admits a natural Deligne--Mumford-type compactification $\Cbar(\top)$, which is the Cartesian product of the Deligne--Mumford compactification of the moduli space of stable curves of the corresponding topological type, together with a copy of the compactified moduli space $[0,\infty]$ of intervals for each $i \in I$. 
We extend this definition to define $\Cbar(\top)$ for general mixed curves of topological type $\top$, to be the Cartesian product of the Deligne--Mumford compactifications of the moduli spaces of curves for each stable irreducible component of $C$, together with a point for each unstable irreducible component which is a disc or sphere, together with a copy of the moduli space of intervals for each $i \in I$.   
The compactification admits a natural stratification, and each stratum can be identified in a natural way with another such moduli space of stable mixed curves. 
In particular, the stratum where the length of an interval goes to $0$ is identified with the moduli space of stable mixed curves where the length-$0$ interval gets replaced by a node.  
All of the stable mixed curves we consider do not have automorphisms, so when $\top$ is stable, the moduli space $\Cbar(\top)$ comes with universal families $\Ubar_{2}(\top) \to \Cbar(\top)$ (the family of curves) and $\Ubar_1(\top) \to \Cbar(\top)$ (the family of intervals and rays).

\begin{rem}
We do not define $\Ubar(\top)$ when $\top$ is unstable; these moduli spaces of domains play more of a bookkeeping role in our construction, and the corresponding moduli spaces of maps (`strips' and `bubble trees' as we will call them), which have continuous families of automorphisms, are dealt with in a more ad-hoc way.    
\end{rem}

\begin{defn}
    A \emph{choice of directions} for a mixed curve is a map 
    $$N^\partial \coprod P^\partial \coprod P^{bulk} \to \{in,out\},$$ 
    where we call points mapping to $in$ `incoming', and points mapping to $out$ `outgoing'; such that the preimage of each boundary node in the normalization consists of one incoming and one outgoing point. 
\end{defn}

We will write
    \begin{align*}
        N^\partial &= N^{\partial,in} \coprod N^{\partial,out},\\
        P^\partial &= P^{\partial,in} \coprod P^{\partial,out},\\
        P^{int} &= P^{int,in} \coprod P^{int,out}.
    \end{align*}

We will construct an algebraic operation, $F$ say, by counting elements of a moduli space $\Mbar(F)$, which will consist of maps from a mixed curve into $X$, which are pseudoholomorphic on the curve part, and (perturbed) Morse flowlines on the rays and intervals. 
We now start to describe the data going into the definition of one of our moduli spaces. 

\begin{defn}\label{def:fam_dom}
    A \emph{family of domains} consists of:
    \begin{itemize}
        \item a semianalytic pseudomanifold with boundary (in the sense of Definition \ref{def:sa_p_b}), each stratum of which is connected and orientable, and with a unique top-dimensional stratum $\cR(F) \subset \Rbar(F)$;
        \item an analytic stratified map $\Rbar(F) \to \Cbar(\top_F)$ for some topological type of mixed curves $\top_F$, sending $\cR(F) \mapsto \cC(\top_F)$; 
        \item a continuous family of choices of directions for the mixed curves parametrized by $\Rbar(F)$;
        \item a choice of subset $P^{sym}(s) \subseteq P^{stab}$ of `symmetric' incoming stabilizing points for each stratum $s$ of $\Rbar(F)$, with the property that $s \subseteq t \Rightarrow P^{sym}(s) \supseteq P^{sym}(t)$; and such that the natural action of the subgroup $Sym(s)  \subset Sym(P^{sym}(s))$ which preserves the topological type of the curve $\top_s$, on $\Cbar(\top_F)$, sends the stratum $s$ to itself by an orientation-preserving diffeomorphism.
    \end{itemize}
When $\top_F$ is stable, we denote the pullback of $\Ubar_{1/2}(\top_F) \to \Cbar(\top_F)$ by $\Sbar_{1/2}(F) \to \Rbar(F)$. 
\end{defn}

\begin{rem}
    In this paper, all stabilizing marked points will be symmetric, i.e. $P^{sym}(s) = P^{stab}$ for all strata $s$; however we include non-symmetric stabilizing marked points in the formalism as they will be needed in the followup paper \cite{CyclicOC}, and we do not want to have to rehash the whole formalism there. 
    This has the disadvantage that an extra layer of complication is added to the formalism in this paper, which is not needed for its immediate purposes; to appreciate its necessity, the reader must consult \cite{CyclicOC}. 
\end{rem}

We have various ways of forming new families from old ones, which we now enumerate.

\paragraph{(Disjoint union)} Given families $\Rbar(F_i)$, we can form a new family $\Rbar(\coprod_i F_i)$ by taking the disjoint union of the curves, rays, and intervals. 

\paragraph{(Attaching)} Given a family $\Rbar(F)$ and a `matching' of some incoming boundary marked points with some outgoing ones, some incoming stabilizing points with outgoing ones, some pairs of rays with each other, and a length parameter $0$, $\infty$, or $[0,\infty]$ for each matched pair of rays, we can `attach' the universal curves together (this creates new nodes in our family -- to be clear, we do not `smooth' the nodes), to obtain a new family of curves, $\Rbar(At(F))$. Attaching two rays creates an interval of length $0$ or $\infty$, or a family of intervals of all lengths $[0,\infty]$, depending on the corresponding length parameter. We only consider such attachments if, when we smooth all nodes, the resulting curve is a disjoint union of discs, spheres, and annuli.

\paragraph{(Boundary stratum)} Given a family $\Rbar(F) \to \Cbar(\top_F)$ and a stratum of $\Rbar(F)$, we may restrict the family to the boundary stratum.

\begin{figure}
    \centering
    \subfigure[Stabilizing by attaching a disc at an unmarked boundary point, or a sphere at an unmarked interior point.]{\label{fig:stab 1}
        \includegraphics[width=0.45\textwidth]{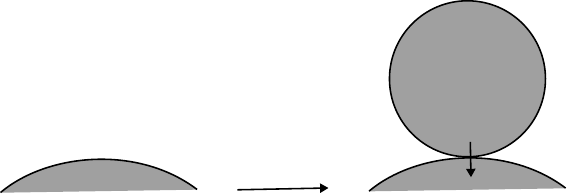}}
    \subfigure[Stabilizing by attaching a disc at an incoming boundary marked point, or a sphere at an incoming bulk marked point, or a sphere at a stabilizing marked point.]{\label{fig:stab 2}
        \includegraphics[width=0.45\textwidth]{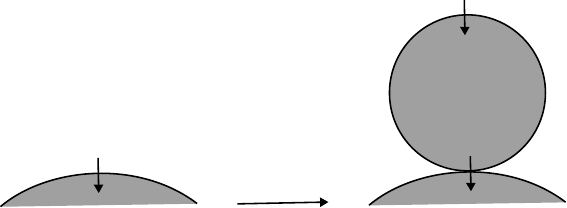}}
    \subfigure[Stabilizing by attaching a disc at an outgoing boundary marked point, or a sphere at an outgoing bulk marked point (or a sphere at a stabilizing marked point, although this is equivalent to Figure \ref{fig:stab 2} as the choice of directions is irrelevant in this case).]{\label{fig:stab 3}
        \includegraphics[width=0.45\textwidth]{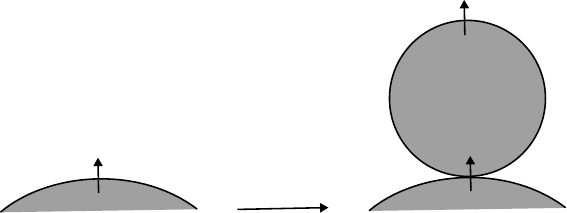}}
    \subfigure[Stabilizing by adding a disc at a boundary node, or a sphere at an interior node.]{\label{fig:stab 4}
        \includegraphics[width=0.45\textwidth]{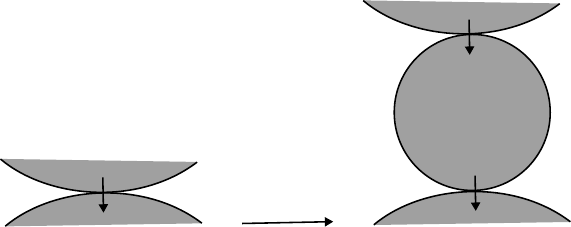}}
    \subfigure[Stabilizing by adding a disc with a single outgoing boundary marked point, or a sphere with a single outgoing bulk marked point, disjoint from the rest of the curve.]{\label{fig:stab 5}
        \includegraphics[width=0.45\textwidth]{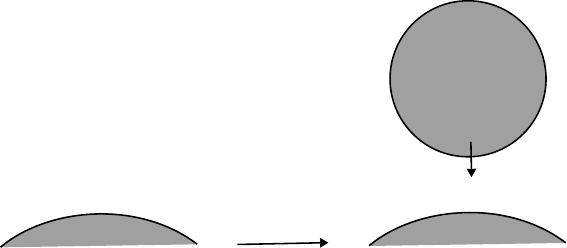}}
    \caption{Stabilizing by adding an unstable disc or sphere. Note that in case the figures are to be interpreted as adding a sphere, no choice of directions is required at the nodes, so arrows at interior nodes should be deleted from the figure; and if the marked point is to be interpreted as a stabilizing marked point, then no choice of directions is required, so arrows at the corresponding marked points should be deleted from the figure.}
    \label{fig:stab}
\end{figure}

\paragraph{(Stabilizing)} This operation should be thought of as `adding interior marked points (incoming in the case of bulk points), and incoming boundary marked points'. 
To describe it precisely, let $\Rbar(F)$ be a family of domains and $\top_F$ the underlying topological type. 
Let $\top_{i^*F}$ be a topological type of stable mixed curves which is obtained from $\top_F$ by first adding some spherical and disc components in accordance with Figure \ref{fig:stab}; then adding marked points (bulk, stabilizing, and/or boundary), and rays attached to the bulk marked points. 
There is a natural forgetful map $\Cbar(\top_{i^* F}) \to \Cbar(\top_F)$, given by forgetting the added marked points and rays. 
We now need the following:

\begin{lem}
    The fibre product 
    $$\Rbar(i^*F) = \Rbar(F) \times_{\Cbar(\top_F)} \Cbar(\top_{i^*F}),$$
    carries a natural structure of semianalytic pseudomanifold with boundary; and the map $\Rbar(i^*F) \to \Cbar(\top_{i^*F})$ is analytic.
\end{lem}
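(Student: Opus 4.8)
**Proof proposal for the Lemma on the fibre product $\Rbar(i^*F)$.**

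The plan is to reduce the statement to two local models for the forgetful map $\Cbar(\top_{i^*F}) \to \Cbar(\top_F)$ — one for added interior/boundary marked points, one for added unstable spherical or disc components — and in each case exhibit the fibre product as a semianalytic pseudomanifold with boundary by a direct local computation, checking that the map to $\Cbar(\top_{i^*F})$ is analytic stratum-by-stratum. First I would recall from Section~\ref{subsec:gen_domains} that $\Cbar(\top_F)$ is, by construction, a Cartesian product of Deligne--Mumford compactifications of moduli of stable curves (one factor per stable irreducible component) with copies of the interval moduli $[0,\infty]$; each such factor is a compact real-analytic manifold with corners, in particular a semianalytic pseudomanifold, and these structures are compatible with the stratification by topological type. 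The forgetful map is a product of forgetful maps on the individual factors, so it suffices to analyze each factor.

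The key steps, in order, are as follows. \emph{(i)} For the operation of adding marked points (without adding unstable components first): on each stable-curve factor, $\overline{\mathcal M}_{g,n+k}\to \overline{\mathcal M}_{g,n}$ is the universal curve (iterated), which is a proper real-analytic submersion away from nodal loci and, globally, a real-analytic map of semianalytic pseudomanifolds with boundary whose fibres are the (stabilized) curves; the rays attached to added bulk marked points contribute factors $[0,\infty)$, which are again semianalytic with boundary. Taking the fibre product of $\Rbar(F)$ with this along the analytic stratified map $\Rbar(F)\to\Cbar(\top_F)$ preserves the property of being a semianalytic pseudomanifold with boundary: here I would cite the general closure properties of the category of semianalytic sets under fibre products along analytic maps (this is where the promised Definition~\ref{def:sa_p_b} and the Goresky-style semianalytic-chain framework referenced in the acknowledgments does the work), together with the observation that the top stratum pulls back correctly because $\cR(F)\to\cC(\top_F)$ is sent into $\cC(\top_F)$ and the universal curve over $\cC(\top_F)$ is smooth there. \emph{(ii)} For the operation of adding an unstable disc or sphere (the five cases of Figure~\ref{fig:stab}): here the forgetful map on the relevant factor is an \emph{isomorphism} onto its image — forgetting an added $\mathbb{P}^1$ carrying $\le 2$ special points, or an added disc carrying $\le 1$ boundary and $0$ interior special points, does not change the moduli, it only records a point (or a bounded interval) of attachment on the curve $C_2$ — so the fibre product is literally $\Rbar(F)$ times a copy of $\Sbar_2(F)$ (the universal curve, or its boundary, or a node locus), possibly times another interval/ray factor. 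These are all semianalytic pseudomanifolds with boundary by the same closure properties, and the analyticity of $\Rbar(i^*F)\to\Cbar(\top_{i^*F})$ is inherited from that of $\Rbar(F)\to\Cbar(\top_F)$ and the (real-analytic) universal family maps. \emph{(iii)} Finally I would check the pseudomanifold axioms globally: connectedness and orientability of strata, and the codimension-one condition, all follow because $\Rbar(i^*F)$ is fibred over $\Rbar(F)$ with fibres that are curves (or their boundaries, or points, or intervals), each of which is connected, orientable, and contributes the expected dimension shift; the unique top-dimensional stratum $\cR(i^*F)$ is the preimage of $\cR(F)$, i.e. the fibre product $\cR(F)\times_{\cC(\top_F)}\cC(\top_{i^*F})$.

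The main obstacle I expect is step \emph{(i)}, specifically verifying that the fibre product is a \emph{pseudomanifold} with boundary — i.e. controlling the codimension-one stratum and ruling out pathological codimension-one pieces — when the map $\Rbar(F)\to\Cbar(\top_F)$ is not a submersion but only an analytic stratified map. One has to know that pulling back the (stratified, with-boundary) universal curve along such a map does not create, e.g., extra boundary from the nodal degeneration loci in a way that violates the "unique top stratum, codimension-$\ge 1$ boundary, codimension-$\ge 2$ lower strata" bookkeeping. I would handle this by working stratum-by-stratum on $\Rbar(F)$: over each stratum $s$, the universal curve is a genuine analytic family of nodal curves, the fibre product is a semianalytic set of the expected dimension, and the compatibility $s\subseteq t\Rightarrow$ (correct incidence of the pulled-back strata) follows from the corresponding compatibility for $\Cbar(\top_{i^*F})\to\Cbar(\top_F)$ over the Deligne--Mumford strata; then one assembles these local pictures using that $\Rbar(F)$ was assumed to be a semianalytic pseudomanifold with boundary to begin with. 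The orientability of the new strata is immediate since curves and intervals are canonically oriented and orientations multiply in a product/fibre-product; the only genuine content is the dimension and incidence bookkeeping, which is routine but must be done with care.
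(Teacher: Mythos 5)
Your overall strategy — use that fibre products along analytic maps preserve semianalyticity, then handle the pseudomanifold bookkeeping via a dimension count — matches the paper's, and you correctly identify the genuine obstacle (step~(i) of your plan: $\Rbar(F)\to\Cbar(\top_F)$ is only a stratified analytic map, not a submersion). The paper resolves exactly this by observing that the forgetful map $\Cbar(\top_{i^*F})\to\Cbar(\top_F)$ restricts to a fibration from each stratum onto its image stratum, so all fibre products of strata are smooth manifolds satisfying the condition of the frontier; the crucial bookkeeping input is then that the fibre dimension over \emph{every} image stratum is bounded above by the fibre dimension over the top stratum, which forces the top and codimension-$1$ strata of the fibre product to be precisely $\cR(F)\times_{\cC(\top_F)}\cC(\top_{i^*F})$, $B\times_{\cC(\top_F)}\cC(\top_{i^*F})$ for $B$ codimension-$1$ in $\Rbar(F)$, and $\cR(F)\times_{\cC(\top_F)}B$ for $B$ codimension-$1$ in $\Cbar(\top_{i^*F})$, with one-point links inherited. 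This single observation does all cases simultaneously, so the case split you propose between ``adding marked points'' and ``adding unstable components'' is not necessary and makes the argument longer than it needs to be.

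There is also a local error in your step~(ii): you assert that the forgetful map ``is an isomorphism onto its image'' when an unstable sphere or disc is added. This is false for the case of Figure~\ref{fig:stab 1} (attaching a bubble at an \emph{unmarked} point): the node at which the bubble is attached contributes a new modulus on the carrying component, so the forgetful map has positive-dimensional fibre equal to (the appropriate boundary stratum of) the universal curve. Indeed your own next clause — that the fibre product is a copy of $\Sbar_2(F)$ — is only consistent with the forgetful map having the universal curve as fibre, not with it being an isomorphism. (It is also misstated as ``$\Rbar(F)$ \emph{times} a copy of $\Sbar_2(F)$''; it is just $\Sbar_2(F)$, since $\Sbar_2(F)$ is by definition a bundle over $\Rbar(F)$.) Furthermore, the stabilization operation as defined first adds unstable components \emph{and then} adds marked points to them, which can produce a stable new component; so even when the ``pure unstable component'' step contributes no modulus, the full forgetful map being analyzed typically is not an isomorphism. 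None of this ultimately derails the argument — the fibre product is still a semianalytic pseudomanifold with boundary — but you should drop the isomorphism claim and instead note, as the paper does, that the stratum-to-stratum forgetful maps are fibrations with controlled fibre dimension.
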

\begin{proof}
    Because the maps are stratified, and the map from each stratum of $\Cbar(\top_{i^*F})$ onto its image stratum in $\Cbar(\top_F)$ is a fibration, the fibre products of strata are smooth manifolds, and they satisfy the condition of the frontier; thus this decomposition gives $\Rbar(i^*F)$ a natural structure of decomposed space. 
    Because all the spaces and maps involved are semianalytic, the fibre product $\Rbar(i^*F)$ is also semianalytic (and the projection to $\Cbar(\top_{i^*F})$ is analytic).  
    Finally, because the fibres of the maps from the strata of $\Cbar(\top_{i^*F})$ onto their respective image strata have dimension bounded above by that of the top stratum, the top-dimensional stratum of the fibre product is $\cR(F) \times_{\cC(\top_F)} \cC(\top_{i^*F})$, and the codimension-$1$ strata are $B \times_{\cC(\top_F)} \cC(\top_{i^*F})$, for $B$ a codimension-$1$ stratum of $\Rbar(F)$, and $\cR(F) \times_{\cC(\top_F)} B$, for $B$ a codimension-$1$ stratum of $\Cbar(\top_{i^*F})$. It is evident that these have one-point links, because the codimension-$1$ strata $B$ do in $\Rbar(F)$ (by definition) and $\Cbar(\top_{i^*F)}$.
\end{proof}

We define a stabilization of $F$ to be such a chain $\Rbar(i^*F) \to \Cbar(\top_{i^*F})$, together with a continuous family of choices of directions and a choice of symmetric stabilizing marked points, such that:
\begin{itemize}
    \item when adding unstable components, the choices of directions are as depicted in Figure \ref{fig:stab}; and when adding marked points, the added boundary and bulk marked points are all designated as incoming (note that this uniquely specifies the choice of directions);
    \item the inclusion $P^{stab}(\top_F) \hookrightarrow P^{stab}(\top_{i^*F})$ sends $P^{sym}(s) \mapsto P^{sym}(i^*s) \cap P^{stab}(\top_F)$ bijectively, for every stratum $i^* s$ mapped onto $s$ by $\Rbar(i^* F) \to \Rbar(F)$.
\end{itemize}
In words, the second condition says that after stabilization, the `original' stabilizing marked points retain their status as symmetric/non-symmetric; but the `added' stabilizing marked points may be symmetric or non-symmetric.

There are two families which play a distinguished role in our framework, because they are stabilizations of the empty family, which is the unit for the monoidal structure (they are created from the empty family by the process indicated in Figure \ref{fig:stab 5}).
First we have $\Rbar(bub,\ell)$, for $\ell=(\ell_\bulk,\ell_\stab)$ with $\ell_\bulk + \ell_\stab \ge 2$, which are called `sphere bubbles'. 
This is the fundamental chain of the moduli space of stable marked spheres with one outgoing bulk marked point, $\ell_\bulk$ incoming rays, and $\ell_\stab$ symmetric stabilizing marked points.\footnote{By `fundamental chain', we mean that the map $\Rbar(F) \to \Cbar(\top_F)$ is an isomorphism of stratified spaces.} 
 More explicitly, it is the moduli space of stable marked spheres with $P^{bulk} = \{0,\ldots,\ell_\bulk\}$, $P^{stab} = \{1,\ldots,\ell_\stab\} = P^{sym}$, an outgoing ray attached at the $0$th bulk marked point, and incoming rays attached at all the others. 
 Each boundary stratum can be identified with a family of domains obtained by attaching a disjoint union of sphere bubble moduli spaces (using length-zero attachments, in the case of bulk marked points).

\begin{defn}\label{def:fstable}
Given a topological type $\top$, suppose that we remove all symmetric stabilizing marked points lying on spherical components and stabilize. We say that a spherical component is \emph{$f^{sym}$-unstable} if it gets collapsed. 
If there are no $f^{sym}$-unstable spheres, we call $\top$ \emph{$f^{sym}$-stable}.
If the stabilization process collapses, but doesn't delete any spheres (precisely, this means that it may only involve the processes indicated in Figure \ref{fig:stab 1}--\ref{fig:stab 4}, where the circles are to be interpreted as spheres; it may not involve the processes where the circles are to be interpreted as discs, nor may it involve the process indicated in Figure \ref{fig:stab 5} where the circle is to be interpreted as a disc or a sphere), we say that the topological type is \emph{nearly $f^{sym}$-stable.} 
We refer to a family $\Rbar(F)$ as $f^{sym}$-stable or nearly $f^{sym}$-stable if the topological type $\top_F$ is so.
\end{defn}

\begin{defn}
    Given a $f^{sym}$-stable topological type $\top$, suppose that we remove all stabilizing marked points lying on spherical components and stabilize. We say that a spherical component is \emph{$f^{stab}$-unstable} if it gets collapsed. 
    More generally, given a nearly $f^{sym}$-stable topological type $\top$, suppose that we remove all symmetric stabilizing marked points and stabilize. 
    We say that a spherical component is $f^{stab}$-unstable if its image under the stabilization map is contained in a $f^{stab}$-unstable component. 
\end{defn}

\begin{rem}
    The significance of these notions will be that:
    \begin{itemize}
        \item for an $f^{sym}$-stable family, the associated moduli space will be regular for a generic choice of perturbation data;
        \item for a nearly $f^{sym}$-stable family, the components of the associated moduli space which have negative virtual dimension will be regular (i.e., empty) for a generic choice of perturbation data;
        \item for an $f^{stab}$-unstable component, the perturbation data will be required to be trivial along $V$; this will allow us to deal with the case where such components are contained inside $V$.
    \end{itemize}
\end{rem}

The other distinguished family will be $\Rbar(\mu,s,\ell)$ with $\ell=(\ell_\bulk,\ell_\stab)$, which will give rise to the $A_\infty$ structure on the relative Fukaya category. 
This corresponds to the fundamental chain of the moduli space of stable marked discs with incoming rays attached at $\ell_\bulk$ bulk points, an additional $\ell_\stab$ stabilizing marked points, all of which are symmetric; and $s+1$ cyclically ordered boundary marked points, of which only one is outgoing. 
Each boundary stratum can be identified with a family of domains obtained by attaching a disjoint union of copies of $\Rbar(bub,\ell)$ and $\Rbar(\mu,s,\ell)$ (using length-zero attachments, in the case of bulk marked points). 

\begin{defn}\label{def:sod}
    A \emph{system of families of domains} is a set of families of domains, closed under all of the above operations, and such that
    \begin{itemize}
        \item every family is a stabilization of one which has no stabilizing marked points on spherical components;
        \item the action of $Sym(\cR(F))$ on $\cR(F)$ is free, for all $f^{sym}$-stable families.
    \end{itemize}
\end{defn}

\begin{rem}\label{rem:stab sph}
    The first condition in Definition \ref{def:sod} is necessary in the proof of compactness of our moduli spaces, to deal with moduli spaces of curves collapsing into $V$. 
    To illustrate its necessity, suppose we were to define an operation counting holomorphic spheres with $\ell$ stabilizing marked points, but where the modulus of the domain (including the positions of the stabilizing marked points) were fixed. 
    Then consider the subspace of spheres contained entirely inside the divisor $V$. 
    For such spheres, the constraint that stabilizing marked points should lie on $V$ becomes vacuous; so the actual dimension of this subfamily is $2\ell-2c_1(NV)$ larger than the dimension of the space of curves transverse to $V$; in particular, if $\ell$ is sufficiently large, the dimension of the space of curves inside $V$ will be larger than that of the curves transverse to $V$. 
    We avoid such pathologies by requiring each family to be a stabilization of one without stabilizing marked points, and choosing the perturbation data along $V$ to be pulled back from the family without stabilizing marked points (see the \textbf{(Consistency with stabilization on $V$)} condition in Definition \ref{def:univ_p_d}); then the moduli space factors through the space of curves in $V$ without stabilizing marked points, whose dimension is equal to that of the space of curves transverse to $V$ minus $2c_1(NV)$, and in particular is smaller, hence we can ensure the moduli space is empty by arranging for it to have negative virtual dimension.
\end{rem}

We remark on a compatibility between the various ways of forming new families of domains. 
In the case that the family is obtained by stabilization, $\Rbar(i^*F) \to \Rbar(F)$, each boundary stratum of $\Rbar(i^*F)$ consists of nodal Riemann surfaces of a fixed topological type. 
We call a component of such a topological type a \emph{bubble} if it gets contracted by the forgetful map. 
    Such a component will be either a sphere, or a disc with exactly one outgoing boundary marked point. 
    Thus, the boundary stratum is isomorphic, as a family, to one obtained by attaching several copies of the distinguished moduli spaces $\Rbar(bub,\ell)$ or $\Rbar(\mu,s,\ell)$ to another boundary stratum of a stabilization of $F$ (with length parameter $0$, in the case of attaching at bulk marked points). 

\begin{rem}
    The notion of a system of families of domains is a formalization of a standard approach in the Floer/Fukaya theory literature. 
    It can be made more explicit and precise by placing it within an appropriate categorical framework: there is a category of families of domains, with a symmetric monoidal structure coming from \textbf{(Disjoint union)}; the remaining operations above correspond to morphisms in this category (cf. \cite{BehrendManin}). 
    However, spelling this out is mainly a question of language, and does not contribute significantly to understanding or to the efficiency with which one may check the validity of arguments; thus we leave these details implicit.
\end{rem}

\subsection{Orientations}

For each family, $\cR(F)$ is orientable and connected (and the action of $Sym(P^{sym})$ respects orientations), and we denote by $\sigma(\cR(F))$ the $\Z$-graded $\Z_2$-torsor of its orientations. 
This means there is an isomorphism $$\sigma(T\cR(F)) \cong \underline{\sigma(\cR(F))}$$
(where the RHS denotes the trivial bundle over $\cR(F)$ with fibre $\sigma(\cR(F))$).

Let $C$ be the surface obtained from $\top_F$ by contracting all intervals, then smoothing all interior and boundary nodes, and $\hat{C}$ the closed surface obtained by `capping off' the boundary components of $C$. 
We define 
\begin{multline}\label{eq:sigmaF}
\S(F) := \sigma(\cR(F))^\vee \otimes \sigma(-n\chi(\hat{C}) +2n|\pi_0(\partial C)| + 2n|P^{int,out}| + 2|P^{int,in}|) \otimes \\ \bigotimes_{i \in P^{\partial,in}} \sigma(p^\partial_i) \otimes \bigotimes_{i \in P^{\partial,out}} \left(\sigma(p^\partial_i)^\vee \otimes \sigma(L_{p^\partial_i})\right) \otimes  \bigotimes_{B \in \pi_0(\partial C)} \sigma(B)^\vee ,
\end{multline}
where $\sigma(p) := \sigma(T_p \partial C) \cong \sigma(1)$ (note that the boundary orientation induces the trivialization of $\sigma(p)$), $\sigma(B) := \sigma(n)$ for each $B \in \pi_0(\partial C)$, and $\sigma(L_{p^\partial_i}):=\sigma(n)$. 

We now describe how the operations described in the previous section act on orientations. 
First, we have an isomorphism
$$\sigma(\cR(bub,(\ell_\bulk,\ell_\stab))) \cong \sigma(2(\ell_\bulk+\ell_\stab-2)),$$ 
via the complex orientation, so $\S(bub) = \sigma(4)$.

The orientation of the disjoint union family is given by $\S(\coprod_i F_i) = \otimes_i \S(F_i)$.

For a family defined by attaching, we define an isomorphism 
\begin{equation}\label{eq:at_sigmaF}
\sigma(2k+2n\ell) \bigotimes_{i \in G} \sigma(i) \S(At(F)) \cong \S(F)
\end{equation}
where $k$ is the number of outgoing interior marked points that get attached to incoming interior marked points with length parameter $0$ or $\infty$; $\ell$ is the number of pairs of outgoing marked points that get attached with length parameter $0$ or $\infty$; $G$ is the set of pairs of interior marked points that get attached with length parameter $[0,\infty]$, and $\sigma(i) = \sigma(1)$ for each $i \in G$. 
We define the isomorphism in the case of a single (boundary or interior) attachment; the generalization to multiple attachments will then be defined by iterating. 
In the case of attaching interior marked points with length parameter $0$ or $\infty$, the isomorphism is easily defined: the only changes after attaching are to the factors $\chi(\hat C)$ (which decreases by $2$) and $P^{int,in}$ and $P^{int,out}$, which decrease by the number of points of each type which get attached. 
In the case of attaching interior marked points with length parameter $[0,\infty]$, the isomorphism arises from $\cR(At(F)) \cong \cR(F) \times (0,\infty)$, so $\sigma(\cR(At(F))) \cong \sigma(\cR(F)) \sigma(1)$, where we choose the standard orientation of $(0,\infty)$.

In the case of attaching an incoming boundary marked point $p_i^{in}$ with an outgoing one $p_i^{out}$, there are two cases to consider: when the points lie on different boundary components $B_{in}$ and $B_{out}$, so the attachment merges them into one boundary component $B_{gl}$; and when they lie on the same boundary component $B = B_{out}$, so the attachment splits into two boundary components $B_-$ and $B_+$ (being those lying in negative/positive direction from $p_i^{out}$, according to the boundary orientation). 
In either case we use the boundary orientations to trivialize, and hence identify $\sigma(p_i^{in}) = \sigma(p_i^{out})$; and we furthermore identify 
\begin{equation}\label{eq:Lout_Bout}
\sigma(L_{p^\partial_{out}}) = \sigma(B_{out}).
\end{equation}
In the former case this gives us the desired isomorphism, but in the latter, we need to furthermore identify 
\begin{align}\label{eq:n choose 2}
    \sigma(B_-) \sigma(B_+) &\cong \sigma(2n) \quad\text{via the map}\\
    1|1 & \mapsto (-1)^{\signn}.
\end{align}
The reason for this explicit sign will be explained in the proof of Lemma \ref{lem:B_at_F}.

\begin{rem}
    The local system $\sigma(-n\chi(\hat{C}) +2n|\pi_0(\partial C)|) \otimes \bigotimes_{B \in \pi_0(\partial C)} \sigma(B)^\vee$, together with the above-specified identifications at boundary and interior attachments, is denoted by $\det^n$ in \cite{Costello2007}. We thank Hirschi and Hugtenburg for pointing this out to us.
\end{rem}

For a stabilization $\Rbar(i^*F)$, there is a natural identification of the tangent space to the fibre of the forgetful map $Forg_i$ (over the top stratum $\cR(i^*F))$ with 
$$\bigoplus_{i \in Q^{int} \setminus P^{int}} T_{p^{int}_i} C \oplus \bigoplus_{i \in Q^{\partial,in} \setminus P^{\partial,in}} T_{p^\partial_i} \partial C, $$
where $C$ is the corresponding fibre of the universal curve. 
Thus we may identify
\begin{align*}
    \sigma(\cR(i^*F)) &= \sigma(\cR(F)) \otimes \bigotimes_{i \in Q^{int} \setminus P^{int}} \sigma(T_{p^{int}_i} C) \otimes \bigotimes_{i \in Q^{\partial,in} \setminus P^{\partial,in}} \sigma(T_{p^\partial_i}\partial C) \\
    &= \sigma(\cR(F)) \otimes \sigma(2|Q^{int} \setminus P^{int}|) \otimes \bigotimes_{i \in Q^{\partial,in} \setminus P^{\partial,in}} \sigma(p^\partial_i).
\end{align*}
As a result, we get a natural identification 
\begin{equation}\label{eq:pullback_or}
\S(i^* F) = \S(F).
\end{equation}

If $\Rbar(F')$ is a codimension-$1$ stratum of $\Rbar(F)$, then there is an induced isomorphism 
$$\sigma(\cR(F)) = \sigma(\cR(F')) \sigma(\partial)
$$
where $\sigma(\partial)$ corresponds to the normal bundle to the boundary stratum. 
We trivialize it by choosing the orientation which points into $\cR(F)$. 
As the marked points are identified for $\cR(F)$ and $\cR(F')$, we obtain an induced isomorphism
\begin{equation}\label{eq:boundary_or}
\sigma(\partial)\S(F) = \S(F').
\end{equation}

\begin{lem}
    In the case that $\Rbar(F)$ is obtained from $\Rbar(F')$ by an interior attachment with length parameter $[0,\infty]$, there are two codimension-$1$ boundary components isomorphic to $\Rbar(F')$, namely those obtained by attachment with length parameter $0$ and $\infty$. 
    The induced isomorphism \eqref{eq:boundary_or} has sign $+1$ for the boundary component associated to $0$, and $-1$ for the boundary component associated to $\infty$.
\end{lem}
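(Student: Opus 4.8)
The plan is to reduce the statement to a one-dimensional orientation comparison on the moduli space $[0,\infty]$ of intervals. First I would recall that, by the definition of an interior attachment with length parameter $[0,\infty]$, the top stratum of $\Rbar(F)$ is canonically $\cR(F) \cong \cR(F') \times (0,\infty)$, the $(0,\infty)$-factor recording the length $\ell$ of the newly created interval; correspondingly $\sigma(\cR(F)) \cong \sigma(\cR(F'))\,\sigma(1)$ with the $\sigma(1)$ trivialized by the standard orientation $d\ell$ of $(0,\infty)$, exactly as in the clause defining \eqref{eq:at_sigmaF}. The two codimension-$1$ strata in question, which I will call $\Rbar(F_0)$ and $\Rbar(F_\infty)$, are $\{\ell=0\}$ and $\{\ell=\infty\}$, each identified with $\Rbar(F')$ as asserted in the lemma.

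Next I would check that the only orientation factor that changes between $\S(F)$ and $\S(F_0)$, or between $\S(F)$ and $\S(F_\infty)$, is this $\sigma(1)$. Indeed, the surface $C$ obtained by contracting all intervals and then smoothing all nodes is literally the same surface for $F$, for $F_0$, and for $F_\infty$: an interval of length $\ell$, for any $\ell \in [0,\infty]$, becomes one and the same node after the contraction step (its two endpoints are identified), and hence the same smooth surface after smoothing. Therefore $\chi(\hat C)$, $\pi_0(\partial C)$, the sets $P^{int,\bullet}$ and $P^{\partial,\bullet}$, and all of the $\sigma$-factors built from them in \eqref{eq:sigmaF}, coincide, and the identifications between them implicit in \eqref{eq:boundary_or} carry no sign. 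Consequently the sign of \eqref{eq:boundary_or} at $F_0$ (resp.\ at $F_\infty$) equals the sign of the induced isomorphism $\sigma(\partial) \cong \sigma(1)$, where $\sigma(\partial)$ is trivialized by the orientation of the normal line pointing \emph{into} $\cR(F)$, and $\sigma(1)$ is trivialized by $d\ell$.

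I would then compute the two signs directly. At $\{\ell=0\}$ the normal direction pointing into the interior of $[0,\infty]$ is $+\partial_\ell$, which agrees with $d\ell$; so the two trivializations of the normal line coincide, and the sign is $+1$. At $\{\ell=\infty\}$, choose any boundary collar coordinate $t$ with $t\to 0^+$ as $\ell\to\infty$ — for instance $t=e^{-\ell}$, or the standard gluing parameter at that end — so that the inward-pointing normal is $+\partial_t$; since $t$ is a strictly decreasing function of $\ell$, the vector $\partial_t$ is a negative multiple of $\partial_\ell$, the inward-normal orientation is opposite to $d\ell$, and the sign is $-1$.

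The only genuine work is the second step: verifying that every orientation factor in \eqref{eq:sigmaF} other than the one coming from the $(0,\infty)$-direction is transported across the two boundary identifications without an extra sign. This is a routine if slightly tedious inspection of \eqref{eq:sigmaF} and \eqref{eq:at_sigmaF}, using that $C$, $\hat C$, and all the marked-point data are the same in all three configurations; once it is dispatched, the asserted signs are precisely the statement that the two ends of the interval moduli space $[0,\infty]$ carry opposite inward normals with respect to the fixed orientation $d\ell$.
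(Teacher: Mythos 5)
Your proof is correct and follows exactly the same reasoning as the paper's (one-line) proof: the paper simply notes that the standard orientation of $(0,\infty)$ agrees with the inward-normal boundary orientation at $0$ and disagrees with it at $\infty$, which is precisely your Step 3. Your Step 2 (that the contracted-and-smoothed surface $C$ and all marked-point data in \eqref{eq:sigmaF} are unaffected by the length parameter, so only the $\sigma(1)$ factor from $\cR(F) \cong \cR(F') \times (0,\infty)$ is in play) is implicit in the paper's statement and is a worthwhile sanity check to spell out.
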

\begin{proof}
    Follows from the fact that the standard orientation of $(0,\infty)$ agrees with the boundary orientation at $0$ and disagrees with the boundary orientation at $\infty$.
\end{proof}

\section{Construction of moduli spaces}

\subsection{Morse cohomology}

We establish our conventions for Morse cohomology of the Morse--Smale pair $(f,g)$ on $X$. 
First, we require that there are unique critical points of index $0$ and $2n$. 
For each critical point $p$ of $f$, we define $C_-(p)$ (respectively $C_+(p)$) to be the descending (respectively ascending) manifold of $p$. 
We define $\sigma(p):= \sigma(C_-(p))$, and for any coefficient (differential graded) ring $R$ we define
$$CM^*(f) := \bigoplus_p \Z \otimes \sigma(p).$$

A Morse flowline from $p_-$ to $p_+$ is a solution of the equation
\begin{align*}
\gamma: \R &\to X \\
\gamma'(t) &= \nabla f \\
\lim_{t \to \pm \infty} \gamma(t) &= p_\pm.
\end{align*}
A Morse flowline $\gamma$ is isolated if $C_-(p_+)$ intersects $C_+(p_-)$ cleanly along $\gamma$; this determines a short exact sequence
$$ 0 \to T\gamma \to TC_-(p_+) \to NC_+(p_-) \to 0.$$
Identifying $NC_+(p_-)$ with $TC_-(p_-)$, we obtain an isomorphism 
\begin{equation}\label{eq:Morse_or}
\sigma(d)\sigma(p_-) \cong \sigma(p_+),
\end{equation}
where $\sigma(d) = \sigma(1)$ is identified with $\sigma(T\gamma)$, which we orient in the direction of increasing $t$. 
The Morse differential 
\begin{align*}
d_{f,g}:\sigma(d)CM^*(f) & \to CM^*(f)
\end{align*}
is then the map whose matrix coefficients are the sums of the isomorphisms \eqref{eq:Morse_or} over all Morse flowlines.
 We define $QC^*(X;R^{big}) = (CM^*(f),d_{f,g}) \otimes_\Z R^{big}$ to be the Morse cohomology with coefficients in $R^{big}$. 
 
Let $\cV$ be the space of smooth vector fields on $X$, and denote by $\nabla f \in \cV$ the gradient vector field of $f$ with respect to $g$. 

To define the pairing on $QC^*(X;R^{big})$, we choose a smooth map $v: \R \to \cV$ such that $v(t) = \nabla f$ for $t \le -1$ and $v(t) = -\nabla f$ for $t \ge 1$. 
For each pair $p_{\pm}$ of critical points of $f$, we consider the moduli space $\cM(\langle p_- ,p_+\rangle)$ of solutions to the equation
\begin{align*}
\gamma: \R &\to X \\
\gamma'(t) &= v(t) \\
\lim_{t \to \pm \infty} \gamma(t) &= p_\pm.
\end{align*}
For a generic choice of $v$, the components of this moduli space of virtual dimension $0$ and $1$ are regular. 
An element of a $0$-dimensional moduli space determines an isomorphism
$$\sigma(p_-)\sigma(p_+) \cong \sigma(TX) \cong \sigma(2n),$$
and the sum of these over all solutions defines the pairing $\langle p_-,p_+\rangle$. 
Considering the $1$-dimensional moduli spaces shows that the pairing satisfies
$$\langle d p_-, p_+ \rangle + \langle p_-,dp_+ \rangle = 0,$$
and hence descends to cohomology. 
Note that we may not be able to arrange for the pairing to be symmetric on the chain level, due to issues with equivariant transversality; but it will be symmetric on the cohomology level.

\begin{rem}
    We will not use the pairing $\langle -,- \rangle$ in the present paper, but it will be used in the followup \cite{CyclicOC}, so we include it in the formalism. 
\end{rem}

We will also consider the moduli space of `rays'. 
Let $v:[0,\infty) \to \cV$ be a smooth map with $v(t) = \pm \nabla f$ for $t \ge 1$. 
Then for a critical point $p$ of $f$, we consider the moduli space $\cM(ray,p)$ of solutions to the equation
\begin{align*}
\gamma: [0,\infty) &\to X \\
\gamma'(t) &= v(t) \\
\lim_{t \to \infty} \gamma(t) &= p.
\end{align*}
We have an isomorphism 
$$\sigma(\cM(ray,p)) \cong \sigma(p)$$ 
if $v(t) = \nabla f$ for large $t$; and an isomorphism 
$$\sigma(\cM(ray,p) \cong \sigma(2n) \sigma(p)^\vee$$
if instead $v(t) = -\nabla f$ for large $t$.
There is an evaluation map
\begin{align*}
\mathrm{ev}: \cM(ray,p) &\to X\\
\mathrm{ev}(\gamma) &= \gamma(0).
\end{align*}
By choosing $v$ generically, we may arrange that this evaluation map is transverse to any given submanifold of $X$; we will need to arrange for it to be transverse to all strata of the system of divisors $V$.

\subsection{Class of almost-complex structures}

We briefly recall the space of almost-complex structures we use to define our pseudoholomorphic curve equations, from \cite[Section 3.3]{perutz2022constructing}, with an important modification. 
Recall that $J_0$ is a fixed $\omega$-compatible almost complex structure which makes each $V_q$ into an almost-complex submanifold, and such that there exists a convex collar for the Liouville subdomain $W$. 

By \cite[Lemma 4.4]{perutz2022constructing}, we may assume without loss of generality that all moduli spaces of simple $J_0$-holomorphic bubble trees are regular. Note that we may achieve regularity without modifying $J_0$ over the convex collar, as any $J_0$-holomorphic sphere contained in the collar region is constant by exactness. 
Strictly speaking the cited result produces an $\omega$-tame almost complex structure, but the argument to produce an $\omega$-compatible almost complex structure (as we assume $J_0$ to be) is identical, as remarked in \cite[Remark 6.4.6]{mcduffsalamon}.

Set
\begin{align} \label{eqn:perts}
 \EuY&:= \left\{   Y\in C^\infty \left( \End TX \right) :     YJ_0 + J_0 Y = 0;\, Y(TV_q) \subset TV_q \text{ for all $q \in Q$}\right\}, \\
\EuY_* &:= \left\{Y \in \EuY: \| Y\|_{C^0} < \log \frac{3}{2}\right\}.
 \end{align}
Any $Y \in \EuY_*$ determines an $\omega$-tame almost complex structure $J_Y := J_0 \exp(Y)$, which makes each component $V_q$ into an almost complex submanifold. 
We also set 
\begin{align*}
\EuY_*^V &:= \{Y \in \EuY_*: Y|_V = 0\}, \\
\EuY_*^{max} &:= \{Y \in \EuY_*:\mathrm{supp}(Y) \subset W\}.
\end{align*}
If $Y \in \EuY_*^V$, then $J_Y|_V = J_0|_V$; while if $Y \in \EuY_*^{max}$, then $J_Y|_{X \setminus W} = J_0|_{X \setminus W}$.

\subsection{Floer data}

Let $\EuH \subset C^\infty(X;\R)$ denote the subset of functions supported in $W$. 
For each pair of Lagrangian branes $(L_0,L_1)$ we choose a smooth function 
$$ H_{01}:[0,1] \to \EuH.$$
We assume that the time-1 flow of the corresponding Hamiltonian vector field $X_{H_{01}(t)}$, when applied to $L_0$, makes it transverse to $L_1$. 
Thus, the set of Hamiltonian chords $y$ from $L_0$ to $L_1$ is finite. 

Following  \cite[Appendix B]{Sheridan2017} (which is based on \cite[Section 12f]{seidel2008fukaya}), to each chord $y$ we can associate the $\G$-graded $\Z/2$-torsor $o_y$, which in this paper we will denote instead by $\sigma(y)$. 
We briefly recall the construction. 
We choose a lift of $y$ to $\tilde y:[0,1] \to \tilde\cG$, such that $\tilde y(0)$ agrees with the grading of $L_0$, and $\tilde y(1)$ agrees with the grading of $L_1$, up to the action of an even element of the covering group $g \in \G$ (where `even' means one in the kernel of $\G \to \Z/2$, i.e., one preserving the orientation).  
Then the path of Lagrangian subspaces determined by $\tilde{y}$ gives us boundary conditions for a Cauchy--Riemann operator on the boundary-punctured disc. 
We denote the $\Z/2$-torsor of orientations of its determinant line, placed in degree equal to its index, by $\sigma(\tilde y)$. 
We denote the $\Z/2$-torsor of isomorphism classes of spin structures on the tautological bundle over $\tilde y$, compatible with the spin structures on the ends, placed in degree $0$, by $Spins(\tilde y)$. 
The principal $\Z/2$-bundle $\sigma(\tilde y) \otimes Spins(\tilde y)^\vee$, over each connected component of the space of choices of $\tilde y$, is trivial by \cite[Section 11]{seidel2008fukaya}. 
Thus it is isomorphic to $\underline{\sigma(y,\gamma)}$, where $\sigma(y,\gamma)$ is a $\Z$-graded $\Z/2$-torsor associated to $y$ and a homotopy class $\gamma$ of lifts of $y$ to $\cG^{or}(W)$, from $T_{y(0)}L_0$ to $T_{y(1)}L_1$. 
Gluing a sphere of Chern number $k$ onto our Cauchy--Riemann operator determines an isomorphism $\sigma(y,\gamma) \cong \sigma(2k) \otimes \sigma(y,\gamma+2k)$ in accordance with Equation \eqref{eq:interior_gluing}, where we use the complex orientation to trivialize the determinant line of the Cauchy--Riemann operator associated to the sphere. 
This operation changes the class $g \in \G$ by $-2k$, so the $\G$-graded $\Z/2$-torsors $\sigma(g) \otimes \sigma(y,\gamma)$ for different $\gamma$ can be compatibly identified with a single torsor $\sigma(y)$.

\begin{defn}\label{defn:mor}
The $hom$-spaces in the exact Fukaya category are the direct sums of all orientation lines associated to Hamiltonian chords:
\[ hom_{\fuk^\ex(W)}(L_0,L_1) := \bigoplus_y \Z \otimes \sigma(y).\]
They are free $\G$-graded $\Z$-modules of finite rank. 
We define
\begin{align*}
    hom_{\fuk^\sm(X,D)}(L_0,L_1) := R^\sm \otimes hom_{\fuk^\ex(W)}(L_0,L_1),\\
    hom_{\fuk^\big(X,D)}(L_0,L_1) := R^\big \otimes hom_{\fuk^\ex(W)}(L_0,L_1),\\
\end{align*}
\end{defn}

Next, for each pair of Lagrangian branes $(L_0,L_1)$ we consider a smooth function
\begin{align*}
Y_{01}: [0,1]& \to \EuY_*^{\max{}},
\end{align*}
giving rise to a family of almost-complex structures $J_{01}:= J_{Y_{01}}$.
The corresponding equation for Floer trajectories between chords $y_0$ and $y_1$ in the Liouville domain $W$ is
\begin{align*}
u: \R \times [0,1] & \to W \\
\partial_s u + J_{01}(t) (\partial_t u - X_{H_{01}(t)} \circ u) &=0\\
u(s,i) & \in L_i \qquad \text{for $i=0,1$} \\
\lim_{s \to +\infty} u(s,\cdot) &= y_1  \\
\lim_{s \to -\infty} u(s,\cdot) &= y_0.
\end{align*}
For a comeagre subset of the space of choices of $Y_{01}$, the moduli space of such Floer trajectories is regular \cite{FHS:transversality}. 
We choose such a regular $Y_{01}$, for each pair of Lagrangian branes $(L_0,L_1)$ equipped with $H_{01}$. 
Taken together, the choice of $(H_{01},Y_{01})$ for each pair $(L_0,L_1)$ is called a choice of \emph{Floer data}. 

Given the choice of Floer data, an element $u$ of the moduli space of Floer trajectories modulo translation determines an isomorphism
$$\sigma(\mu)\sigma(y_1) \cong \sigma(y_0).$$
Summing over all $y_1$, $y_0$, we obtain the \emph{Floer differential} 
$$\mu^1_{Floer}: \sigma(\mu)hom_{\fuk^\ex(W)}(L_0,L_1) \to hom_{\fuk^\ex(W)}(L_0,L_1),$$
where $\sigma(\mu) \cong \sigma(\R) \cong \sigma(1)$, where $\R$ is the group acting by translations on the moduli spaces of solutions to Floer's equation. 
The usual argument shows that $\mu^1_{Floer} \circ \mu^1_{Floer} = 0$.

\subsection{Perturbation data}

For each of the moduli spaces $\Cbar(\top)$ from Section \ref{subsec:gen_domains}, where $\top$ is stable, we make a choice of strip-like ends, and thick-thin decomposition, in the sense of \cite[Section 5.2]{perutz2022constructing}, which are consistent with gluing and equivariant with respect to the action of the group of permutations of the boundary and interior marked points which preserve the topological type $\top$. 
It was proved in [\emph{op. cit.}] that such a choice exists, for moduli spaces of spheres and discs; the extension to other topological types is identical.

Now let $\Rbar(F)$ be a nearly $f^{sym}$-stable family of domains. 
It inherits a choice of cylindrical and strip-like ends, and thick-thin decomposition, which are consistent with gluing and equivariant with respect to the action of $Sym(P^{Sym})$. 
Let $r \in \Rbar(F)$, and $C_r$ be the fibre of the universal family over $r$. 
Let $C_2^\circ \subset C_2$ denote the complement of all boundary marked points and nodes, and $\tilde C_2^\circ$ its normalization. 
A \emph{Lagrangian labelling} $\bL$ for $C_r$ is a choice of Lagrangian brane for each connected component of the boundary of $C_2^\circ$, which `match' at the boundary nodes.

\begin{defn}[cf. Definition 5.4 of \cite{perutz2022constructing}]\label{def:ps_m_bound}
A \emph{choice of perturbation data} for $C_r$, equipped with a Lagrangian labelling $\bL$, consists of a triple $(Y,K,v)$ where
$$Y \in C^\infty(\tilde C_2^\circ,\EuY_*), \qquad K \in \Omega^1(\tilde C_2^\circ,\cH), \qquad v \in C^\infty(C_1,\cV),$$
satisfying
\begin{itemize}
\item[]\textbf{(Continuous at interior nodes)} $Y$ descends to a continuous map on $C_2^\circ$ (i.e., it takes the same value on the two preimages of each interior node);
\item[] \textbf{(Thin 2d regions)} $Y$ and $K$ coincide with the Floer data over the strip-like ends and gluing regions; 
\item[] \textbf{(Thin 1d regions)} on any outgoing (respectively incoming) ray, $v(t) = \nabla f$ (respectively $-\nabla f$) for $t \ge 1$; and on any interval $[0,\ell_i]$, we impose the following constraints on $v(t)$ for $1 \le t \le \ell_i - 1$:
\begin{itemize}
\item if $0$ is incoming and $\ell_i$ is outgoing, $v(t) = \nabla f$;
\item if $0$ is outgoing and $\ell_i$ is incoming, $v(t) = -\nabla f$;
\item if both $0$ and $\ell_i$ are incoming, $v(t) = v_{\langle\rangle}(t-\ell_i/2)$;
\item if both $0$ and $\ell_i$ are outgoing, $v(t) = -v_{\langle \rangle}(t-\ell_i/2)$.
\end{itemize}
\item[] \textbf{(Boundary)} Over each component of $\partial C_2^\circ$ labelled by $L$, 
$$K(\xi)|_L = 0 \quad \text{for all }\xi \in T(\partial C_2) \subset T(C_2).$$
\item[] \textbf{(Maximum principle)} if the curve component has no interior marked points, then $Y \in \EuY_*^{max}$.
\item[] \textbf{($f^{stab}$-unstable spheres)} if $C_\alpha$ is an $f^{stab}$-unstable spherical component, then $Y|_{C_\alpha} \in \EuY_*^V$.
\end{itemize}
\end{defn}

Let $C^{\circ\circ}_2 \subset C^\circ_2$ denote the complement of all boundary and interior marked points and nodes of $C_2$. 
Let $\Sbar^{\circ\circ}_2(F) \subset \Sbar_2(F)$ denote the union of these subsets over all the fibres of the universal family. 
If $\Rbar(F)$ is a $d$-dimensional pseudomanifold with boundary, then $\Rbar(F) \setminus \Rbar(F)^{[d-2]}$ is a smooth $d$-dimensional manifold with boundary. 

\begin{defn}
    A \emph{choice of perturbation data} for a nearly $f^{sym}$-stable family of domains $\Rbar(F)$, equipped with a Lagrangian labelling $\bL$, consists of a choice of perturbation data $(Y_r,K_r,v_r)$ for $C_r$, for each $r \in \Rbar(F)$, such that:
    \begin{itemize}
        \item[] \textbf{(Admissibility)} The perturbation data vary continuously in $r$. In the case of $Y$, the meaning of `continuous' is clear. For $K$, it makes sense because $\Sbar^{\circ\circ}_2(F) \to \Rbar(F)$ is locally a fibration. Similarly, for $v$, it makes sense because $\Sbar_1(F) \to \Rbar(F)$ is locally a fibration. \\Furthermore, the restriction of the perturbation data to the manifold with boundary $\Rbar(F) \setminus \Rbar(F)^{[d-2]}$ is smooth.
        \item[] \textbf{(Equivariance)} The perturbation data are invariant under the action of $Sym(\top_F)$.
    \end{itemize}
\end{defn}

Now we observe that, for each of the operations \textbf{(Disjoint union)}, \textbf{(Attaching)}, and \textbf{(Boundary stratum)}, a choice of perturbation data for the initial family determines such a choice after the corresponding operation. 
We are going to impose a requirement that our choices of perturbation data should respect these operations, but only on the nearly $f^{sym}$-stable families.

\begin{defn}\label{def:univ_p_d}
    Given a system of families of domains, a \emph{universal choice of perturbation data} consists of a choice of perturbation data for each nearly $f^{sym}$-stable family of domains $\Rbar(F)$ and each choice of Lagrangian labelling, satisfying:
    \begin{itemize}
        \item[] \textbf{(Consistency with disjoint union)} The perturbation data on $\Rbar(F_1 \coprod F_2)$ is the product of perturbation data for $\Rbar(F_1)$ with that for $\Rbar(F_2)$;
        \item[] \textbf{(Consistency with attaching)} The perturbation data on a family $\Rbar(At(F))$ obtained by attachment from $\Rbar(F)$, where attachment is only taken along boundary marked points, or rays where the length parameter is $\infty$, is induced by the perturbation data on $\Rbar(F)$;
        \item[] \textbf{(Consistency with boundary strata)} The perturbation data on a boundary stratum of $\Rbar(F)$ coincides with the restriction of the perturbation data from $\Rbar(F)$.
        \item[] \textbf{(Consistency with stabilization)} Where a nearly $f^{sym}$-stable family is obtained from an $f^{sym}$-stable one by stabilization, adding only symmetric stabilizing marked points, and such that all of the added marked points get sent to marked points in the stabilization process, the perturbation data on the nearly $f^{sym}$-stable family are obtained from that of the $f^{sym}$-stable one by pullback.      
        \item[] \textbf{(Consistency with stabilization on $V$)} Where a nearly $f^{sym}$-stable family is obtained from an $f^{sym}$-stable one by stabilization, adding only stabilizing marked points, and such that only spheres get contracted in the stabilization process, the perturbation data on the nearly $f^{sym}$-stable family \emph{restricted to $V$} are obtained from that of the $f^{sym}$-stable one, restricted to $V$, by pullback.
    \end{itemize}
\end{defn}

\begin{rem}
    Note that we do not impose \textbf{(Consistency with attaching)} at an attachment with length parameter $0$; this avoids a problem with self-transversality in the case of attaching a moduli space to itself. 
\end{rem}

\begin{rem}
Note that \textbf{(Consistency with stabilization)} implies that the perturbation data are constant when restricted to any $f^{sym}$-unstable sphere bubble. Note that this is consistent with \textbf{(Equivariance)}. This should be compared with the condition \textbf{(Constant on spheres)} in \cite[Definition 5.4]{perutz2022constructing}. 
\end{rem}

\begin{rem}
    Note that we do not impose \textbf{(Consistency with stabilization)} for forgetful maps which send a stabilizing marked point to a non-marked point of a component with boundary; it would be impossible to satisfy such a condition. For example, suppose we had a forgetful map which simply forgot an interior stabilizing marked point of a component with boundary. Consider a family of domains with an $f^{sym}$-unstable sphere bubble attached at a point which converges to the boundary, and whose image under the forgetful map is non-marked. Then the family breaks along a strip-like end, along which the perturbation data is required to satisfy \textbf{(Thin 2d regions)}; but the pullback of the perturbation data will not satisfy this (instead it will be constant along the strip-like end).  
\end{rem}

\subsection{Bubble trees}

We recall some notation and results concerning moduli spaces of stable holomorphic spheres from \cite[Section 4.1]{perutz2022constructing}. 
Let $Y \in \EuY_*$. 
The \emph{combinatorial type} of a stable $J_Y$-holomorphic sphere is prescribed by a five-tuple $\Gamma = (T,E,\Lambda,\{K_\alpha\},\{A_\alpha\})$. Here $T$ is the set of components $C_\alpha$ of the domain; $E \subset T \times T$ the set of nodes, i.e. points $z_{\alpha \beta} \in C_\alpha$ at which $C_\alpha$ is attached to $z_{\beta \alpha} \in C_\beta$; $\Lambda: \{1,\ldots,k\} \to T$ is a function which prescribes the distribution of marked points among the components; $K_\alpha \subset Q$ is the set of $q$ such that $u(C_\alpha)$ is contained in $V_q$; and 
$$A_\alpha \in \im \left( \pi_2(V_{K_\alpha}) \to H_2(V_{K_\alpha};\Z) \right)
$$
is $u_*[C_\alpha]$. 
The data satisfy a stability condition: if $A_\alpha = 0$ then $C_\alpha$ has at least three special points (marked points or nodes). 

We define $\cM_\Gamma(J_Y)$ to be the moduli space of stable $J_Y$-holomorphic curves of combinatorial type $\Gamma$, and $\cM^*_\Gamma(J_Y) \subset \cM_\Gamma(J_Y)$ to be the subspace of simple curves. 
More generally, given a smooth manifold $\cR$ and a smooth map $\bY: \cR\to \cY_*$, we define 
$$\cM_\Gamma(J_\bY) = \coprod_{r \in \cR} \cM_\Gamma(J_{\bY(r)}),$$
and $$\pi_\cR: \cM_\Gamma(J_\bY) \to \cR$$ the map which records the parameter $r \in \cR$.  
We define $\cM^*_\Gamma(J_\bY) \subset \Mbar_\Gamma(J_\bY)$ to be the subspace of simple maps, as before.

The set of maps $\bY$ such that $\cM^*_\Gamma(J_\bY)$ is regular for all $\Gamma$ (in the sense defined in \cite[Section 4.2]{perutz2022constructing}) is comeagre, by \cite[Lemma 4.4]{perutz2022constructing} (where a formula is also given for the dimension of $\cM^*_\Gamma(J_\bY)$, when it is regular). 

\subsection{Pseudoholomorphic maps}

Let $\Rbar(F)$ be a $f^{sym}$-stable family of domains equipped with a choice of directions, and $C$ a mixed curve of topological type $\top_F$; we assume that $\top_F$ has at most one non-symmetric stabilizing marked point. 
A \emph{choice of critical points} for a ray $r \in \rays(C)$, or for an interval $i \in \ints(C)$ of length $\infty$ with one incoming and one outgoing end, is a sequence $(p_1,\ldots,p_i)$ of critical points of the Morse function $f$, where $i \ge 1$; if $i=1$, the choice is called \emph{elementary}. 
On the other hand, a choice of critical points for an interval $i \in \ints(C)$ of length $\infty$ with both ends outgoing, is a sequence $(p_1^+,\ldots,p_{i^+}^+,p_{i^-}^-,\ldots,p_1^-)$ of critical points of $f$, where $i^+ \ge 1$ and $i^- \ge 1$; if $i^- = i^+ = 1$, the choice is called elementary. 
A choice of critical points $\bp$ for the family $\Rbar(F)$ is a choice of critical points for each ray and interval of infinite length.

Now suppose that our family of domains comes equipped with a choice of Lagrangian labellings $\bL$. 
For an incoming (respectively outgoing) boundary marked point or node, $p \in P^\partial \sqcup N^\partial/\sim$, let $L_p^-/L_p^+$ denote the Lagrangian labels in positive/negative (respectively negative/positive) direction from $p$. 
A \emph{choice of Hamiltonian chords} for $p$ is a sequence $(y_1,\ldots,y_i)$ of Hamiltonian chords from $L_p^-$ to $L_p^+$, for $i \ge 1$. 
The choice is called elementary if $i=1$. 
A choice of Hamiltonian chords $\by$ for $\Rbar(F)$ is a choice of Hamiltonian chords for each $p \in P^\partial \sqcup N^\partial/\sim$. 

Following \cite[Section 5.5]{perutz2022constructing}, given a choice of Lagrangian labels and Hamiltonian chords, we define $\pi_2^\num(\by)$ to be the set of homotopy classes of maps from $C_2$ to $X$ with boundary conditions prescribed by the Lagrangian labelling and choice of Hamiltonian chords, modulo the equivalence relation which identifies two homotopy classes if each irreducible component has the same Maslov index and intersection numbers with each $V_q$. 
A \emph{choice of homotopy class} $A$ is a choice of class in $\pi_2^\num(\by)$. 
We say $\alpha$ is a \emph{ghost component} if it is spherical and $A_\alpha$ is the homotopy class of the constant map. 
A \emph{ghost tree} is a collection of ghost components $C_\alpha$ which are connected by nodes.

A \emph{choice of component data} is a subset $K_\alpha \subset Q$ for each component $C_\alpha$ of $C_2$, with $K_\alpha = \emptyset$ if $\alpha$ is a component with nonempty boundary, and for each ghost tree $T$ there exists $K_T$ such that $K_\alpha = K_T$ for all $\alpha$ in $T$. 

\begin{rem}
Note that the moduli space would be empty if $K_\alpha \neq \emptyset$ and $\partial C_\alpha \neq \emptyset$, because all of our Lagrangians are disjoint from the system of divisors. 
Similarly, it would be empty if the condition on ghost trees were not satisfied, as each ghost tree gets sent to a point.
\end{rem}

Given a choice of homotopy class and component data, a \emph{choice of tangency data} is a map $\tang: P^{int} \sqcup N^{int} \to (\Z_{\ge 0})^Q$ such that the sum of $\tang(p)_q$ over all $p$ contained in a non-ghost component $C_\alpha$ of $C_2$ is equal to $A_\alpha \cdot V_q$ for all $q \notin K_\alpha$; and for a point $p$ on a ghost component $\alpha$, if $\tang(p)_q \neq 0$ then $q \in K_\alpha$.

\begin{rem}
    Note that if $\{K_\alpha\}$ is elementary, then $K_\alpha = \emptyset$ for all non-ghost $\alpha$, and the tangency data $\tang$ determine $K_T$ for each ghost tree $T$, so the component data are determined by the tangency data.
\end{rem}

A \emph{choice of bubble tree types} is a choice of combinatorial type of bubble tree $\Gamma_p$ for each $p \in P^{int} \sqcup N^{int}/\sim$. 
The combinatorial type $\Gamma_p$ is allowed to be empty, but only if $p \in N^{int}/\sim$ or $\tang(p) \neq 0$. 
If it is non-empty, then it is required to have one marked point if $p$ is a stabilizing marked point, and two marked points if $p$ is a bulk marked point or a node. 

\begin{defn}
    A choice $(\bp,\by,A,\{K_\alpha\},\tang,\{\Gamma_p\})$ is called \emph{elementary} if:
    \begin{itemize}
        \item the choice of critical points $\bp$ is elementary, for each ray and interval.
        \item the choice of Hamiltonian chords $\by$ is elementary, for each boundary marked point and node.
        \item there are no symmetric stabilizing marked points on ghost components.
        \item $K_\alpha = \emptyset$ for all non-ghost components $\alpha$.
        \item $\tang(p)$ is a basis vector (i.e., has a unique non-vanishing component, which is equal to $1$) for all $p \in P^{stab}$.
        \item if there is a non-symmetric stabilizing marked point $p$ (recall there is at most one), and it lies on a ghost component, then we denote the ghost tree it lies on by $T_p$. We require that $K_{T_p} = \{q\}$, where $\tang(p) = q$.
        \item $\tang(p) = 0$ for all $p \in P^{bulk} \cup N^{int}$, unless $p$ is a node incident to $T_p$, in which case $\tang(p)_{q'} = 1$ for $q=q'$, $0$ otherwise.
        \item $\Gamma_p$ is empty for all $p$.
    \end{itemize}
\end{defn}

We now define a moduli space $\cM(F,\bp,\by,A,\tang,\{K_\alpha\},\{\Gamma_p\},r)$, for $r \in \cR(F)$, as follows. 

Firstly, for each ray $s \in \rays(C)$, we define $\cM(s,r)$ to be the space of broken flowlines $\gamma:[0,\infty) \to X$ of the vector field $v_r$ (which is part of the perturbation data), where a broken flowline means a flowline limiting to the critical point $p_i$, followed by Morse trajectories between $p_{j+1}$ and $p_{j}$ for $1 \le j \le i-1$ (where $p_1,\ldots,p_i$ are the critical points associated to the ray $s$, and the direction of the trajectories is determined in the natural way by the direction of the rays). 

For each interval $s \in \ints(C)$ of finite length, and $r \in \cR(F)$, we define $\cM(s,r)$ to be the space of flowlines $\gamma:[0,\ell_s(r)] \to X$ of the vector field $v_r$. 

For each interval $s \in \ints(C)$ of infinite length, with one incoming and one outgoing end, we define $\cM(s,r)$ to be the space of broken flowlines of the vector field $v_r$, where a broken flowline means a flowline $\gamma:[0,\infty) \to X$ of $v_r$ limiting to $p_1$, followed by Morse trajectories between $p_{j+1}$ and $p_{j}$ for $1 \le j \le i-1$, followed by a flowline $\gamma:[0,\infty) \to X$ of $v_r$ limiting to $p_i$. 

For each interval $s \in \ints(C)$ of infinite length, with two outgoing ends, we define $\cM(s,r)$ to be the space of broken flowlines of the vector field $v_r$, where a broken flowline means a flowline $\gamma:[0,\infty) \to X$ of $v_r$ limiting to $p_1^+$, followed by Morse trajectories between $p_{j+1}^+$ and $p_j^+$ for $1 \le j \le i^+-1$, followed by an element of $\cM(\langle p^+_{i^+},p^-_{i^-}\rangle)$, followed by Morse trajectories between $p_{j+1}^-$ and $p_j^-$ for $1 \le j \le i^--1$, followed by a flowline $\gamma:[0,\infty) \to X$ of $v_r$ limiting to $p_1^-$. 

We define $\cM_1(r)$ to be the product of all these moduli spaces of broken flowlines, and $\ev_1:\cM_1(r) \to X^{\partial C_1}$ the map which evaluates each finite or semi-infinite flowline at its endpoint.

Now, for each irreducible component $C_\alpha$ of $C_2$ except for the $f^{stab}$-unstable spheres with $K_\alpha \neq \emptyset$ (which we deal with separately), we consider the moduli space of pseudoholomorphic maps $u_\alpha:C_\alpha \to \cap_{q \in K_\alpha} V_q$, with the pseudoholomorphic curve equation prescribed by our choice of perturbation data; with boundary conditions prescribed by the Lagrangian labelling; asymptotic to the `closest' Hamiltonian chord to $C_\alpha$ in accordance with the choice of Hamiltonian chords, with homotopy class as prescribed by $A$, and order of tangency to the divisors $V_q$ with $q \notin K_\alpha$ prescribed by the tangency data $\tang$. (Compare \cite[Section 5.5]{perutz2022constructing}.) 

For an $f^{stab}$-unstable component $C_\alpha$ with $K_\alpha \neq \emptyset$, we consider the moduli space of simple $J_0$-holomorphic maps $u_\alpha: C_\alpha \to \cap_{q \in K_\alpha} V_q$, modulo the action of the group of automorphisms which fix the non-stabilizing marked points.

We define $\cM_2(r)$ to be the product of all these moduli spaces of pseudoholomorphic curves, together with the moduli spaces of Floer trajectories between $y_{j+1}$ and $y_j$ for each choice of Hamiltonian chords $(y_1,\ldots,y_i)$ associated to a boundary marked point or boundary node, and the moduli spaces of simple bubble trees $\cM^*_{\Gamma_p}(J_{\bY(r,z_p)})$ associated to each $p \in P^{int} \sqcup N^{int}/\sim$. 
(Where by definition, if $\Gamma_p$ is empty, then the moduli space is a single point.)

For each node $n$ connecting a bubble tree to an irreducible component $C_\alpha$ of $C_2$, we define $n_-$ to be the point on the bubble tree where the node is attached, and $n_+$ the point on $C_\alpha$ where the node is attached. 
We define $V(n_{\pm})$ to be the intersection of components of $V$ which contain the component of $C$ on which $n_\pm$ lies (in accordance with the label $K_\alpha$), $V(n) = V(n_-) \times V(n_+)$, and $\Delta(n)$ the intersection of $V(n)$ with the diagonal $\Delta \subset X \times X$. 
For each point $p$ at which a ray or interval is attached to $C_2$, we define $V(p)$ similarly to $V(n_\pm)$, and $\Delta(p) \subset V(p) \times X$ the intersection of $V(p)$ with the diagonal $\Delta \subset X \times X$. 
We have an evaluation map 
$$\ev_2:\cM_2(r) \to \prod_n V(n) \times \prod_p V(p),$$
and
$$(\ev_1,\ev_2) : \cM_1(r) \times \cM_2(r) \to X^{\partial C_1} \times \prod_n V(n) \times \prod_p V(p).$$
We define 
$$\cM(r) := (\ev_1,\ev_2)^{-1}\left(\prod_n \Delta(n) \times \prod_p \Delta(p)\right),$$ 
and 
$$\cM(F,\bp,\by,A,\tang,\{K_\alpha\},\{\Gamma_p\}) := \coprod_{r \in \cR(F)} \cM(F,\bp,\by,A,\tang,\{K_\alpha\},\{\Gamma_p\},r).$$

Suppose that $(\bp,\by,A,\{K_\alpha\},\tang,\{\Gamma_p\})$ is elementary. Then we have two cases:
\begin{itemize}
    \item if there is no non-symmetric stabilizing marked point, then $K_\alpha = \emptyset$ for all $\alpha$, $\tang$ is uniquely determined by $A$ up to the action of $Sym(\top_F)$, and $\Gamma_p$ is empty for all $p$. In this case we write $\cM(F,\bp,\by,A)$ instead of $\cM(F,\bp,\by,A,\tang,\{K_\alpha\},\{\Gamma_p\})$.
    \item if there is a non-symmetric stabilizing marked point $p$, then there is a unique $q$ such that $\tang(p)_q = 1$. Then $K_\alpha$ is determined by $A_\alpha$ and $q$ (it is equal to the empty set for all except the components lying on the ghost tree containing $p$, for which it is $\{q\}$); $\tang$ is uniquely determined by $A$ up to the action of $Sym(\top_F)$, and $\Gamma_p$ is empty for all $p$. In this case we write $\cM(F,\bp,\by,A,q)$ instead of $\cM(F,\bp,\by,A,\tang,\{K_\alpha\},\{\Gamma_p\})$.
\end{itemize}
We call these `elementary moduli spaces', and for brevity, we will write ` $\cM(F,\bp,\by,A,(q))$' instead of `$\cM(F,\bp,\by,A)$ or  $\cM(F,\bp,\by,A,q)$'.

\subsection{Transversality}
\label{sec:trans}

A universal choice of perturbation data is called \emph{regular} if all moduli spaces $\cM(F,\bp,\by,A,\tang,\{K_\alpha\},\{\Gamma_p\})$ are regular. (We remark that a weaker notion of regularity was used in \cite[Definition 5.8]{perutz2022constructing}, in order to allow the proof of independence of the Fukaya category of the choice of system of divisors; we do not concern ourselves with this point here, and use instead this stronger and simpler notion of regularity.)

\begin{lem}\label{lem:transversality}
There exists a regular universal choice of perturbation data. 
\end{lem}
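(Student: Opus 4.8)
The plan is to build the universal perturbation data by recursion over the system of families of domains, achieving regularity at each stage by a Sard--Smale argument. Order the nearly $f^{sym}$-stable families by a well-founded relation $\prec$ refining the ordering by $\dim \cR(F)$ and, among families of equal dimension, breaking ties by the number of irreducible components and nodes; this relation has the property that the consistency conditions of Definition~\ref{def:univ_p_d} constrain the data on $\Rbar(F)$ only in terms of strictly $\prec$-smaller families (boundary strata are lower-dimensional; the family underlying an attaching along boundary marked points or length-$\infty$ rays has the same dimension but fewer nodes; and an $f^{sym}$-stable family of which $F$ is a stabilization precedes $F$). For each $F$ these conditions prescribe the restriction of $(Y,K,v)$ to a closed subset $Z_F$ of the universal curve/interval family --- the thin $1$- and $2$-dimensional regions (where it equals the Floer data), the preimages of the boundary strata, the $f^{sym}$-unstable sphere bubbles (where it is constant), and, for stabilizations along spheres, everything over $V$ --- and one must extend generically over the complement and verify regularity.

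If $\Rbar(F)$ is nearly $f^{sym}$-stable but not itself $f^{sym}$-stable, then by Definition~\ref{def:sod} it is a stabilization (collapsing only spheres) of an $f^{sym}$-stable family $\Rbar(F_0)$, and by \textbf{(Consistency with stabilization)} and \textbf{(Consistency with stabilization on $V$)} the data are entirely pulled back, so nothing is chosen; the only point is that the components of $\cM(F,\bp,\by,A,\tang,\{K_\alpha\},\{\Gamma_p\})$ of negative virtual dimension are empty. By construction (cf.\ Remark~\ref{rem:stab sph}) such a moduli space maps to the corresponding one for $\Rbar(F_0)$, whose relevant components are regular by the $f^{sym}$-stable case below, fibred over the simple bubble-tree moduli spaces $\cM^*_{\Gamma_p}(J_\bY)$ (regular by \cite[Lemma~4.4]{perutz2022constructing}) and cut out by transverse intersections with the diagonals $\Delta(p)$ --- transversality arranged by choosing the Morse vector fields $v$ generically, using that the evaluation maps $\ev_1$ onto the strata of $V$ are submersions --- so a dimension count forces the negative-dimensional part to be empty.

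For an $f^{sym}$-stable family we perturb genuinely. The freeness of the $Sym(\top_F)$-action on $\cR(F)$ from Definition~\ref{def:sod} lets us choose data on a fundamental domain and pull back, so equivariance poses no obstruction. Let $\cP_F$ be the Banach manifold of perturbation data extending the fixed data on $Z_F$, modelled on $C^\epsilon$-sections as in \cite{FHS:transversality} (a Taubes-type argument recovers the comeagre set of genuinely smooth data at the end). The universal moduli space $\widetilde{\cM}(F,\bp,\by,A,\tang,\{K_\alpha\},\{\Gamma_p\}) \to \cP_F$ is a Banach manifold: for the pseudoholomorphic pieces this is the standard argument, using that a curve component not contained in any $V_q$ is somewhere injective on the open region where the perturbation is unconstrained, while components contained in $V$ are either $f^{stab}$-unstable spheres (treated as simple $J_0$-holomorphic curves in $V$ with already-regular moduli) or controlled by \cite[Lemma~4.4]{perutz2022constructing}; for the flowline pieces one uses submersivity of $\ev_1$; and the matching conditions at $\Delta(n)$, $\Delta(p)$ are imposed transversally. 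The projection to $\cP_F$ is Fredholm, so Sard--Smale gives a comeagre set of regular values, any one of which makes $\cM(F,\bp,\by,A,\tang,\{K_\alpha\},\{\Gamma_p\})$ regular. There are only countably many tuples $(F,\bp,\by,A,\tang,\{K_\alpha\},\{\Gamma_p\})$ --- the combinatorial data being countable and the homotopy classes forming a countable set by the energy bound coming from \textbf{(Ample)} --- so intersecting over all of them, and recursing over all families, yields the desired regular universal choice.

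The main obstacle is checking that the generic extension over the interior of $\cR(F)$ can be made while respecting \emph{all} constraints at once: the Floer data on thin regions, the maximum principle $Y \in \EuY_*^{max}$ on boundaryless components without interior marked points, the triviality $Y|_{C_\alpha} \in \EuY_*^V$ on $f^{stab}$-unstable spheres, and $Sym$-equivariance. The first two are automatic, since those loci are closed and disjoint from the region where somewhere-injectivity buys perturbation freedom; the equivariance constraint can only obstruct transversality along fixed loci of the $Sym$-action, which is exactly why the freeness hypothesis was built into Definition~\ref{def:sod}; and the $\EuY_*^V$ constraint is precisely why curve components lying in $V$ must be handled separately as simple $J_0$-curves, so their transversality is inherited from \cite[Lemma~4.4]{perutz2022constructing} rather than reproved. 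What remains is the routine but lengthy verification that the linearized operators attached to the various pieces, coupled along the matching conditions, are jointly surjective for generic data, following the template of \cite[Section~4.2]{perutz2022constructing} and \cite{mcduffsalamon}.
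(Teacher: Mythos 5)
The overall architecture matches the paper's proof: an induction over the (nearly $f^{sym}$-stable) families, extending perturbation data from the already-determined locus and running a Sard--Smale argument, using the key observation that the universal flowline evaluation map is a submersion. Your refinement of the well-founded order to break ties by the number of components and nodes is a sensible precaution (the paper relies implicitly on the same point — that attaching along boundary marked points or length-$\infty$ rays, and stabilization, both produce families whose data is forced by strictly earlier choices).

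Two places where you diverge from, or fall short of, the paper's argument deserve comment. First, you nowhere address \emph{why a smooth extension of the prescribed data from $Z_F$ over $\cR(F)$ exists}. This is not a banality: $\Rbar(F)$ is only a semianalytic pseudomanifold with boundary, not a manifold with corners, and the perturbation data must satisfy the \textbf{(Admissibility)} condition, which asks for smoothness with respect to the manifold-with-boundary structure on $\Rbar(F) \setminus \Rbar(F)^{[d-2]}$ together with continuity across the lower strata. Guaranteeing that such extensions exist is precisely the content of Lemma~\ref{lem:ext}, and is the reason the entire Appendix~A on stratified spaces, control data, and the {\L}ojasiewicz refinement was developed. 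Your proposal should invoke it. Second, for the nearly-$f^{sym}$-stable-but-not-$f^{sym}$-stable case, you assert that ``by \textbf{(Consistency with stabilization)} and \textbf{(Consistency with stabilization on $V$)} the data are entirely pulled back, so nothing is chosen.'' That overstates what the consistency conditions say: \textbf{(Consistency with stabilization)} applies only to stabilizations adding \emph{symmetric} stabilizing marked points \emph{which get sent to marked points} under the forgetful map (see the remark following Definition~\ref{def:univ_p_d} explaining why a blanket pullback requirement would be inconsistent with \textbf{(Thin 2d regions)}). The paper instead handles this case by strengthening the genericity requirement imposed while choosing data for the $f^{sym}$-stable families: one arranges, by the same Sard--Smale argument, that the pulled-back data on the stabilizations to which the consistency condition \emph{does} apply are regular. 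Your dimension-count argument is in the right spirit but should be reorganized along those lines, rather than asserting that all data on such families are determined.
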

\begin{proof}
The proof is essentially the same as that of \cite[Lemmas 5.9 and 5.10]{perutz2022constructing}: regular perturbation data are constructed inductively in the dimension of the families of domains. 
Suppose that regular perturbation data have been constructed for all nearly $f^{sym}$-stable families of dimension $\le d-1$, and let $\Rbar(F)$ be a family of dimension $d$. 
Suppose, to start with, that the family is $f^{sym}$-stable.

The perturbation data are then uniquely determined over the boundary strata of $\Rbar(F)$ as these have dimension $\le d-1$.  
They can be extended to a consistent choice over $\Rbar(F)$ which satisfies \textbf{(Admissibility)} by Lemma \ref{lem:ext}, and this choice can be made to satisfy \textbf{(Equivariance)} by averaging (which preserves the conditions required of the perturbation data; in the case of \textbf{(Thin 2d regions)}, this is a consequence of the equivariance of our choices of cylindrical and strip-like ends). 

Having shown that the space of perturbation data satisfying these conditions is non-empty, we must show that the regular choices form a comeagre subset. 
The proof of this follows closely that of \cite[Lemma 5.9]{perutz2022constructing}, which itself is a routine combination of the arguments in \cite[Section 9k]{seidel2008fukaya}, \cite[Section 6.7]{mcduffsalamon}, \cite[Section 6]{cieliebak2007symplectic}, and very similar to the arguments presented in \cite[Theorem 4.19]{CW:flips} and \cite[Theorem 4.1]{CW:floer}. 
The main difference from \cite{perutz2022constructing} is the presence of flowlines in our moduli spaces (which however are dealt with in \cite{CW:flips,CW:floer}). 
Incorporating these in the transversality argument is routine, given the following elementary observation: the universal evaluation map 
\begin{equation}\label{eq:flowline_transv}\ev_1^{univ}: \cM_1^{univ} \to X^{\partial C_1}
\end{equation}
(i.e., the union of $\cM_1$ over all $r \in \cR(F)$ and all choices of perturbation data) is a submersion. 
This follows as we may perturb the vector field arbitrarily in a neighbourhood of the endpoint of each finite or semi-infinite flowline: so we can `make the endpoint move in whichever direction we choose' by modifying the vector field. 

In fact, by the same argument, we may prove that the set of perturbation data satisfying a stronger regularity condition is still comeagre: namely, the condition that all nearly $f^{sym}$-stable moduli spaces obtained from $\Rbar(F)$ by a stabilization of the type considered in the \textbf{(Consistency with stabilization)} condition are regular. 
This covers the case which was excluded at the start of the inductive step.
\end{proof}

\subsection{Compactness}

By Lemma \ref{lem:transversality}, we may make a regular choice of perturbation data; we fix such a choice.

\begin{lem}\label{lem:compactness}
Let $\cM(F,\bp,\by,A,(q))$ be a moduli space of virtual dimension $\le 1$. 
If the virtual dimension is $0$, then the moduli space is a compact $0$-manifold. 
If the virtual dimension is $1$, and all codimension-$1$ boundary strata of $F$ are $f^{sym}$-stable, then the moduli space admits a Gromov compactification $\Mbar(F,\bp,\by,A,(q))$ which is a compact $1$-manifold with boundary. 
The boundary points are in bijection with the disjoint union of the following sets:
\begin{enumerate}
    \item $\cM(F',\bp',\by',A',(q))$, where $\Rbar(F')$ is a codimension-$1$ stratum of $\Rbar(F)$; and $\bp'$, $\by'$, $A'$ induce $\bp$, $\by$, $A$. 
    \item $\cM(F,\bp',\by',A',(q))$, where the sum of lengths $i$ and $i^\pm$ of the choices of critical points and Hamiltonian chords for $\bp'$ and $\by'$ is one more than that for $\bp$ and $\by$, and $A'$ is a choice of homotopy class for $F,\bp',\by'$ inducing $A$ for $F,\bp,\by$;
    \item \label{it:q_ghost} $\cM(F',\bp',\by',A',q)$, where $\Rbar(F'') \subset \Rbar(F)$ is a codimension-$1$ stratum of $\Rbar(F)$, and $\Rbar(F')$ is obtained from $\Rbar(F'')$ by deleting one symmetric stabilizing marked point from each non-ghost component $\alpha$ which is connected by a node to the ghost tree containing $p$, but was separated from $p$ by an interval in $\top_F$.
\end{enumerate}
\end{lem}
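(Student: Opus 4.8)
The plan is to run Gromov compactness for the pseudoholomorphic part of a configuration in tandem with the standard compactness theory for broken Morse flowlines, and then to sort the codimension-one strata of the resulting compactification into the three listed types. Given a sequence in $\cM(F,\bp,\by,A,(q))$, I would first pass to a subsequence along which the parameters $r\in\cR(F)$ converge in $\Rbar(F)$. The fibres $C_r$ then converge to a (possibly nodal) mixed curve, the curve components $u_\alpha$ converge in the Gromov sense, with spheres and discs bubbling off and the tangency orders to the $V_q$ behaving semicontinuously exactly as in \cite[Section 6]{perutz2022constructing} (see also \cite[Chapter 5]{mcduffsalamon}), while the broken flowlines on the rays and intervals converge to broken flowlines, possibly acquiring extra breakings at critical points and with finite intervals possibly degenerating to length $0$ or $\infty$. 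Since $X$ is compact and the perturbation vector fields agree with $\pm\nabla f$ near the ends of the rays, there is no escape of energy or flowlines to infinity; and since the incidence conditions cutting out $\cM(r)$ from $\cM_1(r)\times\cM_2(r)$ amount to intersecting with closed submanifolds, the limiting configuration again satisfies them. This produces the Gromov compactification $\Mbar(F,\bp,\by,A,(q))$ as a compact topological space, stratified by combinatorial type of the limit.

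\textbf{Step 2: the dimension count and the $0$-dimensional case.} Each stratum of $\Mbar$ is a fibre product of elementary moduli spaces of the same kind, over evaluation maps into the strata of the $V_q$ and the diagonal; by the regularity provided by Lemma \ref{lem:transversality} every such stratum is a smooth manifold of dimension equal to its virtual dimension, and the virtual dimension of any boundary stratum drops by at least one relative to that of $\cM(F,\bp,\by,A,(q))$ for each of: passage of the domain to a codimension-one stratum $\Rbar(F')$ of $\Rbar(F)$; an extra breaking of a flowline (an increase in the total length $i$, $i^{\pm}$ of the chains of critical points); and bubbling of a sphere or disc component in the interior of the curve part (using semipositivity of $X$, exactness of $W$, and the simple-curve bubble-tree formalism to bound the codimension). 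In particular, if $\cM(F,\bp,\by,A,(q))$ has virtual dimension $0$ no degeneration can occur, so $\Mbar=\cM$ is a compact $0$-manifold; this is the first assertion.

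\textbf{Step 3: the $1$-dimensional case.} If the virtual dimension is $1$, only codimension-one strata of $\Mbar$ survive, and I would check that these are precisely the sets (1)--(3). Sets (1) and (2) are the domain-degeneration and Morse-breaking strata just discussed — the hypothesis that every codimension-one stratum of $F$ is $f^{sym}$-stable guarantees that the moduli spaces over $\Rbar(F')$ are among those for which regularity was established, and finite intervals degenerating to length $0$ or $\infty$ are themselves codimension-one strata of $\Rbar(F)$, hence subsumed in (1). The remaining possibility is the $V$-bubbling mechanism of Remark \ref{rem:stab sph}: a ghost tree forms carrying the non-symmetric stabilizing point $p$ and maps into some stratum $V_K$ with $K=\{q\}$ where $\mathrm{tang}(p)=q$, and the adjacent non-ghost components that were separated from $p$ by an interval in $\top_F$ collapse into $V$. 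The condition \textbf{(Consistency with stabilization on $V$)} of Definition \ref{def:univ_p_d} forces the perturbation data restricted to $V$ to be pulled back from the corresponding family with fewer stabilizing marked points, so the limiting moduli space factors through that smaller space and is exactly the $\cM(F',\bp',\by',A',q)$ described in (3), with one symmetric stabilizing marked point deleted from each affected non-ghost component; the virtual-dimension bookkeeping (the $2\ell-2c_1(NV)$ shift of Remark \ref{rem:stab sph} being cancelled by the deleted point together with the new incidence with $V_q$) shows that this stratum has virtual dimension $1$, i.e. is genuinely codimension one.

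\textbf{Step 4: gluing, and the main obstacle.} Finally I would upgrade $\Mbar$ to a compact $1$-manifold with boundary by exhibiting, near each codimension-one stratum, a half-open collar via a gluing construction: domain gluing for (1) (the one-point-link property built into the definition of a family of domains supplies the collar), Morse-trajectory gluing for (2), and gluing at the nodes lying in $V$ for (3), the last combined with the product structure of the perturbation data along $V$. Transversality (Lemma \ref{lem:transversality}) makes the linearized gluing maps isomorphisms, and in each case the gluing parameter is one-dimensional, so no corners appear. I expect the main obstacle to be Step 3 together with the type-(3) gluing in Step 4: pinning down exactly which $V$-bubbled configurations occur, verifying that they lie in virtual dimension $1$ (which is the whole point of the consistency conditions on perturbation data along $V$), and constructing the corresponding gluing map; by contrast the strata of types (1) and (2) are routine adaptations of \cite{perutz2022constructing} and of standard Morse theory.
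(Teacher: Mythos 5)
Your overall architecture — Gromov compactness together with broken-flowline compactness, regularity from Lemma \ref{lem:transversality}, a dimension count to rule out degenerations of codimension $\ge 2$, and gluing to produce the manifold-with-boundary structure — matches the paper's proof. The identification of the three boundary types, and in particular the recognition that type (3) comes from the $V$-bubbling mechanism of Remark \ref{rem:stab sph} via the \textbf{(Consistency with stabilization on $V$)} condition, is exactly the intended mechanism.

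However, there are two places where your proposal elides steps that the paper treats as essential and that I do not think can be waved away.

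First, in Step 3 you assert that the codimension-one strata ``are precisely the sets (1)--(3)'' but do not explain how to rule out a degeneration in which a bulk or stabilizing marked point is absorbed into a ghost sphere that maps constantly to a point off $V$. The paper must show that in the limit the tangency orders survive: it associates component data and tangency data to the limit configuration, forgets all stabilizing marked points with $\tang(p)=0$, and then needs the non-trivial fact (from \cite[Lemma 7.2]{cieliebak2007symplectic}) that if the bubble tree $\Gamma'_{p'}$ is empty and $p\in P^{int}$, then $\tang(p)\neq 0$. Without this, the classification of the boundary into the listed types is incomplete — there would be additional boundary components where a constrained interior marked point has escaped its constraint. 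Relatedly, the paper's reduction to \emph{simple} bubble trees (via \cite[Corollary 5.13]{perutz2022constructing}) is what makes the regularity hypothesis applicable to the boundary strata; you nod to it in Step 2 but the well-definedness of the ``forget and stabilize'' procedure (which relies on \textbf{(Consistency with stabilization)} and \textbf{(Consistency with stabilization on $V$)}) needs a check.

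Second, in Step 4 you assert that ``no corners appear,'' but the type (3) strata do not in fact arise as codimension-one strata of $\Rbar(F)$: as the paper notes, they correspond to higher-codimension strata of the domain stratification, and the claimed manifold-with-boundary structure only holds because the stratification is a refinement of the standard one on a smooth manifold with boundary near such points, together with an additional arrangement that the perturbation data are chosen to be independent of the position of the stabilizing marked point when it is close to the node. As written, your gluing argument for type (3) would not justify the absence of corners; this is a genuine gap, not merely a detail to fill in.
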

\begin{proof}
    The proof follows \cite[Lemma 5.16]{perutz2022constructing} closely. 
    The moduli space $\cM(F,\bp,\by,A,(q))$ admits a Gromov compactification, by the \textbf{(Admissibility)} condition on our perturbation data; we define $\Mbar(F,\bp,\by,A,(q))$ to be the closure of $\cM(F,\bp,\by,A,(q))$ in this Gromov compactification.     Let $u'$ be an element of this closure. 

    We now explain how to associate an element of $\cM(F',\bp',\by',A',\tang', \{K'_{\alpha'}\},\{\Gamma'_{p'}\})$ to $u'$. 
    For each component $C_{\alpha'}$ of the domain of $u'$, we define $K'_{\alpha'}$ to be the set of $q \in Q$ such that $u'(C_{\alpha'})$ is contained in $V_q$. 
    Each point in $u'$ comes equipped with tangency data $\tang(p) \in (\Z_{\ge 0})^Q$, which is defined by setting $\tang(p)_q = 0$ if $q \in K'_{\alpha'}$, and equal to the local intersection number of $u'|_{C_{\alpha'}}$ with $V_q$ at $p$ otherwise. 
    We then forget all stabilizing marked points $p$ such that $\tang(p) = 0$, and stabilize the domain. 

    Next, we forget all symmetric stabilizing points on $f^{sym}$-unstable sphere bubbles, then forget all stabilizing points on $f^{stab}$-unstable sphere bubbles $C_{\alpha'}$ with $K'_{\alpha'} \neq \emptyset$ (i.e., those $f^{stab}$-unstable spheres which are contained in $V$), to obtain a pseudoholomorphic curve with bubble trees attached. 
    We then replace the resulting bubble trees with simple bubble trees, as in \cite[Corollary 5.13]{perutz2022constructing} (based on \cite[Proposition 6.1.2]{mcduffsalamon}); this defines  $\{\Gamma'_{p'}\}$. 
    We claim that this procedure defines an element of a moduli space $\cM(F',\bp',\by',A',\tang',\{K'_{\alpha'}\},\{\Gamma'_{p'}\})$. 
    To see this, we first need to check that the respective procedures of forgetting symmetric stabilizing points are well-defined; this follows from the \textbf{(Consistency with stabilization)} and \textbf{(Consistency with stabilization on $V$)} hypotheses on our choice of perturbation data. 
    We also need to check that, if the bubble tree $\Gamma'_{p'}$ is empty and $p \in P^{int}$, then $\tang(p) \neq 0$; this follows from \cite[Lemma 7.2]{cieliebak2007symplectic}, as in \cite[Proof of Lemma 5.16]{perutz2022constructing}. 

    We now observe that, by standard dimension counting (which uses the assumption, made in Definition \ref{def:sod}, that any marked point on a component without boundary is obtained by \textbf{(Stabilizing)}; cf. Remark \ref{rem:stab sph}), the virtual dimension of $\cM(F',\bp',\by',A',\tang',\{K'_{\alpha'}\},\{\Gamma'_{p'}\})$ is at least $2$ less than that of $\cM(F,\bp,\by,A,(q))$, unless it is one of the cases named in the statement, in which case it has dimension $1$ less. 
    In particular, $\cM(F,\bp,\by,A,(q)$ is compact if it has virtual dimension $0$, as claimed; and if it has virtual dimension $1$, then we obtain a compact moduli space by adding these points to $\cM(F,\bp,\by,A,(q))$.  
    We may now invoke a gluing theorem to show that $\Mbar(F,\bp,\by,A,(q))$ is a compact one-manifold with these boundary points. 
    We note that, in a neighbourhood of a codimension-1 stratum $\Rbar(F')$ of $\Rbar(F)$, $\Rbar(F)$ has the structure of a smooth manifold with boundary, and the smoothness hypothesis on our perturbation data imposed in \textbf{(Admissibility)} is the standard one. On the other hand, strictly speaking, the strata of the third type correspond to higher-codimension strata; but the stratification is a refinement of the standard stratification of a smooth manifold with boundary at such points, and one can straightforwardly arrange that the perturbation data are smooth, with respect to the natural structure of smooth manifold with boundary by choosing them to be independent of the position of the stabilizing marked point, so long as it is sufficiently close to the node.
\end{proof}

\subsection{`Small' algebraic operations}

For each $f^{sym}$-stable family of domains $F$, with at most one non-symmetric stabilizing marked point, equipped with a Lagrangian labelling $\bL$, we define 
\begin{multline}
B^\ex(F) := Hom^*\left(\S(F) \otimes \bigotimes_{p \in P^{bulk,in}} \sigma(-2)QC^*(X;\Z) \otimes \bigotimes_{p \in P^{\partial,in}} \fuk^{\ex}(L_p^-,L_p^+) ,\right.\\
\left.\bigotimes_{p \in P^{bulk,out}} QC^*(X;\Z) \otimes  \bigotimes_{p \in P^{\partial,out}} \fuk^{\ex}(L_p^-,L_p^+)\right).
 \end{multline}
Now recall that we have a short exact sequence of Fredholm operators
$$0 \to \bar{\partial} \to T\cM(F,\bp,\by,A,(q)) \to T\cR(F) \to 0,$$
where $\bar{\partial}$ is the operator whose vanishing cuts out the moduli space $\cM(r)$ with fixed domain. 
In particular, a regular point $u$ of $\cM(F,\bp,\by,A)$ determines an isomorphism
\begin{equation}\label{eq:sigmau0}
\sigma(\bar{\partial}) \sigma(\cR(F)) \cong \sigma(T_u \cM(F,\bp,\by,A,(q))).
\end{equation}
We also recall  \eqref{eq:sigmaF}:
\begin{multline}\label{eq:sigmau1}
\S(F) := \sigma(\cR(F))^\vee \otimes \sigma(-n\chi(\hat{C}) +2n|\pi_0(\partial C)| + 2n|P^{int,out}| + 2|P^{int,in}|) \otimes \\ \bigotimes_{i \in P^{\partial,in}} \sigma(p^\partial_i) \otimes \bigotimes_{i \in P^{\partial,out}} \left(\sigma(p^\partial_i)^\vee \otimes \sigma(L_{p^\partial_i})\right) \otimes  \bigotimes_{B \in \pi_0(\partial C)} \sigma(B) ,
\end{multline}

We now remove all of the Morse flowlines, and glue orientation operators $\sigma(\tilde{y}_p)$ onto $\bar{\partial}$ at each $p \in P^{\partial,in}$, and $\sigma(\bar{\tilde{y}}_p)$ at each $p \in P^{\partial,out}$ (where $\bar{\tilde{y}}_p$ denotes the reverse of the path of Lagrangian subspaces $\tilde{y}_p$), to obtain a Cauchy--Riemann operator $\bar{\partial}_{cap}$, where all of the boundary punctures have been `capped off', and there are no constraints at the interior marked points. 
We also glue spin structures over the paths $\tilde{y}_p$ and $\bar{\tilde{y}}_p$ to the spin structures on our Lagrangian boundary conditions, to obtain an isomorphism
$$\bigotimes_{p \in P^{\partial,in}} Spins(\tilde{y}_p) \otimes \bigotimes_{p \in P^{\partial,out}} Spins(\bar{\tilde{y}}_p) \cong Spins(\bar\partial_{cap})$$
(using notation from Appendix \ref{sec:or}). 
Using the isomorphism 
$$\sigma(\tilde{y}_p) \otimes Spins(\tilde{y}_p) \otimes  \sigma(\bar{\tilde{y}}_p) \otimes Spins(\bar{\tilde{y}}_p) \cong \sigma(T_p L)$$ 
(obtained by gluing Cauchy--Riemann operators and spin structures to get a trivial Cauchy--Riemann operator over the disc equipped with a spin structure), and recalling
$$\sigma(y_p) := \sigma(\tilde{y}_p) \otimes Spins(\tilde{y}_p)^\vee,$$
the gluing isomorphism gives us
\begin{multline}\label{eq:sigmau2}
\sigma(\bar{\partial}_{cap}) \cong Spins(\bar\partial_{cap}) \otimes \sigma(\bar{\partial}) \otimes \bigotimes_{p \in P^{\partial,in}} \sigma(y_p) \otimes \bigotimes_{p \in P^{\partial,out}} \sigma(T_pL)\sigma(y_p)^\vee \otimes \\
\bigotimes_{p \in P^{stab}} \sigma(NV_q) \otimes \bigotimes_{s \in \rays^{in}} \sigma(p_s) \otimes \bigotimes_{s \in \rays^{out}} \sigma(TX)\sigma(p_s)^\vee \otimes \bigotimes _{s  \in \ints} \sigma(TX).
\end{multline}
Now we use the isomorphism
\begin{equation}\label{eq:sigmau3}
    \sigma(\bar{\partial}_{cap}) \cong Spins(\bar\partial_{cap}) \otimes \sigma(n\chi(\hat{C}) + \mu(A) - 2n|\pi_0(\partial C)|) \otimes \bigotimes_{B\in \pi_0(\partial \Sigma)} \sigma(B)
\end{equation}
from Definition--Lemma \ref{deflem:or_CR} (which is a reformulation of \cite[Proposition 2.8]{Solomon:thesis} or \cite[Proposition 11.13]{seidel2008fukaya}), where $\mu(A)$ is the boundary Maslov index associated to the Cauchy--Riemann operator $\bar\partial_{cap}$ (which is even, as our Lagrangians are oriented), and $\sigma(B) = \sigma(T_p L)$ for some point $p$ on $B$, for each $B$. 

Combining all of the isomorphisms above, and using the complex orientations $\sigma(NV_q) \cong \sigma(2)$ and $\sigma(TX) \cong \sigma(2n)$, as well as the orientations of our Lagrangians to identify $\sigma(L_{p}) \cong \sigma(T_pL)$, we obtain an isomorphism $\sigma_u$ 
\begin{multline}\label{eq:sigmau_final} 
\S(F) \otimes \bigotimes_{s \in P^{bulk,in}} \sigma(-2)\sigma(p_s) \otimes \bigotimes_{p \in P^{\partial,in}} \left(\sigma(p)^\vee \otimes \sigma(y_p)\right) \\  
\cong \sigma(\mu(A)) \otimes \sigma(T_u\cM)^\vee \otimes \bigotimes_{s \in P^{bulk,out}} \sigma(p_s) \otimes \bigotimes_{p \in P^{\partial,out}} \left(\sigma(p)^\vee \otimes \sigma(y_p) \right). 
\end{multline}
We may regard $\sigma(p)^\vee \otimes \sigma(y_p)$ as an element of $\cF^{small}(L_p^-,L_p^+)$, and therefore, $\sigma_u$ as an element 
$$\sigma_u \in \sigma(T_u\cM)^\vee \otimes \sigma(\mu(A)) \otimes B^\ex(F)$$
of degree $0$. 

\begin{defn}
    For any $f^{sym}$-stable family of domains $F$ with Lagrangian labelling $\bL$ and homotopy class $A$ (and label $q$ attached to the non-symmetric stabilizing marked point, if there is one), we define a degree-zero element
    $$F^\ex_{\bL,A,(q)} := \sum_{u \in \cM(F,\bp,\by,A,(q))} \sigma_u \in B^\ex(F) \otimes \sigma(\mu(A))$$
    where the sum is over all elementary choices of critical points $\bp$ and Hamiltonian chords $\by$ such that $\cM(F,\bp,\by,A,(q))$ is $0$-dimensional, so that there is a natural trivialization $\sigma(T_u \cM) \cong \sigma(0)$; and all elements $u \in \cM(F,\bp,\by,A,(q))$. 
\end{defn}

\begin{defn}
    We define $B^\sm(F) := R^\sm \otimes B^\ex(F)$, and a degree-zero element
    $$F^\sm_{\bL,(q)} := \sum_A \nov^A \otimes F^\ex_{\bL,A,(q)} \in B^\sm(F)_0,$$
    where we have used the fact that $\nov^A$ has degree $\mu(A)$. 
\end{defn}

The following is immediate from the definition:

\begin{lem}
    If one family of domains is obtained from another by \textbf{(Disjoint union)}, there is a natural isomorphism
    $$B^\sm\left(\coprod_i F_i\right) \cong \bigotimes_i B^\sm(F_i).$$
    This identifies $(\coprod_i F_i)_{\coprod_i \bL_i,(q)}$ with $\otimes_i (F_i)_{\bL_i,(q)}$.
\end{lem}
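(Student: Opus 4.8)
The plan is to unwind the definitions and observe that every piece of data entering $B^\sm$ and the elements $F^\sm_{\bL,(q)}$ decomposes multiplicatively under \textbf{(Disjoint union)}; everything then follows formally.

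\emph{The modules.} Write $F:=\coprod_i F_i$. The index sets of marked points of $F$ are the disjoint unions of those of the $F_i$, and the boundary Lagrangian labelling $\coprod_i\bL_i$ restricts to $\bL_i$ on each piece; combined with the already-noted identity $\S(F)=\bigotimes_i\S(F_i)$, this exhibits the source (resp.\ target) $\Z$-module appearing in the definition of $B^\ex(F)$ as the tensor product of the source (resp.\ target) modules for the $B^\ex(F_i)$. Each such module is free of finite rank over $\Z$ — the Fukaya $hom$-spaces by Definition \ref{defn:mor}, and $QC^*(X;\Z)=CM^*(f)$ by construction — and each $\S(F_i)$ is a rank-one graded $\Z_2$-torsor; hence the standard natural isomorphism $Hom^*\!\big(\bigotimes_i S_i,\bigotimes_i T_i\big)\cong\bigotimes_i Hom^*(S_i,T_i)$ for finite-rank free modules gives $B^\ex(F)\cong\bigotimes_i B^\ex(F_i)$. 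Tensoring with $R^\sm$ over $\Z$ and using base change to absorb the extra copies of $R^\sm$ on the right then gives $B^\sm(F)\cong\bigotimes_i B^\sm(F_i)$. The only subtlety is the Koszul signs produced by interleaving tensor factors belonging to different $F_i$; by the conventions of the Koszul sign rule these reordering isomorphisms are canonical, so there is nothing to check.

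\emph{The element.} Because $\Rbar(F)$ is fibrewise the disjoint union of the mixed curves of the $\Rbar(F_i)$, and — by the \textbf{(Consistency with disjoint union)} clause of Definition \ref{def:univ_p_d} — its perturbation data is the product of those on the $\Rbar(F_i)$, an elementary choice $(\bp,\by,A,(q))$ for $F$ is precisely a tuple of elementary choices $(\bp_i,\by_i,A_i,(q))$ for the $F_i$ (with the label $q$, if present, carried by the unique $F_i$ bearing a non-symmetric stabilizing point), and $\cM(F,\bp,\by,A,(q))=\prod_i\cM(F_i,\bp_i,\by_i,A_i,(q))$. For $u=(u_i)_i$ in this product the capped Cauchy--Riemann operator $\bar{\partial}_{cap}$ is the direct sum of those of the $u_i$, and all of the gluing and combining isomorphisms producing $\sigma_u$ in \eqref{eq:sigmau_final} are compatible with direct sums, so $\sigma_u=\bigotimes_i\sigma_{u_i}$ under the isomorphism of the previous paragraph (also using $\mu(A)=\sum_i\mu(A_i)$). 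Summing over the product therefore gives $F^\ex_{\coprod_i\bL_i,A,(q)}=\bigotimes_i (F_i)^\ex_{\bL_i,A_i,(q)}$; and since $\nov^A=\prod_i\nov^{A_i}$, summing over the choice of homotopy class and using the ring structure of $R^\sm$ yields $F^\sm_{\coprod_i\bL_i,(q)}=\bigotimes_i(F_i)^\sm_{\bL_i,(q)}$, as claimed.

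Since every assertion is an instance of ``a disjoint union of domains/moduli spaces corresponds to a tensor product/Cartesian product'', there is no genuine obstacle; the one place demanding care is the sign bookkeeping in the first step, which the Koszul-sign-rule formalism handles automatically.
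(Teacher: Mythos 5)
The paper offers no proof of this lemma --- it is preceded by ``The following is immediate from the definition:'' --- so there is nothing to compare against at the level of written argument. Your proposal simply makes explicit the definition-chasing that the paper is taking for granted, and it does so correctly: you identify the two ingredients (the module isomorphism and the matching of elements), and for each you correctly name the facts that make it go through.

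A few remarks on the places where one might worry, all of which you handle at least implicitly. For the module isomorphism, the key point is that the $\mathrm{Hom}^*$-tensor compatibility $Hom^*(\bigotimes_i S_i,\bigotimes_i T_i)\cong\bigotimes_i Hom^*(S_i,T_i)$ requires freeness and finite rank; you correctly note that $hom_{\fuk^\ex}$ and $CM^*(f)$ are free of finite rank, and that the $\S$-factors are rank-one. For the element, the claim $\sigma_u=\bigotimes_i\sigma_{u_i}$ rests on the compatibility of every isomorphism in the chain \eqref{eq:sigmau0}--\eqref{eq:sigmau_final} with disjoint union; the crucial one is \eqref{eq:sigmau3}, whose compatibility with disjoint union is exactly the content of \eqref{eq:CR_disj_un} together with the first (unnumbered) lemma of the gluing-compatibility subsection of Appendix \ref{sec:or}, and the short exact sequence \eqref{eq:sigmau0} and the gluings producing $\bar\partial_{cap}$ obviously respect direct sums. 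Your treatment of the optional label $(q)$ --- carried by the unique $F_i$ containing the non-symmetric stabilizing point, absent on the others --- is the right reading of the paper's notation, since the framework only ever permits at most one such point per family. In short: correct, and essentially the unique reasonable unwinding of the definitions.
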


\begin{lem}\label{lem:B_at_F}
    If one family of domains is obtained from another by \textbf{(Attaching)}, with length parameter $\infty$ for all pairs of rays getting attached, there is a natural map
    $$B^\sm(F) \to B^\sm(At(F))$$
    given by composing, or pairing in the case of two outgoing rays getting attached together, together with the isomorphism $\S(At(F)) \cong \S(F)$ from \eqref{eq:at_sigmaF}.
    This map sends $F^\sm_{\bL,(q)} \mapsto At(F)^\sm_{\bL,(q)}$.
\end{lem}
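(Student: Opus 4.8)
\textbf{Proof proposal for Lemma \ref{lem:B_at_F}.}

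The plan is to reduce the statement to a comparison of boundary contributions of one-dimensional moduli spaces, following the standard strategy for proving that a Floer-theoretic operation respects a gluing/attaching operation. I will first construct the map $B^\sm(F) \to B^\sm(At(F))$ explicitly. Since $At(F)$ is obtained by matching certain incoming boundary marked points and outgoing ones (composition), matching pairs of outgoing rays (pairing via the pairing $\langle-,-\rangle$ on $QC^*$), and matching incoming/outgoing rays with length parameter $\infty$, the map on the $B$-level is defined by the corresponding composition/evaluation of multilinear maps, twisted by the isomorphism $\S(At(F)) \cong \S(F)$ from \eqref{eq:at_sigmaF}. The degree-zero and grading claims are immediate from the fact that \eqref{eq:at_sigmaF} records precisely the orientation-line discrepancy; one must only check that the Koszul signs bookkept by the $\sigma(\cdot)$-conventions match, which is where the explicit sign $(-1)^{\signn}$ in \eqref{eq:n choose 2} enters (this is the content of the forward reference in the text following that equation).

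Next I would identify $At(F)^\sm_{\bL,(q)}$ geometrically. The key observation is that a choice of critical points $\bp$ for $At(F)$ which is \emph{not} elementary along the newly-created infinite interval corresponds to a broken configuration: an element of $\cM(F,\bp',\by,A)$ together with a (possibly broken) Morse flowline connecting the two endpoints that were attached, or an element of $\cM(\langle p_+,p_-\rangle)$ in the two-outgoing-rays case. Gromov-type compactness (Lemma \ref{lem:compactness}, case 2, where the sum of lengths $i, i^\pm$ increases by one) shows that the relevant one-dimensional moduli spaces over $At(F)$, with \emph{elementary} choices replaced by length-two choices along the glued interval, have boundary consisting of (a) the genuinely glued configurations, which assemble into $\cM(At(F),\bp,\by,A)$ with elementary data, and (b) configurations where the interval breaks off completely, which by the thin-region constraints \textbf{(Thin 1d regions)} and the gluing/consistency axioms recover exactly $\cM(F,\bp',\by',A')$ decorated by a short Morse trajectory — i.e., after summing, the right-hand side of the claimed identity. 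Counting boundary points of these one-manifolds with signs (the signs being controlled by \eqref{eq:boundary_or} and the Lemma following it, together with the two boundary components at interval-length $0$ and $\infty$) yields the equality $At(F)^\sm_{\bL,(q)} = (\text{image of } F^\sm_{\bL,(q)})$.

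I would carry out the signs by working entirely within the $\sigma(\cdot)$-torsor formalism: each element $\sigma_u$ of a moduli space carries a canonical trivialization of $\sigma(T_u\cM)$ in the zero-dimensional case, and in the one-dimensional case the boundary orientation plus \eqref{eq:boundary_or} assigns $\pm 1$ to each boundary point; the assertion that the signed count of boundary points vanishes is then a formal consequence, and comparing it with the definitions of the composition map and of $F^\sm$, $At(F)^\sm$ gives the result. The passage from $B^\ex$ to $B^\sm$ is harmless since one just tensors with $R^\sm$ and sums over homotopy classes $A$, using additivity of Maslov indices under gluing so that the $\nov^A$-weights multiply correctly; this uses completeness of the relevant modules for convergence, exactly as in the convergence remarks accompanying \eqref{eq:brace}.

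The main obstacle I expect is the sign verification at boundary nodes where two boundary components of $C$ get merged or split — this is precisely why the identification \eqref{eq:n choose 2} with its $(-1)^{\signn}$ factor was isolated — and, relatedly, making sure the isomorphism $\S(At(F)) \cong \S(F)$ of \eqref{eq:at_sigmaF} is compatible with the boundary-orientation isomorphism \eqref{eq:boundary_or} when restricted to the codimension-one strata at interval-length $0$ and $\infty$; once that compatibility is pinned down, the rest is bookkeeping. A secondary technical point is ensuring that the \textbf{(Consistency with attaching)} axiom (imposed only for length parameter $\infty$, which is exactly the hypothesis of this lemma) is genuinely in force, so that the perturbation data on $At(F)$ are the induced ones and the broken configurations are literally elements of the moduli spaces over $F$ rather than merely close to them.
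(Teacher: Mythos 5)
Your strategy is misdirected: this lemma is a chain-level identity proved by a \emph{direct bijection} of zero-dimensional moduli spaces, not by a one-dimensional cobordism or compactness argument. Your own final paragraph supplies the mechanism: \textbf{(Consistency with attaching)} makes the perturbation data on $\Rbar(At(F))$ literally the induced ones. Given this, for an \emph{elementary} choice $\bp$ along the newly-created length-$\infty$ interval, the flowline datum is, by definition of $\cM(s,r)$ for an infinite-length interval with $i=1$, a pair of semi-infinite trajectories both limiting to the same critical point $p$ with \emph{no} intervening Morse trajectory. Hence $\cM(At(F),\bp,\by,A,(q))$ with elementary $\bp$ is canonically identified, set-theoretically, with the product of moduli spaces over $F$ in which the two matched rays both limit to $p$; summing over $p$ produces exactly the composition (or pairing via $\cM(\langle p_+,p_-\rangle)$) of the output of $F^\sm_{\bL,(q)}$ at one ray with its input at the other. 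The same holds for boundary attachments matching Hamiltonian chords. There are no broken configurations entering $At(F)^\sm_{\bL,(q)}$, since it sums only over elementary data, and there is no one-parameter family to appeal to: if you try, your boundary strata (a) and (b) turn out to describe the \emph{same} stratum and the cobordism collapses. Lemma \ref{lem:compactness} case (2) and signed boundary counts of one-manifolds belong to the chain-level relations of Lemma \ref{lem:1param}, not to this lemma.

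What remains --- and what the paper's proof consists of entirely --- is the orientation check: one must show that the determinant-line identification \eqref{eq:sigmau3} underlying $\sigma_u$ in \eqref{eq:sigmau_final} is compatible with the interior and boundary gluing isomorphisms \eqref{eq:interior_gluing} and \eqref{eq:boundary_gluing}, which is the content of Lemmas \ref{lem:or_interior_gluing} and \ref{lem:or_boundary_gluing}. In the boundary self-gluing case (item \eqref{it:glue_same_boundary} of Lemma \ref{lem:or_boundary_gluing}) compatibility holds only up to $(-1)^{\signn}$, and this is precisely cancelled by the sign inserted in \eqref{eq:n choose 2} inside the definition of the isomorphism $\S(At(F)) \cong \S(F)$ of \eqref{eq:at_sigmaF}. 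You do correctly flag the $(-1)^{\signn}$ factor and the role of \eqref{eq:at_sigmaF} and \eqref{eq:n choose 2} as the subtle point, but you route the verification through \eqref{eq:boundary_or} and boundary orientations of one-manifolds, neither of which appears in this lemma. Instead, verify directly that $\sigma_{u_1}\otimes\sigma_{u_2}$, composed along the attachment via \eqref{eq:interior_gluing} and \eqref{eq:boundary_gluing}, equals $\sigma_u$ for the glued map $u$ --- which is exactly what the appendix lemmas deliver.
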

\begin{proof}
    The key point is to establish compatibility of the identification \eqref{eq:sigmau3} with interior and boundary gluing. This is established in Lemmas \ref{lem:or_interior_gluing} and \ref{lem:or_boundary_gluing}. 
    In the case of a boundary self-gluing (\eqref{it:glue_same_boundary} of Lemma \ref{lem:or_boundary_gluing}), compatibility only holds up to a sign $(-1)^\signn$; this is compensated by the sign \eqref{eq:n choose 2} in the definition of the isomorphism \eqref{eq:at_sigmaF}.
\end{proof}

\subsection{Relations from $1$-dimensional moduli spaces}

Now let us observe that there is a natural differential on $B^\ex(F)$, induced by the Morse differential on $QC^*(X;R^{small})$ and the Floer differential on $\cF^{small}$ (to clarify: this is the differential which counts strips in $W$, not in $X$). 
This induces one on $B^\sm(F)$. 
For any codimension-$1$ boundary stratum $F' \subset F$, we have an identification $\S(F) \sigma(\partial) \cong \S(F')$ from \eqref{eq:boundary_or}, and hence an identification $B^\sm(F) \cong B^\sm(F') \sigma(\partial)$. 
This identification allows us to make sense of the following:

\begin{lem}\label{lem:1param}
    Let $F$ be a $f^{sym}$-stable family of domains with at most one non-symmetric stabilizing marked point, all of whose codimension-one boundary strata are $f^{sym}$-stable.
    Then
    \begin{equation}\label{eq:1param}
    \partial(F^\sm_{\bL,(q)}) + \sum_{F'} (F')^\ex_{\bL,(q)} = 0
    \end{equation}
    in $\sigma(\partial)^\vee B^\sm(F) \cong B^\sm(F')$,    where the sum is over all codimension-$1$ boundary strata $F'$ of $F$, and in the case that there is a non-symmetric stabilizing marked point, operations with $F'$ as in item \eqref{it:q_ghost} of Lemma \ref{lem:compactness}. 
\end{lem}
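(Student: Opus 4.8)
The strategy is the standard ``count the boundary of a compact 1-manifold'' argument, adapted to the orientation bookkeeping set up in Section~\ref{sec:dom_ms}. First I would fix elementary choices $(\bp,\by,A,(q))$ such that the moduli space $\cM(F,\bp,\by,A,(q))$ has virtual dimension $1$; by Lemma~\ref{lem:transversality} it is a compact $1$-manifold with boundary, with boundary strata enumerated by cases (1), (2), (3) of Lemma~\ref{lem:compactness}. Summing over all such elementary choices with appropriate Novikov weights $\nov^A$ will produce the desired identity in $B^\sm(F')$; the three terms in the identity \eqref{eq:1param} correspond to the three types of boundary points. The core of the argument is the orientation computation: one must check that the sum of the signs attached to the boundary points of each compact $1$-manifold vanishes, where the sign of a boundary point is determined by comparing the outward-normal orientation of the $1$-manifold (coming from \eqref{eq:sigmau0}) with the boundary orientations coming from the various gluing isomorphisms.

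Concretely I would proceed as follows. Each $1$-dimensional component of $\cM(F,\bp,\by,A,(q))$ contributes $0$ to the total signed boundary count, as the two endpoints of a compact interval (or the empty boundary of a circle) cancel; this is the source of the relation. It remains to match up the three types of boundary points with the three summands. Type (1) boundary points, where $\Rbar(F)$ degenerates to a codimension-$1$ stratum $\Rbar(F')$, contribute exactly $\sum_{F'} (F')^\ex_{\bL,(q)}$: here I would invoke the identification $\sigma(\partial)\S(F) = \S(F')$ from \eqref{eq:boundary_or}, and the already-established gluing compatibilities (Lemmas~\ref{lem:or_interior_gluing}, \ref{lem:or_boundary_gluing}, as used in the proof of Lemma~\ref{lem:B_at_F}), noting that we must track the sign $(-1)^\signn$ in the boundary self-gluing case, which is compensated by the sign \eqref{eq:n choose 2}. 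Type (2) boundary points, where a flowline or Floer strip breaks off at a ray or interval of infinite length, contribute the term $\partial(F^\sm_{\bL,(q)})$: the breaking of a Morse flowline is recorded by the Morse differential on $QC^*(X;R^\sm)$ via \eqref{eq:Morse_or}, and the breaking of a Floer strip in $W$ is recorded by the Floer differential on $\fuk^\ex$ via $\mu^1_{Floer}$; together these are precisely the differential $\partial$ on $B^\sm(F)$ introduced just before the statement. Type (3) boundary points, which occur only in the presence of a non-symmetric stabilizing marked point, are also of the form $(F')^\ex_{\bL,(q)}$ for the modified families $F'$ described in item~\eqref{it:q_ghost} of Lemma~\ref{lem:compactness}, and are accounted for by the parenthetical clause in the statement.

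The main obstacle is the sign/orientation bookkeeping at the type (1) boundary. One must verify that, under the identification $\sigma(\partial)^\vee B^\sm(F) \cong B^\sm(F')$, the contribution of a type-(1) boundary point $u'$ equals the corresponding term in $(F')^\ex_{\bL,(q)}$ \emph{with the correct sign}, i.e.\ that the boundary orientation of $\cM(F,\bp,\by,A,(q))$ at $u'$ (pointing into the moduli space) agrees with the product of (a) the boundary orientation of $\Rbar(F)$ at $\Rbar(F')$ used to define \eqref{eq:boundary_or} and (b) the orientation data of the glued Cauchy--Riemann operator and flowline moduli spaces. This is exactly where the gluing-compatibility lemmas for \eqref{eq:sigmau3} and the explicit sign conventions \eqref{eq:n choose 2}, \eqref{eq:Lout_Bout} do their work; the verification is a matter of carefully composing the isomorphisms \eqref{eq:sigmau2}--\eqref{eq:sigmau_final} on both sides of the gluing and checking that all Koszul signs match, using the coherence theorem to avoid writing them out by hand. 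The breaking of flowlines (type (2)) is comparatively easy: the relevant gluing is for operators over $C_1$, and the sign agreement with the Morse/Floer differentials follows directly from the definitions \eqref{eq:Morse_or} and the setup of $\mu^1_{Floer}$, together with the observation (from the proof of Lemma~\ref{lem:transversality}) that the universal evaluation map \eqref{eq:flowline_transv} is a submersion, so these boundary points are cut out transversely.
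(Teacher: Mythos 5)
Your proposal is correct and follows essentially the same route as the paper: sum over elementary choices with virtual dimension one, identify boundary points of the Gromov compactification using Lemma~\ref{lem:compactness}, match the three boundary types to the terms of \eqref{eq:1param} via the orientation bookkeeping (including the $\sigma(\partial)$ trivialization by the inward normal and the gluing compatibilities from Lemmas~\ref{lem:or_interior_gluing}--\ref{lem:or_boundary_gluing}), and conclude by cancellation of the two boundary points of each compact interval. One small citation slip: the statement that the 1-dimensional moduli space is a compact 1-manifold with boundary is Lemma~\ref{lem:compactness}, not Lemma~\ref{lem:transversality} (the latter only asserts that a regular universal choice of perturbation data exists); you do in fact cite Lemma~\ref{lem:compactness} correctly for the boundary classification.
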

\begin{proof}
    The terms in the sum are in one-to-one correspondence with boundary points $u$ of the one-dimensional components of the moduli spaces $\Mbar(F,\bp,\by,A)$, over all $\bp$, $\by$, and $A$, by Lemma \ref{lem:compactness}. 
    At each point $u \in \Mbar(F,\bp,\by,A)$, we have a degree-zero element
    $$\sigma_u \in \sigma(T_u\cM)^\vee \otimes \sigma(\mu(A)) \otimes B^\ex(F).$$
    At a boundary point $u$, we have the natural isomorphism $\sigma(T_u \cM) \cong \sigma(\partial)$ (which is trivialized by equipping it with the inward orientation). 
    It follows from the definitions that $\sigma_u$ coincides with the isomorphism $\sigma_{u'}$ associated to the corresponding element $u' \in \cM(F',\bp',\by',A')$, in the case that $u'$ lives over the boundary component $F'$ of $F$; and that $\sigma_u$ coincides with the isomorphism $\sigma_{u'}$ associated to the corresponding element $u' \in \cM(F,\bp',\by',A')$, in the case that $u'$ corresponds to a breaking along a Morse or Floer trajectory. 

    Each connected component of $\Mbar(F,\bp,\by,A)$ has either zero or two ends, as it is a compact one-manifold with boundary. 
    In the case it has two ends, the boundary orientations at the two ends are opposite; therefore, the two contributions $\sigma_{u'}$ to the sum cancel. 
    This completes the proof.
\end{proof}

\subsection{`Big' algebraic operations}

Let $F$ be a family of domains, and $Q \subset P^{bulk,in}$ a subset of the incoming bulk marked points. 
We define $\S(F,Q) := \S(F)\sigma(-2|Q|)$, and
\begin{multline}
B^\big(F,Q) := Hom\left(\S(F,Q) \otimes \bigotimes_{p \in Q} QC^*(X;R^{big}) \otimes \bigotimes_{p \in P^{\partial,in}} \cF^{big}(L_p^-,L_p^+) ,\right.\\
\left.\bigotimes_{p \in P^{bulk,out}} QC^*(X;R^{big}) \otimes  \bigotimes_{p \in P^{\partial,out}} \cF^{big}(L_p^-,L_p^+)\right).
 \end{multline}
There is a natural map
$$B^\sm(F) \otimes_{R^{small}} R^{big} \to B^\big(F,Q),$$
induced by the map
\begin{align*}
    \bigotimes_{p \in P^{bulk,in} \setminus Q} \sigma(2)QC^*(X;R^{big})^\vee & \to R^{big} \\
    p_1 \otimes \ldots \otimes p_\ell &\mapsto \frac{1}{\ell!} \novb_1 \ldots \novb_\ell.
\end{align*}
We denote the image of $F^\sm_{\bL,(q)} \otimes 1$ under this map by $F^\big_{Q,\bL,(q)}$.

\section{Main constructions}\label{sec:main}

In this section, we make the key constructions of algebraic structures and establish their properties. 
Justifications of the signs are deferred to Appendix \ref{sec:signs}, as we expect that this will make it easier to read this section. 

\subsection{Big quantum cohomology}

\begin{defn}
    We define 
$$\partial_{QC}:\sigma(\partial) QC^*(X;R^\big) \to QC^*(X;R^\big)$$
to be the Morse differential. 
We define $QH^*(X;R^\big)$ to be the cohomology of $(QC^*(X;R^\big),\partial_{QC})$; it is a $H(R^\big)$-module. 
\end{defn}

We define $\Rbar(\star,\ell):= \Rbar(bub,\ell_\bulk+2,\ell_\stab)$, where $\ell=(\ell_\bulk,\ell_\stab)$. 
It is a stabilization of $\Rbar(\star,0)$, which is a point (the moduli space of genus-zero curves with three marked points), so we have $\S(\star,\{1,2\}) = \sigma(0)$.

\begin{defn}
    We define 
$$\star_{big}: QC^*(X;R)^{\otimes 2} \to QC^*(X;R)$$
by considering the family of domains $\Rbar(\star,\ell)$. 
Namely, 
$$\star_\big:= \sum_{\ell} (\star,\ell)^\big_{\{1,2\}}.$$
\end{defn}

\begin{lem}
    We have 
$$\partial_{QC}(\star_\big(p,q)) + \star_\big(\partial_{QC}(p),q) + \star_\big(p,\partial_{QC}(q)) = 0.$$
\end{lem}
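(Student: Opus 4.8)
The plan is to deduce the identity from the chain-level relation of Lemma \ref{lem:1param}, applied to the family $\Rbar(\star,\ell)$, combined with the observation that the universal bulk insertion $\alpha=\sum_i\novb_i\alpha^i$ is a cycle for $\partial_{QC}$.

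First I would record the relevant properties of $\Rbar(\star,\ell)=\Rbar(bub,\ell_\bulk+2,\ell_\stab)$: it is $f^{sym}$-stable, it has no non-symmetric stabilizing marked points, it has no boundary marked points (so the Floer part of the differential on $B^\ex$ is absent), and, being the fundamental chain of a Deligne--Mumford-type moduli space of stable genus-zero curves (a stabilization of the point $\Rbar(\star,0)$), it has \emph{no} codimension-one boundary strata: its non-top strata all have real codimension at least two, since each node contributes complex codimension one. Hence Lemma \ref{lem:1param} applies with an empty sum over $F'$ and yields $\partial\big((\star,\ell)^\sm_{\{1,2\}}\big)=0$ for every $\ell$, where $\partial$ is the differential on $B^\ex(\star,\ell)$ induced by the Morse differential on the $QC^*(X;-)$ factors. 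Unwinding the definition of $\partial$ via \eqref{eq:del_chain_cpx}, this says precisely that $d_{f,g}$ intertwines $(\star,\ell)^\sm$ with $d_{f,g}$ acting on each of its $QC^*$-inputs --- the two inputs labelled by $\{1,2\}$ as well as the remaining $\ell_\bulk$ bulk inputs.

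Next I would pass to the big operation. By definition $\star_\big(p,q)=\sum_\ell\frac{1}{\ell_\bulk!}\,(\star,\ell)^\sm(p,q,\alpha,\ldots,\alpha)$ with $\ell_\bulk$ copies of $\alpha$, while $\partial_{QC}$, being the differential of the complex $(CM^*(f),d_{f,g})\otimes_\Z R^\big$, is the total differential $d_{f,g}\otimes 1+1\otimes d_R$ (consistently with $QH^*(X;R^\big)$ being an $H(R^\big)$-module). Applying $\partial_{QC}$ to the above sum, using the intertwining relation of the previous step to move the $d_{f,g}$ from the output onto the inputs, and using the Leibniz rule for $d_R$ on the $\novb$-coefficients of the $\alpha$-factors (and on the coefficients of $p$ and $q$), together with the graded symmetry of $(\star,\ell)^\sm$ in its $\ell_\bulk$ bulk slots, I obtain
$$\partial_{QC}\left(\star_\big(p,q)\right)+\star_\big\left(\partial_{QC}p,\,q\right)+\star_\big\left(p,\,\partial_{QC}q\right)=\pm\sum_\ell\frac{1}{(\ell_\bulk-1)!}\,(\star,\ell)^\sm\!\left(p,q,\partial_{QC}\alpha,\alpha,\ldots,\alpha\right),$$
the right-hand side collecting exactly the terms in which a differential lands on one of the $\alpha$-slots. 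It therefore remains to check that $\partial_{QC}\alpha=0$: indeed $d_{f,g}\alpha=\sum_i\novb_i\,d_{f,g}\alpha^i$, while by Definition \ref{defn:Rbig} the differential $d_R$ on the divided-power algebra is the one induced from $d_{f,g}$ by the Leibniz rule under the duality $\novb_i\leftrightarrow\alpha^i$ (with the degree shift $|\novb_i|=2-|\alpha_i|$), so that $d_R\alpha=\sum_i(d_R\novb_i)\alpha^i=-\sum_i\novb_i\,d_{f,g}\alpha^i=-d_{f,g}\alpha$; hence the right-hand side vanishes and the claim follows.

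I expect the main obstacle to be purely a matter of signs, rather than of geometric content: on one hand, pinning down the sign in $d_R\alpha=-d_{f,g}\alpha$ (equivalently, that the universal bulk class is $\partial_{QC}$-closed) from the conventions of Definition \ref{defn:Rbig}; on the other, verifying that after moving $d_{f,g}$ through $(\star,\ell)^\sm$ and combining with the $d_R$-Leibniz contributions, the three terms appear with coefficient $+1$ as stated. These are sign verifications of the kind deferred to Appendix \ref{sec:signs}, and none is conceptually difficult; the only geometric input is the vanishing of the codimension-one boundary of $\Rbar(\star,\ell)$.
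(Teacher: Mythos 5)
Your argument is correct and is essentially the same as the paper's (one-line) proof, with all the intermediate steps spelled out: the paper cites Lemma~\ref{lem:1param} applied to $(\star,\ell)$, and you correctly unpack that this yields $\partial((\star,\ell)^\sm)=0$ because the Deligne--Mumford moduli space of genus-zero curves has only even-real-codimension boundary strata (so the sum over $F'$ is empty) and there are no boundary marked points (so the Floer part of $\partial$ is absent), and then observe that passing from the small operation to $\star_\big$ via the insertion $\alpha=\sum_i\novb_i\alpha^i$ commutes with differentials precisely because $\partial_{QC}\alpha=0$, which holds by the usual closedness of the coevaluation element under the total differential $d_{f,g}\otimes 1+1\otimes d_R$. (One small notational slip: the small operation should be written $(\star,\ell)^\sm_{\bL}$ rather than $(\star,\ell)^\sm_{\{1,2\}}$; the subscript set $Q$ appears only on the big operations $(\cdot)^\big_{Q,\bL}$.)
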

\begin{proof}
    Follows from Lemma \ref{lem:1param}, applied to $(\star,\ell)$. 
\end{proof}

In other words, $\partial_{QC}$ and $\star_\big$ satisfy the Leibniz rule. 
(In fact they can be extended to an $A_\infty$ structure on $QC^*(X;R^\big)$, but we will not use this.) 

By the Leibniz rule, $\star_\big$ induces a map 
$$\star_\big: QH^*(X;R^\big)^{\otimes 2} \to QH^*(X;R^\big).$$
on cohomology. 

\begin{lem}
    The product $\star_\big$ coincides with the big quantum product defined, for example, in \cite[Section 11.5]{mcduffsalamon}. 
\end{lem}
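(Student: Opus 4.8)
The plan is to compare the two products by comparing the numbers counted on the geometric side: our product $\star_\big$ counts perturbed pseudoholomorphic spheres with points constrained on Morse flowlines, weighted by $\nov^A$ and by the bulk variables $\novb_i$; the big quantum product of \cite{mcduffsalamon} counts honest $J$-holomorphic spheres with stable domain, with the bulk insertions arising as Gromov--Witten invariants $GW^X_{\beta,3+n}(a,b,c,\alpha,\ldots,\alpha)$ divided by $n!$. First I would observe that it suffices to prove the identity on the chain level up to chain homotopy, or more precisely that the two chain-level products agree after passing to cohomology; since both satisfy the Leibniz rule with $\partial_{QC}$ (the first by the lemma just proved, the second by the standard deformation-invariance of Gromov--Witten invariants), it is enough to identify the induced maps on $QH^*(X;R^\big)$.

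The key step is to recognize that the family of domains $\Rbar(\star,\ell) = \Rbar(bub,\ell_\bulk+2,\ell_\stab)$ is the \emph{fundamental chain} of the moduli space of stable genus-zero curves with $3$ boundary-type-irrelevant (all interior) marked points together with $\ell_\bulk$ extra incoming marked points carrying rays and $\ell_\stab$ stabilizing marked points. The rays, by the \textbf{(Thin 1d regions)} condition, force the constrained interior marked points to lie on (perturbed) descending manifolds of critical points of $f$; summing over critical points, the count of our perturbed holomorphic spheres with marked points on such flowlines, weighted appropriately, computes exactly the Gromov--Witten invariant $GW^X_{\beta,3+\ell_\bulk}(a,b,c,\alpha^{i_1},\ldots,\alpha^{i_{\ell_\bulk}})$ paired against the Morse representatives; this is the standard identification of a pseudocycle/virtual-class count of constrained curves with a Gromov--Witten number, using that our \textbf{(Semipositive)} hypothesis lets us work with stable domains over $\Z$ as in \cite{RuanTian}. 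Here I would be careful about two bookkeeping points: first, summing over the $\ell_\bulk$ bulk insertions with the $\frac{1}{\ell_\bulk !}$ factor built into the definition of $F^\big_{Q,\bL,(q)}$ reproduces exactly the $\frac{1}{n!}\alpha^{\otimes n}$ appearing in the definition of $\star$; second, the stabilizing marked points, being symmetric and (by \textbf{(Consistency with stabilization)}) carrying pulled-back perturbation data, contribute trivially to the count — they only serve to stabilize the domain, and one checks the count is independent of how many are added. A short argument using the divisor equation / forgetful map, or simply direct inspection, confirms this.

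I would then assemble the comparison: $\langle a \star_\big b, c\rangle = \sum_{\ell_\bulk \ge 0} \sum_{A} \frac{\nov^A}{\ell_\bulk!}\, (\#\cM)\cdot \novb_{i_1}\cdots\novb_{i_{\ell_\bulk}}$, matching term-by-term the formula $\langle a\star b,c\rangle = \sum_{\beta,n}\frac{GW^X_{\beta,3+n}(a,b,c,\alpha,\ldots,\alpha)}{n!}\nov^\beta$ once one passes between the class $A \in \pi_2^{\num}$ and its image $\beta \in H_2(X;\Z)$ and between the Morse-theoretic count and the Gromov--Witten invariant. The degenerate $\ell_\bulk=0$ term recovers the agreement of the small quantum product with the usual one, which is essentially the content of the comparison already implicit in \cite{perutz2022constructing}; the higher terms are the genuinely new content.

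The main obstacle I anticipate is the precise identification of the Morse-flowline-constrained count with the Gromov--Witten invariant: our moduli spaces are defined via domain-dependent perturbations $(Y,K,v)$ and semianalytic transversality, whereas \cite{mcduffsalamon} uses a fixed generic $J$ and stable-domain Gromov--Witten theory. Bridging these requires a cobordism argument interpolating between the two setups — turning off the domain-dependence of the almost complex structures while keeping the flowline constraints, and invoking the independence of Gromov--Witten invariants of the chosen representatives of the (co)homology classes and of the auxiliary perturbation. I expect this to be routine but slightly lengthy; it is exactly the kind of comparison that is standard in the literature (e.g. relating Floer-theoretic quantum products to enumerative ones), so I would state it and cite the standard references rather than reprove it in detail.
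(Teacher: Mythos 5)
Your proposal is correct and follows essentially the same route as the paper's sketch: both dispose of the stabilizing marked points via the pulled-back-perturbation observation, interpret $\star_\big$ as a count of Morse flowlines with incidence conditions on a pseudocycle (equivalently, cap product with the associated homology class), and close the gap to \cite[Section 11.5]{mcduffsalamon} by a routine cobordism between the two pseudocycles. The only minor inaccuracy is the claim that \cite{mcduffsalamon} uses a fixed generic $J$ --- they use perturbation data depending on the position of the first three marked points --- but this does not affect the cobordism argument.
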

\begin{proof}[Proof (sketch)]
    If $\ell_{\bulk} \ge 0$, then we may choose regular perturbation data for $\Rbar(\star,(\ell_\bulk,\ell_\stab))$ which are pulled back from a choice of perturbation data for $\Rbar(\star,(\ell_\bulk,0))$; in other words, which does not depend on the positions of the stabilizing marked points.  (Note, in particular, that such perturbation data satisfy the \textbf{(Consistency with stabilization)} condition.) 
    
It follows that there is a moduli space $\Mbar(\star,\ell_\bulk)$ of $(\ell_\bulk+3)$-pointed holomorphic spheres, with an evaluation map to $X^{\ell_\bulk+3}$, which defines a pseudocycle $\mathrm{ev}_*[\Mbar(\star,\ell_\bulk)]$; and our $\star_{big}$ is defined by counting Morse flowlines with incidence conditions on this pseudocycle. 
Thus, on the level of cohomology, it is defined via cap product with the homology class associated to this pseudocycle.

The Gromov--Witten invariants defined in \cite{mcduffsalamon} are defined similarly, but using perturbation data which only depend on the position of the first three marked points; however a routine cobordism argument shows that the two pseudocycles are cobordant, and therefore define the same homology class, and hence the same product.
\end{proof}

In particular, $\star_\big$ is an associative product, and the unique degree-$0$ critical point defines a unit. (Of course we could prove these directly.)

\subsection{Big relative Fukaya category}\label{sec:bigrelfuk}

The objects of the big relative Fukaya category are Lagrangian branes. 
Morphism spaces are as defined in Definition \ref{defn:mor}. 

Recall that $\Rbar(\mu,s,\ell)$, for $\ell=(\ell_\bulk,\ell_\stab)$, is the moduli space of stable discs with $\ell_\bulk$ incoming bulk marked points, $\ell_\stab$ symmetric stabilizing points, and $s+1$ cyclically ordered boundary marked points, of which one is designated as outgoing and the rest as incoming. 
The $A_\infty$ structure $\mu$ is defined to be  
$$\mu := \sum_{s,\ell,\bL} (\mu,s,\ell)^\big_{\emptyset,\bL}.$$

To make the notation more explicit, we define
\begin{align*}
    \mu^s: \sigma(\mu) \fuk^\big(L_0,\ldots,L_s) & \to \fuk^\big(L_0,L_s) \\
    \mu^s &:= \sum_{\ell} (\mu,s,\ell)^\big_{\emptyset,\bL}.
\end{align*}

\begin{rem}\label{rem:mu_signs}
    To complete the definition of $\mu$, we need to specify an isomorphism $\sigma(\mu) \cong \S(\mu)$; we do this in Section \ref{sec:signs_mu}. 
\end{rem}

This $A_\infty$ structure satisfies the $A_\infty$ equations by Lemma \ref{lem:1param}, applied to the family $(\mu,s,\ell_\bulk,\ell_\stab)$. 

\begin{rem}\label{rem:mu_rel_signs}
    More precisely, it is clear that the terms in the $A_\infty$ relations are in bijection with the terms in Equation \eqref{eq:1param}, and we verify in Section \ref{sec:signs_mu} that the signs agree.
\end{rem}

Analogues of Remarks \ref{rem:mu_signs} and \ref{rem:mu_rel_signs} are implicit throughout the rest of this section.

\subsection{Closed--open map}

We define $\Rbar(\cC\cO,s,(\ell_\bulk,\ell_\stab))$ to coincide with $\Rbar(\mu,s,(\ell_\bulk+1,\ell_\stab))$. Here `coincide' has the significance that the perturbation data we later choose for these families will coincide: the closed--open map will coincide with corresponding perturbation data chosen for the bulk-deformed relative Fukaya category. 
The reader can forget all about the terminology $\Rbar(\cC\cO,s,\ell)$ if they choose, but we feel it may be conceptually helpful.

We define $\Rbar(H^{12}_{\cC\cO},0,0)$ to be obtained by attaching the outgoing ray of $\Rbar(\star,0)$ to the incoming ray of $\Rbar(\cC\cO,0,0)$, with length parameter $0$. 
We define $\Rbar(H^{12}_{\cC\cO},s,\ell)$ to be its stabilization, where $\ell=(\ell_{\cC\cO,\bulk},\ell_{\cC\cO,\stab},\ell_{\star,\bulk},\ell_{\star,\stab})$ records the numbers of bulk and stabilizing marked points on each of the two components.

We define $\Rbar(H^1_{\cC\cO},0,0)$ to be obtained by attaching the outgoing ray of $\Rbar(\star,0)$ to the incoming ray of $\Rbar(\cC\cO,0,0)$, with length parameter $[0,\infty]$. 
One of its boundary points is identified with $\Rbar(H^{12}_{\cC\cO},0,0)$, and the other is identified with the attachment of $\Rbar(\star,0)$ to $\Rbar(\cC\cO,0,0))$ with length parameter $\infty$.
We define $\Rbar(H^1_{\cC\cO},s,\ell)$ to be its stabilization, where $\ell$ is as above.

We define $\Rbar(H^2_{\cC\cO},0,0)$ to be the subspace of the moduli space of discs with one outgoing boundary marked point $p^\partial_0$ and two incoming bulk marked points $p^{int}_1$ and $p^{int}_2$, where the disc can be parametrized as the unit disc, with $p^\partial_0$ lying at $-i$, $p^{int}_1$ at $-t$, and $p^{int}_2$ at $+t$, for $t \in [0,1]$. 
It has two boundary components: one at $t=0$, which we identify with $\Rbar(H^{12}_{\cC\cO},0,0)$; and one at $t=1$, which we identify with the attachment of $\Rbar(\cC\cO_1,0,0)$ (the copy containing $p^{int}_1$), $\Rbar(\cC\cO_2,0,0)$ (that containing $p^{int}_2$), and $\Rbar(\mu,2,0)$. 
We define $\Rbar(H^2_{\cC\cO},s,\ell)$ to be its stabilization. 

\begin{thm}\label{thm:co}
    There exist maps of filtered $R^\big$-modules,
    \begin{align}
        \cC\cO: QC^*(X;R^\big) &\to CC^*(\fuk^\big(X,D))\\
        H_{\cC\cO}: \sigma(\partial)^\vee QC^*(X;R^\big))^{\otimes 2} & \to CC^*(\fuk^\big(X,D))
    \end{align}
    satisfying
    \begin{align*}
        \partial (\cC\cO) &= 0 \\
        \cC\cO(p \star_\big q) &= \cC\cO(p) \cup \cC\cO(q) + \partial(H_{\cC\cO}).
    \end{align*}
    In particular, $\cC\cO$ defines a map on the level of cohomology; and this map is an algebra homomorphism. 
    
    Furthermore, $\cC\cO$ is the first-order deformation class of the $A_\infty$ structure on $\fuk^\big(X,D)$, in the bulk directions.
\end{thm}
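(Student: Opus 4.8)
The plan is to build $\cC\cO$ and $H_{\cC\cO}$ directly from the families of domains $\Rbar(\cC\cO,s,\ell)$, $\Rbar(H^1_{\cC\cO},s,\ell)$, $\Rbar(H^2_{\cC\cO},s,\ell)$ introduced above, and then extract every identity in the statement from Lemma~\ref{lem:1param}. \emph{Construction of $\cC\cO$ and the cocycle property.} First I would set $\cC\cO := \sum_{s,\ell,\bL}(\cC\cO,s,\ell)^\big_{Q,\bL}$, where $Q \subset P^{bulk,in}$ is the singleton consisting of the distinguished incoming bulk marked point of $\Rbar(\cC\cO,s,\ell) = \Rbar(\mu,s,(\ell_\bulk+1,\ell_\stab))$, i.e. the one not ``integrated out'' into the divided-power coefficients. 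By construction this is a filtered map of $R^\big$-modules into $\prod_{L_0,\dots,L_s}\Hom^*_{R^\big}(QC^*(X;R^\big)\otimes\fuk^\big(L_0,\dots,L_s),\fuk^\big(L_0,L_s))$, which we identify with $CC^*(\fuk^\big(X,D))$ via the orientation twist exactly as for $\mu$. Because $\Rbar(\cC\cO,s,\ell)$ literally coincides with $\Rbar(\mu,s,(\ell_\bulk+1,\ell_\stab))$, the relation furnished by Lemma~\ref{lem:1param} for these families — summed over $\ell$ — is the ``$\novb_i$-linear part'' of the $A_\infty$ relation $d_{CC^*}(\mu)+\mu\{\mu\}=0$, the one in which the coefficient-ring differential $d_{R^\big}$ acts on the divided-power variables. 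Its codimension-one boundary strata organize into (i) boundary disc bubblings, which assemble into $d_{CC^*}\circ\cC\cO$ (both the $\mu\{\cC\cO\}$ and $\cC\cO\{\mu\}$ pieces, depending on whether the bubble carries the distinguished point), and (ii) the $d_{R^\big}$-term together with Morse-flowline breaking along the ray attached to the distinguished bulk point, which assemble into $\cC\cO\circ\partial_{QC}$; the remaining, higher-order-in-$\novb$, sphere-bubbling contributions drop out of this first-order extraction. Reading off Lemma~\ref{lem:1param} then gives precisely $\partial(\cC\cO)=0$ in $\Hom(QC^*(X;R^\big),CC^*(\fuk^\big(X,D)))$, so $\cC\cO$ descends to cohomology.

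\emph{The homotopy and multiplicativity.} Next I would define $H_{\cC\cO} := H^1_{\cC\cO}\pm H^2_{\cC\cO}$, where $H^i_{\cC\cO} := \sum_{s,\ell,\bL}(H^i_{\cC\cO},s,\ell)^\big_{Q,\bL}$ is built from the one-dimensional families $\Rbar(H^i_{\cC\cO},s,\ell)$ (the $\sigma(\partial)^\vee$ on the source is the orientation twist recording the parameter direction, via $B^\sm(F)\cong B^\sm(F')\sigma(\partial)$). Applying Lemma~\ref{lem:1param} to $\Rbar(H^1_{\cC\cO},s,\ell)$: its $t=0$ end is $\Rbar(H^{12}_{\cC\cO})$; its $t=\infty$ end is $\Rbar(\star)$ attached to $\Rbar(\cC\cO)$ with length parameter $\infty$, contributing $\cC\cO\circ\star_\big$ by Lemma~\ref{lem:B_at_F}; and the remaining codimension-one strata are boundary-disc bubblings and Morse-flowline breakings, assembling into $d_{CC^*}H^1_{\cC\cO}$ and $H^1_{\cC\cO}\circ\partial_{QC}$. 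Applying it to $\Rbar(H^2_{\cC\cO},s,\ell)$: its $t=0$ end is again $\Rbar(H^{12}_{\cC\cO})$; its $t=1$ end attaches two copies of $\Rbar(\cC\cO)$ along a $\Rbar(\mu,2,\cdot)$, contributing $\mu\{\cC\cO(p),\cC\cO(q)\}=\cC\cO(p)\cup\cC\cO(q)$; and the rest assemble into $d_{CC^*}H^2_{\cC\cO}$ and $H^2_{\cC\cO}\circ\partial_{QC}$. Adding the two relations, the two $\Rbar(H^{12}_{\cC\cO})$-contributions cancel (for the correct relative sign), leaving, on the chain level, $\cC\cO(p\star_\big q) - \cC\cO(p)\cup\cC\cO(q) = \partial(H_{\cC\cO})$ with $\partial$ the total differential; passing to cohomology shows $\cC\cO$ is an algebra homomorphism.

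\emph{First-order deformation class.} For $S=R^\big$: in $\mu = \sum_{s,\ell,\bL}(\mu,s,\ell)^\big_{\emptyset,\bL}$ the map sending $F^\sm_{\bL}$ to $F^\big_{\emptyset,\bL}$ weights the $\ell_\bulk$ incoming bulk points of $\Rbar(\mu,s,(\ell_\bulk,\ell_\stab))$ by $\tfrac1{\ell_\bulk!}\novb_{i_1}\cdots\novb_{i_{\ell_\bulk}}$, so the coefficient of $\novb_i$ in $\mu$ — that is, the first-order part in the bulk directions — is exactly the sum over all $\Rbar(\mu,s,(1,\ell_\stab))$ with the single bulk point fed $\alpha^i$. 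But $\Rbar(\mu,s,(1,\ell_\stab)) = \Rbar(\cC\cO,s,(0,\ell_\stab))$ with that point playing the role of the distinguished input, so this is precisely $\cC\cO(\alpha^i)$ reduced modulo the bulk variables and $\fm^\sm$. Assembling over $i$ identifies $\cC\cO$ (so reduced) with $\sum_i\alpha^i\otimes\mu_i$, the first-order deformation class of the $A_\infty$ structure in the bulk directions; the $A_\infty$ relations themselves show each $\mu_i$ is a Hochschild cocycle, consistently with the first paragraph.

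\emph{Main obstacle.} As throughout the paper, the real work is the signs. One must track all the orientation-torsor identifications and, crucially, the compatibility of \eqref{eq:sigmau3} with interior and boundary gluings (Lemmas~\ref{lem:or_interior_gluing} and~\ref{lem:or_boundary_gluing}, including the $(-1)^\signn$ discrepancy at a boundary self-gluing), in order to confirm that the two $\Rbar(H^{12}_{\cC\cO})$-terms really cancel and that the surviving identity carries exactly the signs $+\cC\cO(p)\cup\cC\cO(q)$ and $+\partial(H_{\cC\cO})$; this is the content I would defer to Appendix~\ref{sec:signs}. A minor additional point is to check that all codimension-one boundary strata of $\Rbar(\cC\cO,s,\ell)$ and $\Rbar(H^i_{\cC\cO},s,\ell)$ are $f^{sym}$-stable, so that Lemma~\ref{lem:1param} applies verbatim — this holds because the only non-symmetric phenomenon available here, collision of bulk points, produces sphere bubbles, which keep the topological type $f^{sym}$-stable.
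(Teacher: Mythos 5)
Your proposal is correct and follows essentially the same route as the paper: define $\cC\cO$ by counting over $\Rbar(\mu,s,\ell)$ with one distinguished bulk input (and the rest absorbed into divided-power coefficients), obtain the cocycle property from Lemma~\ref{lem:1param}, read off the first-order deformation class identification directly from the definition, and then glue the two homotopies $H^1_{\cC\cO}$ and $H^2_{\cC\cO}$ along their common $\Rbar(H^{12}_{\cC\cO})$ boundary to get multiplicativity, deferring the sign verifications to Appendix~\ref{sec:signs}. The extra exposition you supply (the $\novb_i$-linear extraction of the $A_\infty$ relation, the explicit check of $f^{sym}$-stability of boundary strata) is consistent with, and slightly amplifies, what the paper leaves implicit.
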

\begin{proof}
The closed--open map
$$\cC\cO:  QC^*(X;R^\big) \to CC^*(\cF^\big(X,D))$$
is defined by
$$\cC\cO := \sum_{s,\ell,\bL} (\mu,s,\ell)^\big_{\{1\},\bL}.$$
It is evident from the definition that it coincides with the first-order deformation class of the $A_\infty$ structure, on the chain level. 
It is a chain map by Lemma \ref{lem:1param}, applied to the family $(\mu,s,\ell)$.

We define maps
\begin{align*}
    H^{i}_{\cC\cO}: \sigma(\partial)^\vee QC(X;R^\big)^{\otimes 2} &\to CC^*(\fuk^\big(X,D))
\end{align*}
for $i = 1,2$, and
\begin{align*}
    H^{12}_{\cC\cO}: QC(X;R^\big)^{\otimes 2} &\to CC^*(\fuk^\big(X,D)),
\end{align*}
by
\begin{align*}
    H^{i}_{\cC\cO} & := \sum_{s,\ell,\bL} (H^{i}_{\cC\cO},s,\ell)^\big_{\{1,2\},\bL}
\end{align*}
for $i=1,2,12$.

By Lemma \ref{lem:1param} applied to $(H^1_{\cC\cO},s,\ell)$, we have
\begin{equation}\label{eq:H1CO}
\partial(H^1_{\cC\cO})(p,q) - H^{12}_{\cC\cO}(p,q) + \cC\cO(p\star_\big q)=0.
\end{equation}
Similarly, applying the same Lemma to $(H^2_{\cC\cO},s,\ell)$ yields
\begin{equation}\label{eq:H2CO}
\partial(H^2_{\cC\cO})(p,q) -\cC\cO(p)\cup \cC\cO(q)) + H^{12}_{\cC\cO}(p,q) =0.
\end{equation}
Thus we find that the map $H_{\cC\cO} := H^1_{\cC\cO} + H^2_{\cC\cO}$ has the desired property.
\end{proof}

\begin{proof}[Proof of Theorem \ref{thm:co_bc}]
Tensoring with $S$, we obtain a filtered chain map 
$$\cC\cO_S: QC^*(X;S) \to CC^*(\fuk^\big(X,D)) \otimes_R S.$$
As the filtration on the RHS is bounded below by $0$, it maps to its completion, which is $CC^*(\fuk^\big(X,D;S))$. 
Thus we have an induced map 
$$QH^*(X;S) \to HH^*(\fuk^\big(X,D;S)).$$
on the level of cohomology. 
The homotopy $H_{\cC\cO} \otimes S$ shows that this map respects products. 

Composing this map with the algebra homomorphism $HH^*(\fuk^\big(X,D;S)) \to HH^*(\fuk^\big(X,D;S)^\bc)$ from Lemma \ref{lem:Hcoh_C_Cbc} gives the result. 
\end{proof}

\subsection{HH-unit}

We define $\Rbar(_2\cC\cO,0,0)$ to be the subspace of the moduli space of discs with one outgoing boundary marked point $p_0^\partial$, one incoming boundary marked point $p_1^\partial$, and one incoming bulk marked point $p_1^{int}$, where the disc can be parametrized as the unit disc with $p_0^\partial$ lying at $-i$, $p_1^\partial$ at $+i$, and $p_1^{int}$ at $0$. 
We define $\Rbar(_2\cC\cO,s,\ell)$ to be its stabilization, where $s=(s_1,s_2)$ records the number of boundary marked points with negative real part and positive real part respectively, and $\ell=(\ell_\bulk,\ell_{\stab})$ records the number of bulk and symmetric stabilizing points respectively. 

We define $\Rbar(H_{_2\cC\cO},0,0)$ to be the subspace of the moduli space of discs with one outgoing boundary marked point $p_0^\partial$, one incoming boundary marked point $p_1^\partial$, and one incoming bulk marked point $p_1^{int}$, where the disc can be parametrized as the unit disc with $p_0^\partial$ lying at $-i$, $p_1^\partial$ at $+i$, and $p_1^{int}$ lying at $t$ for  $t \in [-1,0]$. 
It has two boundary components: one at $t=0$, which we identify with $\Rbar(_2\cC\cO,0,0)$, and one at $t=-1$, which we identify with the attachment of $\Rbar(\cC\cO,0,0)$ with $\Rbar(\mu,2,0)$. 
We define $\Rbar(H_{_2\cC\cO},s,\ell)$ to be its stabilization, where $s=(s_1,s_2)$ records the number of boundary marked points with negative real part and positive real part respectively, and $\ell=(\ell_\bulk,\ell_{\stab})$ records the number of bulk and symmetric stabilizing points respectively.

\begin{thm}
    \label{thm:HH-unit}
    If $e \in QC^0(X;R^\big)$ denotes the unit (i.e., the unique index-$0$ critical point), then the element $\cC\cO(e) \in HH^0(\fuk^\big(X,D))$ is an HH-unit for $\fuk^\big(X,D)$. 
\end{thm}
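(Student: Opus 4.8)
The plan is to exhibit $L^1_{\cC_\Delta}(\cC\cO(e))$ as cohomologous to the identity endomorphism $\id$ of the diagonal bimodule; since an HH-unit is by definition an element of $HH^0$ that maps to $[\id]\in H^0(hom_{\fbimod{\cC}{\cC}}(\cC_\Delta,\cC_\Delta))$ under $H(L^1_{\cC_\Delta})$, this is exactly what is required. Throughout I write $\cC:=\fuk^\big(X,D)$.

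First I would use the family $\Rbar(_2\cC\cO,s,\ell)$ to build a bimodule endomorphism: define ${}_2\cC\cO:=\sum_{s,\ell,\bL}(_2\cC\cO,s,\ell)^\big_{\{1\},\bL}$, a map $QC^*(X;R^\big)\to hom^0_{\fbimod{\cC}{\cC}}(\cC_\Delta,\cC_\Delta)$, and set $\xi:={}_2\cC\cO(e)$, i.e. the count of curves over $\Rbar(_2\cC\cO,s,\ell)$ with the unit $e$ inserted at the bulk marked point $p_1^{int}$. Applying Lemma~\ref{lem:1param} to $\Rbar(_2\cC\cO,s,\ell)$ shows $\partial\xi=0$: its codimension-one boundary strata come only from breaking Floer strips at the boundary marked points and bubbling discs and spheres among the auxiliary (bulk and stabilizing) marked points — the point $p_1^{int}$ is pinned at the centre of the disc and cannot approach the boundary — and these assemble precisely into the bimodule-morphism differential of $\xi$. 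The crucial point is then that $\xi$ equals $\id$ on the chain level: because $e$ is the unit, the perturbed flowline attached to $p_1^{int}$ imposes no constraint (its evaluation map covers $X$ with degree $\pm1$), so with the perturbation data on $\Rbar(_2\cC\cO,s,\ell)$ chosen to restrict to the Floer data along the strip-like directions, the only rigid solutions are the $s$-independent Floer strips; these occur precisely when $s_1=s_2=0$ and no auxiliary bulk points are present, and contribute the identity $x\mapsto x$ on each morphism space.

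Next I would feed the homotopy family $\Rbar(H_{_2\cC\cO},s,\ell)$ into Lemma~\ref{lem:1param}, again with $e$ at $p_1^{int}$, producing a degree-$(-1)$ element $H\in hom_{\fbimod{\cC}{\cC}}(\cC_\Delta,\cC_\Delta)$ and a relation of the form $\partial H\pm\xi\pm\eta=0$, where $\eta$ is the operation coming from the $t=-1$ boundary component of $\Rbar(H_{_2\cC\cO},0,0)$ (the $t=0$ component being $\Rbar(_2\cC\cO,0,0)$, and the strip-breaking and bubbling strata contributing to $\partial H$). That $t=-1$ component is the attachment of $\Rbar(\cC\cO,0,0)$ with $\Rbar(\mu,2,0)$: counting over it and its stabilizations, the bubble feeds a consecutive block of left inputs into $\cC\cO(e)$ while the main disc applies $\mu$ to the remaining left inputs, the output of $\cC\cO(e)$, the bimodule element, and all right inputs — which is exactly the definition of $L^1_{\cC_\Delta}$ of \cite[Lemma~4.4]{Sheridan_formulae} applied to the Hochschild cochain $\cC\cO(e)$ (a cocycle by Theorem~\ref{thm:co}), with the roles of left and right inputs matching the construction of $L_{\cC_\Delta}$. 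Hence $\eta=L^1_{\cC_\Delta}(\cC\cO(e))$, and the relation above shows $L^1_{\cC_\Delta}(\cC\cO(e))$ is cohomologous to $\xi=\id$, so $H(L^1_{\cC_\Delta})(\cC\cO(e))=[\id]$ and $\cC\cO(e)$ is an HH-unit.

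I expect the main obstacle to be the verification that $\xi$ equals (or is at least cohomologous to) $\id$ on the chain level: one must control all solutions over $\Rbar(_2\cC\cO,s,\ell)$ with $e$ inserted, choosing perturbation data that are simultaneously compatible with the consistency conditions of Definition~\ref{def:univ_p_d} and ``strip-like'' enough near the centre that no nonconstant configuration is rigid, and one must dispose of the higher-length components $\xi^{s|1|t}$. The remaining ingredients — the boundary analysis of $\Rbar(H_{_2\cC\cO},s,\ell)$ and the identification of the $t=-1$ degeneration with the left composition functor — are of the same routine character as the other applications of Lemma~\ref{lem:1param} in this section, with the signs deferred to Appendix~\ref{sec:signs}.
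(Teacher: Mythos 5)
Your proposal follows the paper's approach: you define $\xi = {}_2\cC\cO(e)$ via $\Rbar(_2\cC\cO,s,\ell)$, claim $\xi = \id$ on the chain level, and use $\Rbar(H_{_2\cC\cO},s,\ell)$ to homotope $\xi$ to $L^1_{\cC_\Delta}(\cC\cO(e))$. The homotopy step and its identification with the left module action $L^1_{\cC_\Delta}$ are exactly as in the paper, and you are right that those are routine applications of Lemma~\ref{lem:1param}.

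The gap is precisely where you flag it: the argument that $\xi = \id$. Your reasoning --- ``the flowline to $e$ imposes no constraint, so with perturbation data strip-like near the centre, the only rigid solutions are the $s$-independent Floer strips'' --- asserts the conclusion but does not supply the mechanism. Vacuousness of the incidence condition alone does not rule out nonconstant rigid solutions; a dimension count will not suffice, because attaching the ray to the degree-zero critical point leaves the expected dimension unchanged relative to the underlying moduli of discs. The mechanism the paper uses is sharper: impose the \emph{additional} requirement on perturbation data for this particular family (and correspondingly on the strip-like ends) that it be independent of the position of the interior marked point $p_1^{int}$. This produces a free $\R$-action on the moduli space, by translating $p_1^{int}$; the action is nontrivial unless $s_1 = s_2 = \ell_\bulk - 1 = \ell_\stab = 0$ \emph{and} the curve has zero energy. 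A regular zero-dimensional moduli space admitting a nontrivial $\R$-action must be empty, so the only contribution is the constant strip, giving $\xi = \id$. You need this translation-invariance hypothesis and the $\R$-action argument explicitly; ``restricting to Floer data along strip-like directions'' is already forced by the \textbf{(Thin 2d regions)} condition and is strictly weaker. Note also that your condition for the rigid constant strips omits the stabilizing marked points ($\ell_\stab = 0$), which is one of the symmetry-breaking directions that must be excluded.

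One further remark: you separately argue $\partial\xi = 0$ via Lemma~\ref{lem:1param}; this is harmless but unnecessary once $\xi = \id$ is established on the chain level, since $\id$ is manifestly closed in $hom_{\fbimod{\cC}{\cC}}(\cC_\Delta,\cC_\Delta)$.
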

\begin{proof}
    We define the bimodule morphism
    $$ _2\cC\cO(e) \in hom^0_{\bimod{\fuk^\big}{\fuk^\big}}(\fuk^\big_\Delta,\fuk^\big_\Delta)$$ 
    by 
    $$_2\cC\cO(e)^{s_1|1|s_2} := \sum_{\ell}(_2\cC\cO,(s_1,s_2),\ell)^\big_{\{1\},\bL}(e). $$
    However, for this operation only, we impose additional requirements on our choices of perturbation data: we require that they are independent of the position of the interior marked point (at which $e$ is inserted). (This requires us to make corresponding additional requirements on our choice of strip-like ends for this moduli space.) 

    As a result, the moduli space carries an $\R$-action, by translating the first bulk marked point; this action is non-trivial unless $s_1=s_2=\ell_\bulk-1 = \ell_\stab = 0$ and the curve has energy zero. 
    A zero-dimensional moduli space can't have a non-trivial $\R$-action unless it is empty, so the only non-zero contribution comes from the zero-energy strips, which give $_2\cC\cO(e) = \id$ (cf. \cite[Lemma 2.3]{Sheridan2013}).

    We now define the bimodule morphism
    $$H_{_2\cC\cO(e)} \in \sigma(\partial)^\vee hom_{\bimod{\fuk^\big}{\fuk^\big}}(\fuk^\big_\Delta,\fuk^\big_\Delta)$$
    by
    $$H_{_2\cC\cO(e)}^{s_1|1|s_2} := \sum_{\ell}(H_{_2\cC\cO},(s_1,s_2),\ell)^\big_{\{1\},\bL}(e). $$
    By Lemma \ref{lem:1param} applied to $(H_{_2\cC\cO},s,\ell)$, we have 
    \begin{equation}\label{eq:H_2CO}
    \partial(H_{_2\cC\cO(e)}) - {}_2\cC\cO(e) + L^1_{\fuk^\big_\Delta}(\cC\cO(e)) = 0.
    \end{equation}
    In particular, 
    $$H(L^1_{\fuk^\big_\Delta})(\cC\cO(e)) = \id_{\fuk^\big_\Delta}$$
    in $H^0(hom_{\bimod{\fuk^\big}{\fuk^\big}}(\fuk^\big_\Delta,\fuk^\big_\Delta))$, so $\cC\cO(e)$ is an HH-unit for $\fuk^\big(X,D)$, as claimed.
\end{proof}

\subsection{Open--closed map}

We define $\Rbar(\cO\cC,0,0)$ to be the same as $\Rbar(\cC\cO,0,0)$, except the ray is outgoing and the boundary marked point is incoming.  
We define $\Rbar(\cO\cC,s,\ell)$ to be the stabilization. 

We define $\Rbar(H^{12}_{\cO\cC},0,0)$ to be the same as $\Rbar(H^{12}_{\cC\cO},0,0)$, except the ray attached to $p_1^{int}$ is outgoing rather than incoming, and the boundary marked point $p_0^\partial$ is incoming.  
We define $\Rbar(H^{12}_{\cO\cC},s,\ell)$ to be its stabilization. 

We define $\Rbar(H^1_{\cO\cC},0,0)$ to be the same as $\Rbar(H^1_{\cC\cO},0,0)$, except the ray attached to $p_1^{int}$ is outgoing rather than incoming, the boundary marked point $p_0^\partial$ is incoming, and the ray is incoming at the end attached to the sphere, and outgoing at the end attached to the disc. 
The two boundary components are identified with $\Rbar(H^{12}_{\cO\cC},0,0)$ and the attachment of $\Rbar(\star,0)$ to $\Rbar(\cO\cC,0,0)$. 
We define $\Rbar(H^1_{\cO\cC},s,\ell)$ to be its stabilization. 

We define $\Rbar(H^2_{\cO\cC},0,0)$ to be the same as $\Rbar(H^2_{\cC\cO},0,0) $, except the ray attached to $p_1^{int}$ is outgoing rather than incoming, and the boundary marked point $p_0^\partial$ is incoming. 
The two boundary components are identified with $\Rbar(H^{12}_{\cO\cC},0,0)$ and the attachment of $\Rbar(\cC\cO,0,0)$ (the copy containing $p_1$), $\Rbar(\cO\cC,0,0)$ (that containing $p_2$), and $\Rbar(\mu,2,0)$. 
We define $\Rbar(H^2_{\cO\cC},s,\ell)$ to be its stabilization.

\begin{thm}\label{thm:oc}
    There exist maps of filtered $R$-modules,
    \begin{align}
        f\cO\cC: \sigma(\cO\cC) fCC_*(\fuk^\big(X,D)) & \to QC^*(X;R)\\
        H_{\cO\cC}:\sigma(\partial)^\vee \sigma(\cO\cC) QC^*(X;R) \otimes  fCC_*(\fuk^\big(X,D)) & \to QC^*(X;R)
    \end{align}
    where $\sigma(\cO\cC) = \sigma(n)$, satisfying
    \begin{align*}
        \partial (\cO\cC) &= 0 \\
        \cO\cC(\cC\cO(p) \cap \alpha) & = p \star_\big \cO\cC(\alpha) + \partial(H_{\cO\cC}(p,\alpha)).
    \end{align*}  
    In particular, $\cO\cC$ descends to cohomology, and $H_{\cO\cC}$ defines a homotopy which shows that the induced map on cohomology respects the induced module structures.
\end{thm}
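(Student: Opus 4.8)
The plan is to follow the proof of Theorem~\ref{thm:co} almost verbatim, replacing the closed--open families by the four families $\Rbar(\cO\cC,s,\ell)$, $\Rbar(H^{12}_{\cO\cC},s,\ell)$, $\Rbar(H^1_{\cO\cC},s,\ell)$, $\Rbar(H^2_{\cO\cC},s,\ell)$ constructed above, and the cup product by the cap product. First I would set
$$\cO\cC := \sum_{s,\ell,\bL}(\cO\cC,s,\ell)^\big_{\emptyset,\bL},$$
viewing the $s+1$ incoming boundary marked points of the disc (one of which, adjacent to the output, plays the role of $c_0$) as providing a generator $c_0[c_1|\ldots|c_s]$ of $\sigma(\cO\cC)fCC_*(\fuk^\big(X,D))$, and the outgoing interior ray as producing the output in $QC^*(X;R^\big)$; the shift $\sigma(\cO\cC)=\sigma(n)$ is forced by computing $\S(\cO\cC,s,\ell)$ from \eqref{eq:sigmaF}, while convergence and compatibility with the filtration follow from completeness of the morphism spaces, as for $\mu$ in Section~\ref{sec:bigrelfuk}. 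Then $\partial(\cO\cC)=0$ is Lemma~\ref{lem:1param} applied to the family $(\cO\cC,s,\ell)$: its codimension-one boundary strata consist of strata where a sub-disc carrying a string of consecutive boundary inputs bubbles off along a strip-like end --- these reproduce the two families of terms in the Hochschild differential $b$ of \eqref{eqn:ccdiffb} --- and strata where a Morse trajectory breaks along the outgoing ray, reproducing $\partial_{QC}$; since (as in Remark~\ref{rem:stab sph}) sphere bubbles at bulk marked points are already absorbed in passing to $R^\big$, matching these strata against the definition of $b$ on $fCC_*(\fuk^\big(X,D))$ gives $\partial_{QC}\circ\cO\cC = \cO\cC\circ b$ up to sign.

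For the module-homomorphism property I would introduce the three homotopies
$$H^i_{\cO\cC} := \sum_{s,\ell,\bL}(H^i_{\cO\cC},s,\ell)^\big_{\{2\},\bL}, \qquad i\in\{1,2,12\},$$
using the families constructed above, where the interior marked point $p_2^{int}$ carries the closed input $p$ and $p_1^{int}$ carries the closed output; thus $H^1_{\cO\cC},H^2_{\cO\cC}$ are maps out of $\sigma(\partial)^\vee\sigma(\cO\cC)QC^*(X;R^\big)\otimes fCC_*(\fuk^\big(X,D))$ and $H^{12}_{\cO\cC}$ is a map out of $\sigma(\cO\cC)QC^*(X;R^\big)\otimes fCC_*(\fuk^\big(X,D))$. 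Lemma~\ref{lem:1param} applied to $(H^1_{\cO\cC},s,\ell)$ has two codimension-one boundary families: one is $\Rbar(H^{12}_{\cO\cC})$, and the other is the length-$\infty$ attachment of $\Rbar(\star,0)$ to $\Rbar(\cO\cC,0,0)$, in which the $\Rbar(\star,0)$-sphere quantum-multiplies $p$ with the output of the $\Rbar(\cO\cC,0,0)$-disc, hence realizes $p\star_\big\cO\cC(\alpha)$; this yields
$$\partial(H^1_{\cO\cC})(p,\alpha) - H^{12}_{\cO\cC}(p,\alpha) + p\star_\big\cO\cC(\alpha) = 0.$$
Lemma~\ref{lem:1param} applied to $(H^2_{\cO\cC},s,\ell)$ likewise has $\Rbar(H^{12}_{\cO\cC})$ as one end, and as the other the attachment of $\Rbar(\cC\cO,0,0)$, $\Rbar(\cO\cC,0,0)$ and $\Rbar(\mu,2,0)$, in which $p$ passes through a $\cC\cO$-component to produce an open morphism $\cC\cO(p)$, the $\Rbar(\mu,2,0)$-component glues it cyclically in among the boundary inputs, and the $\cO\cC$-component produces the output --- so that, after summing over the position at which $\cC\cO(p)$ is inserted, this end realizes $b^{1|1}(\cC\cO(p),-)$, i.e.\ the cap product; this yields
$$\partial(H^2_{\cO\cC})(p,\alpha) + H^{12}_{\cO\cC}(p,\alpha) - \cO\cC(\cC\cO(p)\cap\alpha) = 0.$$
Adding the two relations, the $H^{12}_{\cO\cC}$ terms cancel, so $H_{\cO\cC} := H^1_{\cO\cC}+H^2_{\cO\cC}$ satisfies
$$\cO\cC(\cC\cO(p)\cap\alpha) = p\star_\big\cO\cC(\alpha) + \partial(H_{\cO\cC}(p,\alpha)).$$
Since $\partial(\cO\cC)=0$, the map $\cO\cC$ descends to $fHH_*(\fuk^\big(X,D))\to QH^*(X;R^\big)$, and the displayed relation shows on cohomology that the descended map intertwines the $QH^*(X;R^\big)$-action $p\cdot\alpha := \cC\cO(p)\cap\alpha$ on $fHH_*$ with multiplication by $\star_\big$ on $QH^*$.

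The main obstacle, exactly as for Theorem~\ref{thm:co}, is the sign bookkeeping: one must pin down the isomorphisms $\sigma(\cO\cC)\cong\S(\cO\cC,s,\ell)$ and $\sigma(H^i_{\cO\cC})\cong\S(H^i_{\cO\cC},s,\ell)$ so that each geometric term produced by Lemma~\ref{lem:1param} matches, with the correct Koszul sign, the corresponding algebraic term --- $b$, $b^{1|1}$, $\partial_{QC}$, $\star_\big$, $\cC\cO$ --- taking account of the degree-$n$ shift $\sigma(\cO\cC)=\sigma(n)$ and of the explicit $(-1)^{n(n+1)/2}$ of \eqref{eq:n choose 2} which enters at boundary self-gluings. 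Following the convention of Section~\ref{sec:main}, these verifications are deferred to Appendix~\ref{sec:signs}. The only feature not already present in the closed--open case is that the Hochschild chain differential $b$ and the module operation $b^{1|1}$ appear in place of the Hochschild cochain differential and $\cup$; but their identification with the relevant boundary strata is of the same character as the computations in the proofs of Lemmas~\ref{lem:Hhom_C_Cbc} and~\ref{lem:1param}.
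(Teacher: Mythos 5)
Your proposal is correct and follows exactly the approach the paper intends: the paper's own proof of Theorem~\ref{thm:oc} is simply the observation that the argument for Theorem~\ref{thm:co} carries over with the four closed--open families replaced by $\Rbar(\cO\cC,s,\ell)$, $\Rbar(H^{12}_{\cO\cC},s,\ell)$, $\Rbar(H^1_{\cO\cC},s,\ell)$, $\Rbar(H^2_{\cO\cC},s,\ell)$ and the cup product by the cap product, with the sign verification deferred to Section~\ref{sec:OC_signs}. Your definitions $\cO\cC=\sum(\cO\cC,s,\ell)^\big_{\emptyset,\bL}$, $H^i_{\cO\cC}=\sum(H^i_{\cO\cC},s,\ell)^\big_{\{2\},\bL}$, your identification of the boundary strata realizing $p\star_\big\cO\cC(\alpha)$ and $\cO\cC(\cC\cO(p)\cap\alpha)$, the two relations from Lemma~\ref{lem:1param}, and the cancellation of the $H^{12}_{\cO\cC}$ terms all match what the paper records (cf.\ Section~\ref{sec:OC_signs} and Lemma~\ref{lem:OC_orientations}).
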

\begin{proof}
    The proof is virtually identical to that of Theorem \ref{thm:co}, with some of the copies of $\cC\cO$ replaced by $\cO\cC$, so we omit it; more details, including the sign computation which is slightly different, can be found in Section \ref{sec:OC_signs}.
\end{proof}

\begin{proof}[Proof of Theorem \ref{thm:oc_bc}]
    Tensoring with $S$, we obtain a filtered chain map
    $$\cO\cC \otimes S: \sigma(\cO\cC) fCC_*(\fuk^\big(X,D)) \otimes S \to QC^*(X;S).$$
    As the map is filtered, and the filtration on the RHS is complete, this extends to a filtered chain map
    $$\sigma(\cO\cC) fCC_*(\fuk^\big(X,D;S)) \to QC^*(X;S).$$
    The homotopy $H_{\cO\cC} \otimes S$ shows that it respects module structures. Composing with the module homomorphism $HH_*(\fuk^\big(X,D;S)^\bc) \to fHH_*(\fuk^\big(X,D;S))$ from Lemma \ref{lem:Hhom_C_Cbc} gives the result. 
\end{proof}

\subsection{Cardy condition}

We define $\Rbar(\cC\cY,0,0)$ to be the moduli space of discs with boundary marked points $p_1^\partial,p_2^\partial$, and $p_3^\partial$, of which the first is incoming and the last two are outgoing. 
Let $\Rbar(\cC\cY,s,\ell)$ be its stabilization, where $s=(s_1,s_2,s_3)$ records the number of boundary marked points between $p_3$ and $p_1$, between $p_1$ and $p_2$, and between $p_2$ and $p_3$ respectively, and $\ell=(\ell_\bulk,\ell_\stab)$ records the number of bulk and symmetric stabilizing points. 

We define $\Rbar(H^{12}_{\cC\cY},0,0)$ to be the attachment of $\Rbar(\cC\cO,0,0)$ with $\Rbar(\cO\cC,0,0)$ at their interior marked points, with length parameter $0$. 
We define $\Rbar(H^{12}_{\cC\cY},s,\ell)$ to be its stabilization, where $s=(s_1,s_2)$ records the number of boundary marked points on the component corresponding to $\cC\cO$, and on the component corresponding to $\cO\cC$ respectively; and $\ell=(\ell_{\cC\cO,\bulk},\ell_{\cC\cO,\stab},\ell_{\cO\cC,\bulk},\ell_{\cO\cC,\stab})$ records the numbers of bulk and symmetric stabilizing marked points on each component.

We define $\Rbar(H^1_{\cC\cY},0,0)$ to be the attachment of $\Rbar(\cC\cO,0,0)$ with $\Rbar(\cO\cC,0,0)$ at their interior marked points, with length parameter $[0,\infty]$. 
One of its boundary components is identified with $\Rbar(H^{12}_{\cC\cY},0,0)$, and the other is identified with the attachment of $\Rbar(\cC\cO,0,0)$ with $\Rbar(\cO\cC,0,0)$ at their interior marked points, with length parameter $\infty$. 
We define $\Rbar(H^{12}_{\cC\cY},s,\ell)$ to be its stabilization. 

We define $\Rbar(H^2_{\cC\cY},0,0)$ to be the subspace of the moduli space of annuli with one boundary marked point on each boundary component, and no interior marked points, where the annulus can be parametrized as $\{z \in \C: R \le |z| \le 1\}$, for $R \in [0,1]$, with incoming boundary point $p_{0,\cO\cC}^\partial$ at $-R$ and outgoing boundary point $p_{0,\cC\cO}^\partial$ at $1$.
This moduli space is an interval. 
One of its boundary components is identified with $(H^{12}_{\cC\cY},0,0)$, and the other is identified with the attachment of the outgoing marked points of $\Rbar(\cC\cY,0,0)$ to the incoming marked points of $\Rbar(\mu,2,0)$ (there are two ways to attach them, but we must choose the one so that the resulting degenerate annulus has one boundary marked point on each boundary component for this to make sense).
We define $\Rbar(H^2_{\cC\cY},s,\ell)$ to be the stabilization of $\Rbar(H^2_{\cC\cY},0,0)$, where $s=(s_1,s_2)$ records the number of boundary marked points on the boundary component containing $p_{0,\cO\cC}^\partial$, and on the boundary component containing $p_{0,\cC\cO}^\partial$ respectively.

\begin{thm}\label{thm:cardy}
There is a morphism of filtered $\fuk^\big(X,D)$-bimodules,
$$\cC\cY: \sigma(\cC\cY)\fuk^\big(X,D)_\Delta \to \fuk^\big(X,D)^!$$
where $\sigma(\cC\cY) = \sigma(\cO\cC)$, and a morphism 
$$H_{\cC\cY}: \sigma(\partial)^\vee \sigma(\cC\cY) fCC_*(\fuk^\big(X,D)) \to \sigma(\partial)CC^*(\fuk^\big(X,D)),$$
such that the diagram
$$    \begin{tikzcd}
        \sigma(\cC\cY) fCC_*(\fuk^\big(X,D)) \ar[r,"f\cO\cC"] \ar[d,"\cC\cY_*"] & QC^*(X;R^\big) \ar[d,"\cC\cO"] \\
        fCC_*(\fuk^\big(X,D),\fuk^\big(X,D)^!) \ar[r,"\bar\mu"] & CC^*(\fuk^\big(X,D)
    \end{tikzcd}
 $$
 commutes up to the sign $(-1)^{\signn}$ and the homotopy $H_{\cC\cY}$. 
Explicitly,
$$
\cC\cO \circ f\cO\cC = (-1)^{\signn} \bar{\mu} \circ \cC\cY_* + \partial(H^{2}_{\cC\cY}).
$$
\end{thm}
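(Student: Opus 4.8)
The plan is to follow the same strategy as in the proof of Theorem \ref{thm:co}, constructing all operations by counting points of the elementary moduli spaces attached to the distinguished families $\Rbar(\cC\cY,s,\ell)$, $\Rbar(H^{12}_{\cC\cY},s,\ell)$, $\Rbar(H^1_{\cC\cY},s,\ell)$, and $\Rbar(H^2_{\cC\cY},s,\ell)$, and then reading off the relations from the boundary-stratum identifications recorded in Lemma \ref{lem:compactness} via the one-parameter identity of Lemma \ref{lem:1param}. First I would define the bimodule morphism $\cC\cY$ as the operation associated to $\Rbar(\cC\cY,s,\ell)$, with its component $\cC\cY^{s_1|1|s_2}(\ldots)$ built from $(\cC\cY,s,\ell)^\big_{\emptyset,\bL}$; the fact that it is a closed morphism of $\fuk^\big(X,D)$-bimodules follows from Lemma \ref{lem:1param} applied to $\Rbar(\cC\cY,s,\ell)$, since all codimension-$1$ boundary strata of that family are concatenations of $\Rbar(\mu,\cdot)$ with $\Rbar(\cC\cY,\cdot)$ or with sphere bubbles, which assemble precisely into the three terms $\mu_{\cC^!}\{;\cC\cY;\} - \cC\cY\{\mu;\id;\} - \cC\cY\{;\id;\mu\}$ of the bimodule-morphism equation. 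Similarly, $H^{12}_{\cC\cY}$, $H^1_{\cC\cY}$, $H^2_{\cC\cY}$ are the operations associated to the corresponding families.

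Next I would extract the three homotopy relations. Applying Lemma \ref{lem:1param} to $\Rbar(H^1_{\cC\cY},s,\ell)$ gives, from the two ends of the interval (length parameter $0$ and $\infty$) together with the codimension-$1$ degenerations of the disc/sphere factors, an identity of the schematic form
\[
\partial(H^1_{\cC\cY}) \pm H^{12}_{\cC\cY} \pm \bar\mu \circ \cC\cY_* = 0,
\]
where the $\bar\mu \circ \cC\cY_*$ term arises from the length-$\infty$ breaking in which the interior marked points of $\cC\cO$ and $\cO\cC$ separate along a long neck, reproducing exactly the gluing formula $\bar\mu(\varphi[c_1|\ldots|c_s])$ applied to $\cC\cY_*$ of a Hochschild chain. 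Applying the Lemma to $\Rbar(H^2_{\cC\cY},s,\ell)$ — whose domain is the moduli of annuli degenerating at $R=0$ to $\Rbar(H^{12}_{\cC\cY},0,0)$ and at $R=1$ to the composite of $\Rbar(\cC\cY,0,0)$ with $\Rbar(\mu,2,0)$ — gives
\[
\partial(H^2_{\cC\cY}) \pm H^{12}_{\cC\cY} \pm \cC\cO \circ f\cO\cC = 0,
\]
since the $R=1$ end, where the annulus pinches to a disc with the two outputs of $\cC\cY$ fed through $\mu^2$, is by definition $\cC\cO$ precomposed with $f\cO\cC$ when one uses that $\Rbar(\cC\cO)$ and $\Rbar(\cO\cC)$ are, respectively, $\Rbar(\mu,s,(\ell_\bulk{+}1,\ell_\stab))$ with an in/outgoing ray. (In the present paper $H^{12}_{\cC\cY}$ can be eliminated by adding the two identities, so that $H_{\cC\cY} := H^1_{\cC\cY} + H^2_{\cC\cY}$ realizes the stated homotopy; strictly one only needs the $H^2_{\cC\cY}$ part for the statement as written, and I would set $H^{2}_{\cC\cY}$ to be this combined homotopy.) Subtracting/adding yields
\[
\cC\cO \circ f\cO\cC = (-1)^{\signn}\,\bar\mu \circ \cC\cY_* + \partial(H^{2}_{\cC\cY}),
\]
and passing to cohomology gives the commuting square of Theorem \ref{thm:cardy}, hence Theorem \ref{thm:cardy_bc} after tensoring with $S$, completing the filtration, and composing with the maps of Lemmas \ref{lem:Hcoh_C_Cbc} and \ref{lem:Hhom_C_Cbc} exactly as in the proofs of Theorems \ref{thm:co_bc} and \ref{thm:oc_bc}.

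The main obstacle is the sign $(-1)^{\signn}$: this is the overall discrepancy between the trivialization of $\S$ induced by the annulus degeneration in $\Rbar(H^2_{\cC\cY})$ and the one induced by the boundary self-gluing built into $\cC\cO \circ f\cO\cC$. As flagged after Equation \eqref{eq:n choose 2}, boundary self-gluings carry the explicit sign $(-1)^{\signn}$ coming from the identification $\sigma(B_-)\sigma(B_+) \cong \sigma(2n)$, and tracking how this interacts with the orientation conventions \eqref{eq:at_sigmaF}, \eqref{eq:boundary_or}, \eqref{eq:sigmau_final}, and the definition of $\sigma(\cC\cY) = \sigma(\cO\cC) = \sigma(n)$ is exactly where the $(-1)^{\signn}$ in the statement is produced. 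I would defer this bookkeeping to Appendix \ref{sec:signs}, verifying it by the same method used there for $\mu$ and for $\cO\cC$ (namely, comparing the two trivializations of $\S$ at a single boundary point of each one-dimensional moduli space), and I expect it to be the only non-routine part of the argument — everything else is a direct transcription of the proof of Theorem \ref{thm:co} with some copies of $\cC\cO$ replaced by $\cO\cC$ and with the diagonal bimodule replaced by $\fuk^\big(X,D)^!$.
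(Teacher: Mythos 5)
Your overall strategy matches the paper's exactly: define $\cC\cY$ and the homotopies via the associated families, apply Lemma \ref{lem:1param} to each, sum the two identities to eliminate $H^{12}_{\cC\cY}$, and defer the sign bookkeeping to the appendix. The final formula you arrive at is correct. However, you have systematically swapped the geometric interpretation of the two homotopies, and this is a genuine error, not a cosmetic one.

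Concretely: you attribute the term $\bar\mu \circ \cC\cY_*$ to the length-$\infty$ end of $\Rbar(H^1_{\cC\cY})$, and $\cC\cO \circ f\cO\cC$ to the $R=1$ end of $\Rbar(H^2_{\cC\cY})$. These are backwards. The family $\Rbar(H^1_{\cC\cY})$ is built from $\Rbar(\cC\cO)$ and $\Rbar(\cO\cC)$ attached at their interior marked points along an interval of length $[0,\infty]$; when the length goes to $\infty$, the Morse flowline breaks at a critical point, producing precisely the composition through quantum cohomology, which is $\cC\cO \circ f\cO\cC$ (the output of the open--closed disc feeds into the closed--open disc). There is no inverse dualizing bimodule in sight here, so the claimed identification with the gluing formula $\bar\mu(\varphi[c_1|\ldots|c_s])$ makes no sense for this degeneration. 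On the other hand, the $R=1$ end of $\Rbar(H^2_{\cC\cY})$ is a thin annulus that pinches along a boundary segment into the Cardy triangle $\Rbar(\cC\cY,0,0)$ with its two outgoing boundary marked points fed into $\Rbar(\mu,2,0)$; this is exactly $\bar\mu \circ \cC\cY_*$ and has nothing to do with composing $\cC\cO$ and $\cO\cC$. Your explanation of the $R=1$ end as ``by definition $\cC\cO$ precomposed with $f\cO\cC$'' via the relabelling of $\Rbar(\mu,s,(\ell_\bulk+1,\ell_\stab))$ is not a sensible reading of this boundary pinching: the degenerate annulus has two boundary nodes, not an interior node, so it cannot be produced by attaching $\cC\cO$ and $\cO\cC$ interior marked points to a broken flowline.

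With the two identifications corrected — $\partial(H^1_{\cC\cY}) = \cC\cO \circ f\cO\cC - H^{12}_{\cC\cY}$ and $\partial(H^2_{\cC\cY}) = H^{12}_{\cC\cY} - (-1)^{\signn}\bar\mu \circ \cC\cY_*$ — the rest of the argument goes through as you describe, and summing gives the statement. The reason this matters is that your deferred sign computation would otherwise be anchored to the wrong geometry: the $(-1)^{\signn}$ arises from the boundary self-gluing hidden in the $R=1$ annulus degeneration (the $\sigma(B_-)\sigma(B_+)\cong\sigma(2n)$ identification from Equation \eqref{eq:n choose 2}), which belongs to the $H^2_{\cC\cY}$ family and hence to the $\bar\mu\circ\cC\cY_*$ term, not to the Morse breaking.
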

\begin{proof}
    We define 
    \begin{multline}
        \cC\cY^{s_1|1|s_2}: \sigma(\cC\cY) \otimes \fuk^\big(L_0^\ell,\ldots,L_{s_1}^\ell) \otimes \fuk^\big_\Delta(L^\ell_{s_1},L^r_{s_2}) \otimes \fuk^\big(L_{s_2}^r,\ldots,L_0^r) \\ 
        \to Hom(\fuk^\big(L_0^{in},\ldots,L_{s_3}^{in}),\fuk^\big(L_0^\ell,L_0^{in}) \otimes \fuk^\big(L_{s_3}^{in},L_0^r))
    \end{multline}
    to be given by 
    $$\sum_{\ell} (\cC\cY,(s_1,s_2,s_3),\ell)^\big_{\emptyset,\bL},$$
    where $\bL$ is the Lagrangian labelling given by the $L^\ell_i$, $L^r_i$, and $L^{in}_i$. 
    This is a bimodule homomorphism, by Lemma \ref{lem:1param} applied to $(\cC\cY,s,\ell)$.

    We define maps
\begin{align*}
    H^{i}_{\cC\cY}: \sigma(\partial)^\vee \sigma(\cO\cC) fCC_*(\fuk^\big(X,D)) &\to CC^*(\fuk^\big(X,D))
\end{align*}
for $i = 1,2$, and
\begin{align*}
    H^{12}_{\cC\cY}: \sigma(\cO\cC) fCC_*(\fuk^\big(X,D)) &\to CC^*(\fuk^\big(X,D)),
\end{align*}
by
\begin{align*}
    H^{i}_{\cC\cY} := & \sum_{s,\ell,\bL} (H^{i}_{\cC\cY},s,\ell)^\big_{\emptyset,\bL}
\end{align*}
for $i=1,2,12$. 

By Lemma \ref{lem:1param} applied to $(H^1_{\cC\cY},s,\ell_\bulk,\ell)$, we have
\begin{equation}\label{eq:H1CY}
\cC\cO \circ f\cO\cC - H^{12}_{\cC\cY} = \partial(H^{1}_{\cC\cY}),
\end{equation}
while by the same Lemma applied to $(H^2_{\cC\cY},s,\ell)$, we have
\begin{equation}\label{eq:H2CY}
H^{12}_{\cC\cY} - (-1)^{\signn} \bar{\mu} \circ \cC\cY_* = \partial(H^{2}_{\cC\cY})
\end{equation}
Thus we find that the map $H_{\cC\cY} := H^1_{\cC\cY} + H^2_{\cC\cY}$ has the desired property.
\end{proof}

\begin{proof}[Proof of Theorem \ref{thm:cardy_bc}]
    Tensoring with $S$, taking appropriate completions, and taking cohomology, we obtain that the diagram
$$    \begin{tikzcd}
        \sigma(\cC\cY) fHH_*(\fuk^\big(X,D;S)) \ar[r,"f\cO\cC_S"] \ar[d,"\cC\cY_*"] & QH^*(X;S) \ar[d,"\cC\cO"] \\
        fHH_*(\fuk^\big(X,D;S),\fuk^\big(X,D;S)^!) \ar[r,"\bar\mu"] & HH^*(\fuk^\big(X,D;S))
    \end{tikzcd}
 $$   commutes, where $\sigma(\cO\cC) = \sigma(\cC\cY)$, and the commutativity is witnessed on the chain level by the homotopy $H_{\cC\cY} \otimes S$. 
    We form the module homomorphism $\cC\cY^{\bc}:\fuk(X,D;S)^\bc_\Delta \to \fuk(X,D;S)^{\bc,!}$ as in Section \ref{sec:Cshriek}; then the result follows by composing the commutative diagram we just constructed with that from Lemma \ref{lem:cy_bc}.
\end{proof}

\appendix

\section{Stratified chains}

In this section we develop the necessary theory of stratified spaces which we will use to define chains in our Deligne--Mumford moduli spaces, following \cite{Pflaum}. 

\subsection{Decomposed spaces}\label{subsec:strat}

A \emph{decomposed space} is a paracompact Hausdorff topological space $\Bbar$, equipped with a locally finite decomposition 
\[ \Bbar = \coprod_{X \in \EuS} X\]
 into locally closed subspaces called \emph{strata}, together with a structure of smooth manifold on each stratum $X$, such that the `condition of the frontier' is satisfied: if $X \cap \overline{Y} \neq \emptyset$, then $X \subset \overline{Y}$.  
A map of decomposed spaces is a continuous map which sends strata to strata by smooth maps. 
There is a symmetric monoidal category of decomposed spaces, with the monoidal structure given by Cartesian product. 
We denote it by $\Dsp$.

The \emph{dimension} of a decomposed space is the maximal dimension of a stratum. 
The strata of maximal dimension are called \emph{top strata}, and the rest are called \emph{boundary strata}. 
We will generally denote a decomposed space by $\Bbar$, and the union of its top strata by $B$ (this creates a notational ambiguity, as the closure of $B$ need not be equal to $\Bbar$; however this will not arise in practice, as $B$ is dense in $\Bbar$ for all decomposed spaces we consider).

For any decomposed space $\Bbar$, we will denote by $\Bbar^{[k]}$ the union of strata of dimension $\le k$. 

\subsection{Control data}

\begin{defn}
A \emph{tube} for a stratum $X$ of a locally compact decomposed space $\Bbar$ is a triple $(T_X,\pi_X,\rho_X)$ where
\begin{itemize}
\item $T_X$ is an open neighbourhood of $X$, called the \emph{tubular neighbourhood}.
\item $\pi_X:T_X \to X$ is a continuous retraction, called the \emph{local retraction}.
\item $\rho_X :T_X \to \R_{\ge 0}$ is a continuous function such that $\rho_X^{-1}(0) = X$, called the \emph{tubular function}.
\end{itemize}
These are required to satisfy
\begin{itemize}
\item For any stratum $Y$, $T_X \cap Y \neq \emptyset \implies X \subset \overline{Y}$. 
 \item For any stratum $Y \neq X$, $(\pi_X,\rho_X): T_X \cap Y \to X \times \R_{>0}$ is a smooth submersion.
 \end{itemize}
Tubes $(T_X,\pi_X,\rho_X)$ and $(T'_X,\pi'_X,\rho'_X)$ are called \emph{equivalent} if there exists a neighbourhood $U \subset T_X \cap T'_X$ of $X$, such that $\pi_X|_U = \pi'_X|_U$ and $\rho_X|_U = \rho'_X|_U$.
\end{defn}

\begin{defn}
A set of \emph{control data} for a locally compact decomposed space $\Bbar$ is a choice of tube $(T_X,\pi_X,\rho_X)$ for each stratum $X$, such that
\begin{itemize}
 \item $\pi_X \circ \pi_Y  = \pi_X$ whenever both sides are defined.
 \item $\rho_X \circ \pi_Y = \rho_X$ whenever both sides are defined.
 \end{itemize}
Two sets of control data are called \emph{equivalent} if for every stratum, the corresponding tubes are equivalent. 
An equivalence class of control data is called a \emph{control structure}.
 \end{defn}

\begin{rem}
A \emph{Thom--Mather stratified space} is a locally compact decomposed space equipped with a control structure. 
Thom--Mather stratified spaces are called \emph{abstract stratified sets} in \cite[Section 8]{Mather1970} and \emph{controlled spaces} in \cite[Section 3.6.4]{Pflaum}.
\end{rem}

\begin{defn}
Given control data on decomposed spaces $\Bbar$ and $\Cbar$, the \emph{product control data} are defined by $T_{X \times Y} = T_X \times T_Y$, $\pi_{X \times Y} = (\pi_X,\pi_Y)$, and $\rho_{X \times Y} = \rho_X + \rho_Y$. 
One easily verifies that these are indeed control data.
\end{defn}
 
\subsection{Smooth pseudomanifolds with boundary}

A \emph{structure of smooth manifold with boundary} on a decomposed space $\Bbar$ is a structure of smooth manifold with boundary on the underlying topological space, so that each stratum is either a connected component of the interior or a connected component of the boundary (and the structures of smooth manifold agree). 

We say that a stratum $X$ of a decomposed space $\Bbar$ has \emph{one-point link} if for any point $x \in X$ there is a neighbourhood $U$ of $x$ in $\Bbar$ which is isomorphic, as a decomposed space, to $V \times [0,\epsilon)$ for some $V \subset X$.
It is clear that, if a decomposed space admits a structure of smooth manifold with boundary, then its codimension-$1$ strata have one-point links. 

Let $\Bbar$ be a decomposed space equipped with a structure of smooth manifold with boundary. 
A choice of control structure for $\Bbar$ is said to be \emph{compatible} with the smooth manifold with boundary structure, if for every boundary stratum $X$, the map
\[ (\pi_X,\rho_X): T_X \to X \times \R_{\ge 0}\]
is a diffeomorphism of smooth manifolds with boundary onto its image.
The following is immediate:
 
\begin{lem}\label{lem:contmwb}
Suppose $\Bbar$ is a $d$-dimensional decomposed space whose codimension-$1$ strata have one-point links, equipped with a control structure.  
Then $\Bbar \setminus \Bbar^{[d-2]}$ admits a natural structure of smooth manifold with boundary, compatible with the control structure.
\end{lem}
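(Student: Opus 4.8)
\textbf{Proof plan for Lemma \ref{lem:contmwb}.}

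The plan is to produce the smooth structure on $\Bbar \setminus \Bbar^{[d-2]}$ by gluing together the charts supplied by the tubular neighbourhoods of the codimension-$1$ strata, together with the manifold charts on the top stratum $B$. First I would observe that $\Bbar \setminus \Bbar^{[d-2]}$ is the disjoint union of the top stratum $B$ (an open $d$-manifold, hence already a smooth manifold with empty boundary) and the codimension-$1$ strata $X$; so the only work is near points of such an $X$. Fix a codimension-$1$ stratum $X$ and its tube $(T_X,\pi_X,\rho_X)$, and note that, after shrinking $T_X$ so that it meets only $X$ and top strata (possible since $T_X \cap Y \neq \emptyset \Rightarrow X \subseteq \overline Y$, and strata between $X$ and a top stratum would have dimension between $d-1$ and $d$, hence none exist), the set $T_X \cap (\Bbar \setminus \Bbar^{[d-2]})$ is exactly $T_X$ minus the lower strata. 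Using the one-point-link hypothesis, around each $x \in X$ there is a neighbourhood isomorphic as a decomposed space to $V \times [0,\eps)$ with $V \subseteq X$ open; combined with a manifold chart on $V$ this gives a chart $U \cong W \times [0,\eps)$ with $W \subseteq \R^{d-1}$ open, in which $X$ corresponds to $W \times \{0\}$.

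The key step is to check that the transition maps between two such boundary charts, and between a boundary chart and an interior (top-stratum) chart, are smooth. For two interior charts this is just the given smooth structure on $B$. For a boundary chart and an interior chart: on the overlap, which lies in $B$, the boundary chart restricts to $W \times (0,\eps)$, and we must see the identification with the ambient smooth structure on $B$ is a diffeomorphism there; this is where I would invoke that the decomposed-space isomorphism $U \cong V \times [0,\eps)$ restricts on each stratum to a \emph{smooth} map (by the definition of a map of decomposed spaces), hence on the interior piece $V\times(0,\eps)$ it is a diffeomorphism onto an open subset of $B$. For two boundary charts around points of the same $X$, the transition map is a homeomorphism $W_1 \times [0,\eps_1) \to W_2 \times [0,\eps_2)$ carrying $W_1\times\{0\}$ to $W_2\times\{0\}$, smooth on the boundary (it restricts to a smooth map $X \to X$) and smooth on the interior (it restricts to a diffeomorphism of open subsets of $B$); the standard fact that a homeomorphism of half-space neighbourhoods which is smooth on the interior and smooth on the boundary, with matching one-sided derivatives, is smooth — or, more cheaply, that we are free to \emph{choose} the half-space coordinate to be $\rho_X$ itself — closes this. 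Indeed the cleanest route is to use the compatibility hypothesis directly: it is exactly the statement that $(\pi_X,\rho_X)\colon T_X \to X \times \R_{\ge 0}$ is a diffeomorphism onto its (open) image, which \emph{is} a smooth manifold with boundary; so I would simply take these maps as the defining charts, one per codimension-$1$ stratum, and the compatibility condition $\pi_X \circ \pi_Y = \pi_X$, $\rho_X \circ \pi_Y = \rho_X$ from the control-data axioms guarantees the overlaps are consistent.

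I expect the main obstacle to be the bookkeeping at points of $X$ where two \emph{different} codimension-$1$ strata $X, X'$ have overlapping tubes, i.e. showing the charts $(\pi_X,\rho_X)$ and $(\pi_{X'},\rho_{X'})$ are smoothly compatible on $T_X \cap T_{X'}$. Since $T_X \cap T_{X'} \neq \emptyset$ forces $X \subseteq \overline{X'}$ or $X' \subseteq \overline X$ by the tube axioms, and both are codimension $1$, we must have $X = X'$ once we are inside $\Bbar \setminus \Bbar^{[d-2]}$ (a stratum of codimension $1$ cannot be in the closure of another of codimension $1$ unless they coincide); so in fact distinct codimension-$1$ strata have disjoint closures within $\Bbar \setminus \Bbar^{[d-2]}$, and the charts have disjoint domains there. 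This collapses the obstacle entirely: the only compatibility to verify is boundary-chart versus interior-chart, handled above. I would then state that the resulting smooth-manifold-with-boundary structure is compatible with the control structure by construction, since its boundary charts \emph{are} the maps $(\pi_X,\rho_X)$, completing the proof; the assertion of Lemma \ref{lem:contmwb} that this is "immediate" is thus borne out.
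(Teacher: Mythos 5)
Your overall strategy is the right one and, as far as one can tell from the paper's terse "The following is immediate", matches its intent: the interior smooth structure is that of the top strata, the boundary charts are the restrictions of $(\pi_X,\rho_X)$ to $T_X\cap(\Bbar\setminus\Bbar^{[d-2]})$, distinct codimension-$1$ strata $X,X'$ have disjoint tubes inside $\Bbar\setminus\Bbar^{[d-2]}$ (since $X\subseteq\overline{X'}$ with both of codimension $1$ forces $X=X'$), and the transition between a boundary chart and an interior chart is automatically smooth because $(\pi_X,\rho_X)$ restricted to a top stratum is smooth. Once $(\pi_X,\rho_X)$ is declared to be the boundary chart, compatibility with the control structure holds by fiat.

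There is, however, a genuine gap that you glide over: to declare $(\pi_X,\rho_X)$ a \emph{chart}, you must first verify that near $X$ (possibly after shrinking $T_X$ within its equivalence class) it is a homeomorphism onto an open subset of $X\times\R_{\ge 0}$ — in particular, injective. The tube axioms give only that $(\pi_X,\rho_X)$ restricted to $T_X\cap Y$ (for a top stratum $Y$) is a submersion, hence a \emph{local} diffeomorphism; they say nothing about injectivity. The one-point-link hypothesis is a statement purely about the decomposed-space topology of $\Bbar$ near $X$ and is not a priori related to the fibres of the tube map. This injectivity is not a cosmetic detail: it is essentially equivalent to the conclusion, since "compatible with the control structure" literally requires $(\pi_X,\rho_X)$ to be a diffeomorphism onto its image. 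The standard way to supply it is a Thom--Mather-type argument (integrate a controlled lift of $\partial/\partial\rho_X$ and use the local model $U\cong V\times[0,\eps)$ to see the flow hits $X$ in finite time, yielding an inverse to $(\pi_X,\rho_X)$). Relatedly, your phrase "use the compatibility hypothesis directly" is circular — compatibility is the conclusion of the lemma, not a hypothesis you may invoke; you recover by switching to "take these maps as the defining charts", which is correct, but that switch is only available once the injectivity above has been established. (The fallback you mention in passing — that a homeomorphism of half-space neighbourhoods which is smooth on the interior and smooth on the boundary, with matching one-sided derivatives, is smooth — is not a theorem in its naive reading, since normal derivatives need not extend continuously to the boundary; you are right to discard that route.)
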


\begin{defn}
A \emph{refinement} of a decomposed space $\Bbar$ is a map of decomposed spaces $f:\Bbar_r \to \Bbar$ which is a homeomorphism. 
A \emph{codimension-$k$ refinement} is a refinement $f$ such that the restriction of $f$ to $\Bbar_r \setminus \Bbar_r^{[d-k]}$ defines an isomorphism of decomposed spaces onto $\Bbar \setminus \Bbar^{[d-k]}$, where $d=\dim(\Bbar)$. 
In other words, it only `refines' the strata of codimension $\ge k$.
\end{defn}

\begin{defn}
A \emph{$d$-dimensional smooth pseudomanifold with boundary} is a $d$-dimensional decomposed space $\Bbar$, together with a structure of smooth manifold with boundary on $\Bbar \setminus \Bbar^{[d-2]}$, such that there exists a codimension-2 refinement $\Bbar_r \to \Bbar$, where $\Bbar_r$ admits a control structure whose restriction to $\Bbar_r \setminus \Bbar_r^{[d-2]} = \Bbar \setminus \Bbar^{[d-2]}$ is compatible with the smooth manifold with boundary structure.
\end{defn}

\subsection{Extension property}

This section explains why we need the `control data on a codimension-$2$ refinement' part of the definition of a smooth pseudomanifold with boundary: it allows us to extend smooth functions from the boundary in the appropriate sense. 
This extension property is used in the inductive construction of perturbation data. 

\begin{defn}\label{defn:smdecomp}
If $\Bbar$ is a decomposed space, then we will call a map $f: \Bbar \to \R$ \emph{smooth} if it is continuous, and its restriction to each stratum is smooth. 
\end{defn}

Now let us recall the notion of a smooth function on a smooth manifold with boundary. 
First we recall that a \emph{chart} for a smooth manifold with boundary $\Bbar$ is a pair $(U,\phi)$, where $U \subset \R_{\ge 0} \times \R^k$ is an open set and $\phi:U \to \Bbar$ is a homeomorphism onto its image; and a structure of smooth manifold with boundary consists of a maximal atlas of charts. 
We say that a function $f: \Bbar \to \R$ is \emph{smooth with respect to the structure of smooth manifold with boundary} if for every chart $(U,\phi)$ belonging to the maximal atlas, the function $\phi^* f$ extends to a smooth function on an open subset of $\R^{k+1}$ containing $U$. 

\begin{rem}
If a function is smooth with respect to the structure of smooth manifold with boundary, then it is smooth on the underlying decomposed space (in the sense of Definition \ref{defn:smdecomp}), but the converse does not hold: the function $f: \R_{\ge 0} \to \R$, $f(x)=\sqrt{x}$ provides a counterexample.
\end{rem}

\begin{defn}\label{def:smpm}
If $\Bbar$ is a $d$-dimensional smooth pseudomanifold with boundary, then we will call a map $f: \Bbar \to \R$ \emph{admissible} if it is smooth in the sense of Definition \ref{defn:smdecomp}, and furthermore $f|_{\Bbar \setminus \Bbar^{[d-2]}}$ is smooth with respect to the structure of smooth manifold with boundary.
\end{defn}

\begin{lem}\label{lem:ext}
Let $\Bbar$ be a compact $d$-dimensional smooth pseudomanifold with boundary, and $f:\Bbar^{[d-1]} \to \R$ a smooth function. 
Then $f$ admits an admissible extension to $\Bbar$.
\end{lem}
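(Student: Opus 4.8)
The plan is to induct on the dimension of the strata, extending $f$ from $\Bbar^{[d-1]}$ outward using the control structure on a codimension-$2$ refinement, and a partition of unity to patch local extensions together. The key point is that the definition of a smooth pseudomanifold with boundary gives us a codimension-$2$ refinement $\Bbar_r \to \Bbar$ carrying a control structure compatible with the smooth-manifold-with-boundary structure on $\Bbar \setminus \Bbar^{[d-2]} = \Bbar_r \setminus \Bbar_r^{[d-2]}$; I would work on $\Bbar_r$ throughout (a smooth function on $\Bbar$ in the sense of Definition \ref{defn:smdecomp} pulls back to one on $\Bbar_r$, and since the refinement is a homeomorphism which is an iso on the top two dimensions of strata, an admissible extension on $\Bbar_r$ descends to an admissible one on $\Bbar$).

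The induction would be organized by extending $f$ across strata in order of increasing codimension (equivalently, I build extensions $f_k$ defined on a neighbourhood of $\Bbar^{[d-1]} \cup \Bbar^{[d-k]}$, actually it is cleaner to go the other way: extend one codimension at a time starting from $\Bbar^{[d-1]}$ which already contains everything of codimension $\ge 1$, so really the induction is on extending over the interior). Let me restructure: the boundary strata are those of codimension $\ge 1$, so $\Bbar^{[d-1]}$ is the union of all boundary strata, and we must extend over the top stratum $B$. The control data furnish, for each boundary stratum $X$, a tube $(T_X, \pi_X, \rho_X)$ with $(\pi_X, \rho_X): T_X \to X \times \R_{\ge 0}$ a diffeomorphism onto its image near the codimension-$1$ strata (by compatibility). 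So near a codimension-$1$ stratum $X$ we extend by $f(\pi_X(-))$, which is admissible there because $(\pi_X,\rho_X)$ is a chart for the smooth-manifold-with-boundary structure and $f$ restricted to $X$ is smooth. For the interior of $B$ we may extend arbitrarily (e.g. by zero) since there is no constraint there. The subtlety is matching these local extensions near higher-codimension strata, where several tubes overlap; here the compatibility conditions $\pi_X \circ \pi_Y = \pi_X$ and $\rho_X \circ \pi_Y = \rho_X$ guarantee that the various candidate extensions agree on overlaps up to controlled error, and a partition of unity subordinate to the cover $\{T_X\} \cup \{B \setminus \Bbar^{[d-2]}\}$, with each bump function pulled back along the appropriate $\pi_X$ so as to be admissible, glues them into a single admissible function. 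One must check that the partition of unity functions can be chosen smooth (in the sense of Definition \ref{defn:smdecomp}) and admissible on the smooth-manifold-with-boundary part; this is standard given the $\rho_X$ are tubular functions, using bump functions of $\rho_X$.

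I would carry this out as follows. First, reduce to $\Bbar_r$ as above, so assume $\Bbar$ itself carries a compatible control structure on $\Bbar \setminus \Bbar^{[d-2]}$ (with a codimension-$2$ refinement understood if needed for the lower strata). Second, fix a set of control data realizing the control structure, and choose, for each stratum $X$, a slightly shrunk tubular neighbourhood $T_X' \Subset T_X$ so that $\{T_X'\}$ still covers $\Bbar^{[d-1]}$; together with $B^{\circ} := B \setminus \overline{\bigcup_X T_X''}$ (for a further shrink) these cover $\Bbar$. Third, construct the local extension on each $T_X'$ by $f \circ \pi_X$, noting this is well-defined by the retraction property and is admissible on $T_X' \setminus \Bbar^{[d-2]}$ by compatibility of the control structure with the smooth-manifold-with-boundary structure; on $B^\circ$ take the zero function. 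Fourth, build an admissible partition of unity $\{\chi_X\} \cup \{\chi_0\}$ subordinate to this cover, where each $\chi_X$ is a function of $\rho_X$ times $\pi_X^*(\text{bump on }X)$ so as to be constant along fibres of $\pi_X$ near $X$ and hence admissible, and invoke the compatibility identities to see that on overlaps $T_X' \cap T_Y'$ the two pulled-back values $f\circ\pi_X$ and $f\circ\pi_Y$ agree on $X \cap T_Y'$ and differ in a controlled way elsewhere. Fifth, define $\tilde f := \sum_X \chi_X \cdot (f \circ \pi_X) + \chi_0 \cdot 0$ and verify: (a) $\tilde f|_{\Bbar^{[d-1]}} = f$, using that on a boundary stratum $X$ only tubes $T_Y$ with $X \subset \overline Y$, i.e. $Y = X$ or $\dim Y > \dim X$, contain it, and $\pi_Y|_X = \id$ for those — here one uses $\pi_X \circ \pi_Y = \pi_X$ restricted to $X$ to get $\pi_Y|_{T_Y \cap X} = \id_X$; (b) $\tilde f$ is continuous and stratum-wise smooth, hence smooth in the sense of Definition \ref{defn:smdecomp}; (c) $\tilde f|_{\Bbar \setminus \Bbar^{[d-2]}}$ is smooth with respect to the smooth-manifold-with-boundary structure, since it is a finite sum of products of admissible functions there. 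The main obstacle I anticipate is step four/five — arranging the partition of unity and the tube shrinkings so that the boundary-matching identity (a) holds exactly rather than only up to error, and simultaneously so that every ingredient is admissible (not merely stratum-wise smooth) on the top-two-codimensional part; this is where the precise compatibility axioms $\pi_X \circ \pi_Y = \pi_X$, $\rho_X \circ \pi_Y = \rho_X$ of the control data, and the diffeomorphism property $(\pi_X,\rho_X):T_X \xrightarrow{\sim} X \times \R_{\ge 0}$ near codimension-$1$ strata, are essential, and where care is needed that the bump functions built from $\rho_X$ are genuinely smooth on the manifold-with-boundary and not merely continuous (the $\sqrt{\ }$ pathology of the preceding remark must be avoided, e.g. by using $\rho_X^2$ or by choosing $\rho_X$ itself smooth in the strong sense, which compatibility allows near codimension $1$).
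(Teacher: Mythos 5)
Your reduction to the codimension-$2$ refinement $\Bbar_r$ is correct and agrees with the paper, but your main argument — a single partition of unity $\{\chi_X\}$ subordinate to the tubes $\{T_X\}$ with $\tilde f := \sum_X \chi_X \cdot (f \circ \pi_X)$ — has a genuine gap, centered on your step (a). You claim that the tubes $T_Y$ meeting a boundary stratum $X$ are those with $X \subset \overline Y$ and that $\pi_Y|_{T_Y \cap X} = \id_X$. Both are backwards: by the tube axiom ($T_Y \cap X \neq \emptyset \implies Y \subset \overline X$), the tubes meeting $X$ are those with $Y \subset \overline X$, i.e.\ $Y = X$ or $\dim Y < \dim X$, and for such $Y \neq X$ the retraction $\pi_Y$ sends $X \cap T_Y$ into the \emph{lower-dimensional} stratum $Y$ — it is nowhere near the identity on $X$. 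The identity $\pi_X \circ \pi_Y = \pi_X$ does not rescue this; restricted to $X$ it would require both $Y \subset \overline X$ and $X \subset \overline Y$ at once. Consequently $f \circ \pi_X$ and $f \circ \pi_Y$ do \emph{not} agree on $X \cap T_Y$ (contrary to your step four), and $\tilde f|_X = \chi_X f + \sum_{Y \subsetneq \overline X} \chi_Y \,(f \circ \pi_Y)$ is a nontrivial average of the correct value with wrong ones. Nor can you fix this by forcing $\chi_Y \equiv 0$ on $X$ for $Y \subsetneq \overline X$: since $Y \subset \overline X$, continuity of $\chi_Y$ along sequences in $X$ converging into $Y$ forbids $\chi_Y|_X \equiv 0$ while $\chi_Y|_Y \equiv 1$.

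This is exactly why the paper proceeds by induction on stratum dimension (via Lemma \ref{lem:stratext}), extending one dimension at a time the \emph{difference} $g_X := f|_{\overline X} - \tilde f_{k-1}|_{\overline X}$, which by the inductive hypothesis vanishes on $\partial \overline X$. Extending $g_X$ by $\eta(\pi_X, \rho_X) \cdot (g_X \circ \pi_X)$ on $T_X$ and by $0$ elsewhere then does not disturb the values already fixed on $\Bbar^{[k-1]}$ (since $g_X$ vanishes there) nor on other $k$-strata (which lie outside $T_X$). This layered correction is the essential idea your one-shot partition of unity is missing; patching local extensions $f\circ\pi_X$ directly, without subtracting off what has already been achieved, cannot work. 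A secondary but real issue you only gesture at is continuity of the tube-supported extension along $\partial\overline X$; the paper handles this with the carefully arranged control data of Lemma \ref{lem:goodcd} (conditions \eqref{it:good} and \eqref{it:intclose}), which guarantee, e.g., that $\overline X \cup \overline T^{\epsilon_X/2}_X$ is closed so the extension-by-zero is continuous. This needs an actual argument, not just the observation that $\rho_X$ is available to build bump functions.
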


In order to prove this result, we start by introducing some notation. 
Given a stratum $X$ and a continuous function $\epsilon:X \to \R_{>0}$, we  define
\begin{align*}
 [0,\epsilon) &= \{(x,t) \in X \times \R_{\ge 0}: t < \epsilon(x)\},\\
 [0,\epsilon] &= \{(x,t) \in X \times \R_{\ge 0}: t \le \epsilon(x)\},\\
 T^{\epsilon }_X &= (\pi_X,\rho_X)^{-1}([0,\epsilon)) \\
 \overline{T}^\epsilon_X &= (\pi_X,\rho_X)^{-1}([0,\epsilon]).
 \end{align*}

\begin{lem}[c.f. \cite{Pflaum}, Lemma 3.6.7] \label{lem:goodcd}
Suppose that $\Bbar$ is a compact decomposed space equipped with a control structure. 
Then there exist control data representing the control structure for $\Bbar$, and continuous functions $\epsilon_X: X \to \R_{>0}$ associated to the strata $X$, satisfying the following conditions:
\begin{enumerate}
\item \label{it:sep} $T_X \cap T_Y \neq \emptyset$ if and only if $X \subset \overline{Y}$ or $Y \subset \overline{X}$.
\item \label{it:propsurj} For all $X$, the map $(\pi_X,\rho_X)$ is proper and surjective onto $[0,\epsilon_X)$. (In particular, $T_X = T_X^{\epsilon_X}$.)
\item \label{it:good} For all $X$, the set $\overline{X} \cup \overline{T}^{\epsilon_X/2}_X$ is closed in $\Bbar$. \item \label{it:intclose} If $Y \subset \overline{X}$ and $x \in T_X \cap T^{\epsilon_Y/4}_Y$, then $\pi_X(x) \in T^{\epsilon_Y/2}_Y$.\end{enumerate}
\end{lem}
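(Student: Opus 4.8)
\textbf{Proof plan for Lemma \ref{lem:goodcd}.}

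The plan is to follow \cite[Lemma 3.6.7]{Pflaum} closely, constructing the shrunken tubes and the cutoff functions $\epsilon_X$ simultaneously by a downward induction on the depth of the strata (equivalently, an induction that processes strata in order of decreasing dimension, or one may stratify by ``depth'' $= $ length of the longest chain $X \subsetneq \overline{Y_1} \subsetneq \cdots$ below a stratum). At each stage we start with a representative $(T_X,\pi_X,\rho_X)$ of the given control structure, and we are allowed to replace it by the equivalent tube $(T_X^{\delta_X},\pi_X|,\rho_X|)$ for a suitable continuous $\delta_X: X \to \R_{>0}$; the point is that shrinking a tube does not change the control structure, nor does it destroy the compatibility relations $\pi_X \circ \pi_Y = \pi_X$, $\rho_X \circ \pi_Y = \rho_X$, provided we shrink ``from the bottom up''. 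Compactness of $\Bbar$ enters to guarantee that the $\epsilon_X$ may be taken to be genuine (strictly positive, continuous) functions and that the various closed sets produced stay closed.

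The key steps, in order. First, I would fix an enumeration of strata refining the partial order $Y \le X \iff Y \subseteq \overline{X}$, processing the deepest (smallest-dimensional) strata first. Second, for the separation property \eqref{it:sep}: whenever $X$ and $Y$ are not comparable, the condition of the frontier gives $\overline X \cap \overline Y \cap (X\cup Y) = \emptyset$; using local compactness/paracompactness and the already-shrunk tubes of strata below, choose $\epsilon_X,\epsilon_Y$ small enough (as functions) that $T_X^{\epsilon_X}\cap T_Y^{\epsilon_Y} = \emptyset$. Since there are finitely many pairs (by compactness, local finiteness of the decomposition), one can satisfy all incomparable pairs at once and then take a pointwise minimum. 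Third, properness and surjectivity onto $[0,\epsilon_X)$ in \eqref{it:propsurj}: the map $(\pi_X,\rho_X)$ is already a submersion on each $T_X \cap Y$; replacing $T_X$ by $T_X^{\epsilon_X}$ makes the image exactly $[0,\epsilon_X)$ by construction, and properness follows from compactness of $\Bbar$ once $\epsilon_X$ is chosen so that $\overline{T}^{\epsilon_X}_X$ is compact and $(\pi_X,\rho_X)^{-1}$ of a compact subset of $[0,\epsilon_X]$ is closed in $\Bbar$ hence compact — this is where one shrinks a little more if necessary. Fourth, \eqref{it:good}: the set $\overline X \cup \overline T^{\epsilon_X/2}_X$ fails to be closed only through limit points lying in strata $Z$ with $X \subseteq \overline Z$ is false and $Z \subseteq \overline X$ is false, i.e. incomparable strata, or in higher strata $Z$ with $X \subseteq \overline Z$; the first is killed by \eqref{it:sep}, and for the second one uses properness of $(\pi_X,\rho_X)$ from \eqref{it:propsurj} to see that the preimage of the closed half-open interval behaves well. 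Fifth, \eqref{it:intclose}: this is the genuine compatibility constraint. For $Y \subseteq \overline X$, we want $\pi_X\bigl(T_X \cap T^{\epsilon_Y/4}_Y\bigr) \subseteq T^{\epsilon_Y/2}_Y$. Using $\rho_Y \circ \pi_X = \rho_Y$ (a control-data relation, valid where both sides are defined) we get that $\rho_Y$ is preserved under $\pi_X$, so a point with $\rho_Y < \epsilon_Y/4$ maps to a point with $\rho_Y < \epsilon_Y/4 < \epsilon_Y/2$; the only subtlety is ensuring $\pi_X(x)$ lies in the \emph{domain} $T_Y$ of $\rho_Y$, which one arranges by shrinking $T_X$ (i.e. decreasing $\epsilon_X$) relative to the already-fixed $T_Y$ — here the downward induction is essential, since $\epsilon_Y$ is already fixed when we get to $X$.

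The main obstacle is the bookkeeping in the inductive shrinking: each time we shrink a tube $T_X$ to enforce one of the conditions, we must check that none of the previously-secured conditions (for $X$ or for strata already treated) is broken, and that the control-data compatibility equations $\pi_X\circ\pi_Y = \pi_X$, $\rho_X\circ\pi_Y=\rho_X$ survive restriction. The standard fix, which I would adopt, is to do \emph{all} the shrinking for a given stratum $X$ in one step — take the pointwise minimum of the finitely many $\epsilon_X$-bounds coming from \eqref{it:sep}, \eqref{it:propsurj}, \eqref{it:good}, \eqref{it:intclose} against all lower strata — and to note that shrinking $T_X$ never affects tubes $T_Y$ with $Y$ not above $X$, while the relations involving $Y$ above $X$ are preserved because restriction of a retraction to a smaller neighbourhood still satisfies the same identities where defined. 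Compactness guarantees all the ``finitely many conditions'' and ``minimum is still strictly positive and continuous'' claims. I would also remark that passing to a codimension-$\ge 1$ locally finite refinement is unnecessary here; the decomposition is kept fixed and only the tubes are modified. The details beyond this are routine and I would refer the reader to \cite[proof of Lemma 3.6.7]{Pflaum} for the parts that are verbatim.
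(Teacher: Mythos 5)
Your high-level plan matches the paper's: arrange \eqref{it:sep} and \eqref{it:propsurj} via Pflaum's Lemma~3.6.7, then induct on stratum dimension (smallest first), shrinking tubes with Pflaum's Lemma~3.1.2 to secure \eqref{it:good} and \eqref{it:intclose}. However, there is a genuine gap at the point you flag as ``the only subtlety'' in \eqref{it:intclose}. Ensuring that $\pi_X(x)\in T_Y$ (so that $\rho_Y\circ\pi_X=\rho_Y$ is applicable) is in fact the \emph{entire} content of the condition, and it is not achieved by ``shrinking $T_X$ relative to the already-fixed $T_Y$'' in any naive sense. The issue: as $\pi_X(x)$ approaches the frontier $\{p\in X:\rho_Y(p)=\epsilon_Y\}$ inside $X$, arbitrarily small perturbations $x$ of $p$ (in the $\rho_X$-direction) can still satisfy $\rho_Y(x)<\epsilon_Y/4$ with $\pi_X(x)\notin T_Y$. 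No bound of the form $\rho_X<\eta_X$ with $\eta_X$ depending only on $\pi_X(x)$ obviously excludes this, because the required $\eta_X$ would have to degenerate precisely where the comparison with $T_Y$ is most delicate, and there may be several $Y$'s to reconcile simultaneously. The paper resolves this not by shrinking $\epsilon_X$ but by constructing an open set $U_X=\bigcup_{\mathcal{U}}U_{\mathcal{U}}$, with $\mathcal{U}$ running over subsets of the strata $Y\subset\overline X$ and $U_{\mathcal{U}}$ imposing membership constraints on \emph{both} $x$ and $\pi_X(x)$ relative to the $T_Y^{\epsilon_Y/4}$ and $T_Y^{\epsilon_Y/2}$; then Pflaum's Lemma~3.1.2 produces a sub-tube $T_X^{\eta_X}\subset U_X$. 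The properties (a) ($\pi_X(x)\in T_Y^{\epsilon_Y/4}\Rightarrow x\in T_Y^{\epsilon_Y/2}$) and (b) ($\pi_X(x)\notin T_Y^{\epsilon_Y/2}\Rightarrow x\notin\overline T_Y^{\epsilon_Y/4}$) built into this $U_X$ are exactly what \eqref{it:intclose} needs, and that construction is the substance you are missing.

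Your treatment of \eqref{it:good} has a related incompleteness. Properness of $(\pi_X,\rho_X)$ handles a limit point $k=\lim k_i$ of $\overline T^{\epsilon_X/2}_X$ only when $\pi_X(k_i)$ stays in a compact subset of $X$; when $\pi_X(k_i)$ escapes to a boundary stratum $Y\subset\partial\overline X$, the paper invokes property (a) (from the same $U_X$ construction) to place $k_i\in T^{\epsilon_Y/2}_Y$ and then uses the inductive closedness of $\overline Y\cup\overline T^{\epsilon_Y/2}_Y$ together with $\rho_Y\circ\pi_X=\rho_Y$ to conclude $k\in Y\subset\overline X$. That case is not covered by your ``incomparable versus higher strata'' dichotomy followed by properness, and it cannot be deferred to Pflaum, since \eqref{it:good} and \eqref{it:intclose} do not appear in his Lemma~3.6.7.
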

\begin{proof}
Conditions \eqref{it:sep} and \eqref{it:propsurj} may be arranged by \cite[Lemma 3.6.7]{Pflaum}. 
It remains to arrange conditions \eqref{it:good} and \eqref{it:intclose}. 
We establish them by induction on the dimension of stratum $X$: so suppose that the conditions are satisfied for all strata $X$ of dimension $<k$, and let $X$ be a stratum of dimension $k$. 

We start by observing that for any $Y \subset \overline{X}$, the intersection $T_X \cap \overline{T}^{\epsilon_Y/4}_Y$ is relatively closed in $T_X$.
Indeed, $Y \subset \overline{X} \implies \dim(Y)<\dim(X)$ for a controllable decomposed space, so $\overline{Y} \cup \overline{T}^{\epsilon_Y/2}_Y$ is closed in $\overline{B}$ by the inductive assumption. 
Since $\overline{T}^{\epsilon_Y/4}_Y$ is relatively closed in $\overline{T}^{\epsilon_Y/2}_Y$, it follows that $\overline{Y} \cup \overline{T}^{\epsilon_Y/4}_Y$ is closed in $\Bbar$.
Hence its intersection with $T_X$ is relatively closed in $T_X$. 
Because $\dim(Y)<\dim(X)$ we have $\overline{Y} \cap T_X = \emptyset$, so this establishes the claim. 
It follows immediately that $X \cap \overline{T}^{\epsilon_Y/4}_Y$ is relatively closed in $X$.

We now define a relatively open cover $\{V_{\mathcal{U}}\}$ of $X$ whose sets are indexed by subsets $\mathcal{U}$ of the set of strata $Y$ such that $Y \subset \overline{X}$, with
\[V_{\mathcal{U}} = \left(X \cap \bigcap_{Y \in \mathcal{U}} T^{\epsilon_Y/2}_Y\right) \setminus \bigcup_{Y \notin \mathcal{U}} \overline{T}^{\epsilon_Y/4}_Y.\]
It is clear that $\{V_{\mathcal{U}}\}$ covers $X$, and each $V_{\mathcal{U}}$ is relatively open as it can be written as a finite intersection of relatively open sets: the sets are relatively open because $T_Y^{\epsilon_Y/2}$ is open and $\overline{T}_Y^{\epsilon_Y/4} \cap X$ is relatively closed in $X$, and there are finitely many because $\Bbar$ is assumed compact, and compact decomposed spaces have finitely many strata by the `local finiteness' assumption. 

Now for each $\mathcal{U}$ we define an open neighbourhood of $V_{\mathcal{U}}$ in $T_X$ by
\[ U_{\mathcal{U}} = \left(\pi_X^{-1}(V_{\mathcal{U}}) \cap \bigcap_{Y \in \mathcal{U}} T^{\epsilon_Y/2}_Y\right) \setminus \bigcup_{Y \notin \mathcal{U}} \overline{T}^{\epsilon_Y/4}_Y.\]
It is clear that $U_{\mathcal{U}}$ contains $V_{\mathcal{U}}$.
The argument that $U_{\mathcal{U}}$ is open is the same as the argument that $V_{\mathcal{U}}$ was relatively open, except now we use the fact that $\overline{T}_Y^{\epsilon_Y/4} \cap T_X$ is relatively closed in $T_X$. 
Since the $V_{\mathcal{U}}$ cover $X$, the set
\[ U_X = \bigcup_{\mathcal{U}} U_{\mathcal{U}}\]
is an open neighbourhood of $X$ in $\Bbar$. 
The key property of $U_X$ is this: if $x \in U_X$ and $Y \subset \overline{X}$, then
\begin{enumerate}[(a)]
\item \label{it:a} if $\pi_X(x) \in T^{\epsilon_Y/4}_Y$, then $x \in T^{\epsilon_Y/2}_Y$;
\item \label{it:b} if $\pi_X(x) \notin T_Y^{\epsilon_Y/2}$, then $x \notin \overline{T}_Y^{\epsilon_Y/4}$.
\end{enumerate}
Indeed, to prove \eqref{it:a}, we observe that if $x \in U_{\mathcal{U}}$ then $\pi_X(x) \in V_{\mathcal{U}}$ from which it follows that $Y \in \mathcal{U}$, so $U_{\mathcal{U}} \subset T^{\epsilon_Y/2}_Y$; hence $x \in T^{\epsilon_Y/2}_Y$ as required.
To prove \eqref{it:b}, we observe similarly that if $x \in U_{\mathcal{U}}$ then $Y \notin \mathcal{U}$, so $U_{\mathcal{U}} \cap \overline{T}_Y^{\epsilon_Y/4} = \emptyset$; hence $x \notin \overline{T}_Y^{\epsilon_Y/4}$ as required.

Now there exists a smooth map $\eta_X:X \to \R_{>0}$ such that $T^{\eta_X}_X \subset U_X$, by \cite[Lemma 3.1.2]{Pflaum}. 
We claim that after replacing $T_X$ with $T^{\eta_X}_X$, and $\epsilon_X$ with $\eta_X$, conditions \eqref{it:good} and \eqref{it:intclose}  are satisfied for $X$. 
Condition \eqref{it:intclose} follows immediately from the contrapositive of \eqref{it:b} above, so it remains to verify condition \eqref{it:good}.

We must show that $K_X = \overline{X} \cup \overline{T}^{\eta_X/2}_X$ is closed in $\Bbar$. 
Let $k_i$ be a sequence of points in $K_X$, converging to a point $k \in \Bbar$; we must show that $k \in K_X$. 
As $\overline{X}$ is closed, we may assume that $k_i \in \overline{T}^{\eta_X/2}_X$. 
We observe that $\overline{X}$ is compact, as it is closed in $\overline{B}$ which is compact by assumption. 
Therefore the sequence $\pi_X(k_i) \in X$ admits a subsequence converging to a point $x \in \overline{X}$. There are two cases:
\begin{enumerate}[(i)]
\item $x \in X$. We may take a further subsequence on which $\rho_X(k_i)$ converges to $t \in [0,\eta_X(x)/2]$. 
Now let $N \subset [0,\epsilon_X)$ be a compact neighbourhood of $(x,t)$. 
By taking a further subsequence we may assume that $(\pi_X,\rho_X)(k_i) \in N$ for all $i$, so $k_i \in (\pi_X,\rho_X)^{-1}(N)$ for all $i$. 
Because $(\pi_X,\rho_X)$ is proper, the preimage of $N$ is compact, hence we must have $k \in N \subset T_X$. 
Since $K_X$ is clearly closed in $T_X$, it follows that $k \in K_X$ as required.
\item $x \in Y$ for some stratum $Y \subset \overline{X}$. By taking a subsequence we may assume that $\pi_X(k_i) \in T^{\epsilon_Y/4}_Y$ for all $i$; since $k_i \in U_X$ this implies that $k_i \in T^{\epsilon_Y/2}_Y$, by \eqref{it:a} above. 
It follows that $k \in \overline{Y} \cup \overline{T}^{\epsilon_Y/2}_Y$ as the latter is closed by the inductive assumption. We have $\overline{Y} \subset \overline{X}$, so it remains to deal with the case $k \in \overline{T}^{\epsilon_Y/2}_Y$. 
We have $\rho_Y(k_i) = \rho_Y(\pi_X(k_i)) \to 0$ (using the definition of control data), which implies that $\rho_Y(k) = 0$ by continuity of $\rho_Y$, hence $k \in Y$ by the definition of control data; since $Y \subset \overline{X}$ we are done.
\end{enumerate}

Replacing $\epsilon_X$ by $\eta_X$ for all strata $X$ of dimension $k$ completes the inductive step and hence the proof. (We observe that conditions \eqref{it:sep} and \eqref{it:propsurj} remain true after reducing $\epsilon_X$, and conditions \eqref{it:good} and \eqref{it:intclose} impose no requirement on pairs of strata of the same dimension.)
\end{proof}

\begin{lem}\label{lem:stratext}
Let $\Bbar$ be a compact $d$-dimensional pseudomanifold with boundary so that the codimension-$2$ refinement $\Bbar_r \to \Bbar$ may be taken to be the identity map, $X$ a stratum of $\Bbar$, and $f: \overline{X} \to \R$ a smooth function such that $f|_{\partial \overline{X}} = 0$. Then there exists an admissible extension $\tilde{f}:\Bbar \to \R$ with $\tilde{f}|_Y = 0$ for all strata $Y$ except possibly for those $Y$ such that $X \subset \overline{Y}$.
\end{lem}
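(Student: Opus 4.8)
The plan is to extend $f$ by pushing its values off the stratum $X$ along the fibres of a tubular neighbourhood, cutting off away from $X$, and setting the extension to zero elsewhere. Concretely, I would first invoke Lemma \ref{lem:goodcd} to fix control data for $\Bbar$ --- which exist on $\Bbar$ itself, not merely on a refinement, by the standing hypothesis that the codimension-$2$ refinement $\Bbar_r \to \Bbar$ may be taken to be the identity, and which moreover may be assumed compatible with the smooth-manifold-with-boundary structure on $\Bbar \setminus \Bbar^{[d-2]}$ (shrinking tubes preserves compatibility, a germ condition) --- together with functions $\epsilon_Y$ enjoying the separation, properness, closedness and nesting properties \eqref{it:sep}--\eqref{it:intclose} of that lemma. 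Since $\overline{X}$ is compact and $f|_{\partial \overline{X}} = 0$ with $f$ continuous, the set $\{|f| \ge \delta\}$ is, for every $\delta > 0$, a compact subset of the stratum $X$. Fixing a smooth cutoff $\chi \colon \R_{\ge 0} \to [0,1]$ identically $1$ near $0$ and supported in a small interval $[0,c]$, with $c$ small enough that Lemma \ref{lem:goodcd}\eqref{it:intclose} applies to the resulting supports near codimension-one strata, I would set
\[
\tilde f(x) := \begin{cases} f(x), & x \in \overline{X},\\[3pt] \chi\!\bigl(\rho_X(x)/\epsilon_X(\pi_X(x))\bigr)\, f(\pi_X(x)), & x \in T_X \setminus X,\\[3pt] 0, & \text{otherwise.}\end{cases}
\]
This is a genuine case distinction: the tube axiom together with the condition of the frontier forces $T_X$ to meet only $X$ and the strata lying above it, so $T_X \cap \overline{X} = X$; and on $X$, the limit of the first two clauses agree, $\chi(0)f(x)=f(x)$.

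Next I would check continuity. On $T_X$, $\tilde f$ is a composite of continuous maps; on the set $\Bbar \setminus (\overline{X} \cup \overline{T}^{\epsilon_X/2}_X)$, which is open because that union is closed by Lemma \ref{lem:goodcd}\eqref{it:good}, it vanishes; these two open sets cover $\Bbar \setminus \partial \overline{X}$ and the two formulas agree there (both zero) on their overlap, where $\rho_X > \tfrac12\,\epsilon_X \circ \pi_X$ and so $\chi$ vanishes. The one delicate point is continuity at a point $x_0$ of the closed set $\partial \overline{X}$: for $x_n \to x_0$, the only case not immediately giving $\tilde f(x_n) \to 0$ is $x_n \in \overline{T}^{\epsilon_X/2}_X \setminus X$, where $|\tilde f(x_n)| \le |f(\pi_X(x_n))|$; were this not to tend to $0$, the points $\pi_X(x_n)$ would lie in a fixed compact $\{|f| \ge \delta\} \subset X$ and $\rho_X(x_n)$ would be bounded, so by properness (Lemma \ref{lem:goodcd}\eqref{it:propsurj}) the $x_n$ would subconverge inside $T_X$, contradicting $x_n \to x_0 \notin T_X$. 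Hence $\tilde f$ is continuous with $\tilde f|_{\partial \overline{X}} = 0 = f|_{\partial \overline{X}}$, so $\tilde f|_{\overline{X}} = f$; and for any stratum $Y$ with $X \not\subset \overline{Y}$, the condition of the frontier gives $Y \cap \overline{X} = \emptyset$ and the tube axiom gives $Y \cap T_X = \emptyset$, so $\tilde f|_Y = 0$.

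It remains to verify admissibility, and this is where I expect the main work to lie. Stratumwise smoothness is immediate: on $X$ the function is $f|_X$; on a stratum above $X$ it is a composite of $\chi$, the smooth submersion $(\pi_X,\rho_X)$, $\epsilon_X$ and $f|_X$ glued to the zero function; elsewhere it is zero. Smoothness in the manifold-with-boundary sense near a codimension-one stratum $W$ is the crux: the plan is to use the nesting identities $\pi_X \circ \pi_W = \pi_X$ and $\rho_X \circ \pi_W = \rho_X$, together with Lemma \ref{lem:goodcd}\eqref{it:intclose} and the smallness of $c$, to show that after shrinking $T_W$ the value $\tilde f(x)$ depends on $x \in T_W$ only through $\pi_W(x)$ when $W \neq X$ (and equals $\chi$ of the collar coordinate times a smooth function of $\pi_W(x)$ when $W = X$); in the compatible collar chart $(\pi_W,\rho_W)$ the function is then independent of the collar coordinate (respectively a manifestly smooth function of both coordinates), hence extends smoothly across $W$. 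The obstacle is precisely this last step --- reconciling the tube-pushforward formula, the zero region, and the collar coordinates near codimension-one strata simultaneously --- which forces one to exploit the refined control data of Lemma \ref{lem:goodcd} and their compatibility with the smooth structure in full, and to choose the cutoff $\chi$ carefully; by contrast the earlier steps are routine point-set topology.
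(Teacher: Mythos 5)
Your construction --- push $f$ along $\pi_X$, cut off using $\rho_X$ inside the tube, and set it to zero elsewhere --- is the same as the paper's, and the structure of the verification (continuity, stratumwise smoothness, collar smoothness, vanishing off $\overline{X}$-dominating strata) matches. Two points worth comparing. First, a small wrinkle in your cutoff: $\chi\bigl(\rho_X/\epsilon_X\circ\pi_X\bigr)$ is only as smooth as $\epsilon_X$, and Lemma~\ref{lem:goodcd} as stated produces merely \emph{continuous} $\epsilon_X$ (though its proof, via \cite[Lemma 3.1.2]{Pflaum}, does yield smooth ones). The paper sidesteps this by choosing directly a cutoff $\eta\colon[0,\epsilon_X)\to\R$ that is smooth in the manifold-with-boundary sense with support in $[0,\epsilon_X/2)$, which does not require $\epsilon_X$ to be smooth. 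Second, your continuity argument at $\partial\overline{X}$ is a genuinely different and arguably cleaner route: you use compactness of $\{|f|\ge\delta\}\subset X$ together with \emph{properness} of $(\pi_X,\rho_X)$ from item~\eqref{it:propsurj} of Lemma~\ref{lem:goodcd} to get a contradiction, whereas the paper follows $\pi_X(k_i)$ using the nesting condition~\eqref{it:intclose} and compactness of $\overline{Y}\cup\overline{T}^{\epsilon_Y/2}_Y$ to force it to converge into $\partial\overline{X}$. Finally, the step you defer as ``the obstacle'' is the right plan, but is more tractable than you suggest, and in fact does not need item~\eqref{it:intclose} at all: for a codimension-one stratum $W$ with $X\subsetneq\overline{W}$, split into $y\in W\setminus\overline{T}^{\epsilon_X/2}_X$, where $\tilde f$ vanishes on a neighbourhood by~\eqref{it:good}, and $y\in W\cap T_X$, where openness of $T_X$ and continuity of $\pi_W$ give a neighbourhood of $y$ on which both $x$ and $\pi_W(x)$ stay in $T_X$, so the nesting identities give $\tilde f(x)=\tilde f(\pi_W(x))$ there and smoothness across $W$ follows from smoothness of $\tilde f|_W$; when $W=X$, one uses instead that $(\pi_X,\rho_X)$ is a compatible collar chart together with smoothness of $f$ and the cutoff.
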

\begin{proof}
We choose control data and functions $\epsilon_X$ as in Lemma \ref{lem:goodcd}. 
We choose a function $\eta: [0,\epsilon_X) \to \R$ such that:
\begin{itemize}
\item $\eta$ is smooth with respect to the natural structure of smooth manifold with boundary; 
\item $0 \le \eta \le 1$;
\item $\eta(x,0) = 1$ for all $x \in X$;
\item the support of $\eta$ is contained in $[0,\epsilon_X/2)$. 
\end{itemize}
We now define the extension:
\[ \tilde{f}(x) := \left\{ \begin{array}{ll}
									\eta(\pi_X(x),\rho_X(x)) \cdot f(\pi_X(x)) & \text{ for $x \in T_X$} \\
									0 & \text{otherwise.}
									\end{array} \right.\]
We must show that $\tilde{f}$ satisfies:
\begin{enumerate}[(a)]
\item \label{it:fcont} $\tilde{f}$ is continuous;
\item \label{it:fsm} $\tilde{f}|_Y$ is smooth, for all strata $Y$;
\item \label{it:fsmb} $\tilde{f}|_{\Bbar \setminus \Bbar^{[d-2]}}$ is smooth with respect to the structure of smooth manifold with boundary;
\item \label{it:fvan} $\tilde{f}|_Y = 0$ for all strata $Y$ whose closure does not contain $X$.
\end{enumerate}

We start by verifying \eqref{it:fvan}: suppose that $\overline{Y}$ does not contain $X$.  Then $Y$ is disjoint from $T_X$ by the definition of control data; as $T_X$ contains the support of $\tilde{f}$, we have $\tilde{f}|_Y = 0$ as required. 

Next we verify \eqref{it:fsm}. 
By the previous step, we only need check this for strata $Y$ such that $X \subset \overline{Y}$. 
It is clearly smooth when restricted to the open subset $T_X \cap Y$ of $Y$, as it is equal to $(\pi_X,\rho_X)^*\eta \cdot \pi_X^* f$, and all functions in sight are smooth. 
Furthermore its support is contained in the set $\overline{T}^{\epsilon_X/2}_X \cap Y$, which is closed in $Y$ by condition \eqref{it:good} of Lemma \ref{lem:goodcd} (as $\overline{X} \cap Y = \emptyset$). 
It follows that the extension by zero to the rest of $Y$ is smooth.

Next we verify \eqref{it:fsmb}. In light of \eqref{it:fsm}, this only needs to be verified along the codimension-$1$ strata. 
So let $Y$ be a codimension-$1$ stratum. 
Note that if $T_Y \cap T_X = \emptyset$, then $\tilde{f}$ vanishes in a neighbourhood of $Y$. 
Therefore, by \eqref{it:sep} of Lemma \ref{lem:goodcd}, we have two cases left to check:
\begin{enumerate}[(i)]
\item $Y \subset \overline{X}$. We have $d-1 = \dim(Y) \le \dim(X) \le d-1$, because $Y$ is codimension-$1$ and $X$ is a boundary stratum. Thus we must have $X=Y$. Note that $(\pi_X,\rho_X)^*\eta$ is smooth with respect to the structure of smooth manifold with boundary, because $(\pi_X,\rho_X)$ is a local isomorphism of smooth manifolds with boundary by our requirement that the control system is compatible with the structure of smooth manifold with boundary, and $\eta$ is smooth with respect to the structure of smooth manifold with boundary by construction. 
It is also clear that $\pi_X^*f$ is smooth with respect to the structure of smooth manifold with boundary; therefore $\tilde{f}$ is too.
\item $X \subset \overline{Y}$. Let $y \in Y$. If $y \notin \overline{T}^{\epsilon_X/2}_X$ then $\tilde{f}$ vanishes in a neighbourhood of $y$ (using the fact $\overline{T}^{\epsilon_X/2}_X \cap T_Y$ is closed in $T_Y$, by \eqref{it:good} of Lemma \ref{lem:goodcd}), and in particular is smooth at $y$. 
If $y \in T_X$, then we have
\begin{align*}
 \tilde{f}(x) &= \eta(\pi_X(x),\rho_X(x)) \cdot f(\pi_X(x)) \\
 &= \eta(\pi_X(\pi_Y(x)),\rho_X(\pi_Y(x)) \cdot f(\pi_X(\pi_Y(x)))\\
 &= \pi_Y^*(\tilde{f}|_Y)(x)
 \end{align*}
for $x$ in a neighbourhood of $y$. 
Now observe that $\tilde{f}|_Y$ is smooth by \eqref{it:fsm}, so its pullback by $\pi_Y$ is smooth with respect to the structure of smooth manifold with boundary along $Y$.
\end{enumerate}

Finally we verify \eqref{it:fcont}. 
We start by showing that the restriction of $\tilde{f}$ to the open set $\Bbar \setminus \partial \overline{X}$ is continuous. 
We observe that $\tilde{f}$ is clearly continuous on the open subset $T_X$, and its support is contained in $\overline{T}^{\epsilon_X/2}_X$, which is a relatively closed subset of $\Bbar \setminus \partial \overline{X}$ by \eqref{it:good} of Lemma \ref{lem:goodcd}. It follows that $\tilde{f}$ is continuous when restricted to $\Bbar \setminus \partial \overline{X}$. 

It remains to prove that $\tilde{f}$ is continuous along $\partial \overline{X}$, which can be checked pointwise. 
So let $y \in \partial \overline{X}$; we will show that $\tilde{f}$ is continuous at $y$. 
Suppose to the contrary that $\tilde{f}$ were discontinuous at $y$. 
Since $\tilde{f}(y) = 0$, this means there exists $\delta>0$ and a sequence $k_i$ converging to $y$, with $|\tilde{f}(k_i)|>\delta$ for all $i$. 
Suppose that $y$ lies on a stratum $Y \subset \partial \overline{X}$. 
By taking a subsequence we may assume that $k_i \in T^{\epsilon_Y/4}_Y$ for all $i$, so $\rho_Y(k_i)$ is defined and converges to $\rho_Y(y) = 0$. 
We also have $\pi_X(k_i) \in T^{\epsilon_Y/2}_Y$ for all $i$ by \eqref{it:intclose} of Lemma \ref{lem:goodcd}, and $\rho_Y(\pi_X(k_i)) = \rho_Y(k_i)$ converges to zero (using the definition of control data). 
Because $\overline{T}^{\epsilon_Y/2}_Y \cup \overline{Y}$ is closed by \eqref{it:good} of Lemma \ref{lem:goodcd}, and hence compact, we may take a subsequence so that $\pi_X(k_i)$ converges to some point $x\in \overline{T}^{\epsilon_Y/2}_Y \cup \overline{Y}$. 
If $x \in \overline{T}^{\epsilon_Y/2}_Y$, then $\rho_Y(x)$ is equal to the limit of $\rho_Y(\pi_X(k_i))$ by continuity, which is equal to zero, so we have $x \in Y$ by the definition of control data. 
Thus we must have $x \in \overline{Y}$, and hence $f(x) = 0$ as $\overline{Y} \subset \partial \overline{X}$, so $f(\pi_X(k_i))$ converges to zero by continuity. 
But then 
\[\tilde{f}(k_i) = f(\pi_X(k_i)) \cdot \phi(k_i)\]
also converges to zero, as $|\phi|\le 1$ by construction; a contradiction, so $\tilde{f}$ is continuous at $y$.
\end{proof}

\begin{proof}[Proof of Lemma \ref{lem:ext}]
Let $\phi:\Bbar_r \to \Bbar$ be a codimension-$2$ refinement as in the definition of pseudomanifold with boundary. 
The function $\phi^*f: \Bbar_r^{[d-1]} \to \R$ is clearly smooth. 
An extension of it is admissible if and only if it is an admissible extension of $f$. 
Therefore, it suffices to prove the result under the assumption that $\phi$ is the identity, in which case we may apply Lemma \ref{lem:stratext}. 

We will construct the extension inductively. 
Suppose that $k \le \dim(\Bbar) -1$ and there exists an admissible function $\tilde{f}_{k-1}$ such that $\tilde{f}_{k-1}|_{\Bbar^{[k-1]}} = f|_{\Bbar^{[k-1]}}$; we will construct an admissible function $\tilde{f}_k$ with the analogous property. 
For each $k$-dimensional stratum $X$, we consider the function $g_X: \overline{X} \to \R$ given by $g_X = f|_{\overline{X}} - \tilde{f}_{k-1}|_{\overline{X}}$. 
The function is clearly smooth, and $g_X|_{\partial \overline{X}} = 0$ by the inductive assumption, so Lemma \ref{lem:stratext} provides an admissible extension $\tilde{g}_X$. 
This extension has the property that $\tilde{g}_X|_Y = 0 $ for $\dim(Y) = k$, $Y \neq X$ (as $X \nsubseteqq \overline{Y}$). 
It follows that the admissible function
\[ \tilde{f}_k := \tilde{f}_{k-1} + \sum_{X} \tilde{g}_X\]
(where the sum is over all $k$-dimensional strata $X$) satisfies $\tilde{f}_k|_{\Bbar^{[k]}} = f|_{\Bbar^{[k]}}$ as required.

By induction we construct $\tilde{f} = \tilde{f}_{\dim(X)-1}$ which has the desired properties.
\end{proof}

\subsection{Semianalytic sets}\label{subsec:sset}

We now recall the notion of a $C^k$ structure on a decomposed space from \cite[Section 1.3]{Pflaum}, where $k \in \mathbb{N} \cup \{\infty,\omega\}$ ($C^\infty$ structures are also called smooth structures, and $C^\omega$ structures are also called analytic structures). 
A \emph{singular chart} for $\Bbar$ is a homeomorphism $\phi:U \to \phi(U) \subset \R^n$ from an open subset of $\Bbar$ to a locally closed subspace of $\R^n$ such that for any stratum $X$, $\phi|_{X \cap U}$ defines a $C^k$ diffeomorphism onto a $C^k$ submanifold of $\R^n$. 
Singular charts $(U,\phi)$ and $(U',\phi')$ are called \emph{$C^k$-compatible} if for every $b \in U \cap U'$ there exists neighbourhoods $U_b \subset U \cap U'$ of $b$, $O \subset \R^N$ of $\iota_n^N(\phi(U_b))$, and $O' \subset \R^{N}$ of $\iota_{n'}^N(\phi'(U_b))$ (where $\iota_n^N: \R^n \to \R^N$ is the inclusion $x \mapsto (x,0)$), and a $C^k$ diffeomorphism $h:O \to O'$ satisfying
\[ h \circ \iota_n^N \circ \phi|_{U_b} = \iota_{n'}^N \circ \phi'|_{U_b}.\]
A \emph{$C^k$ atlas} on a decomposed space is a cover by pairwise-$C^k$-compatible singular charts. 
Two $C^k$ atlases are called \emph{compatible} if each chart in the first is $C^k$-compatible with each chart in the second. 
Compatibility defines an equivalence relation on $C^k$ atlases, and a \emph{$C^k$ structure} is an equivalence class of $C^k$ atlases. 

A \emph{$C^k$ map} between decomposed spaces equipped with $C^k$ structures, is a map $f: \Bbar \to \Cbar$ of decomposed spaces such that for every point $b \in \Bbar$ there exist singular charts $(U,\phi)$ containing $b$, and $(U',\phi')$ containing $f(b)$, open sets $O \subset \R^n$ containing $\phi(U)$ and $O' \subset \R^{n'}$ containing $\phi'(U')$, and a $C^k$ map $g: O \to O'$ satisfying $g \circ \phi|_U = \phi' \circ f|_U$.

We now recall that a subset $A$ of an analytic manifold $M$ is called \emph{semianalytic} if $M$ can be covered by open sets $U$ such that $U \cap A$ is an element of the smallest family of subsets of $U$ which contains the sets $\{x:f(x) > 0\}$ for $f$ analytic on $U$, and is closed under finite intersection, finite union, and complement. 

\begin{defn}
A \emph{semianalytic decomposed space} is a decomposed space $\Bbar$ equipped with analytic structure, such that for every point $b$ there exist a singular chart $(U,\phi)$ containing $b$, such that $\phi(U) \subset \R^n$ is semianalytic. 
We will call such a chart a \emph{semianalytic chart}.
\end{defn}

\subsection{Whitney stratifications}

\begin{defn}
A pair of submanifolds $X$ and $Y$ of $\R^n$ satisfies \emph{Whitney's condition (b)} at the point $y \in Y$ if whenever $x_i$ is a sequence of points in $X$ converging to $y$, and $y_i$ is a sequence of points in $Y$ converging to $y$, such that the lines $\overline{x_i y_i}$ converge to a line $\ell \subset T_y M$, and the tangent spaces $T_{x_i}X$ converge to a subspace $\tau \subset T_yM$, then $\ell \subset \tau$. 
The pair $X$ and $Y$ satisfy Whitney's condition (b) if they satisfy it at all points $y \in Y$. 
\end{defn}

\begin{defn}
Let $\Bbar$ be a decomposed space equipped with a smooth structure. 
We say that a pair of strata $X$ and $Y$ satisfies Whitney's condition (b) if for any chart $(U,\phi)$, the submanifolds $\phi(U \cap X)$ and $\phi(U \cap Y)$ satisfy Whitney's condition (b). 
\end{defn}

\begin{defn}
A \emph{Whitney space} is a decomposed space equipped with a smooth structure, such that each pair of strata satisfies Whitney's condition (b). 
\end{defn}

Whitney spaces admit control data, by work of Thom and Mather, see \cite[Proposition 7.1]{Mather1970}. 

Semianalytic sets admit real analytic Whitney stratifications, by a fundamental theorem of {\L}ojasiewicz \cite{Lojasiewicz} (see also \cite{Wall} and \cite[Theorem 1]{Kaloshin}). 
The result we need is not a logical consequence of {\L}ojasiewicz's theorem, but it is an immediate consequence of the proof:

\begin{thm}[{\L}ojasiewicz]\label{thm:Loj}
Let $\Bbar$ be a semianalytic decomposed space, and suppose that Whitney's condition (b) is satisfied for each pair of strata $X$ and $Y$ both of dimension $\ge k$. 
Then there exists a codimension-$(d-k+1)$ semianalytic refinement of the decomposition which is a Whitney space. 
\end{thm}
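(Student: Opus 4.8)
The statement to prove is Theorem~\ref{thm:Loj} ({\L}ojasiewicz): a semianalytic decomposed space whose strata of dimension $\ge k$ already satisfy Whitney's condition (b) admits a codimension-$(d-k+1)$ semianalytic refinement which is a Whitney space.

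\textbf{Approach.} The plan is to invoke {\L}ojasiewicz's construction of semianalytic Whitney stratifications \cite{Lojasiewicz}, but to run it only on the strata of dimension $< k$, leaving the top $(d - k + 1)$ dimensions' worth of strata untouched. Concretely, {\L}ojasiewicz's proof proceeds by downward induction on dimension: given a semianalytic decomposition, one successively subdivides the lower-dimensional strata into smaller semianalytic pieces so that Whitney's condition (b) holds for each pair. The key local input is that, for a pair of semianalytic sets $X$ (higher-dimensional) and $Y$ (lower-dimensional), the locus of points of $Y$ at which condition (b) \emph{fails} is itself semianalytic and of strictly smaller dimension than $Y$ (this is the content of the relevant {\L}ojasiewicz/Whitney lemma; see also \cite{Wall}, \cite[Theorem 1]{Kaloshin}). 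One therefore removes this bad locus from $Y$, puts it in a lower stratum, and recurses. Since by hypothesis condition (b) already holds for every pair $X, Y$ with $\dim Y \ge k$, no subdivision is forced on any stratum of dimension $\ge k$, and the entire recursive subdivision happens inside $\Bbar^{[k-1]}$, i.e. among strata of dimension $\le k-1$.

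\textbf{Steps in order.} First, I would recall precisely the local semianalyticity-of-the-bad-locus lemma and note that it is stated and proved purely locally in semianalytic charts, so it applies verbatim to a semianalytic decomposed space via the singular charts of its analytic structure. Second, I would set up the downward induction on stratum dimension $j$, running from $j = d$ down to $j = 0$: the inductive hypothesis at stage $j$ is that all strata of dimension $> j$ are fixed and form, together with the (not-yet-refined) strata of dimension $\le j$, a semianalytic decomposed space in which every pair of strata with lower stratum of dimension $> j$ (or equal to $j$, for pairs handled at the previous stage) satisfies condition (b). For $j \ge k$ the hypothesis is given outright and nothing needs to be done; for $j < k$ one performs the {\L}ojasiewicz subdivision: for each pair (higher stratum $X$, stratum $Y$ of dimension $j$), intersect $Y$ with the complement of the semianalytic bad locus, take a common semianalytic refinement of $Y$ over all such $X$, and push the bad loci down into dimension $\le j-1$. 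Third, I would check that the resulting decomposition still satisfies the condition of the frontier and is locally finite (both are preserved by the subdivision, as the new pieces are semianalytic and locally finitely many), and that it is genuinely semianalytic (the new strata are obtained from old ones by finite Boolean operations on semianalytic sets, hence semianalytic). Fourth, I would observe that the refinement map $\Bbar_r \to \Bbar$ is the identity on the union of strata of dimension $\ge k$, hence is a codimension-$(d - k + 1)$ refinement in the sense of the excerpt's definition. Finally, by construction every pair of strata in $\Bbar_r$ satisfies Whitney's condition (b), so $\Bbar_r$ is a Whitney space.

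\textbf{Main obstacle.} The only nontrivial point, and the one I would be careful to cite rather than reprove, is that the {\L}ossiewicz subdivision preserves semianalyticity at each stage and terminates: one must know that throughout the induction the strata stay semianalytic (not merely subanalytic) and that the bad locus has dimension strictly less than $\dim Y$, so that the recursion descends in dimension and halts. This is exactly what is extracted from the \emph{proof} of {\L}ojasiewicz's theorem rather than its statement, as flagged in the excerpt; the remaining bookkeeping (frontier condition, local finiteness, identification as a codimension-$(d-k+1)$ refinement) is routine. I would therefore present the argument as: quote the local lemma, run the truncated downward induction, and verify the three routine closure properties.
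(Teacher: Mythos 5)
Your proof takes essentially the same approach as the paper's: downward induction on stratum dimension, using the {\L}ojasiewicz lemma that the locus of condition-(b) failure on a lower-dimensional stratum is semianalytic and of strictly smaller dimension, and observing that the hypothesis makes the induction vacuous for strata of dimension $\ge k$ so that the refinement has codimension $d-k+1$. You are merely more explicit than the paper's sketch about the routine bookkeeping (frontier condition, local finiteness, semianalyticity of the new strata); the mathematical content coincides.
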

\begin{proof}[Proof (outline)]
The refinement is constructed inductively: suppose that Whitney's condition (b) is satisfied for all pairs of strata $X$ and $Y$ with $\dim(Y) \ge j+1$. 
One then shows that the locus of points $y$ on a $j$-dimensional stratum $Y$, where Whitney's condition (b) is violated for some stratum $X$, is semianalytic and of dimension $<j$. 
By decomposing this locus into analytic submanifolds of dimension $<j$, we obtain a refinement of $Y$, so that Whitney's condition (b) is satisfied along the $j$-dimensional stratum. 
Refining all $j$-dimensional strata in this way completes the inductive step. 
\end{proof}

\begin{defn}\label{def:sa_p_b}
    A $d$-dimensional semianalytic pseudomanifold with boundary $\Bbar$ is a $d$-dimensional semianalytic decomposed space whose codimension-$1$ strata have one-point links, and Whitney's condition (b) is satisfied for all pairs of strata $X$ and $Y$ with $Y$ of codimension $1$.
\end{defn}
 
\begin{prop}\label{prop:sa_p_b}
Suppose that $\Bbar$ is a $d$-dimensional semianalytic pseudomanifold with boundary.
Then $\Bbar$ admits a natural structure of $d$-dimensional smooth pseudomanifold with boundary.
\end{prop}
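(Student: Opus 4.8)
The plan is to reduce Proposition \ref{prop:sa_p_b} to the two main tools developed in this appendix: {\L}ojasiewicz's stratification theorem (Theorem \ref{thm:Loj}) to produce a refinement carrying control data, and Lemma \ref{lem:contmwb} to extract the structure of smooth manifold with boundary on the top strata. The key observation is that the definition of a smooth pseudomanifold with boundary (Definition \ref{def:smpm}) asks for a structure of smooth manifold with boundary on $\Bbar \setminus \Bbar^{[d-2]}$ together with a \emph{codimension-$2$} refinement $\Bbar_r \to \Bbar$ admitting a control structure compatible with it; and Definition \ref{def:sa_p_b} gives us exactly enough Whitney regularity (condition (b) for all pairs involving a codimension-$1$ stratum) to build this.

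First I would apply Theorem \ref{thm:Loj} with $k = d-1$: since Whitney's condition (b) holds for every pair of strata $X, Y$ with $Y$ of codimension $\le 1$ (the hypothesis includes the case $\dim Y = d$, which is vacuous, and $\dim Y = d-1$), the theorem produces a codimension-$2$ semianalytic refinement $\phi: \Bbar_r \to \Bbar$ which is a Whitney space. By construction $\phi$ is a homeomorphism and restricts to an isomorphism of decomposed spaces over $\Bbar \setminus \Bbar^{[d-2]}$, so in particular $\Bbar_r$ still has one-point links along its codimension-$1$ strata (these coincide with those of $\Bbar$). Next, since $\Bbar_r$ is a Whitney space, it admits control data by the Thom--Mather theorem (cited after the definition of Whitney space, \cite[Proposition 7.1]{Mather1970}). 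Then Lemma \ref{lem:contmwb}, applied to $\Bbar_r$ equipped with this control structure, endows $\Bbar_r \setminus \Bbar_r^{[d-2]} = \Bbar \setminus \Bbar^{[d-2]}$ with a natural structure of smooth manifold with boundary, compatible with the control structure. This is precisely the data required by Definition \ref{def:smpm}, so $\Bbar$ becomes a $d$-dimensional smooth pseudomanifold with boundary, with $\Bbar_r \to \Bbar$ serving as the required codimension-$2$ refinement.

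The one point needing a little care --- and what I expect to be the main (though still routine) obstacle --- is the word \emph{natural} in the statement: one must check that the resulting smooth-manifold-with-boundary structure on $\Bbar \setminus \Bbar^{[d-2]}$ does not depend on the choices made (the {\L}ojasiewicz refinement, the control data on $\Bbar_r$). For this I would observe that away from $\Bbar^{[d-2]}$ the refinement is an isomorphism and the control structure restricted there is, by the compatibility clause, forced to be the germ of a collar of each codimension-$1$ stratum; any two such collars induce the same smooth structure near the boundary (they differ by a homeomorphism smooth on each stratum and collar-compatible, which one checks is a diffeomorphism of manifolds with boundary). Since the smooth structure on the interior strata is intrinsic anyway, the whole structure on $\Bbar \setminus \Bbar^{[d-2]}$ is independent of choices, establishing naturality. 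I would present this verification briefly, as it is standard once the collar structure along codimension-$1$ strata is in hand.

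\begin{proof}
Apply Theorem \ref{thm:Loj} with $k = d-1$: Whitney's condition (b) holds for every pair of strata $X, Y$ with $\dim Y \ge d-1$ (for $\dim Y = d$ this is vacuous, and for $\dim Y = d-1$ it is the hypothesis of Definition \ref{def:sa_p_b}), so there is a codimension-$(d-(d-1)+1) = $ codimension-$2$ semianalytic refinement $\phi: \Bbar_r \to \Bbar$ which is a Whitney space. Being a refinement, $\phi$ is a homeomorphism restricting to an isomorphism of decomposed spaces $\Bbar_r \setminus \Bbar_r^{[d-2]} \xrightarrow{\sim} \Bbar \setminus \Bbar^{[d-2]}$; in particular the codimension-$1$ strata of $\Bbar_r$ coincide with those of $\Bbar$ and hence have one-point links.

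Since $\Bbar_r$ is a Whitney space, it admits control data by the theorem of Thom and Mather (\cite[Proposition 7.1]{Mather1970}). By Lemma \ref{lem:contmwb}, $\Bbar_r \setminus \Bbar_r^{[d-2]}$ then acquires a natural structure of smooth manifold with boundary, compatible with the control structure. Under the identification $\Bbar_r \setminus \Bbar_r^{[d-2]} = \Bbar \setminus \Bbar^{[d-2]}$ this is the required structure on $\Bbar \setminus \Bbar^{[d-2]}$, and $\phi: \Bbar_r \to \Bbar$ is a codimension-$2$ refinement admitting a control structure whose restriction to $\Bbar_r \setminus \Bbar_r^{[d-2]}$ is compatible with it. Thus $\Bbar$ satisfies Definition \ref{def:smpm}.

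Finally, the structure is natural: the smooth structure on the interior strata is intrinsic, and along each codimension-$1$ stratum $X$ the compatibility condition forces $(\pi_X, \rho_X): T_X \to X \times \R_{\ge 0}$ to be a diffeomorphism of manifolds with boundary onto its image, i.e. to be (the germ of) a collar. Any two sets of control data on $\Bbar_r$, and any two {\L}ojasiewicz refinements, therefore induce collars of $X$ which differ by a homeomorphism of manifolds with boundary, smooth on the interior and preserving the collar coordinate near $X$; such a map is automatically a diffeomorphism of smooth manifolds with boundary. Hence the resulting structure of smooth manifold with boundary on $\Bbar \setminus \Bbar^{[d-2]}$ is independent of all choices.
\end{proof}
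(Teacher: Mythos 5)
Your proof is correct and takes essentially the same route as the paper's: {\L}ojasiewicz's theorem (Theorem \ref{thm:Loj}) to produce a codimension-$2$ Whitney refinement, Mather's existence of control data, and Lemma \ref{lem:contmwb} to extract the smooth manifold-with-boundary structure on the top strata. You supply a little more bookkeeping than the paper (making the choice $k=d-1$ explicit and checking the hypothesis, and spelling out why the resulting structure is independent of choices), but the underlying argument is identical.
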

\begin{proof}
By Theorem \ref{thm:Loj}, $\Bbar$ admits a codimension-$2$ refinement $\Bbar_r$ which is a Whitney space. 
By \cite[Proposition 7.1]{Mather1970}, $\Bbar_r$ admits a choice of control data. 
This induces a natural structure of smooth manifold with boundary on $\Bbar \setminus \Bbar^{[d-2]}$, compatible with the control data, by Lemma \ref{lem:contmwb}.
\end{proof}

\subsection{Why semi-analytic?}

\begin{figure}
    \centering
    \includegraphics[width=0.9\linewidth]{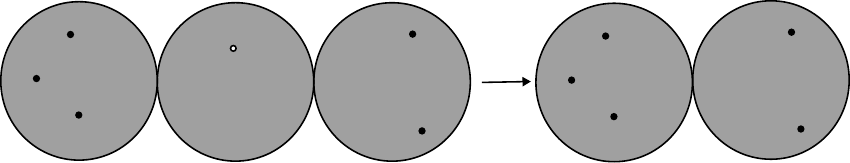}
    \caption{Illustrating a smooth Whitney-stratified chain whose pullback under a forgetful map is not Whitney stratified; the empty marked point is the one which gets forgotten.}
    \label{fig:whitney}
\end{figure}

The reason we work with semianalytic chains is that this class has the following two properties:
\begin{itemize}
    \item they admit Whitney stratifications by Theorem \ref{thm:Loj}, and hence they have the extension property by Lemma \ref{lem:ext}; 
    \item \label{it:pullback} the pullback of a semianalytic chain by a forgetful map is still semianalytic, as forgetful maps are analytic. 
\end{itemize}
The second property is in particular necessary so that the \textbf{(Stabilizing)} process described in Section \ref{subsec:gen_domains} is well-defined. 

One might wonder if one could instead work with the class of smooth Whitney stratified chains, as they clearly have the first property; however this class does not have the second property, as we now demonstrate using an example. 

We equip the complex plane with the stratification $\C = \{0\} \sqcup \C^*$. 
Consider the stratified map 
\begin{align*}
g: \C^3 & \to \C^2,\\
g(x,y,z) &= (xy,z),
\end{align*}
where $\C^3$ and $\C^2$ are equipped with the product stratifications. 
Note that there exist forgetful maps locally modelled on $g$: for example, consider the forgetful map $\Rbar(bub,(5,0)) \to \Rbar(bub,(4,0))$ forgetting one of the marked points, in a neighbourhood of the boundary stratum illustrated in Figure \ref{fig:whitney}. 
Now consider the stratified space $\R_{\ge 0} = \{0\} \sqcup \R_{>0}$, and the stratified map
\begin{align*}
f: \R_{\ge 0} & \to \C^2,\\
f(t) &= (e^{-2t+it},e^{-t}) \qquad \text{(and $f(0) = (0,0)$)}.
\end{align*}
We claim that the pullback $P := g^{-1}(\im(f)) \subset \C^3$ does not satisfy Whitney's condition (b). 
Indeed, let us consider the sequence of points $p_k = (e^{-2k\pi},e^{-2k\pi},e^{-2k\pi})$ lying in $P$, converging to the origin. 
Clearly the line pointing from the origin to $p_k$ is constant, equal to the line spanned by $(1,1,1)$, and in particular converges to this line. 
On the other hand one may compute that the tangent plane to $P$ at the points $p_k$ is also constant, equal to the linear subspace 
\[ T_{p_k} P = \{(u,v,w) \in \C^3: u+v = (2-i)w, w \in \R\},\]
and in particular converges to this subspace. 
However this subspace does not contain the line spanned by $(1,1,1)$, hence $P$ does not satisfy Whitney's condition (b).

\section{Orienting Cauchy--Riemann operators}\label{sec:or}

Nothing in this section is original, although we phrase things in the language of this paper, which avoids explicit signs.

\subsection{Orientations of Cauchy--Riemann operators: statement}

\begin{defn}
    A \emph{Cauchy--Riemann operator} $(C,E,F,\bar\partial)$ consists of:
    \begin{itemize}
        \item A smooth compact Riemann surface $C$, whose boundary we denote by $\partial C = \coprod_{B \in \pi_0(\partial C)} B$;
        \item A complex vector bundle $E \to C$ of rank $n$;
        \item A totally real sub-bundle $F \subset E|_{\partial C}$, which comes equipped with an orientation;
        \item A differential operator 
        $$\bar\partial: \Gamma((C,\partial C),(E,F)) \to \Gamma(C,\Omega^{0,1}(E))$$
        satisfying
        $$\bar\partial (f\xi) = \bar\partial(f) \xi + f \bar\partial \xi.$$
    \end{itemize}
    We will abbreviate $(C,E,F,\bar\partial)$ by $\bar\partial$.
\end{defn}

\begin{defn}
We denote the torsor of orientations of the determinant line of the Fredholm operator $\bar\partial$ by $\sigma(\bar\partial)$, placed in degree equal to the index of the Fredholm operator.   
\end{defn}

\begin{defn}
For each boundary component $B$ of $C$, we denote by $Spins(B)$ the torsor of isomorphism classes of spin structures on $F|_B$, placed in degree $0$. 
We define
$$Spins(\bar\partial) := \bigotimes_{B \in \pi_0(\partial C)} Spins(B).$$
\end{defn}

\begin{deflem}[Proposition 2.8 of \cite{Solomon:thesis}]\label{deflem:or_CR}
    Given a Cauchy--Riemann operator, there is a natural isomorphism
    \begin{equation}\label{eq:or_CR}
        \sigma(\bar\partial) \cong Spins(\bar\partial) \otimes \sigma(n\chi(\hat C) + \mu(E,F) - 2n|\pi_0(\partial C)|) \otimes \bigotimes_{B \in \pi_0(\partial C)} \sigma(B),
    \end{equation}
    where $\hat C$ is the closed Riemann surface obtained by `capping off' the boundary components, $\mu(E,F)$ is the boundary Maslov index (which is even, as $F$ is orientable), and $\sigma(B) = \sigma(n)$ for each $B \in \pi_0(\partial C)$.
\end{deflem}

We will review the proof of Definition--Lemma \ref{deflem:or_CR} below, after some preliminaries on gluing; and then establish its compatibility with gluing.

\subsection{Gluing and determinant line bundles}

Given two Cauchy--Riemann operators, we may define their disjoint union, by taking the disjoint union of the underlying Riemann surfaces. 
The behaviour of the torsor of orientations under disjoint union is clear: there is a natural isomorphism
\begin{equation}
    \label{eq:CR_disj_un}
\sigma\left(\bar\partial_1 \coprod \bar\partial_2 \right) \cong \sigma(\bar\partial_1) \otimes \sigma(\bar\partial_2),
\end{equation}
which is associative (i.e., respects the monoidal structure).

Given two points $p_\pm$ in the interior of $C$, and an identification $E_{p_-} \cong E_{p_+}$, we may attach $p_-$ to $p_+$ to create an interior node, and then smooth the node to create a new Cauchy--Riemann operator $\bar\partial_{gl}$. 
As in \cite[Section 11c]{seidel2008fukaya}, a gluing theorem gives a short exact sequence of Cauchy--Riemann operators
$$0 \to \bar\partial_{gl} \to \bar\partial \to E_{p_\pm} \to 0,$$
where the final non-zero map is the difference of evaluation maps, $\mathrm{ev}_{p_-} - \mathrm{ev}_{p_+}$. 
This gives an isomorphism 
$$\sigma(\bar\partial_{gl}) \otimes \sigma(E_{p_{\pm}}) \cong \sigma(\bar\partial). $$
By choosing the complex orientation of $E_{p_{\pm}},$ we obtain an isomorphism
\begin{equation}\label{eq:interior_gluing}
    \sigma(2n)\sigma(\bar\partial_{gl}) \cong \sigma(\bar\partial).
\end{equation}

Given two points $p_\pm$ on the boundary of $C$, and an identification $E_{p_-} \cong E_{p_+}$ which sends $F_{p_-}$ to $F_{p_+}$ by an orientation-preserving map, we may attach $p_-$ to $p_+$ to create a boundary node, and then smooth the node to create a new Cauchy--Riemann operator $\bar\partial_{gl}$.  
As in \cite[Section 11c]{seidel2008fukaya}, a gluing theorem gives a short exact sequence of Cauchy--Riemann operators
$$0 \to \bar\partial_{gl} \to \bar\partial \to F_{p_\pm} \to 0,$$
where the final non-zero map is the difference of evaluation maps, $\mathrm{ev}_{p_-} - \mathrm{ev}_{p_+}$. 
This gives an isomorphism
$$    \sigma(\bar\partial_{gl}) \otimes \sigma(F_{p_{\pm}}) \cong \sigma(\bar\partial). 
$$
Using the orientation of $F_{\pm}$, this gives an isomorphism 
\begin{equation}\label{eq:boundary_gluing}
    \sigma(gl)\sigma(\bar\partial_{gl}) \cong \sigma(\bar\partial) 
\end{equation}
where $\sigma(gl) = \sigma(F_{p_\pm}) = \sigma(n)$. 
Note that if we swap $p_-$ with $p_+$, the final map in the short exact sequence changes sign, which has the result that the isomorphism \eqref{eq:boundary_gluing} changes sign by $(-1)^n$.

Moreover, the gluing isomorphisms \eqref{eq:interior_gluing} and \eqref{eq:boundary_gluing} satisfy `associativity of gluing':

\begin{lem}\label{lem:glue}
    Suppose that the Cauchy--Riemann operator $\bar\partial_{gl}$ is obtained from $\bar\partial$ by gluing interior nodes $\{p^i_\pm\}_{i \in Int}$ and boundary nodes $\{p^b_\pm\}_{b\in Bound}$. 
    Then there is a natural isomorphism
    \begin{equation}\label{eq:glue_CR}
    \sigma(gl) \sigma(\bar\partial_{gl}) \cong \sigma(\bar\partial)
    \end{equation}
    where 
    $$\sigma(gl) = \sigma(2n|Int|) \otimes \bigotimes_{b \in Bound} \sigma(b)$$
    where $\sigma(b) = \sigma(n)$ for $b \in Bound$, of which \eqref{eq:interior_gluing} and \eqref{eq:boundary_gluing} are special cases. 
    Furthermore, the isomorphism \eqref{eq:glue_CR} satisfies `associativity of gluing': if we break the gluing procedure up into two steps, $Int = Int_1 \sqcup Int_2$ and $Bound = Bound_1 \sqcup Bound_2$, then the isomorphisms \eqref{eq:glue_CR} are compatible with the natural identification $\sigma(gl) \cong \sigma(gl_1) \otimes \sigma(gl_2)$.
\end{lem}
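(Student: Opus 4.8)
\textbf{Proof plan for Lemma \ref{lem:glue}.}

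The plan is to build the isomorphism \eqref{eq:glue_CR} by iterating the one-node gluing isomorphisms \eqref{eq:interior_gluing} and \eqref{eq:boundary_gluing}, and then to argue that the result is independent of the order in which the nodes are glued. First I would fix, once and for all, a total order on the combined set $Int \sqcup Bound$ of nodes to be glued. Glaing the nodes one at a time in this order produces a chain of short exact sequences of Cauchy--Riemann operators; composing the associated determinant-line isomorphisms, together with the natural trivializations \eqref{eq:triv} that combine the factors $\sigma(2n)$ (from interior nodes) and $\sigma(n) = \sigma(b)$ (from boundary nodes) into the single object $\sigma(gl) = \sigma(2n|Int|) \otimes \bigotimes_{b \in Bound} \sigma(b)$, yields an isomorphism of the stated form. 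That \eqref{eq:interior_gluing} and \eqref{eq:boundary_gluing} are the special cases $|Int|+|Bound| = 1$ is then immediate from the construction.

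The substance of the lemma is the independence of the order of gluing, which is exactly the claimed `associativity of gluing'. The key step is the observation that gluing a node is a local operation: the short exact sequence $0 \to \bar\partial_{gl} \to \bar\partial \to E_{p_\pm} \to 0$ (resp. with $F_{p_\pm}$) only modifies the operator in a small neighbourhood of the node, and the evaluation-difference map $\ev_{p_-} - \ev_{p_+}$ depends only on the fibre at that node. Hence for two distinct nodes the two gluing operations are supported in disjoint regions and commute up to canonical isomorphism; the corresponding diagram of determinant lines commutes on the nose once one keeps track of the Koszul signs, because the two evaluation maps sit in different direct summands of the cokernel. To organize this I would appeal to the standard fact (as in \cite[Section 11c]{seidel2008fukaya}) that if a Fredholm operator $D$ surjects onto $V_1 \oplus V_2$ with kernels fitting into the gluing sequences, then $\sigma(D) \cong \sigma(D_{12}) \otimes \sigma(V_1) \otimes \sigma(V_2)$ and this isomorphism is symmetric in the two steps, modulo the Koszul reordering sign which is precisely the one built into the natural identification $\sigma(gl) \cong \sigma(gl_1) \otimes \sigma(gl_2)$ via \eqref{eq:triv}. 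Swapping two adjacent nodes in the chosen order therefore changes neither side of \eqref{eq:glue_CR}; since transpositions of adjacent elements generate the symmetric group, the isomorphism is order-independent.

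Finally, for the two-step compatibility statement, I would simply split the chosen total order so that all of $Int_1 \sqcup Bound_1$ precedes all of $Int_2 \sqcup Bound_2$; then the composite isomorphism through the intermediate operator $\bar\partial_{gl_2}$ (glue $Bound_2, Int_2$ first, then the rest) manifestly factors through $\sigma(gl_1) \otimes \sigma(gl_2) \otimes \sigma(\bar\partial_{gl})$, and the identification of this with $\sigma(gl) \otimes \sigma(\bar\partial_{gl})$ is the natural one, by construction of $\sigma(gl)$ as an iterated tensor product. The main obstacle is bookkeeping rather than conceptual: one must verify that the Koszul signs produced by reordering the evaluation maps in the cokernel agree with those produced by reordering the $\sigma(2n)$ and $\sigma(n)$ factors in $\sigma(gl)$ — but this is exactly what the coherence theorem for the symmetric monoidal structure guarantees, so no explicit sign computation is needed, only the check that every reordering we perform is a morphism in the symmetric monoidal category (not the closed structure), which holds because all the objects involved are genuine graded $\Z_2$-torsors.
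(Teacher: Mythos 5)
Your overall strategy — iterate the one-node gluing isomorphisms \eqref{eq:interior_gluing} and \eqref{eq:boundary_gluing} in some order, and show that the result is independent of the order — is the right one and is implicitly the same as the paper's, which simply cites the ``Exact Squares'' property of determinant line bundles from Zinger. The place where your argument has a genuine gap is in the claim that the required commutativity ``is exactly what the coherence theorem for the symmetric monoidal structure guarantees.'' The coherence theorem governs isomorphisms built purely from associators and braidings of $\G$-graded $\Z_2$-torsors, and so it does handle the re-identification of the factors of $\sigma(gl) \cong \sigma(gl_1) \otimes \sigma(gl_2)$. But the isomorphism $\det(\bar\partial) \cong \det(\bar\partial_{gl}) \otimes \det(E_{p_\pm})$ attached to a short exact sequence of Fredholm operators is a specific construction, not a morphism assembled from the symmetric monoidal structure, so its compatibility with composing/reordering exact sequences is not a consequence of Mac Lane coherence. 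It is a separate, non-trivial property of determinant lines — precisely the Exact Squares property of \cite{Zinger:det} — and it must be cited or proved, not derived from coherence.

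Your geometric observation that the two node-gluings are supported in disjoint regions and that the evaluation maps land in distinct summands of the cokernel is the right intuition for \emph{why} the exact square commutes, but it does not by itself upgrade to a proof that the two chains of determinant-line isomorphisms agree: one still needs the algebraic statement that for a commuting square of short exact sequences of Fredholm operators the two composite isomorphisms of determinant lines agree up to the Koszul sign for swapping the cokernel summands. Once you replace the appeal to coherence with a citation of the Exact Squares property (or a direct verification via the finite-dimensional reduction of determinant lines), the rest of your argument — reducing associativity to adjacent transpositions and observing that \eqref{eq:interior_gluing} and \eqref{eq:boundary_gluing} are the one-node special cases — is correct and essentially reproduces the paper's proof.
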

\begin{proof}
    Follows from the Exact Squares property of determinant line bundles \cite{Zinger:det}.
\end{proof}

Finally, we observe that by choosing a lift of the oriented isomorphism $F_{p_-} \cong F_{p_+}$ to an isomorphism of the associated $Spin(n)$-torsors, we obtain a natural identification $Spins(\bar\partial_{gl}) \cong Spins(\bar\partial)$; and this isomorphism does not depend on the choice of lift.

\subsection{Proof of Definition--Lemma \ref{deflem:or_CR}}

We summarize the proof of Definition--Lemma \ref{deflem:or_CR} given in \cite[Proposition 2.8]{Solomon:thesis}. 
It is obtained by combining two special cases: that of a Riemann surface without boundary, and that of a disc with trivial boundary conditions. 

\begin{lem}
    \label{lem:or_no_boundary}
    Suppose $\partial C = \emptyset$. 
    Then there is a natural isomorphism 
    $$\sigma(\bar\partial) \cong \sigma(n\chi(C) + 2c_1(E)) = \sigma(n\chi(C) + \mu(E,\emptyset)),$$
    given by the complex orientation. 
    Furthermore, this isomorphism is compatible with interior gluing \eqref{eq:interior_gluing} in the obvious sense.
\end{lem}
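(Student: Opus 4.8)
\textbf{Proof proposal for Lemma \ref{lem:or_no_boundary}.}

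The plan is to reduce the statement to the classical fact that the determinant line of a complex-linear Cauchy--Riemann operator on a closed Riemann surface is canonically oriented by the complex orientation, and then track the index formula. First I would recall that for a Cauchy--Riemann operator $\bar\partial$ on a complex vector bundle $E \to C$ over a \emph{closed} Riemann surface, the operator is automatically complex-linear up to compact operators (its symbol is complex-linear), so the kernel and cokernel are complex vector spaces up to a deformation; concretely, one connects $\bar\partial$ through Fredholm operators to a $\bar\partial$-operator holomorphic for some choice of holomorphic structure, whose kernel and cokernel are then genuinely complex. The determinant line $\det(\bar\partial) = \Lambda^{top}(\ker \bar\partial) \otimes \Lambda^{top}(\mathrm{coker}\,\bar\partial)^\vee$ thus inherits a canonical orientation from the complex orientations of kernel and cokernel, and this is independent of the choices because the space of complex structures/holomorphic structures is connected, and because the space of complex-linear Fredholm perturbations is connected (it is convex). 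This gives the natural trivialization, i.e. the isomorphism $\sigma(\bar\partial) \cong \sigma(\mathrm{ind}_{\mathbb R}\,\bar\partial)$ landing in the appropriate degree.

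Next I would compute the real index. By Riemann--Roch for the complex-linear operator, $\mathrm{ind}_{\mathbb C}\,\bar\partial = n(1-g) + c_1(E) = \tfrac{n}{2}\chi(C) + c_1(E)$, so $\mathrm{ind}_{\mathbb R}\,\bar\partial = n\chi(C) + 2c_1(E)$. Since $\partial C = \emptyset$ we have $\hat C = C$ and the boundary Maslov index $\mu(E,\emptyset) = 2c_1(E)$ by the standard normalization (the Maslov index of a closed curve with no boundary condition is twice the first Chern number), so $n\chi(C) + 2c_1(E) = n\chi(C) + \mu(E,\emptyset)$, matching the claimed degree. This identifies the target torsor $\sigma(n\chi(C) + 2c_1(E))$ with $\sigma(n\chi(C) + \mu(E,\emptyset))$, giving the displayed isomorphism.

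Finally I would verify compatibility with interior gluing \eqref{eq:interior_gluing}. Given $\bar\partial_{gl}$ obtained from $\bar\partial$ by smoothing an interior node at $p_\pm$, the exact sequence $0 \to \bar\partial_{gl} \to \bar\partial \to E_{p_\pm} \to 0$ is a sequence of \emph{complex} Fredholm operators (the evaluation map $\mathrm{ev}_{p_-} - \mathrm{ev}_{p_+}$ is complex-linear), so the induced isomorphism $\det(\bar\partial_{gl}) \otimes \det(E_{p_\pm}) \cong \det(\bar\partial)$ respects the complex orientations on all three factors. Since \eqref{eq:interior_gluing} is defined precisely by using the complex orientation of $E_{p_\pm} \cong \mathbb C^n$ to trivialize $\sigma(E_{p_\pm}) \cong \sigma(2n)$, and since the complex-orientation trivializations of $\sigma(\bar\partial_{gl})$ and $\sigma(\bar\partial)$ are the ones asserted in the lemma, the square commutes on the nose; the only bookkeeping is that the real indices add correctly, $n\chi(C) + 2c_1(E) = 2n + (n\chi(C_{gl}) + 2c_1(E_{gl}))$, which follows from $\chi(C) = \chi(C_{gl}) - 2$ and $c_1(E) = c_1(E_{gl})$ (smoothing a node drops the Euler characteristic by $2$ and does not change the degree of the bundle). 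I expect the only mildly delicate point to be stating cleanly \emph{why} the complex-orientation trivialization is independent of the auxiliary holomorphic structure and compact perturbation used to make $\bar\partial$ genuinely complex-linear; this is standard (connectedness of the relevant spaces of choices, plus the fact that the determinant line bundle over a connected base with a nowhere-zero section of complex lines has a well-defined orientation), but it is the step that carries the actual content, everything else being index arithmetic.
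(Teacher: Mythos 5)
Your proposal is correct and follows essentially the same approach as the paper, which disposes of the lemma by invoking the ``Complex Orientations'' and ``Complex Exact Triples'' axioms of Zinger's framework for determinant line bundles \cite{Zinger:det} --- precisely the two facts you spell out from first principles (complex-linear deformation plus connectedness of the space of such deformations for the trivialization, and complex-linearity of the gluing exact triple for compatibility). One small slip worth flagging: in the final bookkeeping you write $\chi(C) = \chi(C_{gl}) - 2$, but as your own parenthetical correctly says, smoothing a node \emph{drops} the Euler characteristic by two, so the relation is $\chi(C) = \chi(C_{gl}) + 2$; the displayed index identity $n\chi(C) + 2c_1(E) = 2n + \bigl(n\chi(C_{gl}) + 2c_1(E_{gl})\bigr)$ that you actually use is the correct one and is consistent with the latter, not with what you wrote.
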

\begin{proof}
    Follows from the Complex Orientations and Complex Exact Triples properties of determinant line bundles \cite{Zinger:det}.
\end{proof}

Next, we consider the special case of a trivial Cauchy--Riemann operator over a disc $D$. 
That is, we have an $n$-dimensional real vector space $V$, $E = V \otimes_\R \C$, $F = V = V \otimes_\R \R \subset E$, and $\bar\partial$ the natural $\bar\partial$-operator corresponding to the trivial connection on $E$. 
We denote the associated Cauchy--Riemann operator by $\bar\partial_{D,V}$. 

\begin{lem}[cf. Lemma 11.12 of \cite{seidel2008fukaya}] \label{lem:CR_disc}
    The evaluation map at a boundary point $p$, 
    $$\mathrm{ev}_p: \bar\partial_{D,V} \to V,$$
    induces an isomorphism 
    \begin{equation}
        \label{eq:CR_disc}
        \sigma(\bar\partial_{D,V}) \cong \sigma(V).
    \end{equation}
\end{lem}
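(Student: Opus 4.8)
The statement to prove is Lemma~\ref{lem:CR_disc}: for the trivial Cauchy--Riemann operator $\bar\partial_{D,V}$ over the disc with constant totally real boundary condition $F = V \subset V \otimes_\R \C = E$, the evaluation map at a boundary point induces an isomorphism $\sigma(\bar\partial_{D,V}) \cong \sigma(V)$. The plan is to run the standard argument from \cite[Lemma~11.12]{seidel2008fukaya}: first identify the kernel and cokernel of $\bar\partial_{D,V}$ explicitly, then read off the determinant line.

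First I would observe that, since everything is linear over $\C$ and the operator decomposes as an $n$-fold direct sum of copies of the rank-one case $E = \C$, $F = \R$, it suffices to treat $n=1$ and then invoke compatibility of $\sigma(-)$ with direct sums (the natural trivializations \eqref{eq:triv}). For the rank-one operator on the disc, the standard computation (holomorphic functions on $D$ with real boundary values are precisely the real constants) gives $\ker \bar\partial_{D,\R} = \R$, spanned by the constant function, and $\mathrm{coker}\,\bar\partial_{D,\R} = 0$; hence the index is $1$ and the determinant line is canonically $\det(\ker) = \R$. I would then note that under this identification the evaluation map $\mathrm{ev}_p: \ker \bar\partial_{D,\R} \to \R$ is an isomorphism (it sends the constant function to its value), which is exactly the claim that $\sigma(\bar\partial_{D,\R}) \cong \sigma(\R)$ via $\mathrm{ev}_p$. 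Tensoring $n$ copies together yields the general statement, with the index being $n$ and the degree of $\sigma(V)$ being $n = \dim V$, matching the degree convention in the definition of $\sigma(\bar\partial)$.

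The one genuinely substantive point to be careful about is that the surjectivity of $\bar\partial_{D,\R}$ (vanishing cokernel) must be justified on the appropriate Sobolev completions; this is the classical fact that the $\bar\partial$-operator on the disc with totally real boundary conditions of Maslov index $0$ is surjective with one-dimensional kernel, which I would cite from \cite[Section~11]{seidel2008fukaya} or \cite[Appendix~C]{mcduffsalamon} rather than reprove. Everything else is bookkeeping with the Koszul/torsor conventions of Section~2: one must check that the degree of $\sigma(\bar\partial_{D,V})$ (namely the Fredholm index $n$) agrees with the degree of $\sigma(V)$ (namely $\dim_\R V = n$), so that the isomorphism is degree-preserving, and that the direct-sum decomposition is compatible with the braiding, which it is since all the pieces sit in the same degree and the complex structure provides canonical orientations on the pieces being permuted. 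I do not expect any real obstacle here; the lemma is essentially a normalization statement, and the main work has been front-loaded into the gluing compatibilities \eqref{eq:interior_gluing}, \eqref{eq:boundary_gluing}, and Lemma~\ref{lem:glue} and into Lemma~\ref{lem:or_no_boundary}, which together with this lemma assemble to give Definition--Lemma~\ref{deflem:or_CR}.
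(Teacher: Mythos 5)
The paper supplies no proof here and simply cites Lemma~11.12 of \cite{seidel2008fukaya}; your reconstruction of that argument — $\ker \bar\partial_{D,V}$ is the space of constant $V$-valued functions (via Schwarz reflection), $\mathrm{coker}\,\bar\partial_{D,V}=0$ by the Maslov-index-zero Riemann--Roch computation, and $\mathrm{ev}_p$ restricts to an isomorphism from the kernel onto $V$, identifying $\det\bar\partial_{D,V}=\det(\ker)\cong\det V$ in degree $n$ — is exactly the intended content, so the core of your proposal is correct.

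One small caveat on your bookkeeping: the reduction to $n=1$ and the sentence justifying its compatibility with the braiding are not right as stated. The factors $\sigma(\R e_i)$ all sit in the odd degree $1$, so permuting them does produce nontrivial Koszul signs; and the complex structure on $E = V\otimes_\R\C$ does not supply an orientation on the one-real-dimensional summands $\R e_i$ of $F=V$. The decomposition nevertheless yields a well-defined answer, but only because a permutation of the basis acts by the same Koszul sign on $\bigotimes_i \sigma(\bar\partial_{D,\R e_i})$ as on $\bigotimes_i \sigma(\R e_i)$, so the identification is independent of the choice of basis. The cleaner move — and what Seidel actually does — is to avoid choosing a basis at all: the evaluation map is already an isomorphism $\ker\bar\partial_{D,V}\to V$ in rank $n$, so no decomposition or braiding argument is needed.
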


\begin{defn}\label{def:or_disc}
    We define the isomorphism \eqref{eq:or_CR}, for a trivial Cauchy--Riemann operator over a disc, to be equal to the isomorphism \eqref{eq:CR_disc} tensored with the trivial spin structure on $F$.
\end{defn}

Now we address the general case. 
We extend $E$ to $\hat E \to \hat C$ in such a way that $F$ extends across the `caps' which are the complement of $C$ in $\hat C$. 
We denote the Cauchy--Riemann operator associated to the complex vector bundle $\hat E$ over the closed Riemann surface $\hat C$  by $\bar\partial_{cl}$. 
From Lemma \ref{lem:or_no_boundary}, we have an isomorphism
\begin{equation}
    \label{eq:RR}
    \sigma(\bar\partial_{cl}) \cong \sigma(n\chi(\hat C) + \mu(E,F))
\end{equation}
(where we have used the composition property for the boundary Maslov index to obtain $\mu(E,F) = \mu(\hat{E},\emptyset)$, see \cite[Theorem C.3.5]{mcduffsalamon}). 

Now note that we may obtain $(C,E,F)$ from $(\hat C,\hat E)$ by gluing a trivial Cauchy--Riemann operator over a disc at an interior point of each `cap' which was added to obtain $\hat C$. 
We denote the trivial Cauchy--Riemann operator over the disc corresponding to $B \in \pi_0(\partial C)$ by $\bar\partial_B$; and we identify $\sigma(\bar\partial_B) \cong Spins(B) \otimes \sigma(B)$ in accordance with Definition \ref{def:or_disc}. 
The gluing isomorphism \eqref{eq:glue_CR}, together with \eqref{eq:CR_disj_un}, then gives an isomorphism
    $$\sigma(\bar\partial) \otimes \sigma(2n|\pi_0(\partial C)|) \cong \sigma(\bar\partial_{cl}) \otimes Spins(\bar\partial) \otimes \bigotimes_{B \in \pi_0(\partial C)} \sigma(B),$$
which combined with \eqref{eq:RR}, induces the isomorphism \eqref{eq:or_CR}. 
This completes the definition of the isomorphism \eqref{eq:or_CR}.

\subsection{Compatibility of orientations with gluings}

We now establish the compatibility of the orientation \eqref{eq:or_CR} with disjoint union \eqref{eq:CR_disj_un} and gluing isomorphisms \eqref{eq:interior_gluing} and \eqref{eq:boundary_gluing}, following \cite[Theorem 4.3.3]{WehrheimWoodward:orientations}.

\begin{lem}
    The isomorphism \eqref{eq:or_CR} is compatible with disjoint union \eqref{eq:CR_disj_un}, in the obvious sense.
\end{lem}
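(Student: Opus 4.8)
The plan is to unwind the construction of the isomorphism \eqref{eq:or_CR} and observe that every step in it is manifestly compatible with disjoint union, so that compatibility follows by naturality. Concretely, recall that \eqref{eq:or_CR} was built by: (i) extending $(C,E,F)$ to the closed surface $(\hat C,\hat E)$ and invoking Lemma \ref{lem:or_no_boundary} to orient $\bar\partial_{cl}$; (ii) recognizing $(C,E,F)$ as the result of gluing a trivial disc operator $\bar\partial_B$ (oriented via Definition \ref{def:or_disc}) at an interior point of each cap of $\hat C$; and (iii) applying the gluing isomorphism \eqref{eq:glue_CR} together with \eqref{eq:CR_disj_un}, then substituting \eqref{eq:RR}. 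Given two Cauchy--Riemann operators $\bar\partial_1$ and $\bar\partial_2$, the surface $\hat C$ of the disjoint union is $\hat C_1 \sqcup \hat C_2$, the caps are the disjoint union of the caps, and the disc operators $\bar\partial_B$ are the union of those for $\bar\partial_1$ and $\bar\partial_2$. Thus the gluing presentation of $\bar\partial_1 \coprod \bar\partial_2$ is literally the disjoint union of the gluing presentations of $\bar\partial_1$ and $\bar\partial_2$.

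First I would note that Lemma \ref{lem:or_no_boundary} is compatible with disjoint union in the obvious sense, since the complex orientation of a determinant line and the additivity of $\chi$ and $c_1$ under disjoint union is built into the Complex Orientations / Complex Exact Triples properties of determinant line bundles (cf. \cite{Zinger:det}); equivalently this is the statement that $\sigma(n\chi(C) + 2c_1(E))$ is additive and the complex-orientation identification respects it. Next, Definition \ref{def:or_disc} for the trivial disc operators involves no choices, so the identification $\sigma(\bar\partial_B) \cong Spins(B)\otimes\sigma(B)$ is compatible with disjoint union trivially. The last ingredient is that the gluing isomorphism \eqref{eq:glue_CR} is compatible with disjoint union: this is part of the content of Lemma \ref{lem:glue} (the Exact Squares property of determinant line bundles), since gluing on $\hat C_1 \sqcup \hat C_2$ decomposes into gluing on each component, and the natural identification $\sigma(gl_1\sqcup gl_2) \cong \sigma(gl_1)\otimes\sigma(gl_2)$ matches the factorization of $\S(F)$-type shift torsors. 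Finally I would check that the bookkeeping terms also split additively: $\chi(\hat C_1\sqcup\hat C_2) = \chi(\hat C_1)+\chi(\hat C_2)$, $\mu(E,F)$ is additive, $|\pi_0(\partial C)|$ is additive, and $\bigotimes_{B}\sigma(B)$ factors accordingly, so the right-hand side of \eqref{eq:or_CR} for the disjoint union is canonically the tensor product of the right-hand sides.

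Assembling these observations, the diagram expressing compatibility of \eqref{eq:or_CR} with \eqref{eq:CR_disj_un} commutes because it is obtained by tensoring together (and composing) diagrams each of which commutes by one of the points above. I do not anticipate a genuine obstacle here: the only mild subtlety is keeping track of the Koszul signs coming from reordering the tensor factors $\sigma(2n|\pi_0(\partial C)|)$, $Spins(\bar\partial)$, and $\bigotimes_B\sigma(B)$ when interleaving the factors of $\bar\partial_1$ and $\bar\partial_2$, but since all of these carry trivial $\Z/2$-gradings coming from the disc caps (they sit in even degree, and the spin torsors sit in degree $0$), the braiding signs are all $+1$ and the coherence theorem for the symmetric monoidal category applies without producing any sign. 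Hence the isomorphism \eqref{eq:or_CR} is compatible with disjoint union.
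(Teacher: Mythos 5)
Your proposal is correct and takes essentially the same route the paper does: the paper's proof is simply "Evident from the construction," and you have spelled out exactly why — every ingredient of \eqref{eq:or_CR} (the closed-surface orientation of Lemma \ref{lem:or_no_boundary}, the trivial disc orientation of Definition \ref{def:or_disc}, and the gluing isomorphism of Lemma \ref{lem:glue}) is defined component-by-component and therefore factors over a disjoint union. Your remark on the Koszul signs is also correct, and for the right reason: $\chi(\hat C)$ is even because $\hat C$ is a closed surface, $\mu(E,F)$ is even since $F$ is orientable, and $Spins(\bar\partial)$ sits in degree $0$, so the only factor of potentially odd degree is $\bigotimes_B\sigma(B)$, and it only ever needs to be moved past even-degree factors in the interleaving.
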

\begin{proof}
    Evident from the construction.
\end{proof}

\begin{lem}\label{lem:or_interior_gluing}
    The isomorphism \eqref{eq:or_CR} is compatible with interior gluing \eqref{eq:interior_gluing}, in the obvious sense.
\end{lem}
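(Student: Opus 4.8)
The plan is to verify the compatibility by unwinding the construction of the isomorphism \eqref{eq:or_CR} in terms of the two special cases (closed surfaces, and trivial operators over discs) and the gluing isomorphism \eqref{eq:glue_CR}, then invoking the associativity of gluing from Lemma \ref{lem:glue}. First I would fix notation: suppose $\bar\partial_{gl}$ is obtained from $\bar\partial$ by gluing the interior node $p_\pm$, so $\sigma(2n)\sigma(\bar\partial_{gl}) \cong \sigma(\bar\partial)$ by \eqref{eq:interior_gluing}. On the capped-off side, $\hat C_{gl}$ is obtained from $\hat C$ by the same interior gluing (the caps are unchanged since the node is interior), so $\bar\partial_{cl,gl}$ is obtained from $\bar\partial_{cl}$ by gluing at the same point; the compatibility of \eqref{eq:RR} with interior gluing is exactly the last sentence of Lemma \ref{lem:or_no_boundary}, and one checks the Euler characteristic bookkeeping: $\chi(\hat C_{gl}) = \chi(\hat C) - 2$ and $\mu$ is unchanged, matching the factor $\sigma(2n)$ on the nose together with the complex-orientation convention.

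Next I would observe that the isomorphism \eqref{eq:or_CR} is built by gluing trivial disc operators $\bar\partial_B$ onto the caps of $\hat C$, and that the interior node $p_\pm$ we are gluing is disjoint from all of these cap-gluings. Therefore I can apply the associativity-of-gluing clause of Lemma \ref{lem:glue}: the two ways of producing $(C,E,F)$ from the disjoint union of $\bar\partial_{cl}$ and the $\bar\partial_B$'s — first glue the caps then glue $p_\pm$, or first glue $p_\pm$ (on the closed side) then glue the caps — induce the same isomorphism on determinant lines, compatibly with the splitting $\sigma(gl) \cong \sigma(gl_{caps}) \otimes \sigma(2n)$. The spin-structure identifications $Spins(\bar\partial_{gl}) \cong Spins(\bar\partial)$ play no role for an interior gluing (they are literally equal), and the $\sigma(B)$ factors are untouched. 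Chasing these identifications through the definition of \eqref{eq:or_CR} for both $\bar\partial$ and $\bar\partial_{gl}$ then yields the claimed commuting square, modulo tracking that the $\sigma(2n)$ appearing in \eqref{eq:interior_gluing} is placed on the correct side and matches the numerical shift $n\chi(\hat C) + \mu - 2n|\pi_0(\partial C)|$ appearing in \eqref{eq:or_CR} (this is the degree-bookkeeping check, which is routine once the $\chi$ computation above is in hand).

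The only genuine subtlety — and the step I expect to be the main obstacle — is making sure the conventions align with no stray sign: since all the maps involved (the complex orientation of $\sigma(E_{p_\pm}) \cong \sigma(2n)$ used in \eqref{eq:interior_gluing}, the complex orientations in Lemma \ref{lem:or_no_boundary}, and the Koszul reordering implicit in writing $\sigma(gl)$ as a single tensor factor in front) are built from complex orientations and the coherence theorem, the composite is automatically sign-free, but one must present the diagram chase carefully enough that this is visible. Concretely I would write out the pentagon (or hexagon) of isomorphisms: $\sigma(\bar\partial)$, $\sigma(2n)\sigma(\bar\partial_{gl})$, $\sigma(2n)\,[\text{RHS of \eqref{eq:or_CR} for }\bar\partial_{gl}]$, and the RHS of \eqref{eq:or_CR} for $\bar\partial$, with the four edges being \eqref{eq:interior_gluing}, \eqref{eq:or_CR} for $\bar\partial_{gl}$, the numerical identification of the two shifts, and \eqref{eq:or_CR} for $\bar\partial$; commutativity reduces, via Lemma \ref{lem:glue}, to the compatibility of Lemma \ref{lem:or_no_boundary} with interior gluing, which is already granted. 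Hence the proof is short once the reduction is set up.

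\begin{proof}
Suppose first that $\bar\partial_{gl}$ is obtained from $\bar\partial$ by gluing a single interior node $p_\pm$; the general case follows by iteration using the associativity clause of Lemma \ref{lem:glue}. Since the node is interior, the capped-off surface $\hat C$ is unaffected by which side of the node a cap lies on, and $\bar\partial_{cl,gl}$ is obtained from $\bar\partial_{cl}$ by gluing at the corresponding interior point; moreover $\chi(\hat C_{gl}) = \chi(\hat C) - 2$, while the boundary Maslov index $\mu(E,F)$ and the set $\pi_0(\partial C)$ are unchanged. Recall that \eqref{eq:or_CR} is defined by gluing the trivial disc operators $\bar\partial_B$ onto the caps of $\hat C$, using \eqref{eq:CR_disj_un} and \eqref{eq:glue_CR}, and combining with the identification \eqref{eq:RR} of Lemma \ref{lem:or_no_boundary}. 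The node $p_\pm$ is disjoint from all of these cap-gluings, so by the associativity of gluing (Lemma \ref{lem:glue}), producing $(C,E,F)$ from $\coprod\left(\bar\partial_{cl} \coprod \coprod_B \bar\partial_B\right)$ by first gluing the caps and then $p_\pm$, or first gluing $p_\pm$ on the closed side and then the caps, induces the same isomorphism of determinant lines, compatibly with $\sigma(gl) \cong \sigma(2n) \otimes \bigotimes_{B} \sigma(n)$. The identification $Spins(\bar\partial_{gl}) \cong Spins(\bar\partial)$ is the identity, and the factors $\sigma(B)$ are untouched. Combining this with the compatibility of \eqref{eq:RR} with interior gluing established in Lemma \ref{lem:or_no_boundary}, and with the numerical identity $n\chi(\hat C) = n\chi(\hat C_{gl}) + 2n$, we obtain the commuting square
$$
\begin{tikzcd}
\sigma(2n)\sigma(\bar\partial_{gl}) \ar[r,"\eqref{eq:or_CR}"] \ar[d,"\eqref{eq:interior_gluing}"'] & \sigma(2n) \otimes Spins(\bar\partial_{gl}) \otimes \sigma(n\chi(\hat C_{gl}) + \mu - 2n|\pi_0(\partial C)|) \otimes \bigotimes_{B} \sigma(B) \ar[d] \\
\sigma(\bar\partial) \ar[r,"\eqref{eq:or_CR}"'] & Spins(\bar\partial) \otimes \sigma(n\chi(\hat C) + \mu - 2n|\pi_0(\partial C)|) \otimes \bigotimes_{B} \sigma(B),
\end{tikzcd}
$$
where the right vertical arrow is the evident identification of shifts. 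All maps involved are built from complex orientations and the natural trivializations, so the square commutes on the nose, with no sign. This is the asserted compatibility.
\end{proof}
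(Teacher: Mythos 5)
Your proof is correct and follows the same route the paper takes, merely filling in the steps that the paper's one-line citation of Lemma~\ref{lem:or_no_boundary} leaves implicit: use associativity of gluing (Lemma~\ref{lem:glue}) to commute the interior node gluing past the cap gluings in the construction of \eqref{eq:or_CR}, reduce to the closed-surface case, and invoke the interior-gluing compatibility asserted in Lemma~\ref{lem:or_no_boundary}, with the $\chi(\hat C_{gl}) = \chi(\hat C) - 2$ bookkeeping matching the $\sigma(2n)$ in \eqref{eq:interior_gluing}. The spin, Maslov, and $\pi_0(\partial C)$ factors being untouched by an interior gluing is exactly the reason the paper could afford to be so terse.
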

\begin{proof}
    Follows from Lemma \ref{lem:or_no_boundary}. 
\end{proof}

\begin{figure}
    \centering
    \includegraphics[width=0.8\linewidth]{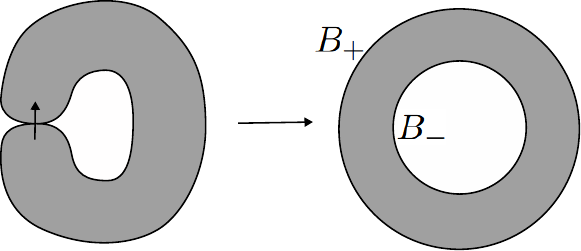}
    \caption{Self-gluing a disc. The arrow at the node indicates the direction from the component containing $p_-$ to the component containing $p_+$.}
    \label{fig:self glue}
\end{figure}

\begin{lem}\label{lem:or_boundary_gluing}
    The isomorphism \eqref{eq:or_CR} is compatible with boundary gluing \eqref{eq:boundary_gluing} in the following sense: the diagram
    $$\begin{tikzcd}
        \sigma(gl) \sigma(\bar\partial_{gl}) \ar[r,leftrightarrow,"\eqref{eq:or_CR}"] \ar[d,leftrightarrow,"\eqref{eq:boundary_gluing}"] & \sigma(gl) \otimes Spins(\bar\partial_{gl}) \otimes \sigma(n\chi(\hat C_{gl} + \mu(E_{gl},F_{gl})) \otimes \bigotimes_{B \in \pi_0(\partial C_{gl})} \sigma(B)  \ar[d,leftrightarrow] \\
        \sigma(\bar\partial) \ar[r,leftrightarrow,"\eqref{eq:or_CR}"] & Spins(\bar\partial) \otimes \sigma(n\chi(\hat C + \mu(E,F)) \otimes \bigotimes_{B \in \pi_0(\partial C)} \sigma(B)
    \end{tikzcd}$$
    commutes, where the rightmost vertical arrow is defined as follows:
    \begin{enumerate}
        \item \label{it:glue_diff_boundaries} If $p_\pm$ lie on distinct boundary components $B_\pm$, we denote by $B_{gl}$ the boundary component of $C_{gl}$ obtained by gluing $B_\pm$. The isomorphism is defined by identifying
        $$\sigma(B_-) \cong \sigma(gl),\qquad \text{and} \qquad \sigma(B_+) \cong \sigma(B_{gl}).$$
        \item \label{it:glue_same_boundary} If $p_\pm$ lie on the same boundary component $B$ of $C$, then we denote by $B_\pm$ the boundary components of $C_{gl}$ into which $B$ is split by the gluing, in accordance with Figure \ref{fig:self glue}. The isomorphism is defined by identifying
        $$\sigma(B) \cong \sigma(gl),\quad \text{and} \quad \sigma(B_-) \sigma(B_+) \cong \sigma(2n) \quad \text{via} \quad 1|1 \mapsto (-1)^\signn.$$
    \end{enumerate}
\end{lem}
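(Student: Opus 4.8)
The statement is about compatibility of the orientation isomorphism \eqref{eq:or_CR} with boundary gluing \eqref{eq:boundary_gluing}, and the key subtle point is the explicit sign $(-1)^{\signn}$ appearing in case \eqref{it:glue_same_boundary}. The plan is to follow the strategy of \cite[Theorem 4.3.3]{WehrheimWoodward:orientations}: reduce the general boundary gluing to the model case of gluing trivial Cauchy--Riemann operators over discs, where the isomorphism \eqref{eq:or_CR} is \emph{defined} (Definition \ref{def:or_disc}), and then track how the gluing isomorphism interacts with the chain of isomorphisms \eqref{eq:RR}, \eqref{eq:glue_CR}, and the capping-off construction that produces \eqref{eq:or_CR}. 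Concretely, both vertical maps in the square factor through a common decomposition of $\sigma(\bar\partial)$ obtained by first capping off all boundary components of $C$ (resp. $C_{gl}$) and then gluing back the trivial disc operators $\bar\partial_B$; the commutativity of the square then reduces, by the associativity of gluing (Lemma \ref{lem:glue}) and the compatibility of \eqref{eq:or_CR} with disjoint union and interior gluing (Lemmas \ref{lem:or_interior_gluing} and the preceding one), to a statement purely about trivial disc operators.

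First I would set up the reduction. Write $\bar\partial$ as obtained from $\bar\partial_{cl}$ (its closed double) by interior-gluing one trivial disc operator $\bar\partial_B$ at a point in each cap, as in the proof of Definition--Lemma \ref{deflem:or_CR}; do the same for $\bar\partial_{gl}$. In case \eqref{it:glue_diff_boundaries}, the boundary gluing of $\bar\partial$ at $p_\pm$ commutes with all these cappings (the gluing point is disjoint from the caps), so after capping we are gluing a boundary node between two trivial disc operators $\bar\partial_{B_-}$ and $\bar\partial_{B_+}$: applying \eqref{eq:boundary_gluing} and using $\sigma(\bar\partial_{B_\pm}) \cong Spins(B_\pm) \otimes \sigma(B_\pm)$ from Definition \ref{def:or_disc}, one sees directly that the result is $Spins(B_{gl}) \otimes \sigma(B_{gl})$, with $\sigma(F_{p_\pm}) = \sigma(gl)$ identified with $\sigma(B_-)$ by the boundary orientation --- exactly the prescribed identification. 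The spin part is handled by the remark at the end of Section on gluing and determinant line bundles: the gluing identification on $Spins$ is canonical. So case \eqref{it:glue_diff_boundaries} is essentially bookkeeping once the reduction is in place.

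The main obstacle will be case \eqref{it:glue_same_boundary}, where the sign $(-1)^{\signn}$ must be produced. Here I would argue as follows: self-gluing the single trivial disc operator $\bar\partial_{D,V}$ at two boundary points $p_\pm$ on its one boundary circle produces an annulus-type operator whose two boundary circles $B_\pm$ each carry the trivial totally real subbundle $V$. The key computation is to compare two ways of orienting this glued operator: (a) directly via \eqref{eq:boundary_gluing}, using $\sigma(F_{p_\pm}) = \sigma(V) = \sigma(gl)$; and (b) via \eqref{eq:or_CR} applied to the glued operator, which caps off \emph{both} new boundary components $B_\pm$ and reintroduces $Spins(B_\pm) \otimes \sigma(B_\pm)$ for each. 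The discrepancy between $\sigma(gl) = \sigma(n)$ on one side and $\sigma(B_-) \otimes \sigma(B_+) = \sigma(2n)$ on the other forces an identification $\sigma(B_-)\sigma(B_+) \cong \sigma(2n)$, and the question is what sign it carries relative to the naive one $1|1 \mapsto 1$. I expect this sign to be $(-1)^{\signn} = (-1)^{n(n+1)/2}$, arising from reordering the orientation data: one must commute a copy of $\sigma(V) = \sigma(n)$ past itself, or equivalently permute the basis of the glued determinant line, and the Koszul sign of transposing an $n$-dimensional orientation line past a parallel one, combined with the contribution of the self-intersection/exact-triple normalization in the model annulus, is $(-1)^{n(n+1)/2}$. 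I would verify this in the smallest model ($C$ a disc, $\hat C_{gl}$ computed explicitly, $n=1$ first to fix conventions, then general $n$ by the multiplicativity of determinant lines under $V \mapsto V \otimes_\R \C$), using the Exact Squares and Complex Orientations properties of \cite{Zinger:det} as in Lemma \ref{lem:glue}. Once this model sign is pinned down, the general case follows from it by the reduction of the first paragraph, since every other ingredient (interior gluings to the caps, the closed-surface isomorphism \eqref{eq:RR}, disjoint unions) has already been shown compatible. The remaining work is then purely the careful sign audit in the model, which is exactly the kind of calculation the Koszul-sign formalism of Section on the Koszul sign rule is designed to make tractable.
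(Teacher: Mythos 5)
Your reduction strategy — cap off to closed surfaces plus trivial disc operators via Definition--Lemma \ref{deflem:or_CR}, use associativity of gluing (Lemma \ref{lem:glue}) and compatibility with interior gluing (Lemma \ref{lem:or_interior_gluing}), and thereby push everything onto the model case of trivial Cauchy--Riemann operators over discs — is exactly the paper's strategy, and your treatment of case \eqref{it:glue_diff_boundaries} is essentially the paper's Lemma \ref{lem:glue_different_discs} (where the short exact sequence $0 \to V_0 \to V_- \oplus V_+ \to V_p \to 0$ is deformed to split form).

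For case \eqref{it:glue_same_boundary}, however, there is a genuine gap in the key step. You correctly reduce to self-gluing the trivial disc operator $\bar\partial_{D,V}$ at two boundary points to obtain a trivial annulus operator $\bar\partial_A$ with boundary conditions $F=V$ on both circles, but the heuristic you give for the sign $(-1)^{\signn}$ does not hold up: commuting $\sigma(V)=\sigma(n)$ past a parallel copy of itself contributes a Koszul sign $(-1)^{n^2}=(-1)^n$, not $(-1)^{n(n+1)/2}$, and the phrase ``self-intersection/exact-triple normalization'' does not point at any concrete mechanism. The technical difficulty is that $\bar\partial_A$ is \emph{not} invertible — its kernel is the space $V$ of constant sections, and since $\mathrm{ind} = n\chi(A)=0$ the cokernel is also $n$-dimensional — so there is no direct way to read off the orientation of $\det(\bar\partial_A)$ from the two trivializations coming from \eqref{eq:boundary_gluing} and \eqref{eq:or_CR} without some additional input. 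The paper's mechanism, following \cite[Section 3.9]{FOOO:toric} and \cite[Examples 5.5--5.6]{Seidel:disjoinable}, is to deform the boundary condition on $B_-$ from $V$ to $iV \subset V\otimes_{\R}\C$, which makes the operator invertible (kernel becomes $V\cap iV=0$) and hence canonically trivialized; one then compares this canonical trivialization against each of the two sides of the diagram. The sign $(-1)^{\signn}$ arises specifically from the discrepancy between orienting $iV$ via multiplication by $i$ and orienting it via the identification with $E/F$. This deformation-to-$iF$ idea is the load-bearing step of case \eqref{it:glue_same_boundary}, and it is absent from your proposal. Your fallback of reducing from general $n$ to $n=1$ ``by multiplicativity'' is also not straightforward, since $(-1)^{n(n+1)/2}$ is not multiplicative in $n$ under direct sums $V=V_1\oplus V_2$ — one again has to track Koszul signs in the splitting, which reintroduces the same difficulty.
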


\subsection{Proof of \eqref{it:glue_diff_boundaries} of Lemma \ref{lem:or_boundary_gluing}}

The first step is to address the special case of gluing together two discs with trivial Cauchy--Riemann operators. 
This follows \cite[Theorem 4.3.3(b)]{WehrheimWoodward:orientations} or \cite[Example 5.4]{Seidel:disjoinable}.

\begin{lem}\label{lem:glue_different_discs}
    Suppose we have trivial Cauchy--Riemann operators over discs $D_+$ and $D_-$, with $E_\pm = V \otimes_\R \C$, $F_\pm = V$. 
    We denote the corresponding Cauchy--Riemann operators by $\bar\partial_\pm$. 
    We glue them together at boundary points $p_\pm \in \partial D_\pm$ to obtain a new trivial Cauchy--Riemann operator $\bar\partial_0$. 
    The boundary gluing isomorphism \eqref{eq:boundary_gluing} determines an isomorphism
    $$\sigma(\bar\partial_-) \otimes \sigma(\bar\partial_+) \cong \sigma(\bar\partial_0) \otimes \sigma(gl).$$
    Applying Lemma \ref{lem:CR_disc}, we obtain an isomorphism
    $$\sigma_-(V) \otimes \sigma_+(V) \cong \sigma_0(V) \otimes \sigma(gl).$$
    This coincides with the isomorphism induced by the natural identifications $\sigma_-(V) \cong \sigma(gl)$, and $\sigma_+(V) \cong \sigma_0(V)$.
\end{lem}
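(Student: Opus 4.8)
\textbf{Proof plan for Lemma \ref{lem:glue_different_discs}.}

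The plan is to reduce the statement to an explicit finite-dimensional linear-algebra computation, exploiting the fact that all three Cauchy--Riemann operators involved are trivial (hence have explicitly understood kernels and cokernels), and that the gluing short exact sequence in this case degenerates to an honest short exact sequence of finite-dimensional vector spaces rather than a genuine analytic gluing. First I would recall that for a trivial Cauchy--Riemann operator $\bar\partial_{D,V}$ over the disc, the operator is surjective and its kernel is precisely the constants, so that $\mathrm{ev}_p$ identifies $\ker\bar\partial_{D,V}\cong V$; this is the content of Lemma \ref{lem:CR_disc}, and it pins down the isomorphism \eqref{eq:CR_disc} as `evaluate the constant section at $p$'. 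Then I would write out the gluing short exact sequence
$$0 \to \bar\partial_0 \to \bar\partial_- \amalg \bar\partial_+ \xrightarrow{\mathrm{ev}_{p_-} - \mathrm{ev}_{p_+}} F_{p_\pm} \to 0$$
and observe that, on the level of (co)homology, since all cokernels vanish this becomes the short exact sequence of kernels
$$0 \to \ker\bar\partial_0 \to \ker\bar\partial_- \oplus \ker\bar\partial_+ \xrightarrow{\mathrm{ev}_{p_-}-\mathrm{ev}_{p_+}} V \to 0,$$
i.e. $0 \to V \xrightarrow{\Delta} V \oplus V \xrightarrow{v_--v_+} V \to 0$ under the identifications from Lemma \ref{lem:CR_disc} (here $\ker\bar\partial_0\cong V$ is again the constants, which on the glued disc restrict to the diagonal constant on the two pieces). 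The claim is then simply that the orientation torsor isomorphism $\sigma(\ker\bar\partial_-)\otimes\sigma(\ker\bar\partial_+)\cong\sigma(\ker\bar\partial_0)\otimes\sigma(V)$ induced by this exact sequence, with $V$ on the right identified via the second copy (the difference map), coincides with the one induced by $\sigma_-(V)\cong\sigma(gl)=\sigma(F_{p_\pm})=\sigma(V)$ and $\sigma_+(V)\cong\sigma_0(V)$.

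The key steps, in order, are: (1) identify the index theory, i.e. check that $\bar\partial_\pm$ and $\bar\partial_0$ are all surjective with kernels canonically the spaces of constant sections, so that the determinant lines are canonically $\sigma$ of these kernels and the convention \eqref{eq:CR_disc} is exactly evaluation-at-a-boundary-point; (2) verify that the gluing short exact sequence of Fredholm operators, passed to homology, is the elementary sequence $0\to V\xrightarrow{\Delta}V^{\oplus 2}\xrightarrow{v_--v_+}V\to 0$ — here one must be careful about which copy of $V$ is `$\bar\partial_-$', which is `$\bar\partial_+$', and that the map to $\sigma(gl)$ uses $F_{p_\pm}=V$ sitting as the rightmost term; (3) compute the induced torsor isomorphism from this sequence using the sign conventions of Section 2.2 (the Koszul sign rule governing how a short exact sequence $0\to A\to B\to C\to 0$ yields $\sigma(B)\cong\sigma(A)\otimes\sigma(C)$, and in turn the isomorphism \eqref{eq:boundary_gluing}), and compare it term-by-term with the `obvious' identification $\sigma_-(V)\cong\sigma(gl)$, $\sigma_+(V)\cong\sigma_0(V)$; (4) observe that the splitting $V^{\oplus 2}=\Delta(V)\oplus(0\oplus V)$ realizes precisely these two identifications, so the two torsor isomorphisms agree on the nose — with no extra sign, because the projection $V^{\oplus 2}\to V$ onto the second factor restricted to $0\oplus V$ is the identity and the difference map restricted there is $-\mathrm{id}$, but squaring a sign line $\sigma(V)$ against itself via $-\mathrm{id}$ contributes nothing to orientations (it only matters up to its effect on $\det$, and a sign change on an odd-dimensional... — one must track $(-1)^{\dim V}=(-1)^n$ here and confirm it is absorbed by the stated conventions).

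I expect step (4), the sign bookkeeping, to be the main obstacle: the statement asserts that \emph{this particular} isomorphism has sign $+1$ (unlike the self-gluing case \eqref{it:glue_same_boundary}, which carries the explicit $(-1)^{\signn}$), and getting this requires carefully unwinding (i) the Koszul sign in how \eqref{eq:boundary_gluing} is defined from the short exact sequence, (ii) the role of the ordering of $p_-$ before $p_+$ (recall from the text that swapping them changes \eqref{eq:boundary_gluing} by $(-1)^n$), and (iii) how the dual torsor conventions of Section 2.2 interact with $\sigma(gl)=\sigma(F_{p_\pm})$. Everything reduces to a single determinant-line computation for the three-term complex above, and the cleanest way to carry it out is to fix explicit bases: pick an oriented basis $e_1,\dots,e_n$ of $V$, write down the constant sections they generate in $\ker\bar\partial_-$, $\ker\bar\partial_+$, $\ker\bar\partial_0$, and compute the sign of the induced isomorphism of top exterior powers directly. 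The comparison with \cite[Theorem 4.3.3(b)]{WehrheimWoodward:orientations} and \cite[Example 5.4]{Seidel:disjoinable} serves as a cross-check that the final sign is indeed $+1$ in the convention where the glued boundary component inherits $\sigma_0(V)$ from $\sigma_+(V)$.
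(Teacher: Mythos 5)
Your plan is essentially the same as the paper's proof: both use Lemma \ref{lem:CR_disc} to identify all three determinant lines with orientations of the kernels of constant sections, then reduce the gluing isomorphism \eqref{eq:boundary_gluing} to the three-term exact sequence
$$0 \to V_0 \xrightarrow{\,v\mapsto(v,v)\,} V_- \oplus V_+ \xrightarrow{\,(v_-,v_+)\mapsto v_- - v_+\,} V_p \to 0.$$
Where the paper is more economical is precisely at your step~(4), the sign bookkeeping. Rather than fixing bases and tracking the $(-1)^n$ you flag, the paper deforms the exact sequence through exact sequences: take $v\mapsto(tv,v)$ and $(v_-,v_+)\mapsto v_- - tv_+$ for $t \in [0,1]$. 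At $t=1$ this is the sequence above; at $t=0$ it is the split sequence with first map $v\mapsto(0,v)$ and second $(v_-,v_+)\mapsto v_-$ (the paper writes $v\mapsto(v,0)$, which is a typo — with $(v,0)$ the sequence is not exact), realizing precisely the claimed identifications $V_+\cong V_0$ and $V_-\cong V_p$ with no sign. The induced isomorphism of orientation torsors is constant along such a path, so the sign is $+1$ and no further computation is needed. Your chosen splitting $V^{\oplus 2}=\Delta(V)\oplus(0\oplus V)$ would instead identify $V_0\cong V_-$ and $V_+\cong V_p$ via $-\mathrm{id}$, which is the \emph{opposite} of the stated pairing; to reconcile it you would have to swap the roles of $V_-$ and $V_+$, contributing a Koszul sign $(-1)^{n^2}=(-1)^n$ that exactly cancels $\det(-\mathrm{id}_V)=(-1)^n$, so you do arrive at $+1$ — but the deformation argument sidesteps the appearance and cancellation of these two signs altogether, and is the more robust way to organize it.
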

\begin{proof}
    From the definitions, the isomorphism is induced by the short exact sequence
    $$0 \to V_0 \xrightarrow{v \mapsto (v,v)} V_- \otimes V_+ \xrightarrow{(v_-,v_+) \mapsto v_- - v_+} V_p \to 0.$$
    This can be deformed to the short exact sequence where the first map sends $v \mapsto (v,0)$, and the second sends $(v_-,v_+)$ to $v_-$; and this coincides with the isomorphism induced by identifying $V_-$ with $V_p$, and $V_+$ with $V_0$, as claimed.
\end{proof}

To complete the proof of \eqref{it:glue_diff_boundaries} of Lemma \ref{lem:or_boundary_gluing} for trivial spin Cauchy--Riemann operators over discs, it suffices to observe that the gluing of two trivial spin structures is a trivial spin structure.

The proof of \eqref{it:glue_diff_boundaries} of Lemma \ref{lem:or_boundary_gluing} now follows by combining this special case with associativity of gluing (Lemma \ref{lem:glue}) and compatibility of \eqref{eq:or_CR} with interior gluing (Lemma \ref{lem:or_interior_gluing}); see \cite[Proof of Theorem 4.3.3(b), in particular Figure 5]{WehrheimWoodward:orientations}. 

\subsection{Proof of \eqref{it:glue_same_boundary} of Lemma \ref{lem:or_boundary_gluing}}

As in the previous section, it suffices to prove the result for a trivial Cauchy--Riemann operator $\bar\partial_D$ over a disc $D$, where $E = V \otimes_\R \C$, $F = V$. 
This is carried out in \cite[Section 3.9]{FOOO:toric} and \cite[Theorem 4.3.3(c)]{WehrheimWoodward:orientations}, and reviewed in \cite[Examples 5.5 and 5.6]{Seidel:disjoinable}.\footnote{The sign $(-1)^\signn$ is missing in \cite[Theorem 4.3.3(c)]{WehrheimWoodward:orientations}; it should have arisen in equation (45) of the proof, where the natural orientation of $iF_{w^\pm}$ (induced from the isomorphism with $F_{w^\pm}$ given by multiplication by $i$) differs from that induced by its identification with $E_{z^\pm}/F_{z^\pm}$, by precisely this sign.}
We explain how these give the desired result.

The self-gluing of $\bar\partial_D$ with itself at boundary marked points $p_\pm$ is a trivial Cauchy--Riemann operator $\bar\partial_A$ over an annulus. 
The isomorphism $Spins(\bar\partial_D) \cong Spins(\bar\partial_A)$ clearly sends the trivial spin structure to the tensor product of trivial spin stuctures. 
The boundary gluing isomorphism \eqref{eq:boundary_gluing} then gives an isomorphism
\begin{equation}\label{eq:bound_glue_ann}
\sigma(gl) \sigma(\bar\partial_A) \cong \sigma(\bar\partial_D).
\end{equation}
The trivializations \eqref{eq:or_CR} give
\begin{align}
\label{eq:triv_disc}    \sigma(\bar\partial_D) & \cong \sigma(B) \\
\label{eq:triv_ann}    \sigma(\bar\partial_A) & \cong \sigma(-2n) \otimes \sigma(B_-) \otimes \sigma(B_+).
\end{align}
By combining \eqref{eq:bound_glue_ann} with the identification $\sigma(gl) = \sigma(B)$ and \eqref{eq:triv_disc}, we obtain an isomorphism 
\begin{equation}
    \label{eq:ann_triv}
    \sigma(\bar\partial_A) \cong \sigma(0).
\end{equation}

We deform the boundary condition on $B_-$ to $iF \subset E = V \otimes_\R \C$; this new spin Cauchy--Riemann operator is invertible, so its determinant line admits a canonical trivialization. 
The deformation, together with the canonical trivialization, determine an isomorphism
$\sigma(\bar\partial_A) \cong \sigma(0)$, which agrees with \eqref{eq:ann_triv} by \cite[Example 5.6]{Seidel:disjoinable}, \cite[Section 3.9.4.1]{FOOO:toric}.  
On the other hand, the composition of this isomorphism with \eqref{eq:triv_ann} differs from the natural one by $(-1)^{\signn}$, by \cite[Example 5.5]{Seidel:disjoinable}.\footnote{The references \cite{Seidel:disjoinable,FOOO:toric} give the sign as $(-1)^{n(n-1)/2}$, due to using the opposite convention for the order of $\sigma(B_-)$ and $\sigma(B_+)$ from ours; the resulting Koszul sign $(-1)^n$ associated to commuting $\sigma(B_-)$ with $\sigma(B_+)$ accounts for the difference.}
This completes the proof of \eqref{it:glue_same_boundary} of Lemma \ref{lem:or_boundary_gluing} in the case of a trivial spin Cauchy--Riemann operator with trivial spin structure, with the trivial identification of $Spin(n)$ torsors in the gluing of spin structures. 
If we choose the non-trivial identification of $Spin(n)$-torsors, then the gluing will have non-trivial spin structures on both $B_-$ and $B_+$; thus the sign in \eqref{eq:triv_ann} will change by $(-1)(-1) = +1$. 
If we choose the non-trivial spin structure on the Cauchy--Riemann operator, the result follows similarly, as precisely one of $B_-$ and $B_+$ will inherit a non-trivial spin structure. 
This completes the proof for discs with trivial Cauchy--Riemann operators.

The proof in the general case now follows, as in the previous section, from this special case together with associativity of gluing (Lemma \ref{lem:glue}) and compatibility of \eqref{eq:or_CR} with interior gluing (Lemma \ref{lem:or_interior_gluing}). 

\section{Signs}\label{sec:signs}

\subsection{Quantum cup product}

As $\Rbar(\star,\ell_\bulk,\ell_\stab)$ is a stabilization of $\Rbar(\star,0,0)$, which is a point (the moduli space of genus-zero curves with three marked points), we have $\S(\star) = \sigma(4)$, so $\S(\star,\{1,2\}) = \sigma(0) = \sigma(\star_\big)$. 

\subsection{$A_\infty$ structure}\label{sec:signs_mu}

Given an element of $\Rbar(\mu,s,\ell_\bulk,\ell_\stab)$, we identify the complement of $p_0^\partial$ with the upper half-plane $\mathbb{H} = \{z \in \C:im(z) \ge 0\}$, with $p_0^\partial$ mapped to infinity, and with boundary marked points $p_1^\partial > \ldots > p_s^\partial \in \R$. 
We have
$$\sigma(\cR(\mu,s,\ell_\bulk,\ell_\stab)) = \sigma(p^\partial_1) \ldots \sigma(p^\partial_s) \sigma(2(\ell_\bulk+\ell_\stab)) \sigma(sc)^\vee \sigma(tr)^\vee,$$
where $\sigma(p_i^\partial)$ corresponds to the positive direction of translating $p_i^\partial$, and $\sigma(sc)$ and $\sigma(tr)$ correspond to the basis vectors for the tangent space to the automorphisms of the upper half-plane (identified with the disc with one outgoing boundary marked point, considered to lie at infinity) given by scaling and translation respectively (see \cite[Appendix B]{Sheridan2017}). 
We identify $\sigma(tr) = \sigma(p_0)$ so that positive translation corresponds to the boundary orientation, and identify $\sigma(L_{p_0}) \cong \sigma(B)$ for the unique boundary component $B$, and hence obtain an isomorphism $\S(\mu) \cong \sigma(sc)$. 
We trivialize $\S(\mu)$ by trivializing $\sigma(sc)$, choosing the direction of positive scaling. 
This allows us to complete the definition of the $A_\infty$ operations from Section \ref{sec:bigrelfuk}: they are given by 
$$ \mu := (\mu,s,\ell_\bulk,\ell_\stab)^\big_{\emptyset,\bL}$$
where $\S(\mu) = \sigma(1) = \sigma(\mu)$ via the trivialization we have specified. 

\begin{lem}\label{lem:mu_boundary}
Let us consider a codimension-$1$ boundary stratum $\Rbar(F')$ of a stabilized moduli space $\Rbar(i^*F)$, which is obtained by gluing $\Rbar(\mu,\ell_\bulk,\ell_\stab,s)$ to $\Rbar(i_1^*F)$ at an incoming boundary marked point $p$.  
We have isomorphisms
\begin{align*}
    \sigma(\cR(F')) \sigma(\partial) &\cong \sigma(\cR(i^*F)) \qquad \text{Equation \eqref{eq:boundary_or}}\\
\Rightarrow    \S(i^*F) \sigma(\partial) &\cong \S(i_1^* F) \S(\mu) \\
\Rightarrow    \S(F) \sigma(\partial) &\cong \S(F) \S(\mu)  \qquad \text{Equation \eqref{eq:pullback_or}}\\
\Rightarrow     \sigma(\partial) &\cong \S(\mu).
\end{align*}
This isomorphism respects trivializations. 
\end{lem}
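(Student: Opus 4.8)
The plan is to trace through the chain of isomorphisms displayed in the lemma and verify that each one respects trivializations, relative to the explicit trivialization of $\S(\mu) \cong \sigma(sc)$ fixed just above the lemma. The only step that carries content is the first one, Equation \eqref{eq:boundary_or}: it records how the orientation of $\cR(F')$ relates to that of $\cR(i^*F)$ via the inward normal to the codimension-$1$ stratum. The subsequent implications are purely formal: the second line is the definition \eqref{eq:sigmaF} of $\S(-)$ applied to $i^*F$ and to the glued family, where one checks that all the factors of \eqref{eq:sigmaF} \emph{except} the $\sigma(\cR)^\vee$ factors match up under the gluing of the two curves at the boundary marked point $p$; the third line is an application of \eqref{eq:pullback_or}, which says $\S(i^*F) = \S(F)$ and $\S(i_1^*F) = \S(F)$ (the stabilization does not change $\S$); and the fourth line is then immediate by cancelling $\S(F)$.

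\textbf{The geometric input.} First I would recall the local picture of the codimension-$1$ stratum: near $\Rbar(F')$, the family $\Rbar(i^*F)$ looks like $\Rbar(F') \times [0,\epsilon)$, where the $[0,\epsilon)$ coordinate is (up to monotone reparametrization) the gluing parameter that resolves the boundary node into the standard thin strip; equivalently, after identifying the disc component $\Rbar(\mu,s,\ell_\bulk,\ell_\stab)$'s relevant end with a strip-like end, it is the length of the thin neck. The key sign computation is: in the coordinates $(\sigma(p^\partial_1),\ldots,\sigma(p^\partial_s),\sigma(2(\ell_\bulk+\ell_\stab)),\sigma(sc)^\vee,\sigma(tr)^\vee)$ describing $\sigma(\cR(\mu,s,\ell))$ from Section \ref{sec:signs_mu}, the gluing/neck parameter interacts with the scaling direction $\sigma(sc)$ in a definite way — degenerating the disc to the node corresponds to $sc \to +\infty$ (the neck getting infinitely long), so the \emph{inward} direction of $\sigma(\partial)$ is $-\sigma(sc)$, or rather the inward-pointing orientation is determined so that the trivialization of $\sigma(sc)$ via "positive scaling" matches. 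One must just carefully pin down this sign once, and then the whole chain closes up: the four $\Rightarrow$'s above are bookkeeping, since \eqref{eq:sigmaF} was designed precisely so that the marked-point and boundary-component factors are gluing-compatible (this is the content already used in Lemma \ref{lem:B_at_F} and its proof via Lemmas \ref{lem:or_interior_gluing} and \ref{lem:or_boundary_gluing}).

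\textbf{Execution and the main obstacle.} Concretely I would: (1) write $\sigma(\cR(i^*F))$ near the stratum as $\sigma(\cR(F'))\sigma(\partial)$ with $\sigma(\partial)$ trivialized by the inward normal, and identify this inward normal explicitly with the appropriate scaling/neck direction, thereby getting the first line with a specified sign; (2) expand both sides of the second line using \eqref{eq:sigmaF}, noting that gluing at a single incoming boundary marked point $p$ merges the data so that $\chi(\hat C)$, $|\pi_0(\partial C)|$, the $\sigma(p^\partial_i)$, and the $\sigma(B)^\vee$ factors all pair off correctly (this is exactly the boundary-gluing compatibility, and in the case of gluing at a boundary marked point lying on a single boundary component one invokes \eqref{it:glue_diff_boundaries} of Lemma \ref{lem:or_boundary_gluing} rather than the self-gluing case, so \emph{no} extra $(-1)^{\signn}$ appears); (3) apply \eqref{eq:pullback_or} twice and cancel. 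The main obstacle is step (1): getting the sign of the identification of the inward normal $\sigma(\partial)$ with $\sigma(sc)$ exactly right, consistently with the convention fixed in Section \ref{sec:signs_mu} that $\S(\mu)$ is trivialized via "the direction of positive scaling" and with the convention that $\sigma(\partial)$ is trivialized "pointing into $\cR(F)$" from \eqref{eq:boundary_or}. Everything downstream is forced once this is nailed down; the statement "respects trivializations" then amounts to the assertion that no residual sign survives, which is exactly what the careful choice of trivialization of $\S(\mu)$ was engineered to guarantee.
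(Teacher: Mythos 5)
Your overall plan — reduce everything to the single sign check $\sigma(\partial)\stackrel{?}{=}\sigma(sc)$, with all the other lines being formal consequences of \eqref{eq:sigmaF}, \eqref{eq:at_sigmaF}, and \eqref{eq:pullback_or} — is the same as the paper's, and your bookkeeping of the factors in \eqref{eq:sigmaF} (in particular, that the attachment is at distinct boundary components, so the $(-1)^{\signn}$ of \eqref{eq:n choose 2} does not arise) is correct. A minor point: the mechanism for the second line is the \emph{definition} of the attaching isomorphism \eqref{eq:at_sigmaF}, together with \eqref{eq:Lout_Bout}; Lemma \ref{lem:or_boundary_gluing} is about gluing of Cauchy--Riemann operators and enters the proof of Lemma \ref{lem:B_at_F}, not this one.

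However, there is a genuine gap exactly where you flag "the main obstacle," and it is the only substantive content of the lemma. You write that degenerating to the node corresponds to $sc\to+\infty$, so that the inward normal is $-\sigma(sc)$, and then hedge with "or rather the inward-pointing orientation is determined so that the trivialization of $\sigma(sc)$ via positive scaling matches." The first assertion is backwards, and the second is circular: it assumes the conclusion. The correct picture (and the one the paper gives) is that \emph{positive} scaling of the configuration $p_1^\partial>\cdots>p_s^\partial$ in $\mathbb{H}$, once glued at $p_0^\partial=\infty$ into the other family, corresponds to the inserted configuration occupying a \emph{larger} region near the node, i.e. to \emph{resolving} the node, i.e. to moving \emph{into} $\cR(i^*F)$ — equivalently, to the neck getting \emph{shorter}, not longer. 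Scaling \emph{down} shrinks the inserted configuration toward the node and is the outward/boundary direction. So the inward normal is $+\sigma(sc)$, which is what makes the composite isomorphism respect trivializations. As written, your argument first derives the opposite conclusion (which would falsify the lemma and break the $A_\infty$ relations downstream) and then retreats to a tautology; you need to actually pin down this sign, and the geometric content is the identification of "positive scaling" with "resolving the node."
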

\begin{proof}
Recalling that $\S(\mu)$ is trivialized by identifying it with $\sigma(sc)$, we must check that the resulting isomorphism $\sigma(\partial) = \sigma(sc)$ respects orientations; i.e., it sends the trivialization corresponding to the inward normal to the boundary, to the trivialization corresponding to positive scaling. 
This is true because, if we positively scale a configuration of points in the upper half plane, then glue onto another moduli space, it corresponds to moving away from the boundary of that moduli space. See \cite[Appendix B]{Sheridan2017} for details in the case $F = \mu$, but the general proof is the same. 
\end{proof}

With Lemma \ref{lem:mu_boundary} in hand, the verification of signs in the $A_\infty$ relations is immediate (cf. \cite[Appendix B]{Sheridan2017}). 
We now record some further properties of the trivialization of $\S(\mu)$ which we have specified, which will be useful in future sections.

\begin{lem}\label{lem:CO_or}
    Note that $\Rbar(\mu,0,1,0)$ is a point, which gives us a trivialization $\sigma(\cR(\mu,0,1,0)) \cong \sigma(0)$, and hence an isomorphism $\S(\mu)\sigma(p_0^\partial) \cong \sigma(2)$. This isomorphism respects trivializations.
\end{lem}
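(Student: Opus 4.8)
The statement concerns the distinguished moduli space $\Rbar(\mu,0,1,0)$: a disc with a single outgoing boundary marked point $p_0^\partial$ and a single (incoming) bulk marked point. Since an automorphism-free disc with two marked points (one boundary, one interior) is rigid, $\Rbar(\mu,0,1,0)$ is a single point, and the claim is that the trivialization of $\sigma(\cR(\mu,0,1,0))\cong\sigma(0)$ arising from this (it is just the orientation of the zero-dimensional manifold consisting of one point) is compatible, via the definition of $\S(-)$ in Equation \eqref{eq:sigmaF} and the trivialization of $\S(\mu)$ fixed in Section \ref{sec:signs_mu}, with the isomorphism $\S(\mu)\otimes\sigma(p_0^\partial)\cong\sigma(2)$. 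The first step is to write out \eqref{eq:sigmaF} for $F=(\mu,0,1,0)$: here $\hat C = S^2$ so $\chi(\hat C)=2$, $|\pi_0(\partial C)|=1$, $P^{int,out}=\emptyset$, $P^{int,in}$ has one element (the bulk point), $P^{\partial,in}=\emptyset$, $P^{\partial,out}=\{p_0^\partial\}$. Plugging in, $\S(\mu,0,1,0) = \sigma(\cR)^\vee\otimes\sigma(-2n+2n+2)\otimes\sigma(p_0^\partial)^\vee\otimes\sigma(L_{p_0^\partial})\otimes\sigma(B)^\vee = \sigma(\cR)^\vee\otimes\sigma(2)\otimes\sigma(p_0^\partial)^\vee\otimes\sigma(L_{p_0^\partial})\otimes\sigma(B)^\vee$, using $\sigma(L_{p_0^\partial})=\sigma(n)=\sigma(B)$, so these cancel (this cancellation, and the fact that it respects the chosen trivializations, should be made explicit, as it is also part of the recipe for trivializing $\S(\mu)$ in Section \ref{sec:signs_mu}). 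Thus $\S(\mu,0,1,0)\otimes\sigma(p_0^\partial)\cong\sigma(\cR)^\vee\otimes\sigma(2)$.

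The second step is to chase the trivializations. On the one hand, $\sigma(\cR(\mu,0,1,0))\cong\sigma(0)$ is trivialized by the standard orientation of a point; this induces a trivialization of $\sigma(\cR)^\vee$. On the other hand, the trivialization of $\S(\mu)$ from Section \ref{sec:signs_mu} was obtained by identifying $\S(\mu)\cong\sigma(sc)$ (the scaling direction in $\mathrm{Aut}(\mathbb H)$) and choosing positive scaling — but that description was for $\Rbar(\mu,s,\ell)$ with the complement of $p_0^\partial$ identified with $\mathbb H$. I would need to reconcile the two pictures: for $(\mu,0,1,0)$ the interior marked point can be put at $i\in\mathbb H$, which kills the scaling automorphism, so that $\sigma(\cR(\mu,0,1,0))$ as a point inherits exactly the orientation coming from $\sigma(sc)$ via this identification; concretely, the formula displayed at the start of Section \ref{sec:signs_mu}, $\sigma(\cR(\mu,s,\ell)) = \bigotimes\sigma(p_i^\partial)\otimes\sigma(2(\ell_\bulk+\ell_\stab))\otimes\sigma(sc)^\vee\otimes\sigma(tr)^\vee$, specializes at $s=0$, $\ell_\bulk=1$, $\ell_\stab=0$ to $\sigma(\cR)\cong\sigma(2)\otimes\sigma(sc)^\vee\otimes\sigma(tr)^\vee$, and then the identifications $\sigma(tr)=\sigma(p_0^\partial)$ and the positive-scaling trivialization of $\sigma(sc)$ are precisely what gives $\S(\mu)\otimes\sigma(p_0^\partial)\cong\sigma(2)$ compatibly with the point-orientation of $\cR$. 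So the proof is essentially: specialize the general formula, observe all the ad hoc identifications ($\sigma(L_{p_0})\cong\sigma(B)$, $\sigma(tr)\cong\sigma(p_0^\partial)$, positive scaling) are exactly the ones used to set up the trivialization of $\S(\mu)$, and conclude the composite respects trivializations by construction.

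The main obstacle, such as it is, is bookkeeping: making sure that the \emph{sign} coming from reordering the tensor factors $\sigma(2)$, $\sigma(p_0^\partial)^\vee$, $\sigma(L_{p_0^\partial})$, $\sigma(B)^\vee$ (in the order they appear in \eqref{eq:sigmaF}) into the order needed to read off $\S(\mu)\otimes\sigma(p_0^\partial)\cong\sigma(2)$ is $+1$ — this requires tracking degrees ($|\sigma(2)|=2$, $|\sigma(p_0^\partial)|=1$, $|\sigma(L_{p_0^\partial})|=|\sigma(B)|=n$) through the Koszul sign rule and verifying the cancellation $\sigma(L_{p_0^\partial})\otimes\sigma(B)^\vee\cong\sigma(0)$ is the one induced by the orientations of $L$, matching the convention in Section \ref{sec:signs_mu}. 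This is routine given the coherence theorem invoked throughout the paper; one checks that the only nontrivial reorderings pair an even-degree factor ($\sigma(2)$) past others, hence contribute no sign, and that the odd-by-odd crossing $\sigma(p_0^\partial)\times\sigma(p_0^\partial)^\vee$ does not occur because these are handled by the natural evaluation isomorphism rather than the braiding (per the convention on duals in Section \ref{sec:signs_mu}'s framework). Hence no sign is introduced and the isomorphism respects trivializations, as claimed. I would keep the write-up short — cite \cite[Appendix B]{Sheridan2017} for the automorphism/orientation conventions and present the computation as a one-paragraph unwinding of definitions.
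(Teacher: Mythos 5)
Your proposal takes the same overall route as the paper — specialize the formula $\sigma(\cR(\mu,s,\ell)) = \bigotimes_i\sigma(p_i^\partial)\otimes\sigma(2(\ell_\bulk+\ell_\stab))\otimes\sigma(sc)^\vee\otimes\sigma(tr)^\vee$ at $(s,\ell)=(0,(1,0))$ and chase trivializations — but there is a genuine gap at the decisive step, which you flag (``I would need to reconcile the two pictures'') and then pass over. The trivialization $\sigma(\cR(\mu,0,1,0))\cong\sigma(0)$ is the canonical orientation of a point, and this is an \emph{independent} input: it is not automatically compatible with the positive-scaling trivialization of $\sigma(sc)$, the boundary-orientation trivialization of $\sigma(tr)=\sigma(p_0^\partial)$, and the complex orientation of $\sigma(2)$. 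Asserting that these identifications ``are exactly the ones used to set up the trivialization of $\S(\mu)$'' and concluding ``by construction'' does not close the loop, because Section~\ref{sec:signs_mu} fixes the trivialization $\S(\mu)=\sigma(sc)$ by decree, with no reference to the zero-dimensional moduli space; whether the resulting trivialization of $\sigma(\cR(\mu,0,1,0))$ matches the canonical one is precisely the content of the lemma, and it could perfectly well fail by a sign.

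The missing ingredient is a concrete computation with the group action. Since $\cR(\mu,0,1,0)$ is a point, the exact sequence $0\to T\mathrm{Aut}(\mathbb{H},\infty)\to T_p\mathbb{H}\to T\cR\to 0$ degenerates to an isomorphism $T\mathrm{Aut}(\mathbb{H},\infty)\cong T_p\mathbb{H}$, and the canonical orientation of the point corresponds to asking whether this isomorphism respects the chosen orientations. The paper checks this directly: the derivative of translation at $p$ is $(1,0)$, the derivative of scaling is $(0,1)$, and this is a positively oriented basis of $\C\cong T_p\mathbb{H}$, so $\sigma(tr)\sigma(sc)\cong\sigma(2)$ respects trivializations. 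Without this tangent-vector computation the proof is incomplete. Your Koszul-sign analysis of the factors in \eqref{eq:sigmaF} is not wrong, but it is a detour — the paper bypasses it by starting directly from $\S(\mu)=\sigma(sc)$ — and it cannot substitute for the geometric check, which lives in the tangent space rather than in the bookkeeping.
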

\begin{proof}
    Using the trivialization $\sigma(\cR(\mu,0,1,0)) \cong \sigma(0)$ is equivalent to using the isomorphism $T\mathrm{Aut}(\mathbb{H}) \cong T_p\mathbb{H}$ arising from the derivative of the group action, then using the complex orientation $\sigma(T_p\mathbb{H}) \cong \sigma(2)$. 
    Positive translation acts with tangent vector $(1,0)$, while positive scaling can be chosen to act with tangent vector $(0,1)$, and this is a positively oriented basis with respect to the complex orientation, so the resulting trivialization of $\sigma(T\mathrm{Aut}\mathbb{H})$ is given by $\sigma(tr)\sigma(sc) = \sigma(2)$. 
    The result now follows by observing that the composition of isomorphisms
    $$\S(\mu)\sigma(p_0^\partial) \cong \sigma(tr)\sigma(sc)\sigma(p_0^\partial)^\vee \sigma(p_0^\partial) \cong \sigma(2),$$
    with the first induced by $\S(\mu) = \sigma(sc)$ and $\sigma(p_0^\partial)^\vee \sigma(tr) = \sigma(0)$, and the second by $\sigma(tr)\sigma(sc) = \sigma(2)$ and $\sigma(p_0^\partial)^\vee\sigma(p_0^\partial) = \sigma(0)$, respects trivializations; one easily checks that each isomorphism does so.
\end{proof}

\begin{lem}\label{lem:mu2_or}
    Note that $\Rbar(\mu,2,0,0)$ is a point, which gives us a trivialization $\sigma(\cR(\mu,2,0,0)) \cong \sigma(0)$. The isomorphism
    \begin{equation}\label{eq:mu2_or}
    \S(\mu)\sigma(p_0^\partial) \sigma(p_1^\partial)^\vee \sigma(p_2^\partial)^\vee \cong \sigma(\cR(\mu,2,0,0))^\vee
    \end{equation} 
    respects trivializations.
\end{lem}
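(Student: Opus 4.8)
\textbf{Proof proposal for Lemma \ref{lem:mu2_or}.}

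The plan is to unwind both sides of \eqref{eq:mu2_or} through the chain of identifications used to define $\S(\mu)$ and its trivialization, exactly as in the proof of Lemma \ref{lem:CO_or}, and check that the accumulated isomorphism $\sigma(sc) \sigma(p_0^\partial)\sigma(p_1^\partial)^\vee\sigma(p_2^\partial)^\vee \cong \sigma(\cR(\mu,2,0,0))^\vee$ respects trivializations. First I would recall the general formula for $\sigma(\cR(\mu,s,\ell_\bulk,\ell_\stab))$ from Section \ref{sec:signs_mu}, specialized to $s=2$, $\ell_\bulk=\ell_\stab=0$: we have
$$\sigma(\cR(\mu,2,0,0)) = \sigma(p_1^\partial)\sigma(p_2^\partial)\sigma(sc)^\vee\sigma(tr)^\vee,$$
using the identification of the domain with the upper half-plane $\mathbb{H}$ and the basis $(sc,tr)$ for $T\mathrm{Aut}(\mathbb{H})$. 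Dualizing, $\sigma(\cR(\mu,2,0,0))^\vee \cong \sigma(p_1^\partial)^\vee\sigma(p_2^\partial)^\vee\sigma(sc)\sigma(tr)$. On the left-hand side of \eqref{eq:mu2_or}, I use the identification $\S(\mu) \cong \sigma(sc)$ (which defines the trivialization of $\S(\mu)$, via positive scaling) and the identification $\sigma(tr) = \sigma(p_0^\partial)$ arising from the boundary orientation at the outgoing marked point $p_0^\partial$. Substituting these, the left-hand side becomes $\sigma(sc)\sigma(tr)\sigma(p_1^\partial)^\vee\sigma(p_2^\partial)^\vee$, which, up to the Koszul reordering, is visibly $\sigma(\cR(\mu,2,0,0))^\vee$.

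The substance is then to confirm two things: (i) that reordering the tensor factors $\sigma(sc),\sigma(tr),\sigma(p_1^\partial)^\vee,\sigma(p_2^\partial)^\vee$ into the order appearing in $\sigma(\cR(\mu,2,0,0))^\vee$ introduces no sign, or that whatever sign it introduces matches the convention built into the definition of $\sigma(\cR(\mu,s,\ldots))$ in Section \ref{sec:signs_mu}; and (ii) that the trivialization of $\sigma(p_0^\partial)$ used (boundary orientation, identified with $\sigma(tr)$ via positive translation) is consistent with the trivialization used when $\sigma(p_0^\partial)$ appears in $\S(\mu)$ through \eqref{eq:sigmaF}. For (i) I would simply observe that the generators in question are all placed in even or odd degree as recorded ($\sigma(p_i^\partial) = \sigma(1)$, $\sigma(sc) = \sigma(tr) = \sigma(1)$), and track the Koszul signs; since $\sigma(\cR(\mu,2,0,0))$ is itself \emph{defined} by the displayed formula with that particular ordering, there is nothing to check beyond matching literal orderings — the claim is essentially a tautology once the definitions are written down carefully. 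For (ii) I would invoke the fact, already used in Lemma \ref{lem:mu_boundary} and its proof (citing \cite[Appendix B]{Sheridan2017}), that positive translation of a configuration in $\mathbb{H}$ corresponds to the boundary orientation at $p_0^\partial = \infty$; this is precisely the identification $\sigma(tr) = \sigma(p_0^\partial)$ fixed in Section \ref{sec:signs_mu}, so no new input is needed.

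I expect the main (and only) obstacle to be bookkeeping: making sure that the \emph{order} in which the four $\Z_2$-torsor factors are written on each side of \eqref{eq:mu2_or} is matched consistently with the order fixed in the formula for $\sigma(\cR(\mu,s,\ell))$ and with the order in which $\sigma(p_0^\partial)$, $\sigma(p_1^\partial)^\vee$, $\sigma(p_2^\partial)^\vee$ enter $\S(\mu)$ via \eqref{eq:sigmaF}. Concretely, the definition of $\S(F)$ in \eqref{eq:sigmaF} places the outgoing boundary factors $\sigma(p_i^\partial)^\vee \otimes \sigma(L_{p_i^\partial})$ after $\sigma(\cR(F))^\vee$ and after the incoming boundary factors, so one must carry the identification $\sigma(L_{p_0^\partial}) \cong \sigma(B)$ and the trivialization of $\sigma(B)$ through correctly (as in the paragraph preceding Lemma \ref{lem:mu_boundary}). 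Once these orderings are aligned, the statement follows because every individual isomorphism in the chain — the derivative-of-group-action identification $\sigma(sc) \cong \S(\mu)$, the boundary-orientation identification $\sigma(tr) = \sigma(p_0^\partial)$, and the evaluation isomorphisms $\sigma(p_i^\partial)^\vee\sigma(p_i^\partial) = \sigma(0)$ — respects trivializations by construction, so their composite does too. I would close by noting (as in Lemma \ref{lem:CO_or}) that this can be checked isomorphism-by-isomorphism, and each check is immediate.
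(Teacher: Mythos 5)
There is a genuine gap in part (i) of your argument, which you dismiss as ``essentially a tautology.'' It is not. The lemma is comparing two trivializations of $\sigma(\cR(\mu,2,0,0))$: one from the fact that the moduli space is a point, and one obtained by composing the identification $\sigma(\cR(\mu,s,\ell)) \cong \sigma(p_1^\partial)\cdots\sigma(p_s^\partial)\sigma(2(\ell_\bulk+\ell_\stab))\sigma(sc)^\vee\sigma(tr)^\vee$ (which is an \emph{isomorphism} induced by the derivative of the $\mathrm{Aut}(\mathbb{H})$-action, not a definition) with the trivializations of the individual factors $\sigma(p_i^\partial)$, $\sigma(sc)$, $\sigma(tr)$. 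Whether these two trivializations agree is exactly the content of the lemma, and it does not follow from ``matching literal orderings.''

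Concretely, when $s=2$, the isomorphism underlying the formula is $T\mathrm{Aut}(\mathbb{H}) \cong T_{p_1^\partial}\partial\mathbb{H} \oplus T_{p_2^\partial}\partial\mathbb{H}$, and one must decide whether the basis $(tr,sc)$ maps to a positively or negatively oriented basis of the right-hand side with respect to $(e_{p_1^\partial},e_{p_2^\partial})$. The paper computes: with $p_1^\partial > p_2^\partial$, positive translation acts with tangent vector $(1,1)$ and positive scaling with $(1,-1)$, so the change-of-basis matrix has negative determinant — the basis is \emph{negatively} oriented, hence $\sigma(tr)\sigma(sc)\cong\sigma(p_1^\partial)\sigma(p_2^\partial)$ has sign $-1$. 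This nontrivial minus sign is what cancels the Koszul sign from the reordering $\sigma(p_1^\partial)\sigma(p_2^\partial)\to\sigma(p_2^\partial)\sigma(p_1^\partial)$ to yield the claimed result. Your proof never performs this computation, and your assertion that ``each check is immediate'' is where it would silently fail. You should also note that this is precisely the step that distinguishes this lemma from the $s=0$ case handled in Lemma \ref{lem:CO_or} (where the analogous basis $(tr,sc)$ is positively oriented for the complex orientation), so the pattern of ``follow the same chain of identifications as Lemma \ref{lem:CO_or}'' cannot be carried through blindly; the geometric input changes.
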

\begin{proof}
    Using the trivialization $\sigma(\cR(\mu,2,0,0)) \cong \sigma(0)$ is equivalent to using the isomorphism $T\mathrm{Aut}(\mathbb{H}) \cong T_{p_1^\partial}\partial\mathbb{H} \oplus T_{p_2^\partial}\mathbb{H}$ arising from the derivative of the group action.  
    Positive translation acts with tangent vector $(1,1)$, while positive scaling acts with tangent vector $(1,-1)$ (recall that $p_1^\partial > p_2^\partial$), and this is a negatively oriented basis, so the resulting isomorphism $\sigma(tr)\sigma(sc) \cong \sigma(p_2^\partial)\sigma(p_1^\partial)$ respects trivializations. 
    
    Thus it remains to check that the composition of isomorphisms
    \begin{align*}
        \S(\mu)\sigma(p_0^\partial) & \cong  \sigma(tr)\sigma(sc) \sigma(p_1^\partial)^\vee \sigma(p_2^\partial)^\vee \sigma(p_2^\partial)\sigma(p_1^\partial) \sigma(p_0^\partial)^\vee \sigma(p_0^\partial) \\
        &\cong \sigma(p_2^\partial)\sigma(p_1^\partial),
    \end{align*}
    with the first induced by $\S(\mu) = \sigma(sc)$, $\sigma(0) = \sigma(p_0^\partial)^\vee\sigma(tr)$, and $\sigma(0) = \sigma(p_i^\partial)^\vee\sigma(p_i^\partial)$ for $i=1,2$, and the second induced by $\sigma(p_1^\partial)^\vee \sigma(p_2^\partial)^\vee\sigma(tr)\sigma(sc) = \sigma(0)$ and $\sigma(p_0^\partial)^\vee\sigma(p_0^\partial) = \sigma(0)$, respects trivializations; one easily checks that each isomorphism does so.
\end{proof}

\subsection{Closed--open map}

Recall that we defined $\cC\cO$ to be equal to $(\cC\cO,s,\ell_\bulk,\ell_\stab)^\big_{\{1\},\bL}$; this defines a map
\begin{align*}
    \S(\cC\cO,\{1\}) QC^*(X;R^\big) &\to hom(\fuk^\big(L_0,\ldots,L_s),\fuk^\big(L_0,L_s)) \\
    &\subset \sigma(CC^*)^\vee CC^*(\fuk^\big).
\end{align*}
To complete the definition, we combine it with the isomorphism
\begin{align*}
    \S(\cC\cO,\{1\}) \sigma(p_0^\partial) & \cong \sigma(0) = \sigma(\cC\cO)
    \end{align*}
via the trivialization $\sigma(\cR(\cC\cO,0,0,0)) = \sigma(0)$ (as the moduli space is a point), and 
\begin{equation}
    \label{eq:CO_p0_CC}
    \sigma(p_0^\partial) = \sigma(CC^*).
\end{equation}
The claim that $\cC\cO$ coincides with the first-order deformation class of the $A_\infty$ structure in the bulk directions follows from the compatibility between the trivialization of $\S(\cC\cO)$ defined above, and that of $\S(\mu)$ defined in the previous section, which is a consequence of Lemma \ref{lem:CO_or}.

In a similar way, we define $H^{12}_{\cC\cO}$ to be equal to $(H^{12}_{\cC\cO},s,\ell_\bulk,\ell_\stab)^\big_{\{1\},\bL}$, where 
$$\S(H^{12}_{\cC\cO},\{1\})\sigma(p_0^\partial) \cong \sigma(0) = \sigma(H^{12}_{\cC\cO})$$
via the trivialization $\sigma(\cR(H^{12}_{\cC\cO},0,0,0)) = \sigma(0)$ (as the moduli space is a point), and \eqref{eq:CO_p0_CC}.

For $i = 1,2$, we define $H^i_{\cC\cO}$ to be equal to $(H^i_{\cC\cO},s,\ell_\bulk,\ell_\stab)^\big_{\{1\},\bL}$ together with the isomorphisms
$$\S(H^i_{\cC\cO},\{1\}) \sigma(p_0^\partial) \cong \sigma(\partial)^\vee = \sigma(H^i_{\cC\cO})$$
induced by the isomorphisms $\sigma(\cR(H^i_{\cC\cO},0,0,0)) \cong \sigma(\partial)$ which agree with the boundary trivialization at $H^{12}$ in the case $i=1$, and disagree in the case $i=2$; together with \eqref{eq:CO_p0_CC}.

\begin{lem}\label{lem:CO_orientations}
    We consider the isomorphisms arising from Equation \eqref{eq:boundary_or}, for the boundary components of $\Rbar(H^1_{\cC\cO},0,0,0)$ and $\Rbar(H^2_{\cC\cO},0,0,0)$:
    \begin{itemize}
        \item The sign of the isomorphism $\sigma(H^{12}_{\cC\cO}) \cong \sigma(\partial)\sigma(H^1_{\cC\cO})$ is $-1$;
        \item The sign of the isomorphism $\sigma(\cC\cO)\sigma(\star) \cong \sigma(\partial)\sigma(H^1_{\cC\cO})$ is $+1$;
        \item The sign of the isomorphism $\sigma(\cup)\sigma(\cC\cO_1)\sigma(\cC\cO_2) \cong \sigma(\partial)\sigma(H^2_{\cC\cO})$ is $-1$;
        \item The sign of the isomorphism $\sigma(H^{12}_{\cC\cO})  \cong \sigma(\partial) \sigma(H^2_{\cC\cO})$ is $+1$.
    \end{itemize}
    Combining with Lemma \ref{lem:mu_boundary} (which establishes the compatibility of signs at codimension-one boundary components formed by disc bubbling), this completes the proofs of Equations \eqref{eq:H1CO} and \eqref{eq:H2CO}.
\end{lem}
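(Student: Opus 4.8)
The plan is to verify Lemma \ref{lem:CO_orientations} by computing the sign of each of the four boundary isomorphisms \eqref{eq:boundary_or} directly, and then to explain how these combine with Lemma \ref{lem:mu_boundary} to yield Equations \eqref{eq:H1CO} and \eqref{eq:H2CO}. The basic setup is common to all four: each moduli space $\Rbar(H^i_{\cC\cO},0,0,0)$ is a one-dimensional manifold with boundary (either an interval of gluing lengths $[0,\infty]$, or the parametrized interval $t \in [0,1]$ of the disc described in Section \ref{sec:dom_ms}), and each of its two boundary points is identified with a zero-dimensional stratum, namely a point or an attachment of points. To compute the sign of \eqref{eq:boundary_or} at a given boundary point, I compare the trivialization of $\sigma(\partial)$ given by the inward normal to that interval with the trivialization induced by the already-specified trivializations of $\S(H^i_{\cC\cO})$ and $\S(F')$ (which, for $F'$ built by attaching, is obtained from \S{}-additivity under \textbf{(Attaching)} via \eqref{eq:at_sigmaF} together with the $\S(\star)$, $\S(\mu)$, $\S(\cC\cO)$ trivializations from the preceding subsections, and for $F'$ a point, directly from $\sigma(\cR(F')) = \sigma(0)$).

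First I would handle $\Rbar(H^1_{\cC\cO},0,0,0)$: this is the family of attachments of $\Rbar(\star,0)$ to $\Rbar(\cC\cO,0,0)$ with gluing length parameter $[0,\infty]$, so $\cR(H^1_{\cC\cO}) \cong \cR(H^{12}_{\cC\cO}) \times (0,\infty)$ where the $(0,\infty)$ is the gluing length; by the lemma following \eqref{eq:boundary_or}, the inward normal agrees with the standard orientation of $(0,\infty)$ at the $0$-end ($\to H^{12}_{\cC\cO}$, length $0$) and disagrees at the $\infty$-end ($\to$ attachment of $\star$ to $\cC\cO$). Since $\sigma(H^1_{\cC\cO})$ was defined precisely so that $\S(H^1_{\cC\cO})\sigma(p_0^\partial) \cong \sigma(\partial)^\vee$ agrees with the boundary trivialization at $H^{12}$, the first sign is $-1$ (the extra minus coming from the $\sigma(\partial)^\vee$ versus $\sigma(\partial)$ bookkeeping, matching the sign convention in \eqref{eq:H1CO}), and the second is $+1$. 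The same reasoning, with the roles of the two ends interchanged (this is exactly what "disagree in the case $i=2$" encodes in the definition of $\sigma(H^2_{\cC\cO})$), gives the last two bullets for $\Rbar(H^2_{\cC\cO},0,0,0)$, once I additionally invoke Lemma \ref{lem:mu2_or} to identify the $\S(\mu)$-factor at the $t=1$ boundary (where $\Rbar(\mu,2,0)$ appears) with the orientation of the point $\cR(\mu,2,0,0)$, which is what makes the $\sigma(\cup)$-normalization from \eqref{eq:assoc_id} match up; and Lemma \ref{lem:CO_or} to identify the $\S(\cC\cO_i)$-factors coming from the two bubbled-off closed--open discs.

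The main obstacle I anticipate is the bookkeeping at the $t=1$ boundary of $\Rbar(H^2_{\cC\cO},0,0,0)$, where three pieces ($\cC\cO_1$, $\cC\cO_2$, $\mu_2$) are attached simultaneously at interior nodes with length parameter $0$: here one must chase the iterated \eqref{eq:at_sigmaF} isomorphism carefully, keeping track of the $\sigma(2k+2n\ell)$ and $\sigma(i)$ factors and of the order in which the Koszul reorderings are applied, to confirm that the composite lands on exactly $\sigma(\cup)\sigma(\cC\cO_1)\sigma(\cC\cO_2)$ with the claimed sign $-1$ rather than $+1$; the analogous verification at the $t=1$ boundary of $H^1_{\cC\cO}$ (attaching $\star$ to $\cC\cO$, an interior attachment with length $\infty$) is simpler since only one node is involved. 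Having established all four signs, Equations \eqref{eq:H1CO} and \eqref{eq:H2CO} follow by specializing Lemma \ref{lem:1param} to $(H^1_{\cC\cO},s,\ell)$ and $(H^2_{\cC\cO},s,\ell)$ respectively: the boundary strata of the one-parameter families are exactly the codimension-one strata enumerated above together with the disc-bubbling strata (whose signs are handled by Lemma \ref{lem:mu_boundary}) and Morse/Floer breakings (which assemble into the differentials $\partial$), and the four signs computed here are precisely the coefficients $-1$, $+1$, $-1$, $+1$ appearing in front of $H^{12}_{\cC\cO}$, $\cC\cO(p\star_\big q)$, $\cC\cO(p)\cup\cC\cO(q)$, $H^{12}_{\cC\cO}$ in those two equations.
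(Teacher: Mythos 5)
Your proposal follows the paper's approach: establish each boundary sign by comparing the inward-normal trivialization of $\sigma(\partial)$ against the chosen trivializations of $\S(H^i_{\cC\cO})$ and of the attached boundary family, using the "agrees/disagrees with boundary trivialization at $H^{12}$" conventions, Lemma \ref{lem:mu2_or}, and the identifications $\sigma(p^\partial_j) = \sigma(CC^*_j)$, then feed these signs into Lemma \ref{lem:1param}. One small misreading: you anticipate the main difficulty at the $t=1$ boundary of $H^2_{\cC\cO}$ to lie in the iterated Koszul reorderings from \eqref{eq:at_sigmaF}, and you invoke Lemma \ref{lem:CO_or} to control the $\S(\cC\cO_i)$-factors; but in the paper's computation all the auxiliary identifications (the ones from Lemma \ref{lem:mu2_or}, from \eqref{eq:CO_p0_CC}, from \eqref{eq:assoc_id}, and the trivialization of $\sigma(\cC\cO_i)$ — the last holding by definition, without Lemma \ref{lem:CO_or}) respect trivializations, so the entire $-1$ in bullet three collapses to the sign of $\sigma(0) \cong \sigma(\partial)\sigma(\partial)^\vee$ — the same commutator of $\sigma(\partial)$ past $\sigma(\partial)^\vee$ that you correctly identify as the source of the $-1$ in bullet one. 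Once you see that, bullets two and four are handled by the flipped orientation convention ("disagree in the case $i=2$" / the $\infty$-end of the interval) exactly as you describe, so your argument closes.
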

\begin{proof}
    We will establish the third claim, on which the others are simpler variations. 
    In this case, the isomorphism in question is defined by combining the following isomorphisms:
    \begin{equation}
    \label{eq:bound_or_mu1_mu2}
    \sigma(\cR(\mu,2,0,0)) \sigma(\cR(\cC\cO_1,0,0,0)) \sigma(\cR(\cC\cO_2,0,0,0)) \sigma(\partial) \cong \sigma(\cR(H^2_{\cC\cO},0,0,0)),
    \end{equation}
    which respects the natural trivializations, as the isomorphism $\sigma(\cR(H^2_{\cC\cO},0,0,0)) \cong \sigma(\partial)$ was chosen to agree with the boundary orientation at this boundary component; 
    $$
        \sigma(\mu) \sigma(p_0^\partial) \sigma(p_1^\partial)^\vee \sigma(p_2^\partial)^\vee \cong \sigma(\cR(\mu,2,0,0))^\vee$$
    which arises from the definition of $\sigma(\mu)$, and respects trivializations by Lemma \ref{lem:mu2_or}; 
    $$    \sigma(\cC\cO_i)  \cong \S(\cC\cO_i)\sigma(p_i^\partial) \cong \sigma(\cR(\cC\cO,0,0,0))^\vee,$$
    (where $p_i^\partial$ are the boundary nodes connecting to the component containing $p_i^{int}$, for $i=1,2$), which arises from the definition of $\sigma(\cC\cO_i)$, and respects trivializations by definition;
    $$  \sigma(p_i^\partial) = \sigma(CC^*_i)$$
    for $i=0,1,2$, which arise from applying \eqref{eq:CO_p0_CC} in the definitions of $\cC\cO_i$ and $H^2_{\cC\cO}$; and
    $$  \sigma(\cup) = \sigma(\mu) \sigma(CC^*_0)\sigma(CC^*_1)^\vee\sigma(CC^*_2)^\vee$$
    arising from the definition of $\cup$ in \eqref{eq:assoc_id}. 
    Putting all the isomorphisms together, we find that the sign coincides with the sign of the isomorphism
    $$\sigma(0) \cong \sigma(\partial)\sigma(\partial)^\vee,$$
    which is $-1$, due to needing to commute the two factors on the RHS before applying $\sigma(\partial)^\vee \sigma(\partial) = \sigma(0)$.
    
    The proofs of the other statements are analogous but simpler: the only difference is that the boundary orientation is opposite to the chosen orientation of $\cR(H^{1/2}_{\cC\cO},0,0,0)$ in the second and fourth cases, leading to the isomorphism respecting trivializations in those cases.
\end{proof}

\subsection{HH-unit}

Recall that we defined $_2\cC\cO(e)$ to be equal to $(_2\cC\cO,s_1,s_2,\ell_\bulk,\ell_\stab)^\big_{\{1\},\bL}(e)$; to complete the definition, we combine it with the isomorphism
$$\S(_2\cC\cO,\{1\}) = \sigma(0)$$
arising from the natural trivialization $\sigma(\cR(_2\cC\cO,0,0,0,0)) = \sigma(0)$ and the identifications
\begin{equation}
    \label{eq:2COe_pi}
    \sigma(p_0^\partial) = \sigma(\fuk^\big) = \sigma(p_1^\partial).
\end{equation}

We defined $H_{_2\cC\cO(e)}$ to be equal to $(H_{_2\cC\cO},s_1,s_2,\ell_\bulk,\ell_\stab)^\big_{\{1\},\bL}(e)$; to complete the definition, we combine it with the isomorphism
$$\S(H_{_2\cC\cO},\{1\}) = \sigma(\partial)^\vee$$
arising from trivializing $\sigma(\cR(H_{_2\cC\cO},0,0,0,0)) = \sigma(\partial)$ so that the trivialization agrees with the boundary trivialization at $_2\cC\cO$, and disagrees at the other boundary component; together with the identifications \eqref{eq:2COe_pi}.

\begin{lem}\label{lem:hh-unit-or}
    The isomorphisms arising from \eqref{eq:boundary_or} satisfy:
    \begin{itemize}
        \item The sign of the isomorphism $\sigma(_2\cC\cO) \cong \sigma(\partial)\sigma(H_{_2\cC\cO})$ is $-1$;
        \item The sign of the isomorphism $\sigma(\cup)\sigma(\cC\cO) \cong \sigma(\partial)\sigma(H_{_2\cC\cO})$ is $+1$.
    \end{itemize}
    Combining with Lemma \ref{lem:mu_boundary}, this completes the proof of Equation \eqref{eq:H_2CO}.
\end{lem}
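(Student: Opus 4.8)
\textbf{Proof proposal for Lemma \ref{lem:hh-unit-or}.}

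The plan is to mimic the computation in the proof of Lemma \ref{lem:CO_orientations}, since the two boundary components of $\Rbar(H_{_2\cC\cO},0,0,0,0)$ are of exactly the two types treated there: one is a ``$_2\cC\cO$-type'' degeneration (analogous to the $H^{12}$ face of $H^i_{\cC\cO}$), and the other is a disc-bubbling degeneration producing a copy of $\Rbar(\cC\cO,0,0,0)$ attached to $\Rbar(\mu,2,0,0)$ (analogous to the $\mu_2$ face of $H^2_{\cC\cO}$). First I would recall the definitional identifications: $\S(H_{_2\cC\cO},\{1\}) = \sigma(\partial)^\vee$ via the trivialization $\sigma(\cR(H_{_2\cC\cO},0,0,0,0)) \cong \sigma(\partial)$ chosen to \emph{agree} with the boundary trivialization at the $_2\cC\cO$ face; the identification $\S(_2\cC\cO,\{1\}) = \sigma(0)$ via $\sigma(\cR(_2\cC\cO,0,0,0,0)) \cong \sigma(0)$; and the identifications \eqref{eq:2COe_pi} relating $\sigma(p_0^\partial)$ and $\sigma(p_1^\partial)$ to $\sigma(\fuk^\big)$.

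For the first bullet, the face $t = 0$ is identified with $\Rbar(_2\cC\cO,0,0,0,0)$, and by the choice of trivialization of $\sigma(\cR(H_{_2\cC\cO}))$ the isomorphism $\sigma(\cR(_2\cC\cO))\,\sigma(\partial) \cong \sigma(\cR(H_{_2\cC\cO}))$ arising from \eqref{eq:boundary_or} respects trivializations. Chasing through the definitions of $\sigma(_2\cC\cO)$ and $\sigma(H_{_2\cC\cO})$, all the $\S$-factors and the $\sigma(p_i^\partial)$-factors cancel as in Lemma \ref{lem:CO_orientations}, and one is left with the sign of $\sigma(0) \cong \sigma(\partial)\,\sigma(\partial)^\vee$, which is $-1$ because one must transpose the two factors before applying the evaluation $\sigma(\partial)^\vee\sigma(\partial) = \sigma(0)$. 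For the second bullet, the face $t = -1$ is the attachment of $\Rbar(\cC\cO,0,0,0)$ to $\Rbar(\mu,2,0,0)$; here the relevant input bulk marked point $e$ sits on the $\cC\cO$-component. I would assemble the isomorphism out of: the boundary identification $\sigma(\cR(\mu,2,0,0))\,\sigma(\cR(\cC\cO,0,0,0))\,\sigma(\partial) \cong \sigma(\cR(H_{_2\cC\cO}))$, which in this case carries a sign $-1$ relative to trivializations because the chosen orientation of $\cR(H_{_2\cC\cO})$ disagrees with the boundary orientation at $t=-1$; the isomorphism $\sigma(\mu)\sigma(p_0^\partial)\sigma(p_1^\partial)^\vee\sigma(p_2^\partial)^\vee \cong \sigma(\cR(\mu,2,0,0))^\vee$, which respects trivializations by Lemma \ref{lem:mu2_or}; the definition of $\sigma(\cC\cO)$, which respects trivializations by definition (using Lemma \ref{lem:CO_or}); the identifications \eqref{eq:CO_p0_CC}, \eqref{eq:2COe_pi}; and $\sigma(\cup) = \sigma(\mu)\sigma(CC^*_0)\sigma(CC^*_1)^\vee\sigma(CC^*_2)^\vee$ from \eqref{eq:assoc_id}. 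As in the third case of Lemma \ref{lem:CO_orientations}, after all cancellations the residual sign is that of $\sigma(0) \cong \sigma(\partial)\sigma(\partial)^\vee$, namely $-1$; combined with the sign $-1$ coming from the disagreement of orientations at the $t=-1$ face, the total is $+1$, as claimed.

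I expect the main obstacle to be purely bookkeeping: correctly tracking which of the two boundary faces the reference trivialization of $\sigma(\cR(H_{_2\cC\cO}))$ was chosen to agree with, and hence which face picks up the extra $-1$; and making sure the identifications \eqref{eq:2COe_pi} at $p_0^\partial$ and $p_1^\partial$ are inserted in the same places on both sides so that the $\sigma(\fuk^\big)$-factors cancel cleanly rather than producing spurious Koszul signs. Once the signs in Lemma \ref{lem:hh-unit-or} are established, combining them with the disc-bubbling compatibility of Lemma \ref{lem:mu_boundary} and the $1$-parameter relation of Lemma \ref{lem:1param} applied to $(H_{_2\cC\cO},s,\ell)$ yields Equation \eqref{eq:H_2CO} with the stated signs, completing the proof.
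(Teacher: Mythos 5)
Your proposal is correct and follows essentially the same approach as the paper: decompose the sign at each face into the agreement or disagreement with the chosen trivialization of $\sigma(\cR(H_{_2\cC\cO}))$, and the residual transposition $\sigma(0) \cong \sigma(\partial)\sigma(\partial)^\vee$, which contributes $-1$. The one detail the paper explicitly singles out as the key computation for the second bullet --- that in the $t=-1$ degeneration the boundary node $p_n^\partial$ occupies the first input slot of $\mu^2$, so that the chain $\sigma(\cup) \cong \sigma(\mu)\sigma_0\sigma_1^\vee\sigma_2^\vee \cong \S(\mu)\sigma(p_0^\partial)\sigma(p_n^\partial)^\vee\sigma(p_1^\partial)^\vee \cong \sigma(\cR(\mu,2,0,0))^\vee$ respects trivializations, the last step by Lemma \ref{lem:mu2_or} --- is glossed over in your write-up, where the generic labels $p_1^\partial,p_2^\partial$ of $\Rbar(\mu,2,0,0)$ risk colliding with the label $p_1^\partial$ inherited from $H_{_2\cC\cO}$; distinguishing the node by a separate name (as the paper does) avoids a hidden transposition.
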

\begin{proof}
    The proof is a minor variation on that of Lemma \ref{lem:CO_orientations}; the key computation for the second point is to check that the composition of isomorphisms
    \begin{align*}
        \sigma(\cup) & \cong \sigma(\mu)\sigma_0 \sigma_1^\vee \sigma_2^\vee \\
        & \cong \S(\mu) \sigma(p_0^\partial) \sigma(p_n^\partial)^\vee \sigma(p_1^\partial)^\vee \\
        & \cong \sigma(\cR(\mu,2,0,0))^\vee
    \end{align*}
    respects trivializations, where $p_n^\partial$ denotes the boundary node; the first two respect trivializations by definition, while the last respects trivializations by Lemma \ref{lem:mu2_or}.
\end{proof}

\subsection{Open--closed map}\label{sec:OC_signs}

Recall that we defined $\cO\cC$ to be equal to $(\cO\cC,s,\ell_\bulk,\ell_\stab)^\big_{\emptyset,\bL}$; to complete the definition, we combine it with the isomorphism
$$\S(\cO\cC) = \sigma(\cO\cC) \sigma(p_0^\partial)$$
arising from the natural trivialization $\sigma(\cR(\cO\cC,0,0,0)) \cong \sigma(0)$ and the identifications 
\begin{align}
\label{eq:OC_trivs}
\sigma(\cO\cC) &= \sigma(2n)\sigma(B)^\vee \qquad \text{and}\\
\sigma(p_0^\partial) &= \sigma(CC_*).
\end{align}

We defined $\Rbar(H^{12}_{\cO\cC},s,\ell_\bulk,\ell_\stab)$ to be $(H^{12}_{\cO\cC},s,\ell_\bulk,\ell_\stab)^\big_{\{2\},\bL}$; to complete the definition, we combine it with the isomorphism
$$\S(H^{12}_{\cO\cC}) = \sigma(\cO\cC) \sigma(p_0^\partial)$$
arising from the natural trivialization $\sigma(\cR(H^{12}_{\cO\cC},0,0,0)) \cong \sigma(0)$, the identifications \eqref{eq:OC_trivs}, and $\sigma(H^{12}_{\cC\cY}) = \sigma(\cO\cC)$.
We defined $\Rbar(H^i_{\cO\cC},s,\ell_\bulk,\ell_\stab)$ to be $(H^i_{\cO\cC},s,\ell_\bulk,\ell_\stab)^\big_{\{2\},\bL}$, for $i = 1,2$; to complete the definitions, we combine these with the isomorphisms
$$\S(H^i_{\cO\cC}) = \sigma(\partial)^\vee \sigma(\cO\cC) \sigma(p_0^\partial)$$
arising from the trivialization
$$\sigma(\cR(H^i_{\cO\cC},0,0,0)) \cong \sigma(\partial)$$
which agrees with the boundary orientation at $H^{12}_{\cO\cC}$ in the case $i=1$, and disagrees in the case $i=2$; the identifications \eqref{eq:OC_trivs}; and $\sigma(H^{i}_{\cC\cY}) = \sigma(\partial)^\vee\sigma(\cO\cC)$.

\begin{lem}\label{lem:OC_orientations}
    We consider the isomorphisms arising from Equation \eqref{eq:boundary_or}, for the boundary components of $\Rbar(H^1_{\cO\cC},0,0,0)$ and $\Rbar(H^2_{\cO\cC},0,0,0)$:
    \begin{itemize}
        \item The sign of the isomorphism $\sigma(H^{12}_{\cO\cC}) \cong \sigma(\partial)\sigma(H^1_{\cO\cC})$ is $-1$;
        \item The sign of the isomorphism $\sigma(\cO\cC)\sigma(\star) \cong \sigma(\partial)\sigma(H^1_{\cO\cC})$ is $+1$;
        \item The sign of the isomorphism $\sigma(\cO\cC)\sigma(\cap) \sigma(\cC\cO)  \cong \sigma(\partial)\sigma(H^2_{\cO\cC})$ is $-1$; 
        \item The sign of the isomorphism $\sigma(H^{12}_{\cO\cC})  \cong \sigma(\partial) \sigma(H^2_{\cO\cC})$ is $+1$.
    \end{itemize}
\end{lem}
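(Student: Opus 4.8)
The plan is to follow the template of Lemma~\ref{lem:CO_orientations} almost verbatim, since each of the four moduli spaces $\Rbar(H^i_{\cO\cC},0,0,0)$ for $i=1,2$ is, as a stratified space, the ``same'' interval as the corresponding $\Rbar(H^i_{\cC\cO},0,0,0)$: the only difference is in the choices of directions (one interior ray is outgoing rather than incoming, and the boundary marked point $p_0^\partial$ is incoming rather than outgoing). Since the orientations $\S(F)$ and the boundary isomorphisms \eqref{eq:boundary_or} depend only on the underlying domains and not on the choices of directions, the four signs computed in Lemma~\ref{lem:CO_orientations} transfer directly; what must be re-examined is only the bookkeeping at the places where the direction of $p_0^\partial$ and of the ray enter, namely the identifications \eqref{eq:OC_trivs} (replacing \eqref{eq:CO_p0_CC}) and the role of $\sigma(\cO\cC) = \sigma(2n)\sigma(B)^\vee$ in place of the trivial $\sigma(\cC\cO) = \sigma(0)$.

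Concretely, I would proceed as follows. First, for the third (and most involved) claim, I would write out the isomorphism $\sigma(\cO\cC)\sigma(\cap)\sigma(\cC\cO) \cong \sigma(\partial)\sigma(H^2_{\cO\cC})$ as the composite of: (a) the boundary orientation isomorphism \eqref{eq:bound_or_mu1_mu2} for the stratum where $\Rbar(\cC\cO)$, $\Rbar(\cO\cC)$ and $\Rbar(\mu,2,0)$ come together, which respects trivializations by our choice of orientation of $\Rbar(H^2_{\cO\cC},0,0,0)$; (b) the definition of $\sigma(\mu)$ on $\Rbar(\mu,2,0,0)$, respecting trivializations by Lemma~\ref{lem:mu2_or}; (c) the definitions of $\sigma(\cC\cO)$ (with $\sigma(p^\partial) = \sigma(CC^*)$, respecting trivializations by Lemma~\ref{lem:CO_or}) and of $\sigma(\cO\cC)$ (with $\sigma(p_0^\partial) = \sigma(CC_*)$ and $\sigma(\cO\cC) = \sigma(2n)\sigma(B)^\vee$ from \eqref{eq:OC_trivs}); and (d) the definition of $\cap$ as given explicitly at the end of Section~\ref{sec:main}'s discussion of Hochschild homology, namely $b^{1|1}$ with $\sigma(\mu) = \sigma_1$. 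As in Lemma~\ref{lem:CO_orientations}, after all the cancellations the sign reduces to that of $\sigma(0) \cong \sigma(\partial)\sigma(\partial)^\vee$, which is $-1$. The other three claims are simpler variations: the first and fourth have $\Rbar(H^1_{\cO\cC})$, resp.\ $\Rbar(H^2_{\cO\cC})$, connecting at one end to $\Rbar(H^{12}_{\cO\cC})$, where the orientation of the interval was chosen (for $i=2$) to disagree or (for $i=1$) to agree with the boundary orientation, producing $+1$ resp.\ $-1$; the second involves the length-$[0,\infty]$ interior attachment of $\Rbar(\star,0)$ to $\Rbar(\cO\cC,0,0)$ and reduces to the statement that the standard orientation of $(0,\infty)$ agrees with the boundary orientation at one end.

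I expect the main obstacle to be (c) and (d): carefully tracking how the nontrivial torsor $\sigma(\cO\cC) = \sigma(2n)\sigma(B)^\vee$ and the identification $\sigma(p_0^\partial) = \sigma(CC_*)$ propagate through the composite, and checking that the factors $\sigma(2n)$, $\sigma(B)$, $\sigma(NV_q)$, etc.\ appearing in \eqref{eq:sigmau_final} for the glued configuration match those for the two-component configuration $\Rbar(\cC\cO)\cup\Rbar(\cO\cC)$, so that they cancel in pairs and do not contribute an extra Koszul sign. This is where the asymmetry between the closed--open and open--closed cases genuinely shows up (in $\cC\cO$ the interior point is incoming and carries $QC^*(X)$, whereas in $\cO\cC$ it is outgoing), and it is the reason the lemma is stated separately rather than deduced by a one-line ``same as Lemma~\ref{lem:CO_orientations}''. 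Once this matching is verified, the reduction to the sign of $\sigma(0) \cong \sigma(\partial)\sigma(\partial)^\vee$ (or, for claims one, two and four, to an orientation comparison with $(0,\infty)$ or with the chosen orientation of the interval) is routine, and I would state those verifications as ``one easily checks'' in the style of the surrounding lemmas. As in Lemma~\ref{lem:CO_orientations}, I would conclude by noting that, combined with Lemma~\ref{lem:mu_boundary} for the codimension-one strata coming from disc bubbling, this completes the proofs of Equations~\eqref{eq:H1CO} and \eqref{eq:H2CO} in the open--closed setting (i.e.\ the analogues used in the proof of Theorem~\ref{thm:oc}).
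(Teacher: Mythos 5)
Your proposal is correct and takes essentially the same route as the paper's proof: both follow Lemma~\ref{lem:CO_orientations} closely, single out the third claim (the one involving $\cap$) as the key computation, invoke Lemma~\ref{lem:mu2_or} for the trivialization of $\sigma(\cR(\mu,2,0,0))$, and reduce the sign to that of $\sigma(0)\cong\sigma(\partial)\sigma(\partial)^\vee$, with the other three claims as simpler variations. One small remark: the Cauchy--Riemann/target factors $\sigma(NV_q)$ from \eqref{eq:sigmau_final} that you mention as a potential obstacle do not actually enter these domain-orientation lemmas (they are handled in Lemma~\ref{lem:B_at_F} and Appendix~\ref{sec:or}, not here); the bookkeeping you need is only the chain $\sigma(\cap)\cong\sigma(\mu)\sigma(CC^*)^\vee\cong\sigma(\mu)\sigma(CC^*)^\vee\sigma(CC_*)^\vee\sigma(CC_*)\cong\S(\mu)\sigma(p_1^\partial)^\vee\sigma(p_2^\partial)^\vee\sigma(p_0^\partial)\cong\sigma(\cR(\mu,2,0,0))^\vee$, each step of which respects trivializations by definition or by Lemma~\ref{lem:mu2_or}.
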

\begin{proof}
    The proof follows that of Lemma \ref{lem:CO_orientations} closely. 
    The key computation for the second point is to verify that the composition of isomorphisms
     \begin{align*}
        \sigma(\cap) & \cong \sigma(\mu)\sigma(CC^*)^\vee \\
        &\cong \sigma(\mu)\sigma(CC^*)^\vee \sigma(CC_*)^\vee \sigma(CC_*) \\
        & \cong \S(\mu) \sigma(p_1^\partial)^\vee \sigma(p_2^\partial)^\vee \sigma(p_0^\partial) \\
        & \cong \sigma(\cR(\mu,2,0,0))^\vee
    \end{align*}
    respects trivializations (where $p_i^\partial$ are labelled as in the proof of Lemma \ref{lem:CO_orientations}); each isomorphism respects trivializations by definition, except for the last one which respects trivializations by Lemma \ref{lem:mu2_or}. 
\end{proof}

\subsection{Cardy condition}

We defined $\cC\cY^{s_1|1|s_2}$ by summing over $(\cC\cY,s_1,s_2,s_3,\ell_\bulk,\ell_\stab)^\big_{\emptyset,\bL}$; to complete the definition, we combine this with the isomorphism
$$\S(\cC\cY) = \sigma(\cC\cY)\sigma(p_1^\partial)^\vee \sigma(p_2^\partial)\sigma(p_3^\partial)$$
arising from the natural trivialization $\sigma(\cR(\cC\cY,0,0,0,0,0)) \cong \sigma(0)$, and the identifications 
\begin{align}
\label{eq:CY2_trivs}
\sigma(\cC\cY) &= \sigma(B_{\cC\cY})^\vee \sigma(L_{p_3^\partial})\sigma(L_{p_2^\partial})\\
\sigma(p_1^\partial) &= \sigma(\fuk^\big), \qquad \text{and}\\
\label{eq:p2p3!}\sigma(p_2^\partial)\sigma(p_3^\partial) &= \sigma(\fuk^{\big,!}).
\end{align}

We defined $H^{12}_{\cC\cY}$ to be $(H^{12}_{\cC\cY},s_1,s_2,\ell_\bulk,\ell_\stab)^\big_{\emptyset,\bL}$; to complete the definition, we combine this with the isomorphisms
$$\S(H^{12}_{\cC\cY}) \cong \sigma(2n) \sigma(B_{\cO\cC})^\vee \sigma(p_{0,\cC\cO}^\partial)^\vee  \sigma(p_{0,\cO\cC}^\partial)$$
arising from $\sigma(\cR(H^{12}_{\cC\cY}),0,0,0,0) \cong \sigma(0)$; as well as
\begin{align} \label{eq:CY_trivs}
    \sigma(\cC\cY) &= \sigma(2n)\sigma(B_{\cO\cC})^\vee\\
\label{eq:p0CO_CC}  \sigma(p_{0,\cC\cO}^\partial) &= \sigma(CC^*)\\
  \sigma(p_{0,\cO\cC}^\partial) & = \sigma(CC_*);
\end{align}
and $\sigma(H^{12}_{\cC\cY}) = \sigma(\cC\cY)$.

We defined $H^{i}_{\cC\cY}$ to be $(H^{i}_{\cC\cY},s_1,s_2,\ell_\bulk,\ell_\stab)^\big_{\emptyset,\bL}$, for $i=1,2$; to complete the definition, we combine this with the isomorphisms
$$\S(H^{i}_{\cC\cY}) \cong \sigma(\partial)^\vee\sigma(2n) \sigma(B_{\cO\cC})^\vee \sigma(p_{0,\cC\cO}^\partial)^\vee  \sigma(p_{0,\cO\cC}^\partial)$$
arising from the trivialization $\sigma(\cR(H^{i}_{\cC\cY}),0,0,0,0) \cong \sigma(\partial)$ which agrees with the boundary trivialization at $H^{12}_{\cC\cY}$ for $i=1$, and disagrees for $i=2$; as well as \eqref{eq:CY_trivs}; and $\sigma(H^i_{\cC\cY}) = \sigma(\partial)^\vee \sigma(\cC\cY)$.

\begin{lem}\label{lem:Cardy_orientations}
    We consider the isomorphisms arising from Equation \eqref{eq:boundary_or}, for the boundary components of $\Rbar(H^1_{\cC\cY},0,0,0,0)$ and $\Rbar(H^2_{\cC\cY},0,0,0,0)$:
    \begin{itemize}
        \item The sign of the isomorphism $\sigma(H^{12}_{\cC\cY}) \cong \sigma(\partial)\sigma(H^1_{\cC\cY})$ is $-1$;
        \item The sign of the isomorphism $\sigma(\cC\cO)\sigma(\cO\cC)  \cong \sigma(\partial)\sigma(H^1_{\cC\cY})$ is $+1$;
        \item The sign of the isomorphism $\sigma(\bar{\mu})\sigma(\cC\cY) \cong \sigma(\partial)\sigma(H^2_{\cC\cY})$ is $(-1)^{1+\signn}$; 
        \item The sign of the isomorphism $\sigma(H^{12}_{\cC\cY})  \cong \sigma(\partial) \sigma(H^2_{\cC\cY})$ is $+1$.
    \end{itemize}
\end{lem}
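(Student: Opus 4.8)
The plan is to verify the four sign claims by the same bookkeeping strategy used in the proof of Lemma~\ref{lem:CO_orientations} and Lemma~\ref{lem:OC_orientations}: each isomorphism in question is a composite of a small number of elementary isomorphisms, each of which either respects trivializations by construction/definition or respects them by an already-established lemma (principally Lemma~\ref{lem:mu2_or}, Lemma~\ref{lem:CO_or}, and the definitions of $\sigma(\cO\cC)$, $\sigma(\bar\mu)$, $\sigma(\cC\cY)$, and the various $\sigma(p^\partial)$-identifications \eqref{eq:CY2_trivs}--\eqref{eq:p0CO_CC}); the only surviving sign is then a Koszul sign coming from commuting factors past each other, or from the relation $\sigma(\partial)^\vee\sigma(\partial)=\sigma(0)$ requiring a transposition. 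I would handle the four bullets in the order they are listed, since the first, second and fourth are the ``easy'' cases (identical in structure to those in Lemma~\ref{lem:CO_orientations}) and the third is the one carrying the genuinely new content, namely the appearance of the Cardy sign $(-1)^{\signn}$.

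For the first and fourth bullets, the isomorphism $\sigma(H^{12}_{\cC\cY})\cong\sigma(\partial)\sigma(H^i_{\cC\cY})$ is, after unwinding the definitions of $\S(H^{12}_{\cC\cY})$ and $\S(H^i_{\cC\cY})$, essentially the identity composed with the comparison between the chosen trivialization of $\sigma(\cR(H^i_{\cC\cY},0,0,0,0))\cong\sigma(\partial)$ and the actual boundary orientation at the $H^{12}_{\cC\cY}$ face: by our choice this agrees for $i=1$ and disagrees for $i=2$, which (together with the one transposition needed to cancel $\sigma(\partial)^\vee\sigma(\partial)$) yields sign $-1$ for $i=1$ and $+1$ for $i=2$, exactly as in Lemma~\ref{lem:CO_orientations}. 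For the second bullet, the face $\Rbar(\cC\cO,0,0,0)\,\#\,\Rbar(\cO\cC,0,0,0)$ of $\Rbar(H^1_{\cC\cY},0,0,0,0)$ is an interior gluing with length parameter $\infty$; I would combine the boundary-orientation identification \eqref{eq:boundary_or} for this face, the definitions of $\sigma(\cC\cO)$ and $\sigma(\cO\cC)$ in terms of $\S$ and $\sigma(p_0^\partial)$, the identifications $\sigma(p_{0,\cC\cO}^\partial)=\sigma(CC^*)$ and $\sigma(p_{0,\cO\cC}^\partial)=\sigma(CC_*)$ from \eqref{eq:p0CO_CC}, and the fact that the chosen orientation of $\cR(H^1_{\cC\cY},0,0,0,0)$ disagrees with the boundary orientation there; as in the analogous computations the accumulated Koszul signs cancel to give $+1$.

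The main obstacle is the third bullet. Here the relevant face of $\Rbar(H^2_{\cC\cY},0,0,0,0)$ is obtained by attaching the two outgoing marked points of $\Rbar(\cC\cY,0,0,0,0,0)$ to the two incoming marked points of $\Rbar(\mu,2,0,0)$ so that the degenerate annulus has one boundary marked point on each boundary component. I would compute the sign of $\sigma(\bar\mu)\sigma(\cC\cY)\cong\sigma(\partial)\sigma(H^2_{\cC\cY})$ by composing: the boundary-orientation identification \eqref{eq:boundary_or} for this face (which respects trivializations since the orientation of $\cR(H^2_{\cC\cY},0,0,0,0)$ was chosen to agree with the boundary orientation at the $H^{12}_{\cC\cY}$ face and hence disagrees here); the identification \eqref{eq:mu2_or} from Lemma~\ref{lem:mu2_or} expressing $\S(\mu)\sigma(p_0^\partial)\sigma(p_1^\partial)^\vee\sigma(p_2^\partial)^\vee\cong\sigma(\cR(\mu,2,0,0))^\vee$; the definition of $\sigma(\bar\mu)$ via $\sigma(CC^*)\sigma(\mu)=\sigma(\cC^!)$; the definition of $\sigma(\cC\cY)$ via \eqref{eq:CY2_trivs}--\eqref{eq:p2p3!}, in particular the identification $\sigma(p_2^\partial)\sigma(p_3^\partial)=\sigma(\fuk^{\big,!})$; and the various $\sigma(p^\partial)=\sigma(CC_*/CC^*/\fuk^\big)$ identifications at the nodes. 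The two sources of sign are then (i) a transposition needed to cancel $\sigma(\partial)^\vee\sigma(\partial)$, contributing $-1$, exactly as in the third bullet of Lemma~\ref{lem:CO_orientations}; and (ii) the sign $(-1)^{\signn}$ built into \eqref{eq:n choose 2} / case~\eqref{it:glue_same_boundary} of Lemma~\ref{lem:or_boundary_gluing}, which enters precisely because this attachment is a boundary self-gluing turning a disc into an annulus (the same phenomenon that produced the explicit $(-1)^{\signn}$ in the statement of Theorem~\ref{thm:cardy}). Combining, the total sign is $(-1)^{1+\signn}$, as claimed. Finally I would note that combining all four bullets with Lemma~\ref{lem:mu_boundary} (compatibility of signs at disc-bubble faces) completes the proofs of Equations~\eqref{eq:H1CY} and \eqref{eq:H2CY}; the $(-1)^{\signn}$ on the right-hand side of \eqref{eq:H2CY} is exactly the discrepancy recorded in the third bullet, since the $-1$ from the $\sigma(\partial)$-transposition is absorbed into the differential $\partial$.
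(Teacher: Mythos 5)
Your strategy is essentially the paper's: follow the template of Lemma~\ref{lem:CO_orientations} case by case, with Lemma~\ref{lem:mu2_or} plus the self-gluing sign from \eqref{eq:n choose 2} (equivalently case~\eqref{it:glue_same_boundary} of Lemma~\ref{lem:or_boundary_gluing}) as the key ingredients for the third bullet. The paper's proof organizes the third bullet around two sub-computations --- the identification $\sigma(p_{0,\cC\cO}^\partial)\sigma(\mu)\cong\sigma(p_2^\partial)\sigma(p_3^\partial)$ via Lemma~\ref{lem:mu2_or}, and the isomorphism $\sigma(B_{\cC\cY})^\vee\sigma(L_{p_3^\partial})\sigma(L_{p_2^\partial})\cong\sigma(2n)\sigma(B_{\cO\cC})^\vee$ whose only non-trivialization-respecting step is $\sigma(2n)=\sigma(B_-)\sigma(B_+)$ from \eqref{eq:n choose 2} --- and you hit the same ingredients and arrive at the same decomposition of $(-1)^{1+\signn}$ into a $-1$ from the $\sigma(\partial)^\vee\sigma(\partial)$ transposition and a $(-1)^\signn$ from the boundary self-gluing.

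One bookkeeping slip in your third-bullet argument: you write that the trivialization of $\sigma(\cR(H^2_{\cC\cY},0,0,0,0))\cong\sigma(\partial)$ was chosen to \emph{agree} with the boundary orientation at the $H^{12}_{\cC\cY}$ face and \emph{hence disagrees} at the $\bar\mu\circ\cC\cY_*$ face. The paper's convention for $i=2$ is the other way round: the trivialization disagrees at the $H^{12}_{\cC\cY}$ end and therefore agrees at the $\bar\mu\circ\cC\cY_*$ end (the moduli space being an interval, agreement at one endpoint forces disagreement at the other). Agreement, not disagreement, is exactly what makes \eqref{eq:boundary_or} respect trivializations there; so as written your parenthetical asserts the right conclusion from the wrong premise. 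The final answer $(-1)^{1+\signn}$ is unaffected because the $-1$ comes from the transposition and the $(-1)^\signn$ from the self-gluing, not from the boundary-orientation comparison, but you should straighten out the agreement/disagreement bookkeeping before writing it up.
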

\begin{proof}
    The proof follows that of Lemma \ref{lem:CO_orientations} closely. 
    The first key computation for the third point is to check that the isomorphism
    $$\sigma(p_{0,\cC\cO}^\partial) \sigma(\mu) \cong \sigma(p_2^\partial)\sigma(p_3^\partial)$$
    induced by the isomorphisms
    \begin{align*}
        \sigma(p_{0,\cC\cO}^\partial) &= \sigma(CC^*) \qquad \text{from \eqref{eq:p0CO_CC}}\\
        \sigma(CC^*)\sigma(\mu) &= \sigma(\fuk^{\big,!}) \qquad \text{from the definition of $\bar{\mu}$}\\
        \sigma(\fuk^{\big,!}) &= \sigma(p_2^\partial)\sigma(p_3^\partial) \qquad \text{from \eqref{eq:p2p3!}}
    \end{align*}
    agrees with the one induced by the natural trivialization of $\sigma(\cR(\mu,2,0,0))$, which follows from Lemma \ref{lem:mu2_or}.
    
    The second is to compute the sign of the isomorphism
    $$\sigma(B_{\cC\cY})^\vee \sigma(L_{p_3^\partial})\sigma(L_{p_2^\partial}) \cong \sigma(2n) \sigma(B_{\cO\cC})^\vee$$
    induced by composing \eqref{eq:CY2_trivs} with \eqref{eq:CY_trivs}. By definition, this is obtained by combining the isomorphisms
    \begin{align*}
        \sigma(B_{\cC\cY})^\vee \sigma(L_{p_2^\partial}) & = \sigma(0) \quad\text{from \eqref{eq:Lout_Bout}, as we first glue at $p_2^\partial$}\\
        \sigma(B_\mu)^\vee \sigma(L_{p_3^\partial}) &= \sigma(0) \quad\text{from \eqref{eq:Lout_Bout}, as we next glue at $p_3^\partial$}\\
        \sigma(0) &= \sigma(2n)\sigma(B_+)^\vee \sigma(B_-)^\vee \quad\text{from \eqref{eq:n choose 2}}\\
        \sigma(B_-) &= \sigma(B_{\cO\cC})\\
        \sigma(B_+)^\vee \sigma(L_{p_{0,\cC\cO}^\partial}) & = \sigma(0).
    \end{align*}
By definition, all the isomorphisms respect trivializations, except for the isomorphism $\sigma(2n) = \sigma(B_-)\sigma(B_+)$, which has sign $(-1)^\signn$; this completes the computation.
\end{proof}

\bibliographystyle{amsalpha}
\bibliography{references.bib}

\end{document}